\documentclass{jhrs} 

\usepackage[all]{xy}
\usepackage{amsmath, amsthm}

\usepackage{latexsym,amsfonts,}

\usepackage{tikz}
\usepackage{mathrsfs}
\usepackage{epsfig}
\usepackage{color}
\usetikzlibrary{arrows}

\makeatletter
\@addtoreset{equation}{section}

\makeatother
\makeatletter
\@addtoreset{equation}{section}

\makeatother
%\selectlanguage{\USenglish}
\makeatletter
\@addtoreset{equation}{section}

\makeatother

%\theoremstyle{change}
%\theorembodyfont{\rm}
\newtheorem{prop}{Proposition}[section]
\newtheorem{theo}[prop]{Theorem}
\newtheorem{lem}[prop]{Lemma}
\newtheorem{defi}[prop]{Definition}
\newtheorem{const}[prop]{Constructions}
\newtheorem{coro}[prop]{Corollary}
\newtheorem{obse}[prop]{Observation}

\newtheorem{exam}[prop]{Example}

\newtheorem{leer}[prop]{}
\newtheorem{rema}[prop]{Remark}

\newtheorem{nota}[prop]{Notation}

\def\Top{\mathcal{T}\! op}
\def\top{{\mathcal{T}\! op}^\ast}
\def\topw{{\mathcal{T}\! op}^w}
\def\id{\mathop{\rm id}}
\def\Id{\mathop{\rm Id}}
\def\Mon{\mathcal{M}on}
\def\Monw{\mathcal{M}on^w}

\def\Monc{\mathcal{M}on^{\mathcal{C}}}
\def\Mc{\mathcal{M}^{\mathcal{C}}}
\def\Nc{\mathcal{N}^{\mathcal{C}}}

\def\constH{\textrm{const}^{\mathcal{H}}}
\def\cons{\textrm{const}}
\def\Sgp{\mathop{\rm \mathcal{S}gp}}
\def\P{\mathop{\rm Path}}
\def\hocolim{\mathop{\rm hocolim}\nolimits}
\def\colim{\mathop{\rm colim}\nolimits}
\def\colimH{\mathop{\rm colim}\nolimits^{\mathcal{H}}}

%\iffalse{\def\SSets{\mathcal{SS}ets}

\def\scA{\mathcal{A}}
\def\scB{\mathcal{B}}
\def\scD{\mathcal{D}}
\def\scH{\mathcal{H}}
\def\scM{\mathcal{M}}
\def\scN{\mathcal{N}}
\def\scC{\mathcal{C}}
\def\scJ{\mathcal{J}}
\def\scX{\mathcal{X}}

\def\bL{\mathop{\bf L}}
\def\bR{\mathop{\bf R}}
\def\BH{B^{\mathcal{H}}}
\def\BwH{B^{w{\mathcal{H}}}}
\def\OH{\Omega'^{\mathcal{H}}}
\def\OwH{\Omega'^{w{\mathcal{H}}}}
\def\id{\mathop{\rm id}\nolimits}
\def\Id{\mathop{\rm Id}\nolimits}
\def\Sing{\mathop{\rm Sing}\nolimits}
\def\ob{\mathop{\rm ob}\nolimits}
\def\ev{\mathop{\rm ev}\nolimits}
\def\SSets{\mathcal{SS}ets}
\def\Ho{\mathop{\rm Ho}\nolimits}
\def\op{\mathop{\rm op}\nolimits}

\begin{document}
\title{Homotopy homomorphisms and the classifying space functor}

\author{R.M. Vogt}             % First Author
\email{rvogt@uos.de}       % First Author's email
\address{Fachbereich Mathematik/Informatik\\ % First Author's postal address
         Universit\"at Osnabr\"uck\\
         Germany\\
         D-49069 Osnabr\"uck\\
         }

% AMS 2000 Mathematics Subject Classification
\classification{55P99}
% AMS 2010 Mathematics Subject Classification
\classification{55P65,55R35,55R37,55U35}
% Keywords of the article
\keywords{Homotopy homomorphism, classifying space, localizations of topologically
enriched categories,
homotopy adjunction, homotopy colimit, group completion, Moore loop space,
James Construction}

% Abstract comes before maketitle
\begin{abstract}
We show that the classifying space functor $B:\mathcal{M}on \to \top$ from the category
of topological monoids to the category of based spaces is left adjoint to 
the Moore loop space functor $\Omega':\top\to\Mon$ after we have localized $\Mon$ with respect to all
homomorphisms whose underlying maps are homotopy equivalences and $\top$ with respect
to all based maps which are (not necessarily based) homotopy equivalences. It is well-known that
this localization of $\top$ exists, and we show that the localization of $\Mon$ is the category
of monoids and homotopy classes of homotopy homomorphisms. To make this statement precise 
 we have to modify the classical definition of a homotopy homomorphism, and we discuss the necessary changes.
 The adjunction is induced by an
adjunction up to homotopy  $B:\scH\Monw\leftrightarrows \topw:\Omega'$ between the category of
 well-pointed monoids and  homotopy homomorphisms and the category of well-pointed spaces. This
adjunction is shown to lift to diagrams. As a consequence, the well-known derived adjunction
between the homotopy colimit and the constant diagram functor can also be seen to be induced 
by an adjunction up to homotopy before taking homotopy classes. As applications we among other
things deduce a more algebraic version of the group completion theorem and show
 that the classifying space functor preserves homotopy colimits up to natural
homotopy equivalences. 
\end{abstract}

\received{Month Day, Year}   % receive date (for example: 11 October 1999)
\revised{Month Day, Year}    % receive date
\published{Month Day, Year}  % publish date
\submitted{Name of Editor}  % Name of Journal's Editor, who submitted Article 

\volumeyear{2012} % Volume Year
\volumenumber{1}  % Volume Number 
\issuenumber{1}   % Issue Number

\startpage{1}     % PageNumber of first page

\maketitle

\section{Introduction}
Let $\Top$ denote the category of $k$-spaces, $\top$ the category of based $k$-spaces, and $\topw$ the
category of well-pointed $k$-spaces. Recall that a space $X$ is a \textit{$k$-space} if $A\subset X$ is closed iff $p^{-1}(A)$ is 
closed in $C$ for each map $p:C\to X$ where $C$ is a compact Hausdorff space, and that a space is called \textit{well-pointed}
if the inclusion of the base point is a closed cofibration. 

Let $\Mon$ denote the category of topological monoids and continuous homomorphisms,
and $\Monw$ and $\mathcal{C}\Mon$ 
the full subcategories of well-pointed, respectively, commutative monoids. A monoid is canonically based by its unit.

We are interested in the relationship between Milgram's classifying space functor
$B: \Mon \to \top$ and the Moore loop space functor $\Omega ':\top \to \Mon$ (for explicit definitions see Section 4).

The related question for commutative monoids is easily answered:
it is well-known that the classifying space $BM$ of a commutative monoid is a commutative monoid \cite{Milgram}, so that we
have a functor $B:\mathcal{C}\Mon\to \mathcal{C}\Mon$. The usual loop space functor induces a functor $\Omega: 
\mathcal{C}\Mon\to \mathcal{C}\Mon$ by defining the multiplication in $\Omega M$ by point-wise multiplication in $M$.
The category $\mathcal{C}\Mon$ is enriched over $\top$ in an obvious way, and it is tensored and cotensored
(for definitions see \cite{Borceux} or Section 3). The cotensor $M^K$ of $M\in \mathcal{C}\Mon$ and $K\in \top$ is the
function space with point-wise multiplication. It is well-known that $B(M)\cong M\boxtimes S^1$, the tensor of
$M$ and $S^1$. Since $-\boxtimes K$ is left adjoint to $(-)^K$  we obtain:

\begin{prop}\label{1_1}
The functors
$$
B:\scC\Mon\rightleftarrows\scC\Mon :\Omega
$$
 form a $\top$-enriched adjoint pair.
\end{prop}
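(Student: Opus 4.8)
The plan is to recognize the two functors as the tensor and cotensor by $S^1$ on the $\top$-enriched category $\scC\Mon$, and then to invoke the standard fact that, in a tensored and cotensored enriched category, tensoring and cotensoring by a fixed object form an enriched adjoint pair. First I would fix the enriched structure: as recalled in the introduction (and set up in Section 3), $\scC\Mon$ is enriched, tensored, and cotensored over $\top$, where $\scC\Mon(M,N)$ is the space of homomorphisms based at the constant homomorphism $M\to N$ sending everything to the unit of $N$. The defining natural homeomorphisms are, for $K\in\top$,
$$
\scC\Mon(M\boxtimes K,N)\ \cong\ \Maps\bigl(K,\scC\Mon(M,N)\bigr)\ \cong\ \scC\Mon(M,N^K),
$$
in $\top$, the tensor $M\boxtimes K$ being characterized by the left homeomorphism and the cotensor $N^K$ by the right one; moreover $N^K$ is the function space $\Maps(K,N)$ with pointwise multiplication.

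Next I would identify $\Omega$ and $B$ with $(-)^{S^1}$ and $-\boxtimes S^1$. For $\Omega$ this is immediate: the underlying space of $\Omega N$ is $\Maps(S^1,N)$ and, by the definition recalled in the introduction, its multiplication is pointwise, so $\Omega N = N^{S^1}$; since the cotensor is functorial in its first variable, $\Omega$ coincides with $(-)^{S^1}\colon\scC\Mon\to\scC\Mon$. For $B$ I would use Milgram's description \cite{Milgram}: for commutative $M$ the classifying space $BM$ is again a commutative monoid, and — as recalled in the introduction — there is a homeomorphism $BM\cong M\boxtimes S^1$; checking that this identification is natural in $M$ gives $B\cong -\boxtimes S^1$ as $\top$-enriched functors.

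With these identifications, the statement follows by chaining the two defining homeomorphisms at $K=S^1$: naturally in $M$ and $N$,
$$
\scC\Mon(BM,N)\ \cong\ \scC\Mon(M\boxtimes S^1,N)\ \cong\ \Maps\bigl(S^1,\scC\Mon(M,N)\bigr)\ \cong\ \scC\Mon(M,N^{S^1})\ \cong\ \scC\Mon(M,\Omega N),
$$
a homeomorphism in $\top$ which is $\top$-natural in both variables; this is precisely a $\top$-enriched adjunction $B\dashv\Omega$. (Alternatively one simply quotes the general statement that $-\boxtimes K$ is left $\top$-adjoint to $(-)^K$ in any tensored and cotensored $\top$-enriched category; see \cite{Borceux}.)

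I do not expect a genuine obstacle here: all the content is in lining up definitions. The one place that needs care is the \emph{naturality} of $BM\cong M\boxtimes S^1$, i.e.\ that $B$ really \emph{is} the tensor functor rather than merely objectwise isomorphic to it, together with keeping the based-point bookkeeping of the enrichment consistent (whether enriched composition is taken over $\wedge$ or over $\times$, and hence whether the function objects $\Maps(K,-)$ are based). For the first point one uses that the bar construction realizes the simplicial object $[n]\mapsto M^{n}$, that the interchange law in the commutative monoid $M$ promotes this to a simplicial commutative monoid, and that geometric realization commutes with the colimits defining the tensor — so that $BM$ inherits exactly the tensor structure, naturally in $M$.
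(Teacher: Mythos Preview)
Your proposal is correct and follows exactly the paper's own argument, which is given in the paragraph immediately preceding the proposition: identify $\Omega$ with the cotensor $(-)^{S^1}$ and $B$ with the tensor $-\boxtimes S^1$ on $\scC\Mon$, then invoke that $-\boxtimes K$ is left $\top$-adjoint to $(-)^K$. Your discussion of the naturality of $BM\cong M\boxtimes S^1$ is more careful than the paper, which simply records this as well known.
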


In the non-commutative case there is no hope for a similar result. A candidate for a right adjoint of
the classifying functor
$$
B: \Mon \to \top$$
is the Moore loop space functor
$$
\Omega':\top \to \Mon,
$$
but $\Omega'$ does not preserve products. In fact, there is no product preserving functor
$$F:\top \to \Mon$$
such that $F(X)\simeq \Omega(X)$ for all $X$ \cite[Prop. 6.1]{BV}.

\begin{rema}\label{1_1a}
In \cite{Fied} Fiedorowicz showed that the Moore loop space functor into a different target category is right
adjoint to what he called the Moore suspension functor: Let $\top[\mathbb{R}_+]$ be the category whose objects are based spaces $X$ together with a continuous map
$p:X\to \mathbb{R}_+$ (the non-negative real numbers) such that $p^{-1}(0)=\ast$ and whose morphisms are maps over
$\mathbb{R}_+$. Then 
$$\Omega':\top\to \top[\mathbb{R}_+]\qquad X\mapsto (\Omega'X,l),$$
where $l$ is the length function, has this Moore suspension functor as left adjoint.
\end{rema} 

The  Moore loop space funtor $\Omega':\top\to \Mon$ preserves products up to natural homotopy. 
So one might expect it to be a right
adjoint of $B$ after formally inverting homotopy equivalences. We will prove this in this paper.

We will have to localize our categories $\scC$, and it is a priori not clear that these localizations exist. 
A common procedure
is to define a Quillen model structure on $\scC$ such that the morphisms we want to invert are the
weak equivalences in these structures. The localization then is the homotopy category $\Ho\scC$ associated
with this model structure.

There are two standard model structures on $\Top$: The structure 
due to Quillen \cite{Qu} whose weak equivalences are weak homotopy equivalences and whose fibrations are 
Serre fibrations, and the structure due to Str{\o}m \cite{Strom2} whose weak equivalences are homotopy
equivalences, whose fibrations are Hurewicz fibrations, and whose cofibrations are closed cofibrations.

Although mainstream homotopy theory usually works with the Quillen model structure and the proofs of our results
would be considerably shorter in this context (because we could use the rich literature, in particular, the 
results of Fiedorowicz \cite{Fied}), we choose the Str{\o}m setting because we share 
D. Puppe's point of view \cite{Puppe}: 
``Frequently a weak
homotopy equivalence is considered as good as a genuine one, because for spaces having
the homotopy type of a $CW$-complex there is no difference and most interesting 
spaces in algebraic topology are of that kind. I am not going to argue against this 
because I agree with it, but I do think that the methods by which we establish
the genuine homotopy equivalences give some new insight into homotopy theory.''
Moreover, there are spaces of interest which rarely have the homotopy type of a $CW$ complex such
as function spaces and spaces of foliations, which account for a growing interest in results
in the Str{\o}m setting.

So we call a based map in $\top$ a weak equivalence if it is a not necessarily based homotopy equivalence,
and a homomorphism in $\Mon$ a weak equivalence if the underlying map of spaces is a weak equivalence in
$\top$. 
Let $\Ho\top$ and $\Ho\Mon$ be the categories obtained from $\top$ respectively $\Mon$ by formally
inverting weak equivalences. 

\begin{theo}\label{1_4}
 The categories $\Ho\top$ and $\Ho\Mon$ exist and the classifying space functor and the Moore loop space 
functor induce a derived adjoint pair
$$
\Ho B:\Ho\Mon\rightleftarrows \Ho\top:\Ho\Omega'
$$
\end{theo}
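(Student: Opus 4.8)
The plan is to identify $\Ho\Mon$ concretely as a category of homotopy homomorphisms and then exhibit the adjunction by writing down an explicit unit and counit, verifying the triangle identities only up to homotopy. Existence of $\Ho\top$ is classical: it follows from the Str{\o}m model structure \cite{Strom2}, and $\Ho\top$ is the quotient of $\topw$ by the homotopy relation. For $\Ho\Mon$ I would avoid constructing a model structure on $\Mon$ and instead show directly that the localization is the category $\HMon$ of topological monoids and homotopy classes of homotopy homomorphisms. The tool is a functorial, well-pointed replacement $q_M\colon W\!M\to M$ — a $W$-construction modelled on the associativity operad — such that $W\!M$ is well-pointed, $q_M$ is a homomorphism and a homotopy equivalence, homotopy homomorphisms $M\to N$ correspond naturally to genuine homomorphisms $W\!M\to N$, and homotopic homotopy homomorphisms correspond to homomorphisms that are homotopic through homomorphisms out of $W\!M$. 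This correspondence is exactly where the classical definition of homotopy homomorphism has to be adjusted, so that composition is associative and unital on homotopy classes and $\HMon$ is a genuine category with small hom-sets. Granting it, the canonical functor $\Mon\to\HMon$ (an ordinary homomorphism being a special homotopy homomorphism) inverts weak equivalences and, by a direct check of the universal property, is the localization; hence $\Ho\Mon\simeq\HMon$ exists. The same argument over well-pointed monoids gives $\Ho\Monw\simeq\scH\Monw$, and since $q_M$ presents every monoid as weakly equivalent to a well-pointed one, the inclusion induces an equivalence $\Ho\Monw\simeq\Ho\Mon$.

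Next I would construct the adjunction up to homotopy on the well-pointed level. First, $B$ carries weak equivalences of well-pointed monoids to weak equivalences, because the bar construction of a well-pointed monoid is a good (Reedy cofibrant) simplicial space and so its realization is homotopy invariant; and $\Omega'$ carries weak equivalences in $\topw$ to weak equivalences by homotopy invariance of the Moore loop space, with $\Omega'X$ again well-pointed. Thus $B$ and $\Omega'$ descend to functors $\Ho\Mon\rightleftarrows\Ho\top$. For the unit I would take $\eta_M\colon M\to\Omega'BM$, which with Moore loops can be chosen to be a genuine natural homomorphism — send $m\in M$ to the loop of length one running through the $1$-simplex labelled by $m$ — and for the counit the evaluation $\varepsilon_X\colon B\Omega'X\to X$ that concatenates and evaluates a formal product of Moore loops. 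Neither is a weak equivalence ($\eta_M$ is a group completion and $\varepsilon_X$ is, up to homotopy, the inclusion of the basepoint component), but I would verify the triangle identities
$$
\varepsilon_{BM}\circ B(\eta_M)\simeq\id_{BM},\qquad \Omega'(\varepsilon_X)\circ\eta_{\Omega'X}\simeq\id_{\Omega'X}
$$
by explicit deformations on the bar constructions, together with the naturality of $\eta$ and $\varepsilon$ in the homotopy-homomorphism category (for a homotopy homomorphism $f$ the relevant squares commute only up to homotopy). This gives the adjunction up to homotopy $B\colon\scH\Monw\leftrightarrows\topw\colon\Omega'$ promised in the abstract.

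Finally, passing to homotopy classes of morphisms — i.e.\ applying $\pi_0$ to the relevant mapping spaces — turns the triangle identities, which held only up to homotopy, into identities on the nose. Hence $\Ho B$ and $\Ho\Omega'$ form a genuine adjoint pair $\Ho\Monw\rightleftarrows\Ho\top$, and composing with the equivalence $\Ho\Monw\simeq\Ho\Mon$ of the first step yields the asserted pair $\Ho B\colon\Ho\Mon\rightleftarrows\Ho\top\colon\Ho\Omega'$. Since $\Ho B$ is computed by applying $B$ to the well-pointed resolution $W\!M$ and $\Ho\Omega'$ by applying $\Omega'$ directly (all based spaces being fibrant in the Str{\o}m setting), this is a derived adjoint pair.

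The main obstacle is the first step: rigging the modified notion of homotopy homomorphism and the resolution $W\!M$ so that (i) homotopy homomorphisms out of $M$ are faithfully represented by strict homomorphisms out of $W\!M$, (ii) composition and units behave correctly on homotopy classes, and (iii) $\Mon\to\HMon$ is precisely the localization at weak equivalences. Once that bookkeeping is settled, the triangle identities in the second step reduce to writing down concrete homotopies on the bar construction and the Moore loop space, and the last step is purely formal.
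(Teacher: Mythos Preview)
Your overall strategy mirrors the paper's: build the localizations via a $W$-type cofibrant replacement, construct a unit and counit, verify the triangle identities up to homotopy, then pass to $\pi_0$. The counit (your $\varepsilon_X$, the paper's $\eta(X)$) is indeed the evaluation map and is genuinely natural. But your proposed unit $\eta_M\colon M\to\Omega'BM$, sending $m$ to the length-one loop through the $1$-simplex $\{m\}\times\Delta^1$, is \emph{not} a homomorphism: the Moore concatenation $\eta_M(m_1)+\eta_M(m_2)$ is a length-two path tracing the edges $[v_0,v_1]$ and $[v_1,v_2]$ of the $2$-simplex $(m_1,m_2)\times\Delta^2$ in $BM$, whereas $\eta_M(m_1m_2)$ is the length-one loop along the opposite edge $[v_0,v_2]$. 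These are different points of $\Omega'BM$ (and $\eta_M(e)$ is a constant loop of length $1$, not the length-$0$ unit). There is no natural strict homomorphism $M\to\Omega'BM$; at best one gets a natural $h$-morphism, i.e.\ a semigroup homomorphism $\overline{W}M\to\Omega'BM$, and promoting this to a morphism in $\scH\Monw$ is precisely the work.

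The paper circumvents this by introducing an intermediate monoid $P(EM,M)$ of Moore paths in $EM=B(M,M,\ast)$ from the basepoint to the $0$-skeleton $M$, equipped with the twisted addition $w_1\oplus w_2=w_1+x\cdot w_2$ where $x$ is the endpoint of $w_1$. The endpoint projection $\pi(M)\colon P(EM,M)\to M$ is then a genuine homomorphism and a weak equivalence, and the quotient $EM\to BM$ induces a genuine homomorphism $\rho'(M)\colon P(EM,M)\to\Omega'BM$. The homotopy unit $\mu(WM)\colon WM\to W\Omega'BWM$ is obtained by \emph{choosing} a homotopy inverse to (a whiskered, $W$-resolved version of) $\pi$ and composing with $W\rho'$; consequently $\mu$ is natural only up to homotopy, and verifying the triangle identities (Propositions \ref{4_8} and \ref{4_12}) requires the explicit comparison Lemma \ref{4_9} together with an indirect argument exploiting that $\Omega'X$ is grouplike. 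So your ``explicit deformations on the bar construction'' would in fact have to produce these higher coherences for an $h$-morphism, not for a strict one. A smaller point: for non-well-pointed $M$ the resolution $WM$ need not be well-pointed, so the paper uses $WVM$ (whisker first, then $W$) as the cofibrant replacement on all of $\Mon$; your assertion that $WM$ is always well-pointed should be adjusted accordingly.
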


\begin{rema}\label{1_2}
 This contrasts the situation in the simplicial category: The loop group functor $G: \SSets \to
\mathcal{SG}roups$ from simplicial sets to simplicial groups is left adjoint to the simplicial
classifying space functor $\overline{W}: \mathcal{SG}roups \to \SSets$ (e.g. see \cite[Lemma V.5.3]{GJ}).
\end{rema}

With our choice of weak equivalences the Str{\o}m model structure on $\Top$ lifts to $\top$ so that
$\Ho\top$ exists, but in contrast to the Quillen model structure, it is not known that
 the Str{\o}m model lifts to $\Mon$
(there is a model structure on $\Mon$ whose weak equivalences
are homotopy equivalences in $\Mon$ rather than homotopy equivalences of underlying spaces; this follows from
work of Cole \cite{Cole} and Barthel and Riel \cite{BaRi}).

In the construction of $\Ho\Mon$ in the Str{\o}m setting homotopy homomorphisms between monoids come into play: 
A topological monoid can be considered as an algebra over the operad $\scA ss$
of monoid structures or as a topologically enriched 
category with one object. The homotopy homomorphisms of this paper are based on the enriched category aspect and
describe ``functors up to coherent homotopies''.
They were
 introduced for monoids by Sugawara in 1960 \cite{Sug} and extensively
studied by Fuchs in 1965 \cite{Fuchs}. Homotopy homomophisms of $\scA ss$-algebras were introduced in
\cite{BV0}, and we will indicate their relation to the ones considered in this paper in Section 2.
An extension of our results to arbitrary category objects in $\Top$ may be of separate interest.

 If we define a semigroup to be a topological space with a continuous associative multiplication,
an inspection of the definition shows
that a homotopy homomorphism $f:M\to N$ of monoids is nothing but a semigroup homomorphism $\overline{W}M\to N$ where $\overline{W}$ is a variant
 of the Boardman-Vogt $W$-construction \cite{BV0} (not to be confused with the functor $\overline{W}$
of Remark \ref{1_2}). If $\Sgp$ denotes the category of semigroups and continuous homomorphisms then $\overline{W}:\Sgp\to \Sgp$ is a functor
equipped with a natural transformation $\overline{\varepsilon}:\overline{W}\to \Id$. The Boardman-Vogt
 $W$-construction $W:\Mon\to \Mon$ and its associated
 natural transformation $\varepsilon : W\to \Id$ are obtained
from $(\overline{W},\  \overline{\varepsilon})$ by factoring out a unit relation. In particular, for 
any monoid $M$ there is a natural projection $\varepsilon'(M):
\overline{W}M\to WM$ of semigroups such that $\varepsilon(M)\circ \varepsilon'(M)=\overline{\varepsilon}(M)$.

The lack of conditions for the unit is
 an indication that Sugawara's notion of a homotopy homomorphism is not quite the correct one. So we define 
unitary  homotopy homomorphisms from $M$ to $N$ to be monoid homomorphisms $WM\to N$; those were studied in 1999 by Brinkmeier \cite{Brink}. 

Composition of homotopy homomorphisms and their unitary versions is only associative up to homotopy. To obtain genuine categories of monoids and (unitary) homotopy homomorphisms
 we modify both notions: A  homotopy homomorphisms from $M$ to $N$ will be a semigroup homomorphism $\overline{W}M\to \overline{W}N$
and a unitary one a monoid homomorphism $WM\to WN$.
>From a homotopy theoretical point of view this modification is not significant: 

\begin{prop}\label{4_a}
If $M,\ N$ are monoids and $M$ is well-pointed and $G,\ H$ are semigroups then the maps
$$\begin{array}{rcl}
\varepsilon(N)_\ast:\Mon (WM,WN)& \to &\Mon(WM,N) \\ 
\overline{\varepsilon}(N)_\ast:\Sgp(\overline{W}G,\overline{W}H)& \to & 
\Sgp(\overline{W}G,H)
\end{array}
$$ 
are homotopy equivalences. 
\end{prop}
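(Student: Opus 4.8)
The plan is to write down explicit homotopy inverses and to check the two round-trip composites, the essential point being that $W$ and $\overline{W}$ carry more structure than just a counit: each is (the functor part of) a comonad, with a comultiplication supplied by the interval coordinates of the Boardman--Vogt construction.

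I treat the semigroup assertion first. I would record a natural transformation $\lambda\colon\overline{W}\to\overline{W}\overline{W}$ on $\Sgp$ --- at a point of $\overline{W}G$, described by a linear string of elements of $G$ separated by ``edge lengths'' $t_i\in[0,1]$, one regroups the string into a string of strings and distributes the lengths --- together with the two counit identities $\overline{\varepsilon}(\overline{W}G)\circ\lambda_G=\id_{\overline{W}G}$ and $\overline{W}(\overline{\varepsilon}(G))\circ\lambda_G=\id_{\overline{W}G}$, which hold on the nose once $\lambda$ is chosen compatibly. Then define
$$\Phi\colon\Sgp(\overline{W}G,H)\longrightarrow\Sgp(\overline{W}G,\overline{W}H),\qquad \Phi(f)=\overline{W}(f)\circ\lambda_G.$$
Naturality of $\overline{\varepsilon}$ along $f\colon\overline{W}G\to H$ gives $\overline{\varepsilon}(H)\circ\overline{W}(f)=f\circ\overline{\varepsilon}(\overline{W}G)$, hence $\overline{\varepsilon}(H)_{\ast}\Phi(f)=f\circ\overline{\varepsilon}(\overline{W}G)\circ\lambda_G=f$; thus $\overline{\varepsilon}(H)_{\ast}\circ\Phi=\id$ strictly. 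For the other composite, $\Phi(\overline{\varepsilon}(H)\circ\psi)=\overline{W}(\overline{\varepsilon}(H))\circ\bigl(\overline{W}(\psi)\circ\lambda_G\bigr)$, and the two homomorphisms $\overline{W}(\psi)\circ\lambda_G$ and $\lambda_H\circ\psi$ from $\overline{W}G$ to $\overline{W}\overline{W}H$ --- which coincide when $\psi$ is a morphism of $\overline{W}$-coalgebras --- must be joined by a homotopy through homomorphisms, natural in $\psi$; granting this, post-composing with $\overline{W}(\overline{\varepsilon}(H))$ and using the second counit identity yields $\Phi\circ\overline{\varepsilon}(H)_{\ast}\simeq\id$. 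Since $\Phi$ and this homotopy are natural in the variable homomorphism, they assemble into continuous maps of the mapping spaces, which is the upgrade from ``$\overline{\varepsilon}(H)_\ast$ is a bijection on homotopy classes'' to ``$\overline{\varepsilon}(H)_\ast$ is a homotopy equivalence''.

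For the monoid assertion the same formulas work verbatim with $(W,\varepsilon,\delta)$ in place of $(\overline{W},\overline{\varepsilon},\lambda)$, once the comultiplication $\delta\colon W\to WW$ and the two counit identities are set up in $\Mon$. The only genuinely new ingredient is the unit relation that distinguishes $W$ from $\overline{W}$: the naturality homotopy for $\delta$ along an arbitrary homomorphism $WM\to WN$, and the homotopy on the mapping space it produces, must be compatible with collapsing vertices labelled by the unit of the source, and for these identifications to remain continuous --- indeed for $WM$ to be the ``cofibrant'' object on which these manipulations are legitimate --- one needs the basepoint inclusion $\ast\to M$ to be a closed cofibration. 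This is exactly where ``$M$ well-pointed'' is used; no such hypothesis is needed on $N$, $G$ or $H$, since those objects are never subjected to the $W$-construction's unit collapse, and in particular the proof never requires $\varepsilon(N)$ or $\overline{\varepsilon}(H)$ to be an equivalence of underlying spaces.

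I expect the main obstacle to be precisely the step identified above as the second round-trip composite: producing, for every homomorphism $\psi\colon\overline{W}G\to\overline{W}H$, an explicit homotopy through homomorphisms between $\overline{W}(\psi)\circ\lambda_G$ and $\lambda_H\circ\psi$ that is natural in $\psi$ (and in $G$, $H$). A naive attempt to build it by pre-composing $\psi$ with a path of self-maps of $\overline{W}G$ that scales edge lengths fails, because the scaled self-maps are not homomorphisms; one is forced to interpolate the regrouping coordinates inside $\overline{W}\overline{W}H$ itself, and bookkeeping this interpolation --- together with its interaction with the unit collapse in the monoid case --- is the technical heart of the argument. Once it is in hand, assembling everything into a homotopy equivalence of mapping spaces is formal.
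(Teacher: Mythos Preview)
Your approach differs from the paper's, and the gap you flag at the end is real; but there is also a problem earlier. The paper does not use any comonad structure on $W$ or $\overline{W}$. Instead it proves a general lifting result: for $M$ well-pointed and $p\colon X\to Y$ any weak equivalence in $\Mon$, the induced map $p_\ast\colon\Mon(WM,X)\to\Mon(WM,Y)$ is a homotopy equivalence, and likewise for $\overline{W}$ in $\Sgp$ with no hypothesis on the source. The proof is a filtration argument on $WM$ combined with the HELP-lemma: $WM$ is filtered by submonoids $F_n$ freely generated by indecomposables of word-length at most $n$, and at each stage the extension problem reduces to one in $\Top$ along a closed cofibration (well-pointedness of $M$ is precisely what makes the relevant inclusion a cofibration). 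The present statement then follows at once by taking $p=\varepsilon(N)$, respectively $p=\overline{\varepsilon}(H)$, which is shrinkable as a map of spaces and hence a weak equivalence.

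The earlier problem in your sketch is that no continuous semigroup homomorphism $\lambda_G\colon\overline{W}G\to\overline{W}\,\overline{W}G$ satisfies both counit identities strictly. If $\overline{W}(\overline{\varepsilon}(G))\circ\lambda_G=\id$, naturality in $G$ forces $\lambda_G(x_0,t_1,\ldots,t_n,x_n)=((x_0),t_1,(x_1),\ldots,t_n,(x_n))$, and applying $\overline{\varepsilon}(\overline{W}G)$ then returns $(x_0,1,x_1,\ldots,1,x_n)$, not the original element. If instead $\overline{\varepsilon}(\overline{W}G)\circ\lambda_G=\id$, then on an indecomposable $z$ one must have $\lambda_G(z)=(z)$ (since $\overline{W}G$ is free on its indecomposables and $z$ cannot be a nontrivial product), and the multiplicative extension is discontinuous where an edge length tends to $1$: for $t<1$ the indecomposable $(x_0,t,x_1)$ is sent to the single chunk $((x_0,t,x_1))$, while the limit $(x_0,1,x_1)=(x_0)\cdot(x_1)$ must be sent to $((x_0),1,(x_1))$, and these are distinct points of $\overline{W}\,\overline{W}G$. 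So at best one counit identity holds on the nose and the other only via the scaling homotopy $h_s$; you are then left with both round-trip composites to control, and the gap you identify becomes unavoidable along this route. The paper's lifting argument bypasses all of this: once $WM$ is shown to lift against arbitrary weak equivalences, no explicit inverse to $\varepsilon(N)_\ast$ is needed.
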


It is well-known that $WM\to M$ has the flavor of a cofibrant replacement of $M$ as known from model category theory provided $M$
is well-pointed (e.g. see \cite{BM}, \cite{Vogt2}). So it is no surprise that the category of well-pointed monoids and homotopy classes of unitary homotopy homomorphisms is the
localization of $\Monw$ with respect to its weak equivalences. If we want to construct $\Ho \Mon$ we have to relax unitary homotopy homomorphisms
to homotopy unitary homotopy homomorphisms and the corresponding statement holds. We will study these various notions of homotopy homomorphisms
in Section 2 in detail.

The lack of the appropriate Quillen model structure in some of our categories
 is made up for by their topological enrichment with nice properties. This topological
enrichment allows us to prove stronger results. E.g. the restriction of Theorem \ref{1_4} to the well-pointed case
is the path-component version of the following result.

\begin{theo}\label{1_5}
 Let $\scH \Monw$ be the category of well-pointed monoids and unitary homotopy homomorphisms. Then the classifying space functor and the
Moore loop space functor induce an adjunction up to homotopy
$$\scH \Monw \leftrightarrows \topw.$$
\end{theo}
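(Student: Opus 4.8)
The plan is to make the notion of an adjunction up to homotopy from Section~3 concrete by exhibiting two topologically enriched functors $B\colon\scH\Monw\to\topw$ and $\Omega'\colon\topw\to\scH\Monw$ (again written $B$, $\Omega'$) together with unit and counit natural transformations whose two triangle composites are homotopic to the identity; equivalently, a homotopy equivalence $\topw(BM,X)\simeq\scH\Monw(M,\Omega'X)$, natural up to homotopy in both variables. Passing to path components of these mapping spaces then yields Theorem~\ref{1_4} restricted to well-pointed objects, and the naturality of all the data is what will let the adjunction be lifted to diagram categories afterwards. For the functors, recall that a morphism $M\to N$ in $\scH\Monw$ is an honest monoid homomorphism $WM\to WN$ with strict composition, and that $WM$ is well-pointed when $M$ is; so one sets $B(M):=BWM$ on objects and $B(f):=Bf\colon BWM\to BWN$ on a morphism $f\colon WM\to WN$. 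Milgram's construction makes $BWM$ well-pointed, and $B$ is continuous and strictly functorial, so this is an enriched functor into $\topw$; up to the natural weak equivalence $B\varepsilon(M)\colon BWM\to BM$ it agrees with the usual classifying space functor, but passing through $W$ is what makes it strictly functorial on homotopy homomorphisms. Dually $\Omega'X$ is well-pointed when $X$ is, and $\Omega'\colon\topw\to\scH\Monw$ sends $X$ to $\Omega'X$ and a based map $h\colon X\to Y$ to $W(\Omega'h)\colon W\Omega'X\to W\Omega'Y$; continuity and functoriality come from those of $\Omega'$ and of the $W$-construction.

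For the counit take
$$\varepsilon_X\colon\ BW\Omega'X\ \xrightarrow{\ B\varepsilon(\Omega'X)\ }\ B\Omega'X\ \xrightarrow{\ e_X\ }\ X,$$
where $e_X$ is the classical evaluation map out of the classifying space of the Moore loop monoid; naturality in $X$ follows from naturality of $\varepsilon\colon W\to\Id$ and of $e$ together with $\varepsilon(\Omega'Y)\circ W(\Omega'h)=\Omega' h\circ\varepsilon(\Omega'X)$. For the unit I would use the classical fact (Sugawara, Fuchs; cf.\ Brinkmeier) that the canonical map $N\to\Omega'BN$ underlies a natural homomorphism $WN\to\Omega'BN$; applied with $N=WM$, combined with Proposition~\ref{4_a} and a homotopy section of $\varepsilon(WM)$, this produces a morphism $\eta_M\colon M\to\Omega'BWM$ of $\scH\Monw$, natural as a transformation of enriched functors. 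In the form best suited to the diagram extension one exhibits instead directly
$$\topw(BWM,X)\ \xrightarrow{\ \simeq\ }\ \Mon(WM,\Omega'X)\ \xleftarrow[\ \simeq\ ]{\ \varepsilon(\Omega'X)_\ast\ }\ \Mon(WM,W\Omega'X)=\scH\Monw(M,\Omega'X),$$
the right-hand equivalence being Proposition~\ref{4_a} and the left-hand map sending $g$ to the composite $WM\to\Omega'BWM\xrightarrow{\Omega' g}\Omega'X$.

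It remains to verify the two triangle identities up to homotopy --- equivalently, that the left-hand map above is a homotopy equivalence. Since $BWM$ is connected, every based map out of it lands in the basepoint component $X_0$ of $X$, and $\Omega'X=\Omega'X_0$, so one reduces to $X$ connected, in which case $\Omega'X$ is grouplike and $e_X\colon B\Omega'X\to X$ is a genuine homotopy equivalence (Dold--Lashof, or the James construction adapted to the Str{\o}m setting). The triangle identities then follow from the classical facts that $BN\to B\Omega'BN\to BN$ and $\Omega'X\to\Omega'B\Omega'X\to\Omega'X$ are homotopic to the identity, transported through the $W$-construction with the help of Proposition~\ref{4_a}; equivalently, for connected $X$ the classical correspondence between unitary homotopy homomorphisms $M\to\Omega'X$ and based maps $BWM\to X$ identifies the displayed map as a homotopy equivalence, and its naturality up to homotopy is routine.

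The main difficulty is not conceptual but lies in staying inside the Str{\o}m world of well-pointed spaces and \emph{genuine} (not merely weak) homotopy equivalences while keeping everything strictly enriched and natural: one must check that $W$, $B$ and $\Omega'$ preserve well-pointedness and, between well-pointed objects, homotopy equivalences and the relevant cofibrations; one must promote the canonical map $M\to\Omega'BM$ to a natural enriched transformation $\eta$ rather than a pointwise-up-to-homotopy family; and one must choose the homotopies realizing the triangle identities coherently enough that the whole package survives passage to diagram categories. Establishing that $e_X\colon B\Omega'X\to X$ is a homotopy equivalence for connected well-pointed $X$, together with the companion mapping-space equivalence $\Mon(WM,\Omega'X)\simeq\topw(BWM,X)$, is the technical heart of the argument.
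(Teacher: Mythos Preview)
Your setup of the two enriched functors and of the counit $\varepsilon_X=e_X\circ B\varepsilon(\Omega'X)$ matches the paper. The genuine gap is the step where you ``reduce to $X$ connected'' and then assert that $e_X\colon B\Omega'X\to X$ is a \emph{genuine} homotopy equivalence. In the Str{\o}m setting this is false for arbitrary connected well-pointed $X$: the paper's Proposition~\ref{4_11}(2) shows it only when $X$ is in addition a Dold space, and the whole point of the paper is to get the adjunction for \emph{all} well-pointed $X$. Your appeal to ``Dold--Lashof, or the James construction'' imports a hypothesis (numerable contractible cover, or CW type) that is not available. Consequently your argument for the triangle identities, which hinges on $e_X$ being a homotopy equivalence, does not go through.

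The paper's route around this is worth noting because it is the technical heart of Section~4. One triangle identity, $\eta(BWM)\circ B\mu(WM)\simeq\id_{BWM}$, is established \emph{directly} (Proposition~\ref{4_8}) via an explicit simplicial homotopy (Lemma~\ref{4_9}), using a concrete $h$-morphism $\zeta(M)\colon\overline{W}M\to P(EM,M)$ and the fat realization $\widetilde{B}$. The other triangle identity (Proposition~\ref{4_12}) is then obtained by a formal cancellation argument: from Proposition~\ref{4_8} and homotopy naturality one gets that $W\Omega'\eta(X)\circ\mu(W\Omega'X)$ is idempotent up to homotopy, and it is a homotopy equivalence because $\Omega'X$ is grouplike (so $\mu(W\Omega'X)$ is a weak equivalence by \ref{4_11}(1)) and because $\Omega'\ev(X)$ is a weak equivalence for \emph{any} well-pointed $X$ (\ref{4_11}(3)); hence it is homotopic to the identity. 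The key observation you are missing is that one only needs $\Omega'e_X$ to be a homotopy equivalence, not $e_X$ itself, and this weaker statement holds without any Dold hypothesis. A secondary point: the unit $\mu$ in the paper is only a natural transformation \emph{up to homotopy} (it involves choosing a homotopy inverse $\nu(WM)$ of a weak equivalence), so your stated goal of making $\eta_M$ ``natural as a transformation of enriched functors'' is both stronger than what is achieved and stronger than what Proposition~\ref{3_9} requires.
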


In Section 3 we will introduce the necessary notions to make this precise. There we will also recall basic facts from enriched category
theory and show that topologically enriched categories with a class of weak equivalences which admit a cofibrant replacement functor
can be localized. We believe that these results are of separate interest.

In Section 4 we prove Theorem \ref{1_5} and related results and hence Theorem \ref{1_4}. In Section 5 we draw some immediate
consequences of Theorem \ref{1_4} and of the intermediate steps in the proof of Theorem \ref{1_5}.

E.g. we obtain yet another but considerably shorter proof of a strong
version of the James construction.

\begin{defi}\label{1_6a}
 A \textit{Dold space} is a topological space admitting a numerable cover $\{ U_\gamma;\ \gamma \in \Gamma\}$ such that each
inclusion $U_\gamma \subset X$ is nullhomotopic.
\end{defi}
A space of the homotopy type of a $CW$-complex is a Dold space. For more details on Dold spaces see \cite{Schwam}.

\begin{prop}\label{1_6} (1) If $X$ is a well-pointed space and $JX$ is the based free topological
 monoid on $X$ (the James construction), then $BJX\simeq \Sigma X$.\\
(2) If $X$ is a well-pointed path-connected Dold space, then $JX\simeq \Omega\Sigma X$.
\end{prop}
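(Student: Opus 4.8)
The plan is to deduce both statements from the adjunction up to homotopy of Theorem~\ref{1_5} together with the behaviour of the $W$-construction on free monoids. For part (1), I would first observe that $JX$ is a well-pointed monoid when $X$ is well-pointed (the standard filtration of the James construction consists of closed cofibrations), so Theorem~\ref{1_5} applies. The key algebraic input is that the James construction is the free monoid functor, left adjoint to the forgetful functor $\Mon\to\top$; hence for any monoid $N$ we have $\Mon(JX,N)\cong\top(X,N)$ naturally. Feeding this into the adjunction up to homotopy $B\dashv\Omega'$ from Theorem~\ref{1_5}, together with Proposition~\ref{4_a} which lets us pass between the rigidified homotopy-homomorphism categories and the naive mapping spaces, we get a chain of natural homotopy equivalences
$$\top(\Sigma X, Y)\;\simeq\;\top(X,\Omega' Y)\;\cong\;\Mon(JX,\Omega'Y)\;\simeq\;\HMonw(JX,Y\text{-side})\;\simeq\;\top(BJX,Y)$$
valid for all well-pointed $Y$; by a Yoneda-type argument in the homotopy category this forces $BJX\simeq\Sigma X$. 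Alternatively, and perhaps more cleanly, one notes directly that $B$ applied to the free monoid on $X$ is, by the bar construction, the reduced suspension: $B(JX)$ is the geometric realization of $[n]\mapsto (JX)^n$, and the free-monoid structure makes this simplicial space a wedge-type decomposition whose realization collapses to $\Sigma X$ — this is essentially Milgram's original computation, and I would cite it, using well-pointedness only to guarantee the realization is well-behaved.

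For part (2), the hypotheses are strengthened: $X$ is well-pointed, path-connected, and a Dold space. The strategy is to apply the group completion phenomenon. Since $X$ is path-connected, $\pi_0(JX)$ is trivial, so $JX$ is already ``group-like'' in the sense that $\pi_0$ is a group; the natural map $JX\to\Omega B(JX)$ is then expected to be a homotopy equivalence rather than merely a group completion. Concretely, I would invoke the intermediate results in the proof of Theorem~\ref{1_5} — specifically that the unit of the adjunction up to homotopy, $M\to\Omega' BM$, is a weak equivalence (hence a genuine homotopy equivalence, since we are in the Str{\o}m setting and the spaces involved have the homotopy type we need) whenever $M$ is well-pointed and group-like. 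The Dold space hypothesis on $X$ is what guarantees the quasifibration / group-completion argument goes through: numerable covers with nullhomotopic inclusions are exactly what is needed to verify that the relevant projection maps in the bar construction are quasifibrations, or in the Str{\o}m language, that the requisite maps are Hurewicz fibrations up to homotopy. Combining $JX\simeq\Omega B(JX)$ with part (1), $B(JX)\simeq\Sigma X$, and the fact that $\Omega$ preserves homotopy equivalences of well-pointed spaces, yields $JX\simeq\Omega\Sigma X$.

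The main obstacle I anticipate is part (2): showing that the unit $JX\to\Omega B JX$ is a homotopy equivalence and not merely a map inducing an isomorphism after group completion. For a general well-pointed monoid $M$ the map $M\to\Omega BM$ is only a group completion, and one genuinely needs both that $\pi_0(M)$ is already a group (supplied by path-connectedness of $X$) \emph{and} a homology-to-homotopy upgrade. In the Str{\o}m setting one cannot simply quote Quillen's theorem verbatim; instead the Dold space condition must be used to run the classical argument — either via the Dold--Thom style quasifibration criterion adapted to numerable covers, or by exhibiting $B JX$ as a CW-type-free version where one still has a fibration $JX\to E\to BJX$ with contractible total space. I would structure this as: (i) build the universal bundle $E JX\to B JX$ with fibre $JX$; (ii) use well-pointedness plus the Dold space hypothesis to show this is a (Hurewicz) fibration with contractible total space; (iii) conclude $JX\simeq\Omega BJX$ from the long exact/path-loop comparison. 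Steps (i) and (iii) are formal; step (ii) is where all the hypotheses are spent, and it is the crux of the argument.
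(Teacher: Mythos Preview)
Your argument for part (1) is essentially the paper's: it, too, observes that $J\dashv U$ and $\Sigma\dashv\Omega$ are adjoint pairs, that $U\circ\Omega'\simeq\Omega$ naturally, and then concludes $\Ho B\circ\Ho J\cong\Ho\Sigma$ by uniqueness of left adjoints. Your Yoneda phrasing is equivalent.

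For part (2), however, there is a genuine gap in how you deploy the Dold hypothesis. You write that $\pi_0(JX)$ being trivial makes $JX$ ``group-like in the sense that $\pi_0$ is a group'', and then plan to use the Dold hypothesis to run a quasifibration argument on $EJX\to BJX$. This misplaces the hypothesis. In the paper's terminology (Definition~\ref{1_7}), \emph{grouplike} means admitting a \emph{continuous homotopy inversion}, which is strictly stronger than $\pi_0$ being a group; and Proposition~\ref{4_11}(1) --- the statement that the unit $\mu(WM):WM\to W\Omega'BWM$ is a weak equivalence --- requires $M$ to be grouplike in this stronger sense. The Dold hypothesis is spent exactly on closing this gap: if $X$ is a path-connected Dold space then so is $JX$, and a well-pointed monoid whose underlying space is a Dold space and whose $\pi_0$ is a group is grouplike (Proposition~\ref{5_4}, citing \cite[(12.7)]{DKP}). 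Once $JX$ is grouplike, Proposition~\ref{4_11}(1) applies directly --- no further quasifibration or fibration argument on $EJX\to BJX$ is needed, and indeed your proposed step (ii) (showing $EJX\to BJX$ is a Hurewicz fibration from the Dold hypothesis on $X$) does not obviously go through and is not how the paper proceeds. After that, your final chain $JX\simeq\Omega'BJX\simeq\Omega BJX\simeq\Omega\Sigma X$ is exactly the paper's.
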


Part (2) was first proven in \cite{DKP}, shorter proofs can be found in \cite{Puppe} and \cite{Schwam}.

We also obtain a new interpretation of the group completion theorem of a
monoid without any additional assumptions on the multiplication.

\begin{defi}\label{1_7}
 A topological monoid is called \textit{grouplike} if it admits a continuous homotopy inversion.
\end{defi}
A standard example of a grouplike monoid is the Moore loop space $\Omega'X$ of a space $X$.

\begin{theo}\label{1_8}
 Let $M$ be a well-pointed topological monoid. Then there is a unitary homotopy homomorphism
$\mu_M:M\to \Omega'BM$, natural up to homotopy, having the following universal property:
Given any unitary homotopy homomorphism $f:M\to N$ into a grouplike monoid $N$ there is a
unitary homotopy homomorphism $\bar{f}:\Omega'BM\to N$, unique up to homotopy, such that
$\bar{f}\circ \mu_M\simeq f$. (Here homotopy means homotopy in the category, i.e.
homotopy through unitary homotopy homomorphisms.)
\end{theo}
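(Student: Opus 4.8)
The plan is to derive Theorem~\ref{1_8} from the homotopy adjunction between $B$ and $\Omega'$ provided by Theorem~\ref{1_5}, by exhibiting $\Omega'B(-)$ together with the unit $\mu$ as a homotopy-theoretic reflection of $\scH\Monw$ onto its grouplike objects. I would take $\mu_M$ to be the unit of that homotopy adjunction, so that naturality up to homotopy is already part of Theorem~\ref{1_5}; and I note, as recorded after Definition~\ref{1_7}, that $\Omega'BM$ is a Moore loop space and hence grouplike, so its type is the one required. Besides Theorem~\ref{1_5} the argument rests on two classical facts (established alongside Theorem~\ref{1_5} in Section~4): (i) if $N$ is grouplike and well-pointed then $\mu_N\colon N\to\Omega'BN$ is a weak equivalence; and (ii) if $Y$ is a path-connected well-pointed space then the counit $\varepsilon_Y\colon B\Omega'Y\to Y$ is a weak equivalence. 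Point (ii) is where the hypotheses of the theorem really bite: $BM$ and $BN$ are always well-pointed and path-connected, whereas for a general space $Y$ the counit $\varepsilon_Y$ is \emph{not} a weak equivalence.

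For existence, recall that in the Str{\o}m setting a weak equivalence of spaces \emph{is} a homotopy equivalence, so (ii) makes $\varepsilon_{BM}$ a homotopy equivalence in $\topw$, and the triangle identity $\varepsilon_{BM}\circ B\mu_M\simeq\id_{BM}$ then exhibits $B\mu_M$ as a two-sided homotopy inverse of it. On the monoid side, (i) together with the enriched cofibrant-replacement/localization results of Section~3 promotes $\mu_N$ to a homotopy equivalence \emph{in the category} $\scH\Monw$; fix a homotopy inverse $\rho_N\colon\Omega'BN\to N$ with $\rho_N\circ\mu_N\simeq\id_N$ in $\scH\Monw$. Now given a unitary homotopy homomorphism $f\colon M\to N$ with $N$ grouplike and well-pointed, set $\bar f:=\rho_N\circ\Omega'(Bf)\colon\Omega'BM\to N$, the composite taken in $\scH\Monw$ (using that $B\colon\scH\Monw\to\topw$ and $\Omega'\colon\topw\to\scH\Monw$ are functors). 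Naturality of $\mu$ gives $\Omega'(Bf)\circ\mu_M\simeq\mu_N\circ f$, hence $\bar f\circ\mu_M\simeq\rho_N\circ\mu_N\circ f\simeq f$, all homotopies being through unitary homotopy homomorphisms.

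For uniqueness, suppose $\bar f_1,\bar f_2\colon\Omega'BM\to N$ both satisfy $\bar f_i\circ\mu_M\simeq f$. Since $\mu_N$ is invertible up to homotopy in $\scH\Monw$, it is enough to show $\mu_N\circ\bar f_1\simeq\mu_N\circ\bar f_2$. Passing to adjuncts under Theorem~\ref{1_5}, let $\phi_i\colon B\Omega'BM\to BN$ be the adjunct of $\mu_N\circ\bar f_i$; naturality of the adjunction in the domain makes $\phi_i\circ B\mu_M$ the adjunct of $\mu_N\circ\bar f_i\circ\mu_M$, which is homotopic to the adjunct of $\mu_N\circ f$ and hence independent of $i$. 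As $B\mu_M$ is a homotopy equivalence in $\topw$, precomposition with it is injective on homotopy classes, so $\phi_1\simeq\phi_2$ and therefore $\mu_N\circ\bar f_1\simeq\mu_N\circ\bar f_2$, whence $\bar f_1\simeq\bar f_2$.

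The formal skeleton above is routine once the pieces are in place; the real work, and the place I expect the main obstacle, is the careful bookkeeping of the two distinct notions of homotopy --- ordinary based homotopy in $\topw$ versus homotopy through unitary homotopy homomorphisms in $\scH\Monw$ --- and checking that the homotopy adjunction of Theorem~\ref{1_5} carries enough structure (triangle identities valid up to the appropriate homotopies, adjuncts of homotopic maps again homotopic, naturality of the adjunction bijection) to transport homotopies across it in both directions. The other substantive input is the upgrade of the weak equivalence $\mu_N$ to a homotopy equivalence \emph{inside} $\scH\Monw$, which is precisely what the Section~3 machinery is designed to supply and which leans on well-pointedness throughout; the corresponding statement for $\varepsilon_{BM}$ is comparatively cheap, since weak equivalences of spaces are already genuine homotopy equivalences in the Str{\o}m setting and it is exactly path-connectedness of $BM$ that makes $\varepsilon_{BM}$ a weak equivalence in the first place.
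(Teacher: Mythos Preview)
Your proof is correct and follows essentially the same approach as the paper's (Proposition~\ref{5_5}): for existence you take $\bar f=\rho_N\circ\Omega'(Bf)$ using the homotopy invertibility of $\mu_N$ for grouplike $N$ (Proposition~\ref{4_11}(1)), exactly as the paper does; for uniqueness you and the paper both reduce to showing $\mu_N\circ\bar f_1\simeq\mu_N\circ\bar f_2$, then cancel the homotopy equivalence $B\mu_M$ (Proposition~\ref{4_11}(4)). The only cosmetic difference is that you package the uniqueness step via the adjunction bijection $\lambda$ of Theorem~\ref{4_3} and the triangle identity for $\eta_{BM}$, whereas the paper applies $B$ directly and then uses the naturality square for $\mu$ at the grouplike monoids $\Omega'BM$, $\Omega'BN$ to descend back; these are equivalent manoeuvres.
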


>From the intermediate steps of the proof of Theorem \ref{1_5} we obtain the following
extension and strengthening of a theorem of Fuchs \cite[Satz 7.7]{Fuchs}

\begin{prop}\label{1_9}
 (1) If $M$ and $N$ are well-pointed monoids and $N$ is group\-like then 
$$B:\Mon(WM,WN)\to \top(BWM,BWN)$$
is a homotopy equivalence.\\
(2) If $X$ is a well-pointed path-connected Dold space
then $W\Omega': \topw(X,Y)\to\Monw(W\Omega'X,W\Omega'Y)$ is a homotopy equivalence.
\end{prop}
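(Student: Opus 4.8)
The plan is to derive both statements from the adjunction up to homotopy $B:\scH\Monw\leftrightarrows\topw:\Omega'$ of Theorem~\ref{1_5}, exactly as one derives, in an ordinary adjunction $B\dashv\Omega'$, that the left adjoint is fully faithful on the objects where the unit is invertible and the right adjoint is fully faithful on the objects where the counit is invertible. Here the unit is the group completion $\mu_M:M\to\Omega'BM$ of Theorem~\ref{1_8} and the counit is a natural map $\epsilon_X:B\Omega'X\to X$. I would begin with the bookkeeping: with the paper's conventions $\scH\Monw(M,N)=\Mon(WM,WN)$, while $\Monw(W\Omega'X,W\Omega'Y)=\scH\Monw(\Omega'X,\Omega'Y)$ since $W\Omega'X$ and $W\Omega'Y$ are well-pointed; and, up to the natural homotopy equivalences $B\varepsilon_M:BWM\to BM$, the map $B$ of the statement is the map induced on morphism spaces by the functor $B$ of Theorem~\ref{1_5}, while $W\Omega'$ is the map induced on morphism spaces by the functor $\Omega'$. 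So it is enough to prove that $B$ induces a homotopy equivalence $\scH\Monw(M,N)\to\topw(BM,BN)$ when $N$ is grouplike, and that $\Omega'$ induces a homotopy equivalence $\topw(X,Y)\to\scH\Monw(\Omega'X,\Omega'Y)$ when $X$ is path-connected and a Dold space. Throughout I will use the structure carried by an adjunction up to homotopy: for $M\in\Monw$ and $Y\in\topw$ a homotopy equivalence $\topw(BM,Y)\to\scH\Monw(M,\Omega'Y)$, namely $\mu_M^{\ast}\circ\Omega'$, with homotopy inverse $(\epsilon_Y)_{\ast}\circ B$, natural up to homotopy; and the triangle identities $\epsilon_{BM}\circ B\mu_M\simeq\id_{BM}$ and $\Omega'\epsilon_X\circ\mu_{\Omega'X}\simeq\id_{\Omega'X}$, valid up to homotopy through morphisms of the two categories.

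For part (1) the first step is to see that $\mu_N$ is a weak equivalence when $N$ is grouplike. Applying Theorem~\ref{1_8} to $\id_N$ produces $r:\Omega'BN\to N$ with $r\circ\mu_N\simeq\id_N$; since the Moore loop space $\Omega'BN$ is grouplike, the uniqueness part of Theorem~\ref{1_8}, applied to $\mu_N:N\to\Omega'BN$, forces $\mu_N\circ r\simeq\id_{\Omega'BN}$, so $\mu_N$ is an isomorphism in the homotopy category of $\scH\Monw$; in particular its underlying map is a homotopy equivalence, i.e. $\mu_N$ is a weak equivalence. By the enriched localization results of Section~3, composition with a weak equivalence induces a homotopy equivalence of morphism spaces in $\scH\Monw$; hence post-composition with $\mu_N$ gives a homotopy equivalence $(\mu_N)_{\ast}:\scH\Monw(M,N)\to\scH\Monw(M,\Omega'BN)$. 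Composing it with the adjunction homotopy equivalence $(\epsilon_{BN})_{\ast}\circ B:\scH\Monw(M,\Omega'BN)\to\topw(BM,BN)$ yields a homotopy equivalence which, by functoriality of $B$, equals $(\epsilon_{BN}\circ B\mu_N)_{\ast}\circ B$, and this is homotopic to $B$ itself by the triangle identity. Therefore $B:\scH\Monw(M,N)\to\topw(BM,BN)$ is a homotopy equivalence, which is assertion (1).

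Part (2) is the dual argument. The first step is that $\epsilon_X:B\Omega'X\to X$ is a weak equivalence when $X$ is path-connected and a Dold space: $\Omega'X$ is grouplike (Definition~\ref{1_7}), so $B\Omega'X$ is path-connected, and on homotopy groups $\epsilon_X$ realizes the classical isomorphisms $\pi_n(B\Omega'X)\cong\pi_{n-1}(\Omega'X)\cong\pi_n(X)$, so it is a weak homotopy equivalence; and since $X$ — and, as one checks, also $B\Omega'X$ — is a Dold space, it is a genuine homotopy equivalence. (This is one of the facts obtained along the way in the proof of Theorem~\ref{1_5}.) Then pre-composition with $\epsilon_X$ is a homotopy equivalence $\epsilon_X^{\ast}:\topw(X,Y)\to\topw(B\Omega'X,Y)$, and composing it with the adjunction homotopy equivalence $\mu_{\Omega'X}^{\ast}\circ\Omega':\topw(B\Omega'X,Y)\to\scH\Monw(\Omega'X,\Omega'Y)$ gives a homotopy equivalence which, by functoriality of $\Omega'$ and the triangle identity $\Omega'\epsilon_X\circ\mu_{\Omega'X}\simeq\id_{\Omega'X}$, is homotopic to the map induced by $\Omega'$, that is, to $W\Omega':\topw(X,Y)\to\Monw(W\Omega'X,W\Omega'Y)$. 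Hence $W\Omega'$ is a homotopy equivalence.

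I expect the real work to be bookkeeping rather than ideas: one must verify that the homotopy equivalences of morphism spaces and the triangle identities supplied by Section~3 and Theorem~\ref{1_5} compose in the claimed way, and that the homotopies involved genuinely run through morphisms of $\scH\Monw$ and $\topw$ — it is precisely to carry this coherence that the localization is treated in the enriched setting in Section~3. The hypotheses enter only through the two geometric inputs: that $\mu_N$ is a weak equivalence for grouplike $N$ (an algebraic sharpening of the group completion theorem, cf. Theorem~\ref{1_8}) and that $\epsilon_X$ is one for path-connected Dold $X$; in the latter, the Dold hypothesis is exactly what is needed, in the Str{\o}m setting, to upgrade a weak homotopy equivalence to an honest homotopy equivalence.
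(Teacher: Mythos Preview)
Your proof is correct and follows essentially the same route as the paper: factor $B$ as the adjunction equivalence $\lambda(WM,BWN)=\eta(BWN)_\ast\circ B$ preceded by $(\mu(WN))_\ast$, and use the triangle identity $\eta(BWN)\circ B\mu(WN)\simeq\id$ together with the fact that $\mu(WN)$ is a homotopy equivalence in $\Monw$ when $N$ is grouplike. Two small remarks. First, your derivation that $\mu_N$ is invertible via Theorem~\ref{1_8} is valid but roundabout: the paper obtains this directly from Proposition~\ref{4_11}(1) (indeed Theorem~\ref{1_8} itself is proved using \ref{4_11}). Second, for Part~(2) the paper argues slightly differently: rather than using the other triangle identity $\Omega'\eta(X)\circ\mu(\Omega'X)\simeq\id$ as you do, it uses the naturality square of the counit $\eta$ to write $\eta(Y)_\ast\circ B\circ W\Omega'=\eta(X)^\ast$, applies Part~(1) to $\Omega'X,\Omega'Y$ to see that $B$ is a homotopy equivalence, and then reads off $W\Omega'$ from two-out-of-three; your dual argument is cleaner and more symmetric.
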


The reader may object that Fuchs considers homotopy homomorphisms while Proposition \ref{1_9}
addresses unitary homotopy homomorphisms. Since Fuchs only considers well-pointed grouplike monoids 
and all his spaces are of the homotopy type of $CW$-complexes the two
notions are linked by 

\begin{prop}\label{1_10}
 Let $M$ and $N$ be well-pointed monoids and $N$ be group\-like. Then 
$$(\varepsilon')^\ast: \Mon(WM,N)\to \Sgp(\overline{W}M,N)$$
is a homotopy equivalence,
\end{prop}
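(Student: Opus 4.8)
The plan is to realize $(\varepsilon')^{\ast}$ as the inclusion of a subspace and then deformation retract onto it. Since $WM$ is obtained from $\overline{W}M$ exactly by imposing the unit relations, $\varepsilon'(M)$ is a quotient map and $(\varepsilon')^{\ast}$ is an embedding; unravelling the two $W$-constructions, a point of $\Sgp(\overline{W}M,N)$ is a coherent family of maps $f_{n}\colon M^{n}\times I^{n-1}\to N$ (a Sugawara homotopy homomorphism), the face relations built into $\overline{W}M$ saying that $f_{n}$ restricted to $t_{i}=0$ comes from $f_{n-1}$ and that $f_{n}$ restricted to $t_{i}=1$ splits as a product in $N$, while $\Mon(WM,N)$ sits inside as the subspace of those families which in addition are strictly degenerate at $e\in M$ (in particular $f_{1}(e)=e_{N}$). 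So it suffices to construct a deformation $\Sgp(\overline{W}M,N)\times I\to\Sgp(\overline{W}M,N)$ from the identity to a retraction onto this subspace, i.e.\ for each $(f_{n})$ a path of homotopy homomorphisms to a strictly unital one, continuous in $(f_{n})$.

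I would carry this out by induction along the word‑length filtration $\overline{W}M=\colim_{k}\overline{W}^{(k)}M$, which $\varepsilon'(M)$ respects and along which $W^{(k)}M$ is obtained from $W^{(k-1)}M$ together with $\overline{W}^{(k)}M$ by collapsing the new unit cells. Suppose $(f_{n})$ has already been deformed so that the unit relations hold on words of length $<k$. On the locus $A_{k}=\{(\mathbf{a},\mathbf{t})\in M^{k}\times I^{k-1}\colon\text{some }a_{i}=e\}$ the lower data $f_{k-1}$ prescribes the value $f_{k}^{\mathrm{req}}$ that $f_{k}$ must take; since $\{e\}\hookrightarrow M$ is a closed cofibration, $A_{k}\hookrightarrow M^{k}\times I^{k-1}$ is a closed cofibration, and the already‑imposed lower relations together with the face relations force $f_{k}$ and $f_{k}^{\mathrm{req}}$ to agree on the faces $t_{i}=0$ and to be compatible on the faces $t_{i}=1$. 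By the parametrised homotopy extension property it therefore remains, at each stage, to produce a homotopy $f_{k}|_{A_{k}}\simeq f_{k}^{\mathrm{req}}$ fixed on those faces, and to do so continuously in $(f_{n})$; one then transports this homotopy to all of $M^{k}\times I^{k-1}$ without disturbing the identities and passes to stage $k+1$.

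The point at which grouplikeness of $N$ enters, and the part I expect to be the main obstacle, is exactly the existence and the coherence of these homotopies. For $k=1$ the obstruction is the class of $c=f_{1}(e)\in N$: restricting $f_{2}$ to $\{(e,e)\}\times I$ exhibits a path from $c$ to $c^{2}$, so $c$ is a homotopy idempotent of $N$, and in a grouplike monoid every homotopy idempotent is homotopic to $e_{N}$ --- this is the one place the hypothesis on $N$ is indispensable, and it is also where the non‑grouplike statement genuinely fails, a monoid with an isolated idempotent other than its unit admitting a homotopy homomorphism out of the trivial monoid which cannot be connected to a unitary one. For $k\geq 2$ the analogous obstructions --- families of loops, resp.\ spheres, in $N$ indexed by the remaining coordinates --- bound because on the splitting faces $t_{i}=1$ one may recurse to strictly shorter configurations, on which the unit relations have already been installed, so that those faces are degenerate and supply the needed nullhomotopies; the genuine work is to assemble the resulting choices, across all $k$ and continuously in $(f_{n})$, into one deformation, which is where the topological enrichment does its job, every step being an extension problem along a cofibration to which the relative HEP applies so that the choices can be made functorially. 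One may reorganise this conceptually as the assertion that, for grouplike $N$, the unit relations are a homotopically vacuous constraint on homomorphisms out of the cofibrant object $WM$ --- equivalently that a coherent homotopy idempotent in a grouplike monoid is canonically trivial --- which is also made available by Proposition~\ref{4_a}, Theorem~\ref{1_8} and the group completion theorem; but the filtration argument above seems the most direct route.
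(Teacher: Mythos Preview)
Your plan is a direct hands-on deformation, and it is very different from what the paper does. The paper's argument is short and structural: first replace $N$ by $WN$ via Proposition~\ref{2_4}; then use the natural isomorphism $(\overline{W}M)_{+}\cong W(M_{+})$ of \ref{2_3b} to rewrite $\Sgp(\overline{W}M,WN)\cong\Mon(W(M_{+}),WN)$, so that the map in question becomes precomposition with $W\kappa(M)$, where $\kappa(M)\colon M_{+}\to M$ is the counit of the adjunction $(-)_{+}\dashv i$. Now apply $B$ to the resulting square of hom-spaces. By Proposition~\ref{1_9}(1) the functor $B$ is a homotopy equivalence on both hom-spaces because $WN$ is grouplike, and $BW\kappa(M)$ is a homotopy equivalence by the fat-versus-thin comparison of \ref{4_1a} (identifying $BW(M_{+})\cong B((\overline{W}M)_{+})\cong\widetilde{B}(\overline{W}M)$). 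So grouplikeness enters only through the fact that $B$ is fully faithful up to homotopy on grouplike targets; no obstruction theory inside $N$ is ever needed.

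Your outline, by contrast, has a genuine gap at $k\geq 2$. After arranging $f_{1}(e)=e_{N}$, the map $t\mapsto f_{2}(e,t,a)$ is a \emph{loop} in $N$ based at $f_{1}(a)$: both endpoints already match the required value. Saying that the splitting face is ``degenerate'' only records that the endpoint is right; it does not produce a filling disc, and a grouplike monoid can certainly have nontrivial $\pi_{1}$. Your claim that grouplikeness is used only at $k=1$ is therefore suspect: to kill these loops, and the analogous higher spheres, one must invoke the homotopy inverse again at each stage and assemble all those choices coherently across the filtration and continuously in $(f_{n})$. That is essentially the long bare-hands programme Fuchs carried out; the HEP alone does not deliver it. The paper's route sidesteps all of this by transporting the question to classifying spaces, where it becomes the standard equivalence $\widetilde{B}\simeq B$.
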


Section 6 deals with diagrams in topologically enriched categories $\scM$ with weak equivalences and a ``good''
cofibrant replacement functor. We first show that their localizations with respect to maps of diagrams
which are objectwise weak equivalences exist. We then show that the well-known derived adjunction
induced by the colimit functor and the constant diagram functor is the path-component version of 
an adjunction up to homotopy between the homotopy colimit functor and the constant diagram functor.
We believe that this is of separate interest, too. We then show that the 
 homotopy adjunction of Theorem \ref{1_5}
lifts to a homotopy adjunction between the corresponding categories of diagrams. In contrast to
strict adjunctions this is a priori not clear, because the associated unit is natural only up to
homotopy and hence does not lift to diagrams. We apply this result to prove 

\begin{theo}\label{1_11}
 The classifying space functor $B:\Mon\to \top$ preserves homotopy 
colimits up to natural homotopy equivalences.
\end{theo}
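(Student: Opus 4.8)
The plan is to deduce this from the homotopy-colimit lift of the homotopy adjunction of Theorem \ref{1_5}, established in Section 6. Recall that by Theorem \ref{1_5} we have an adjunction up to homotopy $B:\scH\Monw\leftrightarrows\topw:\Omega'$, and by the diagram results of Section 6 this lifts to a homotopy adjunction between the diagram categories $\scH\Monw^{\scD}$ and $(\topw)^{\scD}$ for any small indexing category $\scD$, compatibly with the homotopy colimit functors on both sides. The key formal fact I would use is that a left adjoint (up to homotopy) commutes with homotopy colimits: since $\hocolim$ is itself a kind of homotopy left adjoint to the constant-diagram functor, and the constant-diagram functors on $\topw$ and $\scH\Monw$ are intertwined (up to homotopy) by $B$ and $\Omega'$, composing the two homotopy adjunctions and using uniqueness of homotopy left adjoints yields a natural homotopy equivalence $B(\hocolim_{\scD} X)\simeq \hocolim_{\scD}(B\circ X)$ for every diagram $X:\scD\to\Monw$ of well-pointed monoids.

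The steps, in order: (1) reduce to well-pointed monoids — given an arbitrary diagram $X:\scD\to\Mon$, replace it objectwise by $WX$, which is a diagram of well-pointed monoids, with $\varepsilon:WX\to X$ an objectwise weak equivalence; since $W$ is a cofibrant-type replacement and $B$ sends weak equivalences between well-pointed monoids to weak equivalences (this is part of the machinery of Section 4), we have $B(\hocolim X)\simeq B(\hocolim WX)$, and the homotopy colimit is a homotopy invariant, so it suffices to treat $WX$. (2) Apply the lifted homotopy adjunction of Section 6: for diagrams of well-pointed monoids, the unit and counit of the homotopy adjunction $B\dashv\Omega'$ lift to the diagram level (this is exactly the subtlety flagged in the excerpt — naturality only up to homotopy — but it is resolved in Section 6), giving a homotopy adjunction on diagram categories. (3) Invoke the formal principle that homotopy left adjoints preserve homotopy colimits: more concretely, for any $Y\in\topw$ one has natural homotopy equivalences of mapping spaces
$$
\topw\bigl(B(\hocolim WX),\,Y\bigr)\ \simeq\ \topw^{\scD}\bigl(B\circ WX,\,\cons Y\bigr)\ \simeq\ \topw\bigl(\hocolim (B\circ WX),\,Y\bigr),
$$
where the first equivalence uses the homotopy adjunction $B\dashv\Omega'$ on diagrams together with $\Omega'\cons Y\simeq \cons\Omega'Y$, and the second is the defining homotopy universal property of $\hocolim$ in $\topw$. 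By a homotopy-Yoneda argument this forces $B(\hocolim WX)\simeq\hocolim(B\circ WX)$, naturally in $X$. (4) Combine with step (1) and with $\hocolim(B\circ WX)\simeq\hocolim(B\circ X)$ (again homotopy invariance of $\hocolim$, using $BWX\to BX$ an objectwise weak equivalence) to conclude $B(\hocolim X)\simeq\hocolim(B\circ X)$ naturally, which is the assertion.

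The main obstacle is step (2)–(3): making the passage from the object-level homotopy adjunction to a homotopy adjunction on diagrams precise, and in particular showing that the unit $\mu_M:M\to\Omega'BM$ — which by Theorem \ref{1_8} is only natural up to homotopy — can be rectified to an honest natural transformation of diagrams (or that one can work with a coherent-up-to-homotopy version strong enough to run the mapping-space argument). This is precisely the point the introduction singles out (``In contrast to strict adjunctions this is a priori not clear''), and it is handled by the enriched-category and cofibrant-replacement technology built up in Sections 3 and 6; here I would simply cite that machinery. A secondary, more routine point is checking that the homotopy equivalences of mapping spaces in step (3) are natural enough in $Y$ to apply the homotopy-Yoneda lemma in the topologically enriched setting, which follows from the enriched adjunction-up-to-homotopy formalism of Section 3.
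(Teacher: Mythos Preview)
Your proposal is correct and follows the same overall strategy as the paper: both arguments deduce the result from the fact that $B$ is a homotopy left adjoint and hence commutes with homotopy colimits, using that the corresponding right adjoints $\Omega'$ and $\cons$ visibly commute. The paper's execution is somewhat more streamlined than yours: rather than working with mapping-space equivalences and a homotopy-Yoneda argument at the level of the $\scH$-categories, it passes directly to the localized categories $\Ho(\Monw)^{\scC}$, $\Ho(\topw)^{\scC}$, $\Ho\Monw$, $\Ho\topw$, where by Theorems \ref{4_3b}, \ref{6_11}, and \ref{6_12} one has \emph{genuine} adjunctions. Then the square of left adjoints commutes up to natural isomorphism because the square of right adjoints does (ordinary uniqueness of adjoints), and one reads off that the natural comparison map $\hocolim BD\to B(\hocolim D)$ is an isomorphism in $\Ho\topw=\pi\topw$, i.e., a homotopy equivalence. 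This avoids having to verify naturality-in-$Y$ of the mapping-space equivalences, which in your approach is the ``secondary, more routine point'' you flag at the end.

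One small slip in your step (1): $WM$ is well-pointed only when $M$ already is; for arbitrary monoids the cofibrant replacement is $WVM$ (see \ref{3_4}). The paper sidesteps this by noting that the cofibrant replacement $Q$ built into the very definition of $\hocolim$ (Definition \ref{6_13}) already lands in well-pointed monoids, so the reduction to the well-pointed case is immediate from the definition rather than requiring a separate replacement argument.
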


The path-component versions of most of our main results are more or less known if we restrict to grouplike monoids. The
paper extends these results to general monoids and shows that they arise from stronger statements. Moreover, we show 
that a topological enrichment with good properties can make up for the non-existence of Quillen model structures.

\textit{Acknowledgement:} I want to thank P. May for pointing out possible shortcuts to Theorem \ref{1_4}
in the Quillen context and
for an extended e-mail exchange on the presentation of the paper, and to M. Stelzer for clarifying discussions. I am
indebted to the referee for his careful reading of the paper, for requiring a number of clarifications, 
for suggesting explicit improvements of a number 
of formulations which had been a bit opaque, and for his patience with my many typos. In particular, the organisation
of the present proof of Proposition \ref{4_8} is due to him.

\section{Homotopy homomorphisms revisited}
In 1960 Sugawara introduced the notion of a strongly homotopy multiplicative map between monoids, which 
we will call a homotopy
 homomorphism or $h$-morphism, for short \cite{Sug}.

\begin{defi}\label{2_1}
A \textit{homotopy homomorphism}, or $h$-\textit{morphism} $f:M\to N$ between two monoids is a sequence of maps
$$
f_n: M^{n+1}\times I^n\longrightarrow N\quad n\in\mathbb{N}
$$
such that $(x_i\in M, t_j\in I)$
\begin{multline*}
f_n(x_0, t_1, x_1,t_2,\ldots,t_n,x_n)\\
= \left\{
\begin{array}{ll}
f_{n-1}(x_0,t_1,\ldots, x_{i-1}\cdot x_i, \ldots,t_n,x_n) & \textrm{ if } t_i=0\\
f_{i-1}(x_0,t_1,\ldots, x_{i-1})\cdot f_{n-i}(x_i,t_{i+1},\ldots,x_n) & \textrm{ if } t_i=1.
\end{array}
\right.
\end{multline*}
We call $f_0:M\to N$ the \textit{underlying map} of $f$.

If in addition $f_0(e_M)=e_N$ and
\begin{multline*}
f_n(x_0, t_1, x_1,t_2,\ldots,t_n,x_n)\\
= \left\{
\begin{array}{ll}
f_{n-1}(x_1,t_2, \ldots,x_n) & \textrm{ if } x_0=e_M\\
f_{n-1}(x_0,\ldots, x_{i-1},\max(t_i, t_{i+1}),x_{i+1},\ldots, x_n)& \textrm{ if } x_i=e_M\\
f_{n-1}(x_0, t_1,\ldots,x_{n-1}) & \textrm{ if } x_n=e_M
\end{array}
\right.
\end{multline*}
where $e_M\in M$ and $e_N\in N$ are the units. We call $f$ a \textit{unitary homotopy homomorphism} or $uh$-\textit{morphism}, for short.
\end{defi}

Since an $h$-morphism does not pay tribute to the unit it does not seem to be the right notion for maps between monoids. E.g. if 
we require $f_0$ to be a based map so that it preserves the unit we would like the path
$$
\xymatrix{
f_0(x_0\cdot x_1)\ar@{-}[rrrr]^{f_1(x_0,t,x_1)} &&&& f_0(x_0)\cdot f_0(x_1)
}
$$
to be the constant one, if $x_0$ or $x_1$ is the unit. Unitary $h$-morphisms have this 
property. Nevertheless, in the past 
one usually considered $h$-morphisms because the additional conditions for $uh$-morphisms make it harder to work with them.

We will later find it more convenient to work with homotopy unitary homotopy homomorphisms which preserve the unit only up
to homotopy. We will introduce those at the end of this section.

The most extensive study of $h$-morphisms and their induced maps on classifying spaces was done by Fuchs \cite{Fuchs}, who  
constructed composites of $h$-morphisms, proved that composition is homotopy associative and stated that an $h$-mor\-phism 
$f:M\to N$ whose underlying map is a homotopy equivalence has a homotopy inverse $h$-morphism $g:N\to M$. In fact, he constructed 
$g_0$, $g_1$ and the homotopies $g\circ f\simeq\id$ and $f\circ g\simeq\id$ in dimensions 0 and 1 in \cite[p.205-p.208]{Fuchs}, but left the rest to the reader.
He produced a complete proof in \cite{Fuchs2}.

We handle these problems by interpreting homotopy homomorphisms as genuine homomorphisms of a ``cofibrant'' replacement of $M$.

By a \textit{semigroup} we will mean a $k$-space with a continuous associative multiplication. Let $\Sgp$ denote the category of semigroups and continuous homomorphisms.

\begin{const}\label{2_2}
We will construct continuous functors
$$
\overline{W}:\Sgp\longrightarrow\Sgp \quad \textrm{ and }\quad W:\Mon\longrightarrow\Mon
$$
and natural transformations
$$
\overline{\varepsilon}: \overline{W}\longrightarrow\Id \quad\textrm{ and }\quad \varepsilon:W\longrightarrow\Id 
$$
as follows:
$$
\overline{W}M=\left(\coprod\limits^\infty_{n=0} M^{n+1}\times I^n\right)/ \sim
$$
with the relation

(1) $(x_0,t_1,x_1,t_2,\ldots,t_n,x_n)\sim(x_0,t_1,\ldots,x_{i-1}\cdot x_i,\ldots,t_n,x_n)$\qquad if $t_i=0$\\
and $WM$ is the quotient of $\overline{W}M$ by imposing the additional relations

(2)  $(x_0, t_1, x_1,t_2,\ldots,t_n,x_n)\\
{}\qquad \ \sim \left\{
\begin{array}{ll}
(x_1,t_2, \ldots,x_n) & \textrm{ if } x_0=e\\
(x_0,\ldots, x_{i-1},\max(t_i, t_{i+1}),x_{i+1},\ldots, x_n)& \textrm{ if } x_i=e\\
(x_0, t_1,\ldots,x_{n-1}) & \textrm{ if } x_n=e
\end{array}
\right.
$\\
% $\begin{array}[t]{lll}
%(x_0,t_1,\ldots, t_n,x_n)
%& \sim (x_1,t_2,\ldots,t_n,x_n) & \textrm{ if } x_0=e\\
%& \sim (x_0,\ldots,x_{i-1},\max(t_i,t_{i+1}),x_{i+1},\ldots, x_n) & \textrm{ if } x_i=e\\
%& \sim (x_0,t_1,\ldots,t_{n-1}, x_{n-1}) & \textrm{ if } x_n=e
%\end{array}$
%\end{enumerate}
The multiplications of $\overline{W}M$ and $WM$ are given on representatives by 
$$
(x_0,t_1,\ldots,x_k)\cdot(y_0,u_1,\ldots,y_l)=(x_0, t_1,\ldots x_k, 1, y_0, u_1,\ldots,y_l).
$$
The natural transformations $\overline{\varepsilon}$ and $\varepsilon$ are defined by
$$
\overline{\varepsilon}(M),\ \varepsilon(M):(x_0,t_1,\ldots,x_n)\longmapsto x_0\cdot x_1\cdot\ldots\cdot x_n.
$$
Their underlying maps have natural sections
$$
\bar{\iota}(M),\ \iota(M): x \longmapsto (x)
$$
which are not homomorphisms, and there is a homotopy over $M$
$$
h_s:(x_0,t_1,x_1,\ldots,t_n,x_n)\longmapsto(x_0,s\cdot t_1,x_1,\ldots,s\cdot t_n,x_n)
$$
from $\overline{\iota}(M)\circ\overline{\varepsilon}(M)$ respectively $\iota(M)\circ\varepsilon(M)$ to the identity. 
In particular, $\overline{\varepsilon}(M)$ and $\varepsilon(M)$ are shrinkable as maps.

If $M$ is a monoid the projection 
$$\varepsilon'(M):\overline{W}M\to WM$$

is a homomorphism of semigroups satisfying
$$
\bar{\varepsilon}(M)=\varepsilon(M)\circ\varepsilon'(M)\qquad \textrm{and}\qquad \varepsilon'(M)\circ \overline{\iota}(M) = \iota(M).
$$
\end{const}

By inspection we see

\begin{obse}\label{2_3}
\begin{enumerate}
\item[(1)]
$h$-morphisms $(f_n): M\to N$ correspond bijectively to homomorphisms $\bar{f}:\overline{W}M\to N$ of semigroups, and $f_0=\bar{f}\circ\overline{\iota}(M)$
\item[(2)]
$uh$-morphisms $(f_n): M\to N$ correspond bijectively to homomorphisms $f: WM\to N$ of monoids, and $f_0=f\circ\iota(M)$
\end{enumerate}
\end{obse}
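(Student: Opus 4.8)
The plan is to translate everything into the universal property of the quotient space $\overline{W}M$ (and then $WM$). For part~(1), starting from a semigroup homomorphism $\bar f\colon\overline{W}M\to N$, I would define $f_n\colon M^{n+1}\times I^n\to N$ as the composite of the canonical map $M^{n+1}\times I^n\to\overline{W}M$ (include the $n$-th summand of the coproduct, then project to the quotient) with $\bar f$; each $f_n$ is then continuous for free, and $f_0=\bar f\circ\overline{\iota}(M)$ is immediate from the formula for $\overline{\iota}(M)$. Conversely, given an $h$-morphism $(f_n)$, relation~(1) of Construction~\ref{2_2} says precisely that the map $\coprod_n(M^{n+1}\times I^n)\to N$ assembled from the $f_n$ is constant on $\sim$-classes; since $\overline{W}M$ carries the induced quotient topology in $k$-spaces, it descends to a continuous $\bar f\colon\overline{W}M\to N$. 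These two assignments are visibly inverse to one another, so the only thing to check is that ``$\bar f$ is multiplicative'' matches ``$(f_n)$ satisfies the $t_i=1$ clause of Definition~\ref{2_1}''.

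That matching is the one point that is not literally inspection, and it is exactly where the multiplication on $\overline{W}M$ is used. On representatives one has $(x_0,t_1,\dots,x_{i-1})\cdot(x_i,t_{i+1},\dots,x_n)=(x_0,t_1,\dots,x_{i-1},1,x_i,t_{i+1},\dots,x_n)$, so the face $\{t_i=1\}$ of the $n$-th summand is precisely the image of the product of the $(i-1)$-st and $(n-i)$-th summands. Hence requiring $\bar f(a\cdot b)=\bar f(a)\cdot\bar f(b)$ for all $a,b$ is equivalent to requiring $f_n(x_0,\dots,1,\dots,x_n)=f_{i-1}(x_0,t_1,\dots,x_{i-1})\cdot f_{n-i}(x_i,t_{i+1},\dots,x_n)$ for every $i$ and every choice of arguments, and that is the $t_i=1$ clause. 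I expect this bookkeeping --- keeping the summand indices straight and noting that the multiplication map of $\overline{W}M$ surjects onto the union of the faces $\{t_i=1\}$ --- to be the only real (and still very mild) obstacle; everything else is formal.

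For part~(2) I would simply iterate the argument past one further quotient. A monoid homomorphism $f\colon WM\to N$ gives the semigroup homomorphism $f\circ\varepsilon'(M)\colon\overline{W}M\to N$, hence an $h$-morphism $(f_n)$ by part~(1); the relations~(2) of Construction~\ref{2_2}, which $f$ automatically respects, translate one at a time into the three identities defining a $uh$-morphism, while preservation of units, $f(\text{unit of }WM)=e_N$, becomes the condition $f_0(e_M)=e_N$ (the unit of $WM$ being the class of $(e_M)$, which relation~(2) makes two-sided). Conversely, from a $uh$-morphism the associated $\bar f\colon\overline{W}M\to N$ of part~(1) is constant on the $\sim$-classes of relation~(2) by those very identities, so it factors through $\varepsilon'(M)$ as a semigroup homomorphism $f\colon WM\to N$, which is unital --- hence a monoid homomorphism --- by the unit clause; and $f_0=f\circ\iota(M)$ because $\varepsilon'(M)\circ\overline{\iota}(M)=\iota(M)$. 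Bijectivity is again immediate from the constructions, which would complete the proof.
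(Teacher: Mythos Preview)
Your proposal is correct and matches the paper's approach: the paper offers no argument beyond the phrase ``By inspection we see'', and what you have written is precisely the inspection being invoked --- identifying the $t_i=0$ clause of Definition~\ref{2_1} with relation~(1) of Construction~\ref{2_2}, the $t_i=1$ clause with multiplicativity via the formula for the product in $\overline{W}M$, and for part~(2) the extra $uh$-clauses with relations~(2) together with the unit condition. There is nothing to add.
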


\begin{obse}\label{2_3a}
Algebraically, $\overline{W}M$ is a free semigroup and $WM$ is a free monoid. The indecomposables are precisely those elements which have a representative
$(x_0,t_1,x_1,\ldots,x_n)$ where no $t_i$ equals $1$.
\end{obse}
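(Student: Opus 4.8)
The plan is to give each element of $\overline{W}M$, and of $WM$, a canonical \emph{normal form} and to read both assertions off it. For $\overline{W}M$ I would observe that deleting from a representative all coordinates equal to $0$ and multiplying up the $M$-entries that thereby become adjacent is a well-defined map onto the reduced words $(x_0,t_1,\dots,x_n)$ with all $t_i\in(0,1]$; it is constant on $\sim$-classes — the only point being associativity of the multiplication of $M$ — and fixes the reduced words, so every class has a unique reduced representative, its normal form. The sole computational input is the multiplication formula of the construction: a product $a\cdot b=(x_0,\dots,x_k,1,y_0,\dots,y_l)$ of two reduced words is reduced, with $t_{k+1}=1$. Hence splitting the normal form of $w$ at any coordinate equal to $1$ factors $w$ nontrivially, and conversely the normal forms of the factors of $w=a\cdot b$ assemble into that of $w$ with a coordinate $1$ at the seam; so $w$ is indecomposable iff its normal form has no $t_i=1$, which (deleting zeros neither creating nor destroying the value $1$ among the nonzero coordinates) holds iff $w$ has \emph{some} representative with no $t_i=1$. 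Letting $S$ be this set of indecomposables, splitting normal forms at their coordinates equal to $1$ shows $\coprod_{m\ge1}S^m\to\overline{W}M$ is onto, and the multiplication formula shows it injective — the normal form of $g_1\cdots g_m$ carries a coordinate $1$ exactly at the block seams and no $g_j$ has an interior coordinate $1$, so $m$ and the $g_j$ are recovered; thus $\overline{W}M$ is the free semigroup on $S$.

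For $WM$ I would run the same argument with the enlarged rewriting system given by (1) together with the three rules of (2). It terminates, each rule lowering the number $n$ of interval coordinates, and I would invoke Newman's lemma: local confluence is a finite check of critical pairs — adjacent zeros, a zero beside a unit entry, adjacent unit entries, a unit entry at an end beside a zero, and so on — resolved by associativity of $M$, associativity of $\max$, and $\max(s,0)=s$. So each class has a unique normal form: either $(e)$, the unit, or a reduced word with all $t_i\in(0,1]$ and all $x_i\neq e$. The multiplication formula again carries a pair of non-unit normal forms to a normal form with a coordinate $1$ at the seam, and no rule of (2) ever manufactures a new coordinate $1$ (the $\max$-merge produces $1$ only when one merged coordinate already was $1$); so the indecomposables of $WM$ are exactly the non-unit normal forms with no $t_i=1$, equivalently the non-unit elements admitting a representative with no $t_i=1$, and $WM$ is the free monoid on them, $(e)$ serving as the empty word.

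I expect the one point needing genuine care to be the uniqueness of normal forms for $WM$. Relation~(2) is markedly more subtle than~(1): the $\max$-merge alters interval coordinates and deleting a unit entry at an end can erase a coordinate equal to $1$, so the critical-pair analysis really must be carried out rather than dismissed ``by inspection''. Everything else — the multiplication-formula identities and the surjectivity/injectivity bookkeeping for the product maps — is routine once the normal forms are pinned down.
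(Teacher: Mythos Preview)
Your argument is correct. The paper offers no proof of this Observation, treating it as evident from the construction; your approach via unique normal forms obtained from a terminating, locally confluent rewriting system is the standard way to make such a statement precise, and the critical-pair analysis you flag for $WM$ does indeed resolve as you anticipate (associativity of the product in $M$, associativity of $\max$ together with $\max(s,0)=s$, and the unit laws for $e$). One small remark: the paper's wording literally admits the unit $(e)$ among the elements ``having a representative with no $t_i=1$'', since the case $n=0$ vacuously qualifies; your exclusion of it from the generating set of $WM$ is of course the intended reading.
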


\begin{leer}\label{2_3b} \textbf{The formal relation between $\overline{W}$ and $W$:}
 The forgetful functor $i:\Mon\to\Sgp$ has a left adjoint 
$$(-)_+:\Sgp\to\Mon,\qquad G\mapsto G_+,$$
where $G_+=G\sqcup \{\ast\}$ with $\ast$ as unit. It follows from the definitions that the diagram
$$
\xymatrix{
\Sgp \ar[d]_{(-)_+}\ar[r]^{\overline{W}} & \Sgp \ar[d]^{(-)_+}\\
\Mon\ar[r]^W &\Mon
}
$$
commutes up to natural isomorphisms in $\Mon$.
\end{leer}

Both constructions have a universal property, which is a consequence of the following result.
We give $\top(X,Y)$ and $\Top (X,Y)$ the $k$-function space topology, obtained by turning the space of all maps from $X$ to $Y$ with the compact-open topology 
into a $k$-space. We give $\Mon(M,N)$ and $\Sgp(M,N)$ the subspace topologies of the corresponding function spaces in $\top$ respectively $\Top$.

\begin{defi}\label{2_3c}
 We call a homomorphism $f:M\to N$ in $\Mon$ or $\Sgp$ a \textit{weak equivalence} if its underlying map of spaces is a homotopy equivalence in $\Top$.
(Recall that a weak equivalence in $\Mon$ is a homotopy equivalence of underlying spaces in $\top$ if $M$ and $N$ are well-pointed.)
\end{defi}

\begin{prop}\label{2_4}
\begin{enumerate}
\item[(1)]Let $M$ be a well-pointed monoid and $p:X\to Y$ a homomorphism of monoids. Let
$$
p_\ast:\Mon(WM,X)\longrightarrow\Mon(WM,Y)
$$
be the induced map.
If $p$ is a fibration of underlying spaces, so is $p_\ast$. If $p$ is a weak equivalence, $p_\ast$ is a homotopy equivalence.
\item[(2)]
The same holds for $\overline{W}$ and an arbitrary object $M$ in the category $\Sgp$.
\end{enumerate}
\end{prop}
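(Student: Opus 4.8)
The plan is to exhibit $WM$ (resp.\ $\overline{W}M$) as an explicit ``cofibrant'' object, assembled by iterated pushouts out of the cofibrations $\{0,1\}\hookrightarrow I$ and $\{e\}\hookrightarrow M$, and to read both conclusions off this structure, working in the Str{\o}m model structure on $\Top$. Here ``cofibration'' means closed cofibration, a \emph{shrinkable} map is one with the right lifting property against every cofibration (equivalently, a Hurewicz fibration that is a homotopy equivalence), and I shall freely use that $\Top(K,-)$ preserves Hurewicz fibrations, that restriction $\Top(K,-)\to\Top(L,-)$ along a cofibration $L\hookrightarrow K$ is a Hurewicz fibration, and the pullback--hom axiom of the Str{\o}m structure. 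I carry out the argument for $W$ and a well-pointed monoid $M$, i.e.\ part (1); part (2) is entirely parallel and in fact easier, since $\overline{W}$ does not involve the unit relation (2) of Construction \ref{2_2}, so the well-pointedness hypothesis is not needed there. Filter $WM$ by $W_nM=\mathrm{image}(\coprod_{k\le n}M^{k+1}\times I^k)$, so $W_0M=M$ and $WM=\colim_n W_nM$ with the colimit topology, and let $A_n\subset M^{n+1}\times I^n$ be the closed subspace of those $(x_0,t_1,\dots,t_n,x_n)$ for which some $t_i\in\{0,1\}$ or some $x_i=e$. The iterated pushout--product lemma, applied to the cofibrations $\{0,1\}\hookrightarrow I$ ($n$ copies) and $\{e\}\hookrightarrow M$ ($n+1$ copies, using well-pointedness of $M$), shows that $A_n\hookrightarrow M^{n+1}\times I^n$ is a cofibration; and by Observation \ref{2_3} and the identities of Definition \ref{2_1}, $\Mon(WM,X)=\lim_n P_n(X)$, where $P_0(X)=\top(M,X)$ and $P_n(X)$ is the pullback of $P_{n-1}(X)\to\Top(A_n,X)\leftarrow\Top(M^{n+1}\times I^n,X)$ (the second arrow is restriction), the first arrow sending $(f_0,\dots,f_{n-1})$ to the boundary datum on $A_n$ that the identities prescribe for $f_n$ (continuous and well defined because $f_0,\dots,f_{n-1}$ already satisfy the identities). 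For $\overline{W}M$ one has the analogous description with $A_n$ replaced by $M^{n+1}\times\partial I^n$, which is a cofibration for arbitrary $M$.

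Next, the fibration statement. Assume $p\colon X\to Y$ is a Hurewicz fibration. I would prove by induction on $n$ that $P_n(X)\to P_n(Y)$ is a Hurewicz fibration: the base case is $\top(M,X)\to\top(M,Y)$, and for the inductive step, in the map from the pullback square defining $P_n(X)$ to the one defining $P_n(Y)$ the three ``vertical'' maps are Hurewicz fibrations ($\Top(A_n,-)$ and $\Top(M^{n+1}\times I^n,-)$ preserve them, and $P_{n-1}(X)\to P_{n-1}(Y)$ is one by induction) while the comparison map $\Top(M^{n+1}\times I^n,X)\to\Top(A_n,X)\times_{\Top(A_n,Y)}\Top(M^{n+1}\times I^n,Y)$ is a Hurewicz fibration by the pullback--hom axiom for $A_n\hookrightarrow M^{n+1}\times I^n$ and $p$; the standard lemma on maps between pullback squares then applies. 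Since the tower maps $P_n(X)\to P_{n-1}(X)$ are pullbacks of restriction maps, hence Hurewicz fibrations, passing to the limit gives that $p_\ast\colon\Mon(WM,X)\to\Mon(WM,Y)$ is a Hurewicz fibration.

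For the weak-equivalence statement, the crux is the following lifting property, proved by the same filtration: \emph{if $\rho\colon P\to Q$ is a monoid homomorphism whose underlying map is shrinkable and $M$ is well-pointed, then every monoid homomorphism $g\colon WM\to Q$ lifts through $\rho$.} Write $g=(g_n)$ and build a lift $(\tilde g_n)$ inductively: $\tilde g_0$ exists since $\rho$ has the RLP against $\{e\}\hookrightarrow M$; and given $\tilde g_0,\dots,\tilde g_{n-1}$, the identities of Definition \ref{2_1} prescribe $\tilde g_n$ on $A_n$ consistently (as $\tilde g_{<n}$ satisfies them) and in such a way that $\rho$ takes this datum to $g_n|_{A_n}$, so $\tilde g_n$ is obtained on all of $M^{n+1}\times I^n$ from the RLP of $\rho$ against the cofibration $A_n\hookrightarrow M^{n+1}\times I^n$; the resulting $(\tilde g_n)$ is a monoid homomorphism $WM\to P$ lifting $g$. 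From this I would deduce that $\rho_\ast$ is shrinkable whenever $\rho$ is: for a cofibration $C\hookrightarrow D$ of spaces the pullback--hom map $P^D\to P^C\times_{Q^C}Q^D$ is again a shrinkable monoid homomorphism (cotensors of monoids being formed pointwise), so the italicized claim applied to it, together with the adjunction $\top(C,\Mon(WM,-))\cong\Mon(WM,(-)^C)$, furnishes the required lift in any square testing the RLP of $\rho_\ast$ against $C\hookrightarrow D$; being shrinkable, $\rho_\ast$ is in particular a homotopy equivalence.

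Finally, given an arbitrary weak equivalence $p\colon X\to Y$ in $\Mon$, I would factor it as $X\xrightarrow{j}N_p\xrightarrow{\pi}Y$, where $N_p=X\times_Y Y^{[0,1]}$ with $Y^{[0,1]}$ the monoid of paths in $Y$ under \emph{pointwise} multiplication, $Y^{[0,1]}\to Y$ evaluation at $0$, $\pi$ evaluation at $1$, and $j(x)=(x,\mathrm{const}_{p(x)})$; all three maps are monoid homomorphisms. The projection $r\colon N_p\to X$ is a monoid homomorphism with $rj=\id$, and $H_s(x,\gamma)=(x,\ t\mapsto\gamma(st))$ is for each $s$ a monoid homomorphism — this is exactly where the pointwise product is needed — and deforms $jr$ to $\id_{N_p}$; applying the functor $\Mon(WM,-)$ therefore shows $j_\ast$ is a homotopy equivalence. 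Since $p$, hence $j$, is a homotopy equivalence while $\pi$ is a Hurewicz fibration, $\pi$ is shrinkable, so $\pi_\ast$ is a homotopy equivalence by the previous paragraph, and hence so is $p_\ast=\pi_\ast\circ j_\ast$. I expect the main obstacle to be the careful construction of the filtration $W_\bullet M$ and the verification that $A_n\hookrightarrow M^{n+1}\times I^n$ is a cofibration with the boundary data prescribed by the three kinds of relation of Construction \ref{2_2} fitting together continuously and consistently; once that is in place, the remaining steps are formal consequences of standard properties of the Str{\o}m structure on $\Top$.
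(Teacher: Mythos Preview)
Your argument is correct and complete. Both you and the paper exploit the same filtration of $WM$ and the same key cofibration $A_n\hookrightarrow M^{n+1}\times I^n$ (the paper writes it as $DM^{n+1}\times I^n\cup M^{n+1}\times\partial I^n$), so the geometric core is identical. The organisation, however, is genuinely different. The paper verifies the HELP property for $p_\ast$ directly: given a test square with a closed cofibration $A\hookrightarrow B$, it passes to adjoints, obtaining a homotopy-commutative square $WM\times A\to X$, $WM\times B\to Y$, and extends the partial data over the filtration stratum by stratum using the HELP-Lemma in $\Top$; the fibration case is handled by the same induction with Str{\o}m's lifting criterion replacing HELP. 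Your route is more structural: you recast $\Mon(WM,-)$ as an inverse limit of a tower of pullbacks and push the fibration statement through that tower, while for the weak-equivalence statement you exploit the cotensored structure of $\Mon$ twice --- once to factor an arbitrary weak equivalence $p$ through the pointwise-multiplication mapping path space $N_p$ as a $\Mon$-homotopy equivalence followed by a shrinkable homomorphism, and once to transport the RLP of $\rho_\ast$ against a cofibration $C\hookrightarrow D$ into a lifting problem for $WM$ against the shrinkable homomorphism $P^D\to P^C\times_{Q^C}Q^D$. What your approach buys is a clean separation of concerns and a reusable ``$WM$ lifts against shrinkable homomorphisms'' lemma; what the paper's approach buys is a single uniform induction with no detour through factorisation or cotensors. (One cosmetic point: your adjunction should read $\Top(C,\Mon(WM,-))\cong\Mon(WM,(-)^C)$ over $\Top$, not $\top$, since $C$ carries no basepoint in the lifting problems you consider.)
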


\begin{proof}
 Let $p:X\to Y$ be a weak equivalence. By the HELP-Lemma \cite{Vogt} in $\Top$ with the Str{\o}m
model structure \cite{Strom} we have to show: Given a diagram of spaces

(A)
$$ \xymatrix
{
 A\ar[d]^i\ar[r]^(0.25){\bar{f}_A} &\Mon(WM,X)\ar[d]^{p_\ast}\\
B\ar[r]^(0.25){\bar{g}} &\Mon(WM,Y)
}
$$
which commutes up to a homotopy $\bar{h}_{A,t}:\bar{g}\circ i\simeq p_\ast\circ \bar{f}_A$, where $i$ is a closed cofibration,
there are extensions $\bar{f}:B\to \Mon(WM,X)$ of $\bar{f}_A$ and $\bar{h}_t:B\to \Mon(WM,Y)$ of $\bar{h}_{A,t}$ such that
$\bar{h}_t: \bar{g}\simeq p_\ast\circ \bar{f}$.

Passing to adjoints we obtain a diagram
$$ \xymatrix
{
WM \times A\ar[d]^{id\times i}\ar[r]^(0.65){f_A} & X\ar[d]^p\\
WM \times B \ar[r]^(0.65)g& Y
}
$$
commuting up to a homotopy $h_{A,t}$, such that each $f_a=f_A|WM\times \{a\}$, each $g_b=g|WM\times \{b\}$, and each
$h_{a,t}=h_{A,t}|WM\times \{a\}$ is a homomorphism. We have to construct extensions $f:WM\times B\to X$ and $h_t:WM\times B\to Y$
of $f_A$ and $h_{A,t}$ such that $h_t:g\simeq p\circ f$ and each $h_{b,t}$ and $f_b, \ b\in B$ is a homomorphism.

We filter $WM\times B$ by closed subspaces $F_n\times B$, where $F_n$ is the submonoid of $WM$ generated by all 
elements having a representative $(x_0,t_1,\ldots,t_k,x_k)$ with $k\leq n$. We put $F_{-1}=\{e\}$. Then $f$ and $h_t$ are
uniquely determined on $F_{-1}\times B$. 

Suppose that $f$ and $h_t$ have been defined on $F_{n-1}\times B$. An element $(x_0,t_1,\ldots,t_n,x_n)$ represents
an element in $F_{n-1}$ iff one of the following conditions holds
\begin{itemize}
 \item some $x_i=e$\qquad (relation \ref{2_2}.2)
\item some $t_i=0$\qquad (relation \ref{2_2}.1)
\item some $t_i=1$\qquad (it represents a product in $F_{n-1}$).
\end{itemize}
If $DM^{n+1}\subset M^{n+1}$ denotes the subspace of points with some coordinate $e$, then $f$ and $h_t$ are already
defined on $(DM^{n+1}\times I^n\cup M^{n+1}\times\partial I^n)\times B\cup M^{n+1}\times I^n\times A$. The elements in
$(M^{n+1}\times I^n)\backslash (DM^{n+1}\times I^n\cup M^{n+1}\times\partial I^n)$ represent indecomposables of filtration $n$, but not of
lower filtration.  Consider the diagram

(B)
%$$\xymatrix
%{
%(DM^{n+1}\times I^n\cup M^{n+1}\times\partial I^n)\times B\cup M^{n+1}\times I^n\times A\ar[r]^(0.74)j\ar[d]^f & M^{n+1}\times I^n\times B
%\ar[d]^g\\
%X\ar[r]^p & Y
%}
%$$
$$\xymatrix
{
(DM^{n+1}\times I^n\cup M^{n+1}\times\partial I^n)\times B\cup M^{n+1}\times I^n\times A\ar[r]^(0.88)f\ar[d]^j & X\ar[d]^p\\
M^{n+1}\times I^n\times B\ar[r]^g & Y
}
$$
(in abuse of notation we use $g$ for the composite $M^{n+1}\times I^n\times B\to WM\times B\to Y$). Diagram (B) commutes up to the
homotopy $h_t$ and we need an extension of $f$ and $h_t$ to $M^{n+1}\times I^n\times B$. These extensions exist by the HELP-Lemma,
because our assumptions ensure that $j$ is a closed cofibration.
So we have defined $f$ and $h_t$ for indecomposable generators $(x_0,t_1,\ldots,t_n,x_n)$ of $F_n$. We extend these maps to $F_n\times B$
by the conditions that each $f_b$ and $h_{b,t},\ b\in B$ be a homomorphism using Observation \ref{2_3a}.

Now suppose that $p$ is a fibration. By \cite[Thm. 8]{Strom} we need to consider a commutative diagram (A), where $i$ is a closed
cofibration and a homotopy equivalence, and we have to find an extension $\bar{f}: B\to \Mon(WM,X)$ of $\bar{f}_A$ such that 
$\bar{g}=p_\ast\circ \bar{f}$. We proceed as above. In the inductive step we have a commutative diagram (B). Since $i$ is a closed
cofibration and a homotopy equivalence so is $j$ by the pushout-product theorem for cofibrations. Hence the required extension
$f:M^{n+1}\times I^n\times B\to X$ exists by \cite[Thm. 8]{Strom}.

Part (2) is proved in the same way starting with $F_{-1}M=\emptyset$.
\end{proof}

As an immediate consequence we obtain the

\begin{leer}\label{2_5}
\textbf{Lifting Theorem:} (1) Given homomorphisms of monoids
$$
\xymatrix{
&& X \ar[d]^p\\
WM \ar[rr]^f && Y
}
$$
such that $p$ is a weak equivalence and $M$ is well-pointed, then there exists a homomorphism $g:WM\to X$, unique up to homotopy in $\Mon$ (i.e. a homotopy through
homomorphisms), such that $f\simeq p\circ g$ in $\Mon$. \\
If, in addition, the underlying map of $p$ is a fibration there is a homomorphism $g: WM\to X$, unique up to homotopy in $\Mon$, such that $f=p\circ g$.

(2) For $\overline{W}$ the analogous results hold in the category $\Sgp$.
\end{leer}

\begin{leer}\label{2_6a}
By Proposition \ref{2_4} the second one of the maps 
$$\begin{array}{rcl}
\varepsilon(N)_\ast:\Mon (WM,WN)& \to &\Mon(WM,N) \\ 
\overline{\varepsilon}(N)_\ast:\Sgp(\overline{W}M,\overline{W}N)& \to & 
\Sgp(\overline{W}M,N)
\end{array}
$$ 
is a homotopy equivalence, and the first one is a homotopy equivalence if $M$ is well-pointed. 
\end{leer}

To guarantee the well-pointedness condition we introduce the whiskering functor.

\begin{leer}\label{2_6b}
 \textbf{The whiskering construction:}
We define a functor
$$
V^t:\top\to\topw
$$ by $V^t(X,x_0)=(X\sqcup I)/(x_0\sim 1)$
and choose $0\in I$ as base-point of $X_I$. Then
$V^tX$ is well-pointed, and
the natural map $q(X): V^tX\to X$ mapping $I$ to $x_0$ is a homotopy equivalence. Its homotopy inverse
$\bar{q}(X): X\to V^tX$
is the canonical map. 
If $X$ is well-pointed, $q(X)$ is a based homotopy equivalence.

This functor lifts to a functor
$$
V:\Mon\to \Monw$$
defined by $V(M)=V^t(M)$ with $x_0$ replaced by $e_M$
with the multiplication
$$
x\cdot y=\left\{
\begin{array}{ll}
x\cdot y\in M & \textrm{if } x,y\in M\\
x & \textrm{if } x\in M, \ y\in I\\
y & \textrm{if } y\in M, \ x\in I\\
\textrm{max}(x,y)& \textrm{if }  x,y \in I
\end{array}
\right.
$$
Since $0\in I$ is the unit of $VM$ the monoid $VM$ is well-pointed. The
natural map $q(M): VM\to M$ is a weak equivalence in $\Mon$, but observe that $\bar{q}(M): X\to VM$
is not a homomorphism because it does not preserve the unit.
\end{leer}

A homomorphism $f:WVM\to N$ can be considered a homotopy unitary homotopy homomorphism. Strictly speaking,
the underlying map of $f:WVM\to N$ is
 $$f_0=f\circ \iota(VM)\circ \bar{q}(M):M\to VM\to WVM\to N.$$
 We note that $f_0$ preserves the unit $e_M$ only up to homotopy.

By \ref{2_6a} the following \textbf{change of our notations of homotopy homomorphisms}
is insignificant from a homotopy theoretic point of view:

\begin{defi}\label{2_6} \textbf{From now on}
a \textit{homotopy unitary homotopy homomorphism}, $huh$-morphism for short, 
from $M$ to $N$ is a homomorphism $f:WVM\to WVN$. Its
\textit{underlying map} is $q(N)\circ\varepsilon(VN)\circ f\circ\iota(VM)\circ \bar{q}(M)$. \\
A \textit{unitary homotopy homomorphism}, $uh$-morphism for short, from $M$ to $N$ is a homomorphism $f:WM\to WN$. Its 
\textit{underlying map} is $\varepsilon(N)\circ f\circ\iota(M)$. \\
A \textit{homotopy homomorphism}, $h$-morphism for short, from the semigroup $M$ 
to the semigroup $N$ is a homomorphism $f:\overline{W}M\to\overline{W}N$.
Its \textit{underlying map} is $\overline{\varepsilon}(N)\circ f\circ\overline{\iota}(M)$.
\end{defi}

This solves the problem of composition, and from \ref{2_4} we obtain

\begin{prop}\label{2_7}
If $f:WM\to WN$ is a $uh$-morphism from $M$ to $N$ whose underlying map is a homotopy equivalence, and $M$ and $N$ are well-pointed,
then $f$ is a homotopy equivalence in the category $\Mon$.\\
If $f:WVM\to WVN$ is a $huh$-morphism from $M$ to $N$, whose underlying map is a homotopy equivalence, 
then $f$ is a homotopy equivalence in the category $\Mon$.\\
 The analogous statement in $\Sgp$ holds for homomorphisms $\overline{W}M\to\overline{W}N$.
\end{prop}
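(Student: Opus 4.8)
The plan is to reduce everything to the Lifting Theorem \ref{2_5} (equivalently Proposition \ref{2_4}), using the shrinkability of the augmentations $\varepsilon$ and $\overline\varepsilon$ established in Construction \ref{2_2}. First I would treat the $uh$-case. Let $f:WM\to WN$ be a homomorphism whose underlying map $\varepsilon(N)\circ f\circ\iota(M):M\to N$ is a homotopy equivalence. The key observation is that the composite $\varepsilon(N)\circ f:WM\to N$ is then a weak equivalence of monoids: indeed $\varepsilon(N)\circ f\circ\iota(M)$ is a homotopy equivalence by hypothesis, $\iota(M)$ is a homotopy equivalence (it is a section of the shrinkable map $\varepsilon(M)$, with the explicit homotopy $h_s$ of \ref{2_2}), hence $\varepsilon(N)\circ f$ is a homotopy equivalence on underlying spaces. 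Since $M$ is well-pointed, the Lifting Theorem \ref{2_5}(1) applied to $p=\varepsilon(N):WN\to N$ (a weak equivalence by \ref{2_2}, in fact shrinkable) and to the map $\varepsilon(N)\circ f:WM\to N$ produces a homomorphism $g:WM\to WN$, unique up to homotopy in $\Mon$, with $\varepsilon(N)\circ g\simeq \varepsilon(N)\circ f$ in $\Mon$; since $f$ itself is such a lift, uniqueness gives $f\simeq$ (the canonical lift of a weak equivalence) in $\Mon$.

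The cleaner way to package this: it suffices to show that any homomorphism $f:WM\to WN$ whose underlying map is a homotopy equivalence is inverted in $\Ho\Mon$, i.e. admits a two-sided homotopy inverse through homomorphisms. By the Lifting Theorem, the homotopy equivalence $M\to N$ underlying $f$ has a lift to a homomorphism $\widetilde g:WN\to WM$ with $\varepsilon(M)\circ\widetilde g\simeq$ (a homotopy inverse of the underlying map of $f$) $\circ\,\varepsilon(N)$ in $\Mon$; one then checks that $\widetilde g\circ f:WM\to WM$ and $f\circ\widetilde g:WN\to WN$ have underlying maps homotopic to the identity, and applies the uniqueness clause of the Lifting Theorem with $p=\varepsilon(M)$ (resp. $p=\varepsilon(N)$), which forces $\widetilde g\circ f\simeq \id_{WM}$ and $f\circ\widetilde g\simeq\id_{WN}$ in $\Mon$, because both $\widetilde g\circ f$ and $\id_{WM}$ are homomorphisms from $WM$ to $WM$ lifting (homotopic) maps along $\varepsilon(M)$. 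Here I use that $WM$ is well-pointed whenever $M$ is, so that the well-pointedness hypothesis of \ref{2_4} is met at each application; this is where the well-pointedness assumption on $M$ and $N$ is genuinely used.

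For the $huh$-statement, $f:WVM\to WVN$ is a homomorphism between $W$ of the well-pointed monoids $VM,VN$, so \emph{no} well-pointedness hypothesis on $M,N$ is needed: I simply apply the $uh$-case already proved to the well-pointed monoids $VM$ and $VN$, after checking that the underlying map $q(N)\circ\varepsilon(VN)\circ f\circ\iota(VM)\circ\bar q(M)$ of the $huh$-morphism being a homotopy equivalence is equivalent to the underlying map $\varepsilon(VN)\circ f\circ\iota(VM):VM\to VN$ of $f$ as a $uh$-morphism being one — this follows since $q(M):VM\to M$ and $q(N):VN\to N$ are homotopy equivalences and $\bar q(M)$ is a homotopy inverse of $q(M)$ (\ref{2_6b}). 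The $\Sgp$-version is proved identically, replacing $(W,\varepsilon,\iota)$ by $(\overline W,\overline\varepsilon,\overline\iota)$, using Proposition \ref{2_4}(2) (which has no well-pointedness hypothesis) and the Lifting Theorem \ref{2_5}(2), together with the shrinkability of $\overline\varepsilon$ and the fact that $\overline\iota$ is a homotopy equivalence by the same homotopy $h_s$.

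The main obstacle is the rigidity of homomorphisms: producing a homotopy \emph{inverse} $g$ and verifying $g\circ f\simeq\id$, $f\circ g\simeq\id$ \emph{through homomorphisms} (not merely on underlying spaces) is exactly the point where one cannot argue naively, and it is precisely the uniqueness clauses in Proposition \ref{2_4}/the Lifting Theorem \ref{2_5} that do the work — so the whole proof hinges on setting up the lifting diagrams so that two a priori different homomorphisms lifting homotopic maps along a (possibly fibrant-replaced) $\varepsilon$ are forced to be homotopic in $\Mon$. A minor technical point to get right is that $\varepsilon(M)$ is only a shrinkable weak equivalence and not obviously a fibration, so to invoke the sharper "unique up to homotopy, strict lift" half of \ref{2_5} one first replaces it by a fibration in the standard way; but for the statement as given the weak-equivalence half of \ref{2_5} suffices.
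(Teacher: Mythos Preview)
Your overall strategy---reduce to Proposition \ref{2_4} / the Lifting Theorem \ref{2_5}---is exactly what the paper does (it simply says ``from \ref{2_4} we obtain''), and your reductions for the $huh$-case (via well-pointedness of $VM,VN$) and the $\Sgp$-case are correct.

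There is, however, a genuine gap in your $uh$-argument. In the ``cleaner way'' paragraph you try to construct $\widetilde g:WN\to WM$ by lifting a space-level homotopy inverse $v:N\to M$ along $\varepsilon(M)$, asserting $\varepsilon(M)\circ\widetilde g\simeq v\circ\varepsilon(N)$ ``in $\Mon$''. But $v\circ\varepsilon(N)$ is not a homomorphism, so this statement is ill-formed, and the Lifting Theorem only lifts \emph{homomorphisms} along weak equivalences. The subsequent uniqueness step has the same defect: to conclude $\widetilde g\circ f\simeq\id_{WM}$ from uniqueness in \ref{2_5} with $p=\varepsilon(M)$, you would need $\varepsilon(M)\circ\widetilde g\circ f\simeq\varepsilon(M)$ \emph{in $\Mon$}, not merely as maps of spaces; but that is essentially what you are trying to prove.

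The fix is a small but crucial change of lifting diagram: observe first that $f:WM\to WN$ is itself a weak equivalence (its underlying map differs from the given homotopy equivalence $M\to N$ only by the shrinkable maps $\varepsilon(M),\varepsilon(N)$). Now apply \ref{2_5}(1) with $p=f$ rather than $p=\varepsilon$: lift $\id_{WN}:WN\to WN$ along $f$ to obtain $g:WN\to WM$ with $f\circ g\simeq\id_{WN}$ in $\Mon$. Then $f\circ(g\circ f)\simeq f\circ\id_{WM}$ in $\Mon$, and the uniqueness clause of \ref{2_5}(1), again with $p=f$ (domain $WM$), forces $g\circ f\simeq\id_{WM}$ in $\Mon$. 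Equivalently, \ref{2_4} says $f_\ast:\Mon(WK,WM)\to\Mon(WK,WN)$ is a homotopy equivalence for every well-pointed $K$; taking $K=N$ gives surjectivity on $\pi_0$ (produce $g$), taking $K=M$ gives injectivity on $\pi_0$ (conclude $g\circ f\simeq\id$). This is the argument the paper has in mind.
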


Monoids are algebras over the operad $\scA ss$ of monoid structures, and there is the notion of an ``operadic'' homotopy homomorphism defined
by Boardman and Vogt in \cite{BV0}. M. Klioutch compared the operadic notion with the one considered in this paper and could show
\cite{Kli}

\begin{prop}\label{2_8}
Let $M$ and $N$ be well-pointed monoids and let H$(M,N)$ be the space of operadic homotopy homomorphisms from $M$ to $N$, then
there is a natural homotopy equivalence
$$\textrm{H}(M,N)\simeq \Mon(WM,N).$$
\end{prop}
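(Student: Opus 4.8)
The plan is to realise both sides as spaces of "extensions" of algebra structures over cofibrant resolutions, and then to compare the two resolutions combinatorially, tree by tree. Recall first what the left-hand side means: an operadic homotopy homomorphism $M\to N$ is a $W\mathcal{P}$-algebra structure on the pair $(M,N)$ extending the given monoid structures, where $\mathcal{P}$ is the two-coloured operad whose algebras are triples (monoid, monoid, homomorphism) and $W\mathcal{P}$ is its Boardman--Vogt resolution \cite{BV0}, equipped with the augmentation $\varepsilon\colon W\mathcal{P}\to\mathcal{P}$. Hence $\mathrm{H}(M,N)$ is the space of those mixed structure maps $W\mathcal{P}(n)\times M^{n+1}\to N$ (here $W\mathcal{P}(n)$ is the space of operations with $n$ inputs of the $M$-colour and output of the $N$-colour) which are equivariant and compatible with the operad composition; since the $\Ass$-structures on $M$ and $N$ are already fixed, this compatibility is exactly compatibility with the colour-$M$ vertices at the bottom and the colour-$N$ vertices at the top of the labelling trees.

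Next I would make $W\mathcal{P}(n)$ explicit. Since $\Ass(k)$ carries no moduli (a point, resp. a copy of $\Sigma_k$), $W\mathcal{P}(n)$ is the colimit over two-coloured planar trees $T$ with $n$ leaves of the cubes $I^{E(T)}$ on their inner edges, with the usual identifications: an inner edge of length $0$ is contracted, an inner edge of length $1$ joining two vertices of the same colour is replaced by the composite vertex, and the arity-$0$ operations take care of the units. Each such tree has a unique colour frontier, the set of inner edges separating its colour-$M$ part (below) from its colour-$N$ part (above). Collapsing all colour-$M$ subtrees and all colour-$N$ subtrees, which is legitimate because $\Ass$-vertices of equal colour can always be composed, deformation retracts $W\mathcal{P}(n)$ onto the subcomplex of trees carrying a single colour-$M$ corolla below and a single colour-$N$ tree above each frontier edge. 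Feeding $M^{n+1}$ through these reduced trees and using only the multiplications of $M$ and $N$, one sees that the evaluation map factors through, and is determined by, a family of maps $M^{k+1}\times I^{k}\to N$ subject to precisely the (unitary) relations of Definition \ref{2_1} --- that is, through a monoid homomorphism $WM\to N$ by Observation \ref{2_3} and Construction \ref{2_2}.

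Finally I would promote this to a homotopy equivalence of mapping spaces. The collapse of the colour-$M$ and colour-$N$ subtrees induces a natural map $\mathrm{H}(M,N)\to\Mon(WM,N)$, which I would analyse by the same device as in the proof of Proposition \ref{2_4}: filter $WM$ by the submonoids $F_n$ and $W\mathcal{P}$ correspondingly by the number of vertices, so that at each stage the relative cells on the source are products of an $M^{n+1}$ with a cube built from the frontier-edge lengths and the colour-$N$ moduli, while those on the target are the cubes $I^{n}$ coming from $F_n/F_{n-1}$. The surplus moduli that $W\mathcal{P}$ carries over $WM$ form, at each stage, a contractible complex (a union of collapsible cubes, as $\varepsilon\colon W\Ass\to\Ass$ is levelwise a deformation retract), and this subspace enters as a closed cofibration in the $M^{n+1}$-direction by well-pointedness of $M$; hence the restriction map on mapping spaces is a homotopy equivalence at each filtration stage, and passing to the colimit (all maps being closed cofibrations) gives the claim. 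Naturality in $M$ and $N$ is automatic since $\mathcal{P}$, $W\mathcal{P}$, $WM$ and all the collapse maps are natural.

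The main obstacle will be steps two and three together: writing down the tree model for $W\mathcal{P}$ precisely enough that its combinatorics match those of $WM$ --- in particular producing the unit relations $\max(t_i,t_{i+1})$ of Construction \ref{2_2} out of the arity-$0$ operations of $\mathcal{P}$ --- and then verifying that the surplus moduli carried by $W\mathcal{P}$ are contractible and enter compatibly with the $M^{n+1}$-coordinates, so that the HELP/filtration argument of Proposition \ref{2_4} can be rerun. The equivariance bookkeeping, if one works with the symmetric associativity operad, adds no real difficulty, since the symmetric groups act freely on the relevant tree configurations.
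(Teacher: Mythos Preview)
The paper does not supply a proof of this proposition; it simply attributes the result to Klioutch's diploma thesis \cite{Kli}. So there is no ``paper's own proof'' to compare against.

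Your strategy is the natural one and is in the spirit of \cite{BV0}: identify $\mathrm{H}(M,N)$ with the space of algebra structures over the Boardman--Vogt resolution of the two-coloured arrow operad for $\Ass$, exploit the contractibility of the spaces $W\Ass(k)$ to collapse the monochromatic parts of the labelling trees, and match what remains with the combinatorics of $WM$ via a filtration argument modelled on Proposition \ref{2_4}. You have correctly located the delicate point, namely reproducing the specific unit relation $\max(t_i,t_{i+1})$ of Construction \ref{2_2} from the treatment of nullary operations (stumps) in the operadic $W$-construction; this is not automatic, since different conventions for stumps in $W\mathcal{P}$ yield different though homotopy equivalent normal forms, and it is precisely where the well-pointedness of $M$ enters, just as in Proposition \ref{2_4}.

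One point to tighten: your step two is phrased as a deformation retraction of $W\mathcal{P}(n)$ onto a subcomplex, but collapsing monochromatic subtrees uses the relation ``edge of length $1$ between same-coloured vertices composes'', which is a quotient identification, not a subspace inclusion. What you actually need is either a zig-zag of homotopy equivalences between $W\mathcal{P}(n)$ and your reduced model, or --- and this is closer to what your step three already does --- to work directly at the level of mapping spaces and show that restriction along the relevant inclusion induces a homotopy equivalence, using contractibility of the extra moduli together with the cofibration conditions coming from well-pointedness. The argument is salvageable, but step two as written promises a geometric statement that is stronger than what you use and not literally true.
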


\section{Categorical prerequisites and localizations}

The functors $WV:\Mon\to\Mon$ and $\overline{W}:\Sgp\to \Sgp$ resemble cofibrant replacement functors
as known from Quillen model category theory. Unfortunately, there is no known model category structure
on $\Mon$ with our choice of weak equivalences. This draw-back is made up by the topological
enrichment of our categories as we will see in this section.

Our categories are enriched over $\top$ or $\Top$. So we have a natural notion of homotopy.
Moreover, they are \textit{tensored} and \textit{cotensored}. Recall that a $\top$-enriched
category $\scM$ is tensored and cotensored (over $\top$) if there are functors
$$
\begin{array}{rclcl}
\top\times\scM& \to & \scM , & & (X,M) \mapsto X\boxtimes M\\
(\top)^{\op} \times\scM &\to&\scM,  & & (X,M) \mapsto M^X
\end{array}
$$
and natural homeomorphisms
$$
\scM(X\boxtimes M, N)\cong \top(X,\scM(M,N))\cong \scM (M,N^X).
$$
These properties imply that for based spaces $X$ and $Y$ and objects $M\in \scM$ there are natural
isomorphisms
$$(X\wedge Y)\boxtimes M\cong X\boxtimes (Y\boxtimes M).$$

The definition in the $\Top$-enriched case is similar. To distinguish between the based and the non-based
case we denote the tensor over $\Top$ by $X\otimes M$. The natural isomorphism in the non-based case reads
$$(X\times Y)\otimes M\cong X\otimes (Y\otimes M).$$
Forgetting base points turns a $\top$-enriched category $\scM$ into a $\Top$-enriched one. If $\scM$ is tensored
over $\top$ it is also tensored over $\Top$: we define
$$X\otimes M= X_+\boxtimes M$$
where $X_+=X\sqcup \{\ast\}$ with the additional point as base point.

\begin{exam}\label{3_1}
 $\Mon$ is $\top$-enriched, tensored and cotensored \cite[Prop. 2.10]{PRV}. The cotensor
$M^X$ is the $k$-function space with pointwise multiplication, $X\boxtimes M$ is more complicated: as a set, it is a 
free product of copies $M$, one copy for each $x\in X$ different from the base point. By the same argument as in \cite{PRV}
the category $\Sgp$ is $\Top$-enriched and tensored and cotensored over $\Top$. 

If $\otimes_{\Sgp}$ denotes the tensor in $\Sgp$ and $\otimes$ the one over $\Top$ in $\Mon$, then the universal properties of the tensor 
and of the adjunction of \ref{2_3b} imply that there is a natural isomorphism
$$(K\otimes_{\Sgp} G)_+\cong K\otimes (G_+)$$
in $\Mon$ for semigroups $G$.
\end{exam}

\begin{defi}\label{3_2a}
 Let $\scM$ be a $\Top$-enriched category. Two morphisms $f,g:A\to X$ are called homotopic if there is a path in
 $\scM(A,X)$ joining $f$ and $g$.
\end{defi}

Clearly, the homotopy relation is an equivalence relation preserved under composition. Passing to path components we obtain
the \textit{homotopy category} $\pi\scM$.

If $\scM$ is tensored over $\Top$ it has a canonical cylinder functor $M\mapsto I\otimes M$. The associated homotopy notion
coincides with the one of Definition \ref{3_2a}.

\begin{defi}\label{3_2}
Let $\mathcal{M}$ be a category and $\mathscr{W}$ a class of morphisms in $\mathcal{M}$, which we will call
\textit{weak equivalences}. The \textit{localization} of 
$\mathcal{M}$ with respect to $\mathscr{W}$ is a category $\mathcal{M}[\mathscr{W}^{-1}]$ with $\ob\mathcal{M}[\mathscr{W}^{-1}]=\ob \mathcal{M}$
and a functor
$\gamma: \mathcal{M}\to\mathcal{M}[\mathscr{W}^{-1}]$ such that \\
(1) $\gamma$ is the identity on objects\\
(2) $\gamma(f)$ is an isomorphism for all $f\in\mathscr{W}$\\
(3) if $F:\mathcal{M}\to\mathcal{D}$ is a functor such that $F(f)$ is an isomorphism for all $f\in \mathscr{W}$ then there exists a unique 
functor $\overline{F}:\mathcal{M}[\mathscr{W}^{-1}]\to\mathcal{D}$ such that $F=\overline{F}\circ \gamma $.
\end{defi}

\begin{prop}\label{3_3}
 Let $\mathcal{M}$ be a $\Top$-enriched tensored category and $\mathscr{W}$ a class of morphisms in $\mathcal{M}$ such that\\
(1) $\mathscr{W}$ contains all homotopy equivalences,\\
(2) there is a functor $Q:\scM\to \scM$ and a natural transformation $\varepsilon:Q\to \Id$ or a natural transformation
$\eta: \Id \to Q$ taking values in $\mathscr{W}$
such that $Qf$ is a homotopy equivalence for each $f\in \mathscr{W}$.\\
Then $\mathcal{M}[\mathscr{W}^{-1}]$ exists. 
Precisely, let $\mathcal{HM}$ be the category with  $\ob \mathcal{HC}=\ob \scM$ and $\mathcal{HM}(M_1,M_2)=\scM(QM_1,QM_2)$. Then
$\mathcal{M}[\mathscr{W}^{-1}]= \pi\mathcal{HC}$,
the quotient category obtained by passing to homotopy classes. The functor $\gamma :\scM\to \mathcal{M}[\mathscr{W}^{-1}]$ is 
the identity on objects and maps
a morphism $f$ to the homotopy class of $Qf$.
\end{prop}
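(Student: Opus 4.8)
The plan is to verify directly that the proposed $\gamma:\scM\to\pi\mathcal{HM}$ has the universal property of Definition \ref{3_2}. First I would check that $\mathcal{HM}$ is a genuine category and that $\gamma$ is a genuine functor: composition in $\mathcal{HM}$ is inherited from $\scM$ (applied to the $Q$-replacements), and $\gamma(f)$ is defined to be the homotopy class of $Qf\in\scM(QM_1,QM_2)=\mathcal{HM}(M_1,M_2)$. Functoriality of $\gamma$ up to homotopy follows because $Q$ is a functor, so $Q(g\circ f)=Qg\circ Qf$ on the nose, hence their homotopy classes agree; and $Q(\id)=\id$. That $\pi\mathcal{HM}$ is a well-defined quotient category uses the remark after Definition \ref{3_2a} that the homotopy relation (paths in the mapping spaces $\scM(QM_1,QM_2)$) is an equivalence relation compatible with composition, which in turn uses the $\Top$-enrichment.

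Next I would check condition (2), that $\gamma(f)$ is an isomorphism in $\pi\mathcal{HM}$ for every $f\in\mathscr W$. By hypothesis (2) of the proposition, $Qf$ is a homotopy equivalence; say in the case of $\varepsilon:Q\to\Id$ the naturality square gives $\varepsilon(M_2)\circ Qf=f\circ\varepsilon(M_1)$. Since $Qf$ is a homotopy equivalence in $\scM$, it has a two-sided homotopy inverse $g\in\scM(QM_2,QM_1)=\mathcal{HM}(M_2,M_1)$, and the homotopy class $[g]$ is then a two-sided inverse to $\gamma(f)=[Qf]$ in $\pi\mathcal{HM}$. (Here ``homotopy equivalence in $\scM$'' must be read as: $g\circ Qf$ and $Qf\circ g$ are each homotopic to identities \emph{through morphisms of $\scM$}, which is exactly what being joined by a path in the relevant mapping space gives, so $[g]$ and $[Qf]$ are mutually inverse.) The argument in the case of $\eta:\Id\to Q$ is symmetric.

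The heart of the proof, and the main obstacle, is condition (3): given $F:\scM\to\scD$ inverting all of $\mathscr W$, I must produce a unique $\overline F:\pi\mathcal{HM}\to\scD$ with $F=\overline F\circ\gamma$. Uniqueness on objects is forced since $\gamma$ is the identity on objects; uniqueness on morphisms is the real point. The key observation is that homotopic maps in $\scM$ become equal after applying $F$: if $f_0,f_1:A\to X$ are joined by a path in $\scM(A,X)$, equivalently by a homotopy $H:I\otimes A\to X$, then the two inclusions $i_0,i_1:A\to I\otimes A$ satisfy $f_j=H\circ i_j$, and $i_0,i_1$ are homotopy equivalences (hence in $\mathscr W$ by (1)), with $i_0,i_1$ both sections of the projection $I\otimes A\to A$ which is also a homotopy equivalence; so $F(i_0)=F(i_1)$ after composing suitably with $F(\mathrm{proj})^{-1}$, giving $F(f_0)=F(f_1)$. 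Also $F(\varepsilon(M))$ (resp.\ $F(\eta(M))$) is an isomorphism for each $M$ since $\varepsilon(M),\eta(M)\in\mathscr W$. I would then \emph{define}
$$
\overline F([g])=F(\varepsilon(M_2))\circ F(g)\circ F(\varepsilon(M_1))^{-1}
$$
for $g\in\scM(QM_1,QM_2)$ in the $\varepsilon$-case (and the evident variant with $\eta$ in the other case); this is well-defined on homotopy classes by the observation above, is functorial because $\varepsilon$ is natural (the middle $F(\varepsilon)^{\pm1}$ terms telescope), and satisfies $\overline F(\gamma(f))=\overline F([Qf])=F(\varepsilon(M_2))\circ F(Qf)\circ F(\varepsilon(M_1))^{-1}=F(f)$ by naturality of $\varepsilon$. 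Uniqueness follows because any functor $\overline F$ with $F=\overline F\circ\gamma$ must send $[Qf]=\gamma(f)$ to $F(f)$, and every morphism of $\pi\mathcal{HM}$ has the form $\gamma(\varepsilon(M_2))^{-1}\circ[g']\circ\ldots$; more directly, for $[g]\in\pi\mathcal{HM}(M_1,M_2)$ one writes $g$ itself as $Q$ applied to nothing canonical, so instead one argues $[g]=\gamma(\varepsilon(M_2))^{-1}\cdot[\text{something}]$—cleanest is to note $g=\varepsilon(QM_2)_{*}$-type identities let one express $[g]$ via $\gamma$ of morphisms of $\scM$ and the inverted $\gamma(\varepsilon(M))$, pinning down $\overline F$ uniquely. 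I expect the bookkeeping in this last uniqueness step—carefully checking that $\pi\mathcal{HM}$ is generated under composition by the images of $\gamma$ and the inverses of $\gamma(\varepsilon(M))$—to be the fiddly part, but it is routine given the naturality of $\varepsilon$ (resp.\ $\eta$) and the fact that $F$ kills homotopies.
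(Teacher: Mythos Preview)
Your proposal is correct and follows essentially the same route as the paper's proof: both verify that $\pi\mathcal{HM}$ is a category via the enrichment, show $\gamma$ inverts $\mathscr W$ using assumption (2), use the cylinder $I\otimes A$ and the common section $r$ of $i_0,i_1$ to prove $F$ identifies homotopic maps, and define $\overline F([g])=F(\varepsilon(M_2))\circ F(g)\circ F(\varepsilon(M_1))^{-1}$. The paper is in fact terser than you are---it organizes the argument as an explicit parallel to the model-category construction and then defers the remaining verification (including the uniqueness you flag as ``fiddly'') to \cite[Thm.~8.3.5]{Hirsch}.
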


\begin{proof}
The proof is essentially the same as in the case of a Quillen model category (e.g. see \cite[Thm 8.3.5]{Hirsch}). We recall
the construction of the localization $\mathcal{M}[\mathscr{W}^{-1}]$ in this case. So let $\mathcal{M}$ be a Quillen model
category, let $\varepsilon : C\to \Id$ respectively $\eta; \Id\to R$ be a cofibrant respectively fibrant replacement functor.
There are cylinder objects giving rise to the left homotopy relation.\\
\textit{Step 1:} Using the fact that $RC(X)$ is fibrant and cofibrant for each object $X$ in $\scM$ one proves that left homotopy
is an equivalence relation on $\scM(RC(A),RC(X))$ which is preserved under composition. Let $\pi\scM(RC(A),RC(X))$ be the set of
equivalence classes. One defines
$$ \ob \mathcal{M}[\mathscr{W}^{-1}]= \ob \scM\quad \textrm{and}\quad \mathcal{M}[\mathscr{W}^{-1}](A,B)= \pi\scM(RC(A),RC(B)),$$
and it follows that $\mathcal{M}[\mathscr{W}^{-1}]$ is a category.\\
\textit{Step 2:} One proves that $RC(f)$ is a homotopy equivalence if $f:A\to X$ is a weak equivalence. Then one defines 
$$\gamma :\scM\to \mathcal{M}[\mathscr{W}^{-1}] \qquad f\mapsto RC(f).$$
In particular, $\gamma$ maps weak equivalences to isomorphisms.\\
\textit{Step 3:}
One shows that a functor $F:\scM\to \scN$, which maps weak equivalences to isomorphisms, maps homotopic morphisms to the same morphism.\\
\textit{Step 4:} Given a functor $F:\scM\to \scN$, which maps weak equivalences to isomorphisms, then there is a unique functor
$\bar{F}: \mathcal{M}[\mathscr{W}^{-1}]\to \scN$ such that $F=\bar{F}\circ \gamma$, and $\bar{F}$ is defined on objects by
$\bar{F} (X)=F (X)$ and on morphisms $[f]\in \mathcal{M}[\mathscr{W}^{-1}](A,X)$ by
$$\bar{F}([f]) = F (\varepsilon(X))\circ (F (\eta(CX)))^{-1}\circ F (f)\circ F (\eta(CA))\circ (F(\varepsilon(A)))^{-1},$$
 where $[f]$ is the homotopy class of $f$.

We now prove Proposition \ref{3_3}. We deal with the case where we have
 a natural transformation $\varepsilon:Q\to \Id$ taking values in $\mathscr{W}$.\\
Step 1 follows from the topological enrichment
$$\ob \mathcal{M}[\mathscr{W}^{-1}]= \ob \scM\quad \textrm{and}\quad \mathcal{M}[\mathscr{W}^{-1}](A,B)= \pi\scM(Q(A),Q(B))$$
which is a category.\\
Step 2 holds by Assumption \ref{3_3}.2, and we define 
$$\gamma:\scM\to \mathcal{M}[\mathscr{W}^{-1}]\qquad f\mapsto Q(f).$$
 $\gamma$ maps weak equivalences to isomorphisms.\\
For Step 3 we need the cylinder functor: the bottom and top inclusions $i_0\otimes \id,i_1\otimes \id:X\cong \ast\otimes X\to I\otimes X$ into the cylinder 
are homotopy equivalences with the common homotopy inverse $r\otimes \id :I\otimes X\to \ast\otimes X\cong X$. \\
Step 4: Given a functor $F:\scM\to \scN$, which maps weak equivalences to isomorphisms, we define $\bar{F}: \mathcal{M}[\mathscr{W}^{-1}]\to \scN$ 
by
$$\bar{F} (X)=F (X)\quad\textrm{and}\quad \bar{F}([f]) = F (\varepsilon(X))\circ F(f)\circ (F(\varepsilon(A)))^{-1}$$
for $[f]\in \mathcal{M}[\mathscr{W}^{-1}](A,X)$. The rest follows like in  \cite[Thm 8.3.5]{Hirsch}.
\end{proof}

\begin{rema}\label{3_3c}
For Proposition \ref{3_3} we do not need that the tensor $X\otimes M$ exists for all topological spaces: it suffices that $\scM$ is tensored 
over the full subcategory of $\Top$ consisting of a point $\ast$ and the unit interval $I$.
\end{rema}

\begin{nota}\label{3_3a}
Following the standard convention we denote $\mathcal{M}[\mathscr{W}^{-1}]$ by $\Ho \scM$ if the class $\mathscr{W}$ has 
been specified.\\
A pair $(Q,\ {\varepsilon:Q\to \Id})$ respectively $(Q,\ {\eta:\Id \to Q})$ satisfying the requirements of \ref{3_3} will be called 
a \textit{cofibrant} respectively \textit{fibrant replacement functor}.
Each $\Top$-enriched category $\scM$ considered in this paper will have a continuous cofibrant replacement functor, and we call the category 
$\scH\scM$ the \textit{category of $Q$-morphisms} associated with $\scM$.
\end{nota}

\begin{defi}\label{3_3d}
A functor $Q:\scM\to \scM$ together with a natural transformation $\varepsilon: Q\to \Id$ is called a
\textit{strong cofibrant replacement functor} if each $\varepsilon(M) : Q(M)\to M$ is a weak equivalence
and $p_\ast: \scM(QA,B)\to \scM(QA,C)$ is a homotopy equivalence whenever $p:B\to C$
is a weak equivalence.
\end{defi}
Clearly, a strong cofibrant replacement functor is a cofibrant replacement functor.

\begin{leer}\label{3_4} \textbf{Examples:}
\begin{enumerate}
\item Let $\mathscr{W}\subset\Mon$ be the class of weak equivalences in the sense of \ref{2_3c}. 
Then $WV:\Mon\to \Mon$ together with $WVM\xrightarrow{\varepsilon(VM)}VM
\xrightarrow{q(M)} M$ is a strong cofibrant replacement functor, and the
$Q$-morphisms are the $huh$-morphisms. This follows from informations in 
\ref{2_2}, \ref{2_4}, \ref{2_6b}, and \ref{2_7}.
\item  Let $\mathscr{W}\subset\Monw$ be again the class of weak equivalences. Then $W:\Monw\to \Monw$ together with $\varepsilon: W\to \Id$
is a strong cofibrant replacement functor, and the
$Q$-morphisms are the $uh$-morphisms. The required information is obtained from \ref{2_2}, \ref{2_4}, and \ref{2_7}.
\item Let $\mathscr{W}\subset\Sgp$ be the class of weak equivalences. Then $\overline{W}:\Sgp\to \Sgp$ together with $\overline{\varepsilon}:
\overline{W}\to \Id$ is a strong cofibrant replacement functor, and the
$Q$-morphisms are the $h$-morphisms by informations from \ref{2_2} and \ref{2_4}.
\item Let $\mathscr{W}\subset\top$ be the class of based maps which are (not necessarily based) homotopy equivalences. 
Then $V^t:\top\to\top$ together with 
$q:V^t\to \Id$ is a strong cofibrant replacement functor by the lemma below, the proof of which we leave as an exercise.
\item Let $\mathscr{W}\subset\topw$ be the class of homotopy equivalences. Then $\Id: \topw\to \topw$ is a strong
 cofibrant replacement functor and each map is
a $Q$-morphism.
\end{enumerate}
\end{leer}
\begin{lem}\label{3_3e}
 Let $A$ be a well-pointed space and $p:X\to Y$ a map in $\top$ which is a not necessarily based homotopy equivalence. Then
$$
p_\ast: \top(A,X)\to \top(A,Y)
$$
is a homotopy equivalence in $\Top$.
\end{lem}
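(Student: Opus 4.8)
The plan is to realize the based mapping space $\top(A,X)$ as the fibre of an evaluation fibration and then to compare fibres. Write $a_0$, $x_0$, $y_0$ for the base points of $A$, $X$, $Y$, and let $\ev_X\colon\Top(A,X)\to X$ be the evaluation $f\mapsto f(a_0)$, and similarly $\ev_Y\colon\Top(A,Y)\to Y$. Since $A$ is well-pointed, the inclusion $\{a_0\}\hookrightarrow A$ is a closed cofibration, and so the pullback--hom property of the Str{\o}m model structure on $\Top$ --- applied to this cofibration and to the Hurewicz fibration $X\to\ast$ --- shows that $\ev_X$ and $\ev_Y$ are Hurewicz fibrations. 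Tautologically $\top(A,X)$ is the subspace $\ev_X^{-1}(x_0)$ of $\Top(A,X)$, and likewise for $Y$.

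Next I would record that the square with horizontal maps $p_\ast\colon\Top(A,X)\to\Top(A,Y)$ and $p\colon X\to Y$ and with vertical maps $\ev_X$ and $\ev_Y$ commutes, since both composites carry $f$ to $p(f(a_0))$. As $p$ is a based map, $p(x_0)=y_0$, so $p_\ast$ sends $\ev_X^{-1}(x_0)$ into $\ev_Y^{-1}(y_0)$ and its restriction there is exactly the map $p_\ast\colon\top(A,X)\to\top(A,Y)$ of the statement. Now $p$ is a homotopy equivalence by hypothesis, and $p_\ast\colon\Top(A,X)\to\Top(A,Y)$ is a homotopy equivalence because $\Top(A,-)$ preserves homotopies: applied to a homotopy inverse $g$ of $p$ together with homotopies $g\circ p\simeq\id_X$ and $p\circ g\simeq\id_Y$, it yields a homotopy inverse of $p_\ast$.

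It then remains to compare the fibres. Pulling $\ev_Y$ back along $p$ gives a Hurewicz fibration $p^\ast\Top(A,Y)\to X$, and since $p$ is a homotopy equivalence the projection $p^\ast\Top(A,Y)\to\Top(A,Y)$ is a homotopy equivalence; hence the canonical map $\Phi\colon\Top(A,X)\to p^\ast\Top(A,Y)$ over $X$ is a homotopy equivalence by the two-out-of-three property. Because the fibre of a Hurewicz fibration agrees up to homotopy equivalence with its homotopy fibre, and homotopy fibres are invariant under homotopy equivalences of total spaces over a fixed base, $\Phi$ restricts to a homotopy equivalence between the fibres over $x_0$. These fibres are $\top(A,X)$ and the fibre of $\ev_Y$ over $p(x_0)=y_0$, namely $\top(A,Y)$, on which $\Phi$ restricts to $p_\ast$; hence $p_\ast\colon\top(A,X)\to\top(A,Y)$ is a homotopy equivalence in $\Top$.

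The only genuine subtlety is that $p$ is merely a \emph{free} homotopy equivalence, so one cannot feed a homotopy inverse of $p$ directly into the functor $\top(A,-)$ --- it need not preserve base points. Well-pointedness of $A$ is precisely what converts the statement into one about the evaluation fibrations $\ev_X$ and $\ev_Y$, where this base-point defect gets absorbed. The remaining ingredients --- that $\ev_X$ is a Hurewicz fibration for the $k$-function-space topology, and the identification of the fibre of a Hurewicz fibration with its homotopy fibre --- are routine within the framework already fixed in Sections 1 and 3, which is no doubt why the author leaves the proof as an exercise.
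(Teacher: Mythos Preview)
Your proof is correct. The paper leaves this lemma as an exercise, so there is no proof to compare against directly; however, your approach via the evaluation fibrations and Dold--type fibre comparison is sound. The one step that deserves an extra word is the claim that a homotopy equivalence $\Phi$ over $X$ between Hurewicz fibrations restricts to a homotopy equivalence on fibres: your homotopy--fibre argument works because $\text{hofib}_{x_0}(q)=E\times_X P_{x_0}X$ is the pullback of $E\to E'$ along the Hurewicz fibration $E'\times_X P_{x_0}X\to E'$, so right properness of the Str{\o}m structure gives the induced equivalence on homotopy fibres. It would be cleaner simply to cite Dold's theorem that a homotopy equivalence over the base between Hurewicz fibrations is a fibre homotopy equivalence; this is in the spirit of \cite{DKP}, which the paper already uses.

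A route more in keeping with the paper's own methods (compare the proofs of Proposition~\ref{2_4} and Lemma~\ref{6_2}) is a direct application of the HELP--Lemma. Given a closed cofibration $i\colon K\hookrightarrow L$ and a square from $i$ to $p_\ast$ commuting up to homotopy, pass to adjoints to obtain maps $f\colon K\times A\to X$, $g\colon L\times A\to Y$, and a homotopy $h_t\colon K\times A\to Y$, all sending $(-)\times\{a_0\}$ to the respective base points. Extend $f$ and $h_t$ over $L\times\{a_0\}$ by the constant maps to $x_0$ and $y_0$. Since $\{a_0\}\hookrightarrow A$ and $K\hookrightarrow L$ are closed cofibrations, the pushout--product theorem makes $K\times A\cup L\times\{a_0\}\hookrightarrow L\times A$ a closed cofibration, and one applies HELP for the homotopy equivalence $p$ to obtain the required extensions. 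This avoids the detour through pullbacks and homotopy fibres and matches the paper's toolbox exactly; your approach, on the other hand, has the virtue of making the role of well--pointedness (turning $\ev_X$ into a fibration) completely transparent.
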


\begin{prop}\label{3_3f}
The localizations of the categories of \ref{3_4} with respect to their weak equivalences exist.
\end{prop}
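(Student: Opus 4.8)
The plan is to check, for each of the five pairs $(\scM,\mathscr{W})$ listed in \ref{3_4}, that the hypotheses of Proposition \ref{3_3} are satisfied, and then simply to invoke that proposition; it will then deliver $\scM[\mathscr{W}^{-1}]=\Ho\scM$ concretely as the homotopy category $\pi\scH\scM$ of the associated category of $Q$-morphisms. Three things must be verified in each case: (i) $\scM$ is $\Top$-enriched and tensored — by Remark \ref{3_3c} it is enough that $\scM$ be tensored over the full subcategory of $\Top$ on the objects $\ast$ and $I$; (ii) $\mathscr{W}$ contains every homotopy equivalence of $\scM$ in the sense of Definition \ref{3_2a}; (iii) $\scM$ carries a cofibrant replacement functor as in \ref{3_3}.2. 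Condition (iii) is free: \ref{3_4} exhibits in each case a \emph{strong} cofibrant replacement functor $(Q,\varepsilon\colon Q\to\Id)$, and, as recorded immediately after Definition \ref{3_3d}, a strong cofibrant replacement functor is in particular a cofibrant replacement functor.

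For (i): by Example \ref{3_1} the categories $\Mon$ and $\Sgp$ are $\top$- resp.\ $\Top$-enriched, tensored and cotensored, and the same structure restricts to the full subcategory $\Monw$ (using that $X\boxtimes M$, or at least the cylinder $I\otimes M$, is again well-pointed when $M$ is); forgetting base points turns the $\top$-enriched tensored categories $\Mon$ and $\Monw$ into $\Top$-enriched ones via $X\otimes M=X_+\boxtimes M$. The categories $\top$ and $\topw$ are $\top$-enriched with tensor the smash product, and the reduced cylinder $I_+\wedge M=(I\times M)/(I\times\{\ast_M\})$ is again well-pointed when $M$ is; so after forgetting base points both are tensored over $\{\ast,I\}$, which by Remark \ref{3_3c} suffices.

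For (ii): in $\Mon$, $\Monw$ and $\Sgp$ a homotopy equivalence in the enriched sense is a homomorphism admitting a homotopy-inverse homomorphism, hence its underlying map of spaces is a homotopy equivalence in $\Top$ and it lies in $\mathscr{W}$ by Definition \ref{2_3c}. In $\top$ the morphism spaces are based function spaces, so a homotopy equivalence of $\top$ is a based homotopy equivalence, in particular an unbased one, hence a member of $\mathscr{W}$. In $\topw$, $\mathscr{W}$ is by definition the class of all homotopy equivalences (and since based and unbased homotopy equivalences of well-pointed spaces coincide, there is no ambiguity), so the inclusion is trivial. The only point asking for a moment's attention is exactly this comparison of based and unbased homotopy equivalences in $\top$ and $\topw$; everything else is routine bookkeeping, and there is no genuine obstacle beyond correctly matching each example to the hypotheses of \ref{3_3}.

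Having verified (i), (ii) and (iii) in all five cases, Proposition \ref{3_3} applies and yields the existence of $\Ho\scM$ in each case, with morphism sets $\pi\scM(QM_1,QM_2)$; the corresponding categories of $Q$-morphisms are, in order, the $huh$-morphisms, the $uh$-morphisms, the $h$-morphisms, and, in the two space cases, the evident function-space categories. This establishes Proposition \ref{3_3f}.
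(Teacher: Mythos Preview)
Your proposal is correct and follows essentially the same approach as the paper: verify the hypotheses of Proposition \ref{3_3} (via Remark \ref{3_3c}) for each of the five examples in \ref{3_4}, noting that the cofibrant replacement functors are already supplied there and that the only substantive point is tensoring over $\{\ast,I\}$, which reduces to well-pointedness of the cylinder in the subcategories $\Monw$ and $\topw$. The paper's proof is terser---it subsumes your condition (ii) under ``the other assumptions of \ref{3_3} have been verified above'' and cites \cite[Prop.~7.8]{PRV} for the well-pointedness of $I\otimes M$ in $\Monw$---but the logical content is the same.
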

\begin{proof}
We apply \ref{3_3} and \ref{3_3c}. We have to show that our categories are tensored over  
the full subcategory of $\Top$ consisting of a point $\ast$ and the unit interval $I$, the other
assumptions of \ref{3_3} have been verified above. \\
We already know that $\Mon$ and $\Sgp$ are tensored over $\Top$. The category $\top$ is tensored over itself
by the smash product and hence also tensored over $\Top$.  
For the Examples \ref{3_4}.2 and \ref{3_4}.5 it suffices to know
that for any object $M$ in the category the tensor $I\otimes M$ is well-pointed (recall $\ast\otimes M\cong M$).
This is well known
for $\topw$ and holds for $\Monw$ by \cite[Prop. 7.8]{PRV}.
\end{proof}

\begin{defi}\label{3_5}
Let $\mathcal{M}$ be a category and $\mathscr{W}$ a class of morphisms in $\mathcal{M}$ such that $\mathcal{M}[\mathscr{W}^{-1}]$ exists.
Let $F:\scM\to \scD$ be a functor. A functor $\bL F:\mathcal{M}[\mathscr{W}^{-1}]\to \scD$ together with a natural
 transformation $\tau: \bL F\circ \gamma \to F$
is called  \textit{left derived functor} of $F$, if given any functor $T:\mathcal{M}[\mathscr{W}^{-1}]\to \scD$ 
and natural transformation
$\sigma :T\circ \gamma \to F$, there is a unique natural transformation $\rho: T\to \bL F$ such that $\sigma=\tau\circ (\rho\ast\gamma)$.\\
Dually, a functor $\bR F:\mathcal{M}[\mathscr{W}^{-1}]\to \scD$ together with a natural transformation $\mu: F\to \bR F\circ \gamma$
is called  \textit{right derived functor} of $F$, if given any functor $G:\mathcal{M}[\mathscr{W}^{-1}]\to \scD$ and natural transformation
$\nu :F\to G\circ \gamma$, there is a unique natural transformation $\xi : \bR F\to G$ such that $(\xi\ast\gamma)\circ\mu$.
\end{defi}

\begin{rema}\label{3_5a}
(1) A left or right derived functor is unique up to natural isomorphism if it exists.\\
(2) If $F:\scM\to \scD$ maps weak equivalences to isomorphisms, then the induced functor 
$\overline{F}: \mathcal{M}[\mathscr{W}^{-1}]\to \scD$ is the right and
left derived functor of $F$.
\end{rema}

\begin{prop}\label{3_6a}
Let $\scM$ be as in Proposition \ref{3_3}, and let 
$F:\scM\to \scB$ be a functor which maps homotopy equivalences to isomorphisms. Then $\bL F :\mathcal{M}[\mathscr{W}^{-1}]\to \scB$
exists if $\scM$ has a cofibrant replacement functor, and $\bR F :\mathcal{M}[\mathscr{W}^{-1}]\to \scB$
exists if $\scM$ has a fibrant replacement functor. In both cases the derived functor is induced by
$F\circ Q:\scM\to \scB$.
\end{prop}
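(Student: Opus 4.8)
The plan is to deduce Proposition \ref{3_6a} directly from Proposition \ref{3_3} together with Remark \ref{3_5a}.2, by showing that the functor $F\circ Q$ descends to the localization and that the resulting functor on $\mathcal{M}[\mathscr{W}^{-1}]$, when paired with the natural transformation built from $F(\varepsilon)$, has the required universal property. So first I would treat the cofibrant case: we are given a cofibrant replacement functor $(Q,\varepsilon:Q\to\Id)$ with $\varepsilon(M)\in\mathscr{W}$ for all $M$ and $Q f$ a homotopy equivalence for each $f\in\mathscr{W}$. Since $F$ sends homotopy equivalences to isomorphisms and $Q$ sends weak equivalences to homotopy equivalences, the composite $F\circ Q:\scM\to\scB$ sends every weak equivalence to an isomorphism. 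Hence by the universal property of the localization (Definition \ref{3_2}) there is a unique functor $G:\mathcal{M}[\mathscr{W}^{-1}]\to\scB$ with $G\circ\gamma = F\circ Q$. I will set $\bL F := G$.

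Next I would produce the natural transformation $\tau:\bL F\circ\gamma\to F$. Since $\bL F\circ\gamma = F\circ Q$ on the nose, I can simply take $\tau := F(\varepsilon):F\circ Q\to F$, which is a natural transformation of functors $\scM\to\scB$, and moreover each component $F(\varepsilon(M))$ is an isomorphism because $\varepsilon(M)$ is a weak equivalence, hence a homotopy equivalence when composed appropriately — more carefully, $\varepsilon(M)\in\mathscr{W}$ and $F$ inverts weak equivalences, so $F(\varepsilon(M))$ is an isomorphism. (Strictly, $F$ is only assumed to invert homotopy equivalences, but by hypothesis \ref{3_3}.2 the class $\mathscr W$ and the replacement functor are set up so that $F\circ Q$ inverts all of $\mathscr W$; for the components of $\tau$ one uses that $Q(\varepsilon(M))$ is a homotopy equivalence, so $FQ(\varepsilon(M))$ is an isomorphism, and then the naturality square for $\varepsilon$ applied to $\varepsilon(M)$ forces $F(\varepsilon(M))$ to be an isomorphism as well.) So every component of $\tau$ is invertible.

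Now I would verify the universal property of Definition \ref{3_5}. Let $T:\mathcal{M}[\mathscr{W}^{-1}]\to\scB$ be any functor and $\sigma:T\circ\gamma\to F$ a natural transformation. I must exhibit a unique $\rho:T\to\bL F$ with $\sigma = \tau\circ(\rho\ast\gamma)$. Because each component of $\tau$ is an isomorphism, the equation $\sigma = \tau\circ(\rho\ast\gamma)$ determines $\rho\ast\gamma$ uniquely, namely $(\rho\ast\gamma)_M = \tau_M^{-1}\circ\sigma_M$ for every object $M$; since $\gamma$ is the identity on objects and both $T$ and $\bL F$ are functors out of the localization, a natural transformation $\rho:T\to\bL F$ is entirely determined by its components, which are indexed by the (shared) objects, and $(\rho\ast\gamma)$ has the same components as $\rho$. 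So $\rho_M := \tau_M^{-1}\circ\sigma_M$ is the only possible candidate; its naturality with respect to morphisms of $\mathcal{M}[\mathscr{W}^{-1}]$ follows from naturality of $\sigma$ and $\tau$ on the subcategory $\gamma(\scM)$ together with the fact that every morphism of $\mathcal{M}[\mathscr{W}^{-1}]$ is a composite of images of morphisms of $\scM$ and formal inverses of weak equivalences (equivalently, one can invoke that $\gamma$ is a localization, so naturality on $\gamma(\scM)$ suffices to conclude naturality on all of $\mathcal{M}[\mathscr{W}^{-1}]$ once one checks compatibility with the inverted arrows, which is automatic since the relevant $\tau$, $\sigma$ components are isomorphisms). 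This gives the left derived functor. The right-derived (fibrant) case is entirely dual: replace $(Q,\varepsilon:Q\to\Id)$ by $(Q,\eta:\Id\to Q)$, set $\bR F := G$ where $G\circ\gamma = F\circ Q$, take $\mu := F(\eta):F\to F\circ Q = \bR F\circ\gamma$, and run the same argument with the arrows reversed.

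I expect the only real subtlety — hence the main obstacle — to be the bookkeeping in the uniqueness/naturality step: one has to be careful that a natural transformation between two functors defined on $\mathcal{M}[\mathscr{W}^{-1}]$ is determined by its object-components and that naturality checked on $\gamma(\scM)$ propagates to the whole localized category. This is a standard consequence of the universal property of localization (the argument is the same as in \cite[Thm 8.3.5]{Hirsch}), so once it is invoked correctly the rest is formal. Everything else — that $F\circ Q$ inverts weak equivalences, that the components of $\tau$ (resp. $\mu$) are isomorphisms — is immediate from the hypotheses on $F$ and from property \ref{3_3}.2 of the replacement functor.
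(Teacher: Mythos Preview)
Your overall strategy—showing that $F\circ Q$ inverts weak equivalences, invoking the universal property of the localization to produce $G$ with $G\circ\gamma=F\circ Q$, and taking $\tau=F(\varepsilon)$—is exactly the standard one (and is what the paper intends by its reference to \cite[8.4]{Hirsch}). The problem lies in your verification of the universal property.

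You claim that each component $\tau_M=F(\varepsilon(M))$ is an isomorphism, and then use $\rho_M:=\tau_M^{-1}\circ\sigma_M$. But $F$ is only assumed to invert \emph{homotopy equivalences}, while $\varepsilon(M)$ is only known to lie in $\mathscr{W}$; in general $\varepsilon(M)$ is not a homotopy equivalence (think of $q(M):V^tM\to M$ in $\top$ when $M$ is not well-pointed). Your attempted rescue via the naturality square for $\varepsilon$ at $\varepsilon(M)$ does not work: that square reads $\varepsilon(M)\circ Q\varepsilon(M)=\varepsilon(M)\circ\varepsilon(QM)$, and after applying $F$ the morphism $F\varepsilon(M)$ sits on the \emph{same} side of both composites, so knowing $FQ\varepsilon(M)$ is an isomorphism tells you nothing about $F\varepsilon(M)$. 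Indeed, if $\tau$ were always an isomorphism then $F$ itself would factor through the localization, making the notion of a derived functor vacuous.

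The correct argument (as in \cite[8.4]{Hirsch}) exploits instead that $T$, not $F$, is defined on $\mathcal{M}[\mathscr{W}^{-1}]$: naturality of $\sigma$ at $\varepsilon(M)$ gives
\[
\sigma_M\circ T(\gamma\varepsilon(M))=F(\varepsilon(M))\circ\sigma_{QM}=\tau_M\circ\sigma_{QM},
\]
and since $\varepsilon(M)\in\mathscr{W}$ the morphism $T(\gamma\varepsilon(M))$ is invertible. One then sets $\rho_M:=\sigma_{QM}\circ T(\gamma\varepsilon(M))^{-1}$ and checks naturality and uniqueness. So the fix is to move the inversion from $\tau$ (where it is unavailable) to $T\circ\gamma$ applied to $\varepsilon$ (where it is automatic).
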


\begin{proof}
The proof is the same as in the case of a model category (e.g. see \cite[8.4.]{Hirsch}).  
\end{proof}

 Let $F:\scM\to \scB$ be a functor between $\Top$-enriched categories admitting cofibrant replacement functors
$Q_{\scM}:\scM\to \scM$ and $Q_{\scB}:\scB\to \scB$. Proposition \ref{3_6a} motivates the introduction of the
functor
\begin{leer}\label{3_6b}\hspace*{27ex}{$F^{\scH}: \scH\scM\to \scH\scB$}
\end{leer}
\hspace*{-3ex}defined on objects by $F^{\scH}(X)=F(Q_{\scM}X)$ and on morphisms by
$$
F^{\scH}: \scM(Q_{\scM}X,Q_{\scM}Y)\xrightarrow{Q_{\scB}\circ F} \scB(Q_{\scB}FQ_{\scM}X,Q_{\scB}FQ_{\scM}Y).
$$
If $F$ preserves homotopy equivalences, e.g. if $F$ is continuous, and $\pi_{\scB}:\scB\to \pi\scB$ is the canonical
functor, then $\pi_{\scB}\circ F^{\scH}$ induces the left derived functor 
$$
\Ho F:\Ho\scM\to \Ho\scB$$
of $\pi_{\scB}\circ F$. Following model category terminology, we call $\Ho F$ the \textit{total left derived functor
of} $F$.
%\end{leer}

One of the objectives of this paper is to show that the classifying space functor and the Moore loop space functor induce
an adjoint derived pair (see Theorem \ref{4_3b} below). This is the path-component version of the more general result (Theorem \ref{4_3a} below)
that 
$$
\xymatrix{
\BH:\mathcal{H}\Mon \ar@<0.5ex>[r] & \top:\OH \ar@<0.5ex>[l]
}
$$
are a homotopically adjoint pair. To make this last statement precise we need some preparations.

\begin{defi}\label{3_6}
Let $\mathcal{A}$ and $\mathcal{B}$ be topologically enriched categories. A functor $F:\mathcal{A}\to\mathcal{B}$ is called \textit{continuous} if
$$
F:\mathcal{A}(A,B)\longrightarrow\mathcal{B}(FA,FB)
$$
is continuous for all $A$ and $B$ in $\mathcal{A}$. 

If $F,G:\mathcal{A}\to\mathcal{B}$ are continuous functors, a collection of morphisms $\{\alpha(A): FA\to GA;\; A\in\ob\mathcal{A}\}$ 
is called a \textit{natural transformation up to homotopy} if the diagram
$$
\xymatrix{
\mathcal{A}(A,B) \ar[rr]^F \ar[d]^G
&& \mathcal{B}(FA,FB) \ar[d]^{\alpha(B)_\ast}
\\
\mathcal{B}(GA,GB)\ar[rr]^{\alpha(A)^\ast} && \mathcal{B}(FA,GB)
}
$$
 is homotopy commutative.
 
A pair of continuous functors
$$F:\scA \leftrightarrows \scB:G$$
is called a \textit{homotopy adjoint pair} if there is a natural transformation up to homotopy
$$\alpha(A,X):\scB(FA,X)\to \scA(A,GX)$$
such that each $\alpha(A,X)$ is a homotopy equivalence. The homotopy equivalences are called the
homotopy adjunctions.
\end{defi}

Just as the usual notion of adjunction is equivalently encoded by the concepts of unit and counit,
Proposition \ref{3_9} below describes how a homotopy adjunction is specified by a homotopy unit and 
a homotopy counit.

Observe that we have chosen a strong form of a natural transformation $\alpha : F\to G$ up to homotopy: for
each morphism $f:A\to B$ in $\scA$ we have a square
$$\xymatrix{
FA\ar[rr]^{\alpha(A)} \ar[d]_{Ff} && GA\ar[d]^{Gf}\\
FB\ar[rr]^{\alpha(B)} && GB
}$$
commuting up to a homotopy $H(f)$ which is continuous in $f$.

The proofs of the following two lemmas are easy exercises.

\begin{lem}\label{3_7} Let $S,T,U:\scA\to \scB$ be continuous functors of topologically enriched categories.

(1) Each natural transformation $\alpha:S\to T$ is a natural transformation up to homotopy.

(2) If $\varepsilon :S\to T$ and $\eta:T\to U$ are natural transformations up to homotopy, then $\eta\circ \varepsilon:S\to U$ is one.

(3) Let $\varepsilon :S\to T$ be a natural transformation up to homotopy such that each $\varepsilon(A)$ is a homotopy equivalence.
 Choose a homotopy inverse $\eta(A)$ of 
$\varepsilon(A)$ for each $A$ in $\mathcal{A}$. 
Then the $\eta(A)$ form a natural transformation $\eta :T\to S$ up to homotopy.\hfill\ensuremath{\Box}
\end{lem}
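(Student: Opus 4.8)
The plan is to reduce all three parts to elementary bookkeeping with the continuous pre- and post-composition operators of the target category $\scB$. As a preliminary I would record the following, valid because $\scB$ is enriched over $k$-spaces so that composition is jointly continuous: for morphisms $u:X\to X'$ and $v:Y'\to Y$ in $\scB$ the operators $u_\ast:\scB(Z,X)\to\scB(Z,X')$ and $v^\ast:\scB(Y,Z)\to\scB(Y',Z)$ are continuous and satisfy $(u'u)_\ast=u'_\ast u_\ast$, $(vv')^\ast=(v')^\ast v^\ast$, $u_\ast v^\ast=v^\ast u_\ast$; moreover $u\simeq u'$ implies $u_\ast\simeq u'_\ast$ (and similarly $v\simeq v'$ implies $v^\ast\simeq(v')^\ast$), since composing with a path $t\mapsto u_t$ from $u$ to $u'$ in $\scB(X,X')$ defines, by the exponential law for $k$-spaces, a path from $u_\ast$ to $u'_\ast$ in the relevant mapping space. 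Finally, composing a map of spaces with a fixed continuous map on either side preserves homotopy, and homotopy of maps of spaces is transitive.

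Part (1) is then immediate: if $\alpha:S\to T$ is a strict natural transformation, the equations $\alpha(B)\circ Sf=Tf\circ\alpha(A)$, valid for all $f:A\to B$, say exactly that the two continuous maps $\alpha(B)_\ast\circ S$ and $\alpha(A)^\ast\circ T$ from $\scA(A,B)$ to $\scB(SA,TB)$ coincide, hence are homotopic via the constant homotopy, so the square of Definition \ref{3_6} commutes up to homotopy. For part (2), write the composite collection as $\{\eta(A)\circ\varepsilon(A)\}$ and note $(\eta(B)\varepsilon(B))_\ast=\eta(B)_\ast\varepsilon(B)_\ast$ and $(\eta(A)\varepsilon(A))^\ast=\varepsilon(A)^\ast\eta(A)^\ast$. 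Post-composing the homotopy expressing that $\varepsilon$ is a natural transformation up to homotopy with the continuous map $\eta(B)_\ast$, then rewriting $\eta(B)_\ast\varepsilon(A)^\ast=\varepsilon(A)^\ast\eta(B)_\ast$, then post-composing the homotopy for $\eta$ with $\varepsilon(A)^\ast$, and finally concatenating, yields $(\eta(B)\varepsilon(B))_\ast\circ S\simeq(\eta(A)\varepsilon(A))^\ast\circ U$, which is the required homotopy commutativity.

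For part (3) the key device is the continuous conjugation operator $\Phi:=\eta(B)_\ast\circ\eta(A)^\ast=\eta(A)^\ast\circ\eta(B)_\ast:\scB(SA,TB)\to\scB(TA,SB)$. Applying $\Phi$ to the homotopy $\varepsilon(B)_\ast\circ S\simeq\varepsilon(A)^\ast\circ T$ supplied by the hypothesis on $\varepsilon$ gives $\Phi\circ\varepsilon(B)_\ast\circ S\simeq\Phi\circ\varepsilon(A)^\ast\circ T$. On one side, $\Phi\circ\varepsilon(B)_\ast=\eta(A)^\ast\circ(\eta(B)\varepsilon(B))_\ast\simeq\eta(A)^\ast$ because $\eta(B)\varepsilon(B)\simeq\id_{SB}$ forces $(\eta(B)\varepsilon(B))_\ast\simeq\id$; on the other, $\Phi\circ\varepsilon(A)^\ast=\eta(B)_\ast\circ(\varepsilon(A)\eta(A))^\ast\simeq\eta(B)_\ast$ because $\varepsilon(A)\eta(A)\simeq\id_{TA}$. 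Pre-composing these with $S$ and with $T$ respectively and chaining, one gets $\eta(A)^\ast\circ S\simeq\eta(B)_\ast\circ T$ as maps $\scA(A,B)\to\scB(TA,SB)$, which says precisely that the square of Definition \ref{3_6} for the collection $\{\eta(A)\}$, regarded as a transformation $T\to S$, commutes up to homotopy.

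The only ingredient that is not purely formal is the preliminary observation that composition with a path of morphisms induces a homotopy between the associated maps of hom-spaces; this is where the topological ($k$-space) enrichment and the exponential law enter, and I would state it explicitly at the outset. Granting that, (1) is trivial, (2) is a two-step concatenation of homotopies, and (3) — the conjugation argument — is the most elaborate of the three and the place where one must track carefully which hom-space each map lives on, but it remains entirely routine.
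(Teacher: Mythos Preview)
Your proof is correct, and it is exactly the routine verification the paper has in mind: the paper gives no argument at all, marking the lemma with a $\Box$ after stating that it and the following lemma are ``easy exercises.'' Your preliminary observation about paths of morphisms inducing homotopies of pre- and post-composition operators is the one nontrivial ingredient, and the conjugation argument in (3) is the standard way to carry out this exercise.
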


\begin{lem}\label{3_8} Let $S,T,U,V:\scA\to \scB$ be continuous functors of topologically enriched categories, and let $\varepsilon:S\to T$ 
and $\eta:U\to V$ be natural transformations up to homotopy.

(1) Let $F,G:\scA^{\op}\times \scA\to \Top$ be defined by $F(A,B)=\scA(A,B)$ and $G(A,B)=\scB(TA,TB)$. Then 
$$\tau(A,B):\scA(A,B)\xrightarrow{T} \scB(TA,TB)$$
is a natural transformation from $F$ to $G$.

(2) Let $F,G:\scA^{\op}\times \scA\to \Top$ be defined by $F(A,B)=\scB(VA,SB)$ and $G(A,B)= \scB(UA,TB)$. Then
$$\alpha(A,B):\scB(VA,SB)\xrightarrow{\varepsilon(B)_\ast\circ\eta(A)^\ast} \scB(UA,TB)$$
is a natural transformation from $F$ to $G$ up to homotopy.\hfill\ensuremath{\Box}
\end{lem}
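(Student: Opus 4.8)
\emph{Proof strategy.}\quad Part (1) is nothing but the functoriality of $T$, recast as a naturality statement, so the plan is simply to spell this out. A morphism $(A,B)\to(A',B')$ in $\scA^{\op}\times\scA$ is a pair $(a,b)$ with $a\in\scA(A',A)$ and $b\in\scA(B,B')$; it acts on $F$ by $g\mapsto b\circ g\circ a$ and on $G$ by $\varphi\mapsto Tb\circ\varphi\circ Ta$. The naturality square for $\tau$ then asserts exactly that $T(b\circ g\circ a)=Tb\circ Tg\circ Ta$, which holds because $T$ is a functor; and that $F,G$ are continuous functors $\scA^{\op}\times\scA\to\Top$ and that $\tau$ is continuous follows from continuity of $T$ and of the composition maps of $\scB$. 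There is nothing more to it.

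For part (2) I would first record, in the same way, that $F(A,B)=\scB(VA,SB)$ and $G(A,B)=\scB(UA,TB)$ are continuous functors on $\scA^{\op}\times\scA$ and that $\alpha(A,B)\colon h\mapsto\varepsilon(B)\circ h\circ\eta(A)$ is continuous, by continuity of $S,T,U,V$ and of composition. Then, fixing a morphism $(a,b)$ as above, I would compute the two composites around the naturality square for $\alpha$: they send $h\in\scB(VA,SB)$ to the two morphisms $UA'\to TB'$
$$\varepsilon(B')\circ Sb\circ h\circ Va\circ\eta(A')\qquad\text{and}\qquad Tb\circ\varepsilon(B)\circ h\circ\eta(A)\circ Ua .$$
The task is to connect these by a homotopy that is continuous in $h$ and --- to match the strong form of ``natural transformation up to homotopy'' used in Definition \ref{3_6} --- also continuous in $(a,b)$.

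I would produce this homotopy as a concatenation of two pieces. Since $\varepsilon\colon S\to T$ is a natural transformation up to homotopy, Definition \ref{3_6} applied to the morphism $b\in\scA(B,B')$ yields a path in $\scB(SB,TB')$ from $\varepsilon(B')\circ Sb$ to $Tb\circ\varepsilon(B)$, depending continuously on $b$; precomposing it with the fixed morphism $h\circ Va\circ\eta(A')$ gives a path in $\scB(UA',TB')$ from the first displayed morphism to $Tb\circ\varepsilon(B)\circ h\circ Va\circ\eta(A')$. Likewise, $\eta\colon U\to V$ being a natural transformation up to homotopy yields, for the morphism $a\in\scA(A',A)$, a path in $\scB(UA',VA)$ from $Va\circ\eta(A')$ to $\eta(A)\circ Ua$, continuous in $a$; postcomposing it with $Tb\circ\varepsilon(B)\circ h$ gives a path in $\scB(UA',TB')$ from $Tb\circ\varepsilon(B)\circ h\circ Va\circ\eta(A')$ to the second displayed morphism. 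Concatenating, and noting that all operations used (composition in $\scB$, the homotopies supplied by $\varepsilon$ and $\eta$, and path concatenation) are continuous jointly in $h$ and $(a,b)$, gives the required homotopy, so that $\alpha$ is a natural transformation up to homotopy from $F$ to $G$. I expect no genuine obstacle here: the argument is pure bookkeeping of iterated composites, the single point meriting care being this joint continuity, which is automatic precisely because Definition \ref{3_6} was set up so that the homotopies witnessing that $\varepsilon$, respectively $\eta$, are natural transformations up to homotopy depend continuously on the morphism at which they are evaluated.
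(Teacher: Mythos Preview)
Your argument is correct and is precisely the routine verification the paper has in mind: the paper states that ``the proofs of the following two lemmas are easy exercises'' and omits them, so there is no alternative approach to compare against. Your treatment of part~(2)---concatenating the homotopy $\varepsilon(B')\circ Sb\simeq Tb\circ\varepsilon(B)$ supplied by $\varepsilon$ with the homotopy $Va\circ\eta(A')\simeq\eta(A)\circ Ua$ supplied by $\eta$, and noting that joint continuity in $(h,a,b)$ follows from the continuity built into Definition~\ref{3_6}---is exactly the expected bookkeeping.
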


\begin{prop}\label{3_9} Let $F:\scA \leftrightarrows \scB:G$ be a pair of continuous functors of topologically enriched categories.
 Suppose there are natural transformations up homotopy
$$\mu(A): A\to GF(A)\qquad\textrm{and}\qquad \eta(X):FG(X)\to X$$
such that $$G(\eta(X))\circ \mu(GX)\simeq \id_{GX}\qquad\textrm{and}\qquad \eta(FA)\circ (F(\mu(A))\simeq \id_{FA}.$$
Then $F$ and $G$ are a homotopy adjoint pair. (We call $\mu: \Id \to GF$ the \textit{homotopy unit} and $\eta:FG\to \Id$
the \textit{homotopy counit} of the resulting homotopy adjunction.)
\end{prop}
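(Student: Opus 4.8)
The plan is to build the homotopy adjunction directly from $\mu$ and $\eta$ by the same formulas that turn a unit and counit into a strict adjunction, and then to verify the two defining properties from Definition \ref{3_6} — that the resulting adjunction map is a natural transformation up to homotopy, and that each of its components is a homotopy equivalence — using Lemmas \ref{3_7} and \ref{3_8} together with the two triangle identities assumed in the statement.

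Concretely, I would set
$$\alpha(A,X):\scB(FA,X)\longrightarrow \scA(A,GX),\qquad g\longmapsto G(g)\circ \mu(A),$$
and introduce the candidate homotopy inverse
$$\beta(A,X):\scA(A,GX)\longrightarrow \scB(FA,X),\qquad f\longmapsto \eta(X)\circ F(f).$$
Both are continuous since $F,G$ are continuous and composition in $\scA$ and $\scB$ is continuous. The first step is to check that $\alpha$ is a natural transformation up to homotopy between the two bifunctors $\scA^{\op}\times\scB\to\Top$ sending $(A,X)$ to $\scB(FA,X)$ and to $\scA(A,GX)$. For this I would factor $\alpha(A,X)$ as the continuous map $\scB(FA,X)\xrightarrow{\,G\,}\scA(GFA,GX)$ — a genuine natural transformation of bifunctors by Lemma \ref{3_8}(1) (with $T=G$, precomposed with $(A,X)\mapsto(FA,X)$) — followed by the map induced by precomposition with $\mu(A)$, which is a natural transformation up to homotopy by Lemma \ref{3_8}(2), applied with the roles of $S,T$ played by $\Id$ and of $U,V$ by $\Id$ and $GF$, so that ``$\eta$'' there is our $\mu:\Id\to GF$. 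By Lemma \ref{3_7}(1),(2) the composite is then a natural transformation up to homotopy; note that this uses the \emph{strong} form of homotopy-naturality recorded after Definition \ref{3_6}, i.e.\ that the witnessing homotopies depend continuously on the morphism.

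The second step is to show that $\beta(A,X)$ is a homotopy inverse of $\alpha(A,X)$. For $g\in\scB(FA,X)$ one has
$$\beta(A,X)\,\alpha(A,X)(g)=\eta(X)\circ FG(g)\circ F(\mu(A)).$$
Homotopy-naturality of $\eta:FG\to\Id$ applied to the morphism $g:FA\to X$ gives a homotopy $\eta(X)\circ FG(g)\simeq g\circ\eta(FA)$, continuous in $g$; post-composing it with $F(\mu(A))$ on the right and then using the triangle identity $\eta(FA)\circ F(\mu(A))\simeq\id_{FA}$, composed with $g$ on the left, yields a homotopy $\beta(A,X)\,\alpha(A,X)(g)\simeq g$ that is jointly continuous in $g$ and the homotopy parameter, hence a homotopy $\beta(A,X)\circ\alpha(A,X)\simeq\id$ of self-maps of $\scB(FA,X)$. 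Dually, for $f\in\scA(A,GX)$,
$$\alpha(A,X)\,\beta(A,X)(f)=G(\eta(X))\circ GF(f)\circ\mu(A),$$
and homotopy-naturality of $\mu:\Id\to GF$ applied to $f:A\to GX$, together with the triangle identity $G(\eta(X))\circ\mu(GX)\simeq\id_{GX}$, gives by the same bookkeeping a homotopy $\alpha(A,X)\circ\beta(A,X)\simeq\id$. Thus each $\alpha(A,X)$ is a homotopy equivalence, and combined with the first step this exhibits $F,G$ as a homotopy adjoint pair.

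I expect the only genuinely delicate point to be the passage from ``pointwise homotopy-commutative'' to ``homotopy-commutative as maps of function spaces'': both the verification that $\alpha$ is natural up to homotopy and the two homotopy-inverse computations rely on the fact, stressed after Definition \ref{3_6}, that a natural transformation up to homotopy comes equipped with homotopies $H(f)$ that are continuous in $f$, so that pre- and post-composing these with fixed morphisms still produces jointly continuous homotopies. Once this is granted the remaining content is exactly the familiar unit--counit computation for an adjunction, carried out one $\simeq$ at a time; $\mu$ becomes the homotopy unit and $\eta$ the homotopy counit of the adjunction.
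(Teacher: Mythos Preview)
Your proposal is correct and follows essentially the same route as the paper: define $\alpha$ and $\beta$ by the standard unit--counit formulas, invoke Lemma~\ref{3_8} (together with Lemma~\ref{3_7}) for naturality up to homotopy, and then verify the two composites are homotopic to the identity using homotopy-naturality of $\eta$ (resp.\ $\mu$) and the triangle identities. The paper organizes the verification of $\beta\circ\alpha\simeq\id$ as a diagram chase rather than writing out the composite explicitly, but the content is identical.
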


\begin{proof}
 We define 
$$
\alpha(A,X):\scB(FA,X)\xrightarrow{G} \scA(GFA,GX)\xrightarrow{\mu(A)^\ast} \scA(A,GX)$$
and
$$
\beta(A,X):\scA(A,GX)\xrightarrow{F}\scB(FA,FGX)\xrightarrow{\eta(X)_\ast}\scB(FA,X).$$
By \ref{3_8} both are natural transformations up to homotopy. The following diagram
shows that $\beta(A,X)\circ \alpha(A,X)\simeq \id$.
$$\xymatrix
{
&& \scA(GFA,GX)\ar[dd]^F\ar[rr]^{\mu(A)^\ast}   && \scA(A,GX)\ar[dd]^F\\
&& & \textrm{II} & \\
\scB(FA,X)\ar[rruu]^G\ar[ddrr]^{\eta(FA)^\ast} & \textrm{I} & \scB(FGFA,FGX)\ar[rr]^{(F\mu(A))^\ast}\ar[dd]^{\eta(X)_\ast} &&
 \scB(FA,FGX)\ar[dd]^{\eta(X)_\ast}\\
&& & \textrm{III} & \\
&& \scB(FGFA,X)\ar[rr]^{(F\mu(A))^\ast} && \scB(FA,X)
}
$$
The squares II and III commute and square I commutes up to homotopy, and $(F\mu(A))^\ast\circ \eta(FA)^\ast\simeq \id$ by
assumption.\\
 The proof that $\alpha(A,X)\circ \beta(A,X)\simeq \id$ is dual.
\end{proof}
\begin{defi}\label{3_18}
A homotopy adjunction
$F: \scA \leftrightarrows \scB :G$
is called \textit{natural} if there is a natural homotopy equivalence
$$\beta(A,X):\scA(A,GX)\to \scB(FA,X)$$
and  \textit{conatural} if there is a natural homotopy equivalence
$$\alpha(A,X):\scB(FA,X)\to \scA(A,GX)$$
(because in this case there is a natural homotopy unit, respectively, a natural
homotopy counit).
\end{defi}

\section{The classifying space and the Moore loop space functor}
\begin{leer}\label{4_1}
 \textbf{The 2-sided bar construction:} Let $\scC$ be a small topologically enriched category, $X$ a $\scC^{op}$-diagram
and $Y$ a $\scC$-diagram in $\Top$. 
 We define a simplicial space $B_\bullet (X,\scC ,Y)$ by
$$
\begin{array}{rcl}
B_0(X,\scC,Y) & = & \coprod_{A \in \scC}X(A)\times Y(A) \\
B_n(X,\scC,Y)& =& \coprod_{A,B\in \scC} X(B)\times \scC_n(A,B)\times Y(A)\quad \textrm{for } n>0,
\end{array}
$$
where $\scC_n(A,B)$ is the space of all composable $n$-tuples of morphisms $(f_1,\ldots,f_n)$ such that source$(f_n)\ =\ A$ and target$(f_1)\ =\ B$,
with boundary and degeneracy maps given by
$$
\begin{array}{ll}
d^i(x,f_1,\ldots,f_n,y)= (X(f_1)(x),f_2,\cdots,f_n,y) & i=0\\
d^i(x,f_1,\ldots,f_n,y)= (x,f_1,\ldots,f_i\circ f_{i+1},\ldots,f_n,y) & 0<i<n\\
d^i(x,f_1,\ldots,f_n,y)= (x,f_1,\ldots,f_{n-1},Y(f_n)(y))& i=n\\
s^i(x,f_1,\ldots,f_n,y)= (x,f_1,\ldots,f_i,\id, f_{i+1},\ldots,f_n,y) & 0\le i\le n
\end{array}
$$
Let $B(X,\scC,Y)=|B_\bullet (X,\scC ,Y)|$ be its topological realization.
\end{leer}

We consider a topological monoid as a topologically enriched category with one object and define the \textit{classifying space functor}
$$
B:\Mon\longrightarrow\top
$$
by $BM=B(\ast,M, \ast)$. 
Since $BM$ is well-pointed if $M$ is, the classifying space functor is a functor of pairs
$$B:(\Mon,\Monw)\to (\top,\topw).$$

\begin{leer}\label{4_1a}
 We will also work with the variant
$$
\widetilde{B}:\Mon\longrightarrow\top
$$
where the topological realization of $ B_\bullet (\ast,M, \ast)$ is replaced by the fat realization which disregards degeneracies.
Since the fat realization does not make use of identities the functor $\widetilde{B}$ extends to $\Sgp$; moreover,
$\widetilde{B}G$ is well-pointed for any semigroup $G$ so that 
$$\widetilde{B}:\Sgp \to \topw.$$
By construction, there is a natural homeomorphism $\widetilde{B}(G)\cong B(G_+)$ for semigroups $G$, and the diagram
$$\xymatrix{
\widetilde{B}(M)\ar[rr]^\cong\ar[dr]_{p(M)} & & B(M_+)\ar[ld]^{B(\kappa(M))}\\
&B(M)&
}
$$
commutes for monoids $M$, 
where $\kappa: M_+\to M$ is the counit of the adjunction
\ref{2_3b} and $p: \widetilde{B}\to B$ is the natural projection.

It is well-known that $p(M):\widetilde{B}(M)\to B(M)$ and hence $B(\kappa(M)):B(M_+)\to B(M)$ are homotopy equivalences if
$M$ is well-pointed.
\end{leer}

\begin{leer}\label{4_2}
\textbf{The Moore path and loop space}: Let $X$ be a (not necessarily based) space. 
The \textit{Moore path space} of $X$ is the subspace $\P (X)\subset X^{\mathbb{R}_+}\times\mathbb{R}_+$ consisting of 
all pairs $(w,r)$ such that $w(t)=w(r)$ for all $t\ge r$. 
We call $r$ the \textit{length} of $w$ and denote it by $r=l(w)$.\\
For two paths $(w_1,r_1)$ and $(w_2,r_2)$ with $(w_1)(r_1)= (w_2)(0)$ we define
\textit{path addition} by
$$
(w_1,r_1)+(w_2,r_2)=(w, r_1+r_2)
$$
with 
$$
w(t)=\left\{
\begin{array}{ll}
w_1(t), & 0\le t\le r_1,\\
w_2(t-r_1) & r_1 \le t.
\end{array}\right.
$$

If $(X,\ast)$ is a based space, the \textit{Moore loop space} $\Omega'(X)\subset \P (X)$ is the subspace of 
all pairs $(w,r)$ with $(w)(r)=(w)(0)=\ast$. Path addition defines a monoid structure on $\Omega'X$ with
$(c,0)$ as unit, where $c:\mathbb{R}_+\to X$ is  the constant map to $\ast$. The usual loop space $\Omega X$ is 
embedded in $\Omega'(X)$ as a deformation retract.

It follows from \cite[(11.3)]{DKP} that $\Omega'(X)$ is well-pointed if $X$ is. Hence $\Omega'$ defines a functor of pairs
$$\Omega': (\top,\topw)\to(\Mon,\Monw).$$
\end{leer}

Following \ref{3_6b} we  have pairs of continuous functors
$$\BH : \scH\Mon\leftrightarrows \scH\top:\OH$$
and
$$\BwH:\scH\Monw\leftrightarrows \scH\topw =\topw:\OwH.$$
We shall prove

\begin{theo}\label{4_3}
The functors
$$
\xymatrix{
\BwH:\mathcal{H}\Monw \ar@<0.5ex>[r] & \topw:\OwH \ar@<0.5ex>[l]
}
$$
are a conatural homotopically adjoint pair: There is a continuous natural map 
$$
\lambda(WM,X): \Mon(WM, W\Omega'X)\longrightarrow\top(BWM,X)
$$
which is a homotopy equivalence.
\end{theo}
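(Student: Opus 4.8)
The plan is to produce the map $\lambda(WM,X)$ from a chain of natural transformations and then verify it is a homotopy equivalence by the usual ``replace the source with something manageable'' technique. First I would recall the standard adjunction-like relationship between $B$ and $\Omega'$ at the level of the bar construction: for a monoid $N$ there is a natural map $BN \to B(\text{something})$ and, dually, for a space $X$ the counit $B\Omega'X \to X$ (evaluation of Moore loops) together with a natural homotopy equivalence when $X$ is well-pointed. Concretely, I would build $\lambda$ as the composite
$$
\Mon(WM, W\Omega'X) \xrightarrow{\ \varepsilon(\Omega'X)_\ast\ } \Mon(WM, \Omega'X) \xrightarrow{\ B\ } \top(BWM, B\Omega'X) \xrightarrow{\ (\text{ev})_\ast\ } \top(BWM, X),
$$
where $\text{ev}:B\Omega'X\to X$ is the natural evaluation map. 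The first arrow is a homotopy equivalence by Proposition \ref{2_4}(1) (applied with $p=\varepsilon(\Omega'X):W\Omega'X\to\Omega'X$ a weak equivalence and $WM$ the cofibrant object built from a well-pointed monoid — note $\Omega'X$ and $W\Omega'X$ need not be well-pointed, which is exactly why we must land in $W\Omega'X$ rather than $\Omega'X$ and why $M$ well-pointed is used). Continuity and naturality of each arrow are clear from the constructions in Section 4 and from the enrichment; so the content is showing that the middle-plus-last composite $\top(BWM,B\Omega'X)\to\top(BWM,X)$, i.e. $(\text{ev})_\ast\circ B$, is a homotopy equivalence after restricting to homomorphisms in the source.

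The key step is to analyze $B:\Mon(WM,\Omega'X)\to\top(BWM,B\Omega'X)$ followed by postcomposition with $\text{ev}$. Here I would use that $WM$ is, algebraically, a free monoid (Observation \ref{2_3a}), so $BWM$ is a bar construction on a free monoid and there is a natural homotopy equivalence $BWM \simeq \Sigma(\text{space of indecomposables})$ — more usefully, I would exploit the James-type/group-completion machinery: since $W M$ is well-pointed when $M$ is, and $\Omega'X$ is grouplike (stated after Definition \ref{1_7}), the natural map $WM \to \Omega'BWM$ is a group completion, and for grouplike targets mapping into a grouplike monoid is controlled by the classifying space. Precisely, I would invoke the adjunction between $B$ and $\Omega'$ that holds up to homotopy: a homomorphism $WM\to \Omega'X$ of monoids is the same (up to the natural homeomorphism $\Mon(N,\Omega'X)\cong \top(BN\,\text{-ish}, X)$ coming from the standard simplicial identity $B(\ast,N,\ast)$ versus $\text{Map}(S^1_\bullet,\ldots)$) as a based map $BN\to X$ — this is the monoid-level statement that $B$ is left adjoint to $\Omega'$ on the nose in an appropriate sense for the bar/cobar pair, which I would either cite from \cite{Fied} or reprove via the explicit formulas in \ref{4_1} and \ref{4_2}. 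Combining: $\Mon(WM,\Omega'X)$ maps by a homotopy equivalence onto $\top(BWM,X)$, and this map is exactly $(\text{ev})_\ast\circ B$.

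I would organize the verification in two halves. \emph{Existence and naturality} of $\lambda$: assemble the composite above, check continuity (each factor is continuous: $\varepsilon$-postcomposition, $B$ is a continuous functor, $\text{ev}$-postcomposition), and check naturality in both $WM$ (hence in $M$, through $W$) and $X$, using naturality of $\varepsilon$, of $B$, and of $\text{ev}$. \emph{Homotopy equivalence}: the first factor is a homotopy equivalence by Proposition \ref{2_4}(1); for the remaining composite I would produce an explicit homotopy inverse using the homotopy unit $\mu_M:M\to\Omega'BM$ of Theorem \ref{1_8} — given a based map $g:BWM\to X$, loop it to get $\Omega'g:\Omega'BWM\to\Omega'X$ and precompose with the universal map $WM\to\Omega'BWM$ to land back in $\Mon(WM,\Omega'X)$ — and then check the two round-trip composites are homotopic to the identity, one direction using the universal property in Theorem \ref{1_8} and the other using that $\text{ev}\circ B(\mu)\simeq \id$ and $B(\text{ev-adjoint})$ unwinds to the identity on $BWM$ (the triangle identities of the $B\dashv\Omega'$ homotopy adjunction, Proposition \ref{3_9}).

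The main obstacle I anticipate is the last check — establishing that the composite $\top(BWM,\Omega'X\text{-side})\to\top(BWM,X)$ really is a homotopy equivalence and not merely a weak one, since we are working in the Str{\o}m setting with no CW hypotheses and with $\Omega'X$, $W\Omega'X$ not well-pointed. The point where this bites is the evaluation map $\text{ev}:B\Omega'X\to X$: it is a genuine (not just weak) homotopy equivalence when $X$ is well-pointed, by the standard analysis of the bar construction of a Moore loop space (the simplicial space $B_\bullet(\ast,\Omega'X,\ast)$ is proper/good because $\Omega'X$ is well-pointed, by \cite[(11.3)]{DKP}, so realization behaves well, and the resulting map is a homotopy equivalence — one can cite \cite{Fied} or give the argument via the path-loop fibration in the Hurewicz setting). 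Given that, $(\text{ev})_\ast$ is a homotopy equivalence by Lemma \ref{3_3e} (with $A=BWM$ well-pointed since $M$, hence $WM$, hence $BWM$ is well-pointed), and $B:\Mon(WM,\Omega'X)\to\top(BWM,B\Omega'X)$ restricted suitably is handled by the explicit inverse above; assembling these is then routine.
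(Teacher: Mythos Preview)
Your overall strategy---define $\lambda$ via the counit $\eta(X)=\ev(X)\circ B\varepsilon(\Omega'X)$ and verify it is a homotopy equivalence through Proposition \ref{3_9} by constructing a homotopy unit $\mu$ and checking the triangle identities---matches the paper's approach, and your formula for $\lambda$ agrees with the paper's (by naturality of $B$). But two things go wrong in the execution.

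First, a circularity: you invoke Theorem \ref{1_8} to supply the homotopy unit $\mu_M:M\to\Omega'BM$ and its universal property. In the paper, that theorem is Proposition \ref{5_5}, proved in Section~5 as a \emph{consequence} of Theorem \ref{4_3}. The unit $\mu(WM):WM\to W\Omega'BWM$ has to be constructed from scratch (\ref{4_6}): one builds a monoid $P(EWM,WM)$ of Moore paths in $EWM$ ending in $WM$, shows the endpoint projection to $WM$ is a weak equivalence of monoids, projects to $\Omega'BWM$, and then uses Proposition \ref{2_4} to choose a homotopy inverse in $\Monw$. This construction, and the explicit $h$-morphism $\zeta(M):\overline{W}M\to P(EM,M)$ that makes it workable, is where the real labor sits; it cannot be imported from \ref{1_8}.

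Second, and more seriously, your ``anticipated obstacle'' paragraph proposes to resolve the hard step by arguing that $\ev:B\Omega'X\to X$ is a genuine homotopy equivalence for well-pointed $X$, so that $(\ev)_\ast$ is one by Lemma \ref{3_3e}. This is false: $B\Omega'X$ is always path-connected, so $\ev$ cannot be a homotopy equivalence when $X$ is not path-connected, and even for path-connected $X$ one needs a Dold-space hypothesis (Proposition \ref{4_11}(2)). The paper avoids this entirely. The triangle identity $\eta(BWM)\circ B\mu(WM)\simeq\id$ (Proposition \ref{4_8}) is obtained not by showing $\ev$ is an equivalence but by an explicit simplex-by-simplex homotopy in $BM$ (Lemma \ref{4_9}) comparing $\ev\circ p\circ\widetilde{B}\rho'\circ\widetilde{B}\zeta$ with $p\circ\widetilde{B}\bar\varepsilon$. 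The other identity $W\Omega'\eta(X)\circ\mu(W\Omega'X)\simeq\id$ (Proposition \ref{4_12}) uses that $\Omega'\ev(X)$---not $\ev(X)$---is a weak equivalence for any well-pointed $X$ (Proposition \ref{4_11}(3)), together with the grouplike-ness of $\Omega'X$ to make $\mu(W\Omega'X)$ a weak equivalence, and then a formal cancellation argument. Your plan would need this more delicate analysis in place of the incorrect claim about $\ev$. (Incidentally, $\Omega'X$ \emph{is} well-pointed when $X$ is, by \cite[(11.3)]{DKP}; your remark to the contrary is mistaken, though it does not affect the main issue.)
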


As an immediate consequence we obtain

\begin{theo}\label{4_3a}
The functors
$$
\xymatrix{
\BH:\mathcal{H}\Mon \ar@<0.5ex>[r] & \scH\top:\OH \ar@<0.5ex>[l]
}
$$
are a conatural homotopically adjoint pair: There is a continuous natural map 
$$
\lambda(WVM,V^tX): \Mon(WVM, WV\Omega'V^tX)\longrightarrow\top(V^tBWVM,V^tX)
$$
which is a homotopy equivalence. 
\end{theo}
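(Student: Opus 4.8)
\textbf{Proof proposal for Theorem \ref{4_3a}.}
The plan is to deduce Theorem \ref{4_3a} from Theorem \ref{4_3} by a formal argument using the whiskering functor $V$ together with Proposition \ref{2_4}, Proposition \ref{2_7}, and the examples in \ref{3_4}. The two relevant cofibrant replacement functors are $WV:\Mon\to\Mon$ (with counit $WVM\xrightarrow{\varepsilon(VM)}VM\xrightarrow{q(M)}M$, giving the $huh$-morphisms) and $V^t:\top\to\top$ (with counit $q:V^t\to\Id$), while on the well-pointed side the replacements are $W:\Monw\to\Monw$ and $\Id:\topw\to\topw$. Since $VM$ is well-pointed and $V^tX$ is well-pointed, Theorem \ref{4_3} applies verbatim to the objects $VM$ and $V^tX$, yielding a continuous natural homotopy equivalence
$$\lambda(WVM,V^tX):\Mon(WVM,W\Omega'V^tX)\longrightarrow\top(BWVM,V^tX).$$
The only discrepancy with the claimed map is the target of $\Omega'$: in Theorem \ref{4_3a} we want $WV\Omega'V^tX$ in place of $W\Omega'V^tX$, and we want $V^tBWVM$ in place of $BWVM$ in the source of the $\top$-mapping space. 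Both replacements are absorbed by weak equivalences.

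First I would handle the loop-space side. By \ref{2_6b} the natural map $q(\Omega'V^tX):V\Omega'V^tX\to\Omega'V^tX$ is a weak equivalence in $\Mon$, hence $Wq(\Omega'V^tX):WV\Omega'V^tX\to W\Omega'V^tX$ is a weak equivalence; since $WVM$ is well-pointed, Proposition \ref{2_4}(1) shows that
$$(Wq(\Omega'V^tX))_\ast:\Mon(WVM,WV\Omega'V^tX)\to\Mon(WVM,W\Omega'V^tX)$$
is a homotopy equivalence. Next, on the space side, the natural map $q(BWVM):V^tBWVM\to BWVM$ is a homotopy equivalence in $\top$ (indeed $BWVM$ is well-pointed, so it is even a based homotopy equivalence), hence $q(BWVM)^\ast:\top(BWVM,V^tX)\to\top(V^tBWVM,V^tX)$ is a homotopy equivalence. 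Composing, I obtain the continuous natural map
$$\lambda(WVM,V^tX):\Mon(WVM,WV\Omega'V^tX)\xrightarrow{(Wq)_\ast^{-1}}\Mon(WVM,W\Omega'V^tX)\xrightarrow{\lambda}\top(BWVM,V^tX)\xrightarrow{q^\ast}\top(V^tBWVM,V^tX),$$
which is a homotopy equivalence as a composite of three homotopy equivalences. Here ``$(Wq)_\ast^{-1}$'' means a chosen homotopy inverse, natural up to homotopy by Lemma \ref{3_7}(3); to get an honest natural map one instead composes with $(Wq)_\ast$ in the other direction, or simply records that $\lambda\circ(Wq)_\ast$ is the desired natural homotopy equivalence in the opposite direction and invokes Lemma \ref{3_7}(3) once more. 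Finally I would observe that, by the definition of $\BH$ and $\OH$ in \ref{3_6b} applied to the replacement functors $WV$ and $V^t$, the domain and codomain of $\lambda(WVM,V^tX)$ are exactly $\mathcal{H}\Mon(M,\Omega'^{\mathcal{H}}V^tX)$-type mapping spaces, so conaturality of the homotopy adjunction follows from naturality of $\lambda$, of the $q$'s, and of the $W$- and $V$-constructions, together with Definition \ref{3_6} and \ref{3_18}.

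The one point requiring genuine care — rather than bookkeeping — is that Theorem \ref{4_3} produces a map $\lambda$ that is natural \emph{on the nose} in $WM$ and $X$, and I must check that all the intermediate maps $q(\Omega'V^tX)$, $q(BWVM)$, and the functorial data of $W$, $V$, $V^t$ assemble into a transformation that is continuous and natural (up to homotopy) jointly in $M$ and $X$, so that the composite qualifies as a homotopy adjunction in the sense of Definition \ref{3_6}. This is where I expect the main obstacle: the naturality of the whiskering counit $q(M):VM\to M$ is only that of a weak equivalence in $\Mon$ (it is not a based map after $W$ is applied in the relevant way), so one cannot blindly compose strict natural transformations; instead one must use Lemma \ref{3_7} and Lemma \ref{3_8} to propagate ``natural up to homotopy'' through the composite, exactly as in the proof of Proposition \ref{3_9}. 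Once that formal bookkeeping is in place, conaturality (Definition \ref{3_18}) is immediate because $\alpha=\lambda$ itself is the natural homotopy equivalence $\scB(FA,X)\to\scA(A,GX)$, with no homotopy inverse needed in that direction.
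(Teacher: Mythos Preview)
Your approach is essentially the paper's: substitute $M\mapsto VM$ and $X\mapsto V^tX$ into Theorem \ref{4_3} (both replacements land in the well-pointed subcategories, so \ref{4_3} applies), then post-compose with $q(BWVM)^\ast$ to pass from $BWVM$ to $V^tBWVM$. You are in fact more careful than the paper on one point: the paper writes $\Mon(WVM,WV\Omega'V^tX)$ directly after substitution, silently absorbing the passage from $W\Omega'V^tX$ to $WV\Omega'V^tX$, whereas you make this explicit via $Wq(\Omega'V^tX)$ and Proposition \ref{2_4}.

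There is, however, a direction slip in your write-up. Since $Wq(\Omega'V^tX):WV\Omega'V^tX\to W\Omega'V^tX$, postcomposition $(Wq)_\ast$ already goes from $\Mon(WVM,WV\Omega'V^tX)$ to $\Mon(WVM,W\Omega'V^tX)$—the direction you want. No homotopy inverse is needed, and the composite $q(BWVM)^\ast\circ\lambda\circ(Wq)_\ast$ is a strictly natural map, because $q$, $\lambda$, and $W$ are all natural on the nose. Your concerns about Lemma \ref{3_7}(3) and naturality-up-to-homotopy of an inverse are therefore unnecessary; once you relabel that first arrow as $(Wq)_\ast$, the conaturality (Definition \ref{3_18}) is immediate and the last paragraph of your proposal can be dropped.
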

\begin{proof}
 Replacing $M$ by $VM$ and $X$ by $V^tX$ in Proposition \ref{4_3} we obtain a natural
homotopy equivalence
$$\Mon(WVM,WV\Omega'V^tX)\xrightarrow{\simeq} \top(BWVM,V^tX).$$
Since $BWVM$ is well-pointed the natural map $q(BWVM):V^tBWVM \to BWVM$ is a based
homotopy equivalence inducing a natural homotopy equivalence
$$ q(BWVM)^\ast : \top(BWVM,V^tX) \to \top(V^tBWVM,V^tX).$$
\end{proof}

Passing to homotopy classes (see \ref{3_3}) we obtain 

\begin{theo}\label{4_3b}
 The functors
$$\Ho B:\Ho \Mon\leftrightarrows \Ho \top:\Ho \Omega'$$
are an adjoint pair. Moreover, $\Ho B$ is the left derived of $\gamma_{\top}\circ B$ and $\Ho \Omega'$
the left derived of $\gamma_{\Mon}\circ \Omega'$.
\end{theo}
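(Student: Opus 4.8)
The plan is to read off Theorem~\ref{4_3b} as the path-component shadow of the enriched homotopy adjunction of Theorem~\ref{4_3a}, using the descriptions of the localizations from Section~3. First I would invoke Proposition~\ref{3_3} together with Examples~\ref{3_4}.1 and \ref{3_4}.4: since $WV:\Mon\to\Mon$ (with $\varepsilon(V-)$ followed by $q$) and $V^t:\top\to\top$ (with $q$) are strong cofibrant replacement functors for the relevant classes of weak equivalences, we have
$$\Ho\Mon=\pi\scH\Mon,\qquad \Ho\top=\pi\scH\top,$$
where $\scH\Mon(M,N)=\Mon(WVM,WVN)$ and $\scH\top(X,Y)=\top(V^tX,V^tY)$, and $\gamma_\Mon$, $\gamma_\top$ send a morphism to the homotopy class of its $WV$- resp.\ $V^t$-image. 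Because $B$ and $\Omega'$ are continuous they preserve homotopy equivalences, so the continuous functors $\BH:\scH\Mon\to\scH\top$ and $\OH:\scH\top\to\scH\Mon$ of \ref{3_6b} induce, on passing to homotopy classes, functors $\Ho B:\Ho\Mon\to\Ho\top$ and $\Ho\Omega':\Ho\top\to\Ho\Mon$.

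For the ``moreover'' clause I would apply Proposition~\ref{3_6a} twice. The functor $\gamma_\top\circ B:\Mon\to\Ho\top$ sends homotopy equivalences to isomorphisms (a homotopy equivalence $f$ in $\Mon$ has $Bf$ a homotopy equivalence, hence a weak equivalence in $\top$, hence invertible after $\gamma_\top$), and $\Mon$ carries the cofibrant replacement functor $WV$; so $\bL(\gamma_\top\circ B)$ exists and is the functor induced by $\gamma_\top\circ B\circ WV$, which on objects sends $M$ to $BWVM$ and is therefore identified with $\Ho B$. Symmetrically, $\gamma_\Mon\circ\Omega':\top\to\Ho\Mon$ sends homotopy equivalences to isomorphisms (since $\Omega'$ carries a homotopy equivalence of spaces to a homotopy equivalence of monoids, i.e.\ a weak equivalence in $\Mon$), and $\top$ carries the cofibrant replacement functor $V^t$, so $\bL(\gamma_\Mon\circ\Omega')$ exists and is identified with $\Ho\Omega'$.

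It then remains to produce the adjunction isomorphism. Theorem~\ref{4_3a} supplies a continuous natural homotopy equivalence
$$\lambda(WVM,V^tX)\colon \scH\Mon(M,\OH X)=\Mon(WVM,WV\Omega'V^tX)\ \xrightarrow{\ \simeq\ }\ \top(V^tBWVM,V^tX)=\scH\top(\BH M,X),$$
natural (up to homotopy) in $M\in\scH\Mon$ and $X\in\scH\top$. Applying $\pi_0$ and using that a homotopy equivalence induces a bijection on path components, while a natural transformation up to homotopy between $\Top$-valued functors becomes a strict natural transformation after $\pi_0$ (the homotopies collapse), we obtain for all $M$ and $X$ a bijection
$$\Ho\Mon(M,\Ho\Omega'\,X)\ \cong\ \Ho\top(\Ho B\,M,X),$$
natural in $M\in\Ho\Mon$ and in $X\in\Ho\top$. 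This is precisely an adjunction with $\Ho B$ left adjoint to $\Ho\Omega'$.

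I expect the only real point requiring care --- the ``main obstacle'', such as it is --- to be checking that the naturality of $\lambda$ furnished by Theorem~\ref{4_3a} is naturality with respect to the enriched hom-spaces $\scH\Mon(\cdot,\cdot)$ and $\scH\top(\cdot,\cdot)$, equivalently with respect to $Q$-morphisms ($huh$-morphisms and morphisms of $\scH\top$), so that after $\pi_0$ the bijection is natural over the morphisms of $\Ho\Mon$ and $\Ho\top$ and not merely over strict homomorphisms and strict based maps. This is, however, exactly the form in which $\lambda$ and the functors $\BH,\OH$ are built in the proof of Theorem~\ref{4_3}; granting it, Theorem~\ref{4_3b} is a purely formal passage from a $\Top$-enriched homotopy adjunction to an ordinary adjunction of homotopy categories.
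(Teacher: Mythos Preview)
Your proposal is correct and follows essentially the same approach as the paper, which proves Theorem~\ref{4_3b} in one sentence by saying it ``follows from our explicit description of the localizations and the derived functors in Section 3''; you have simply unpacked that sentence using Proposition~\ref{3_3}, Examples~\ref{3_4}.1 and \ref{3_4}.4, Proposition~\ref{3_6a}, and \ref{3_6b}, exactly as intended. One small sharpening: the map $\lambda$ of Theorem~\ref{4_3a} is in fact strictly natural (it is built as $\eta(X)_\ast\circ B$ with $\eta$ a genuine natural transformation), not merely natural up to homotopy, so your concern about the ``main obstacle'' dissolves and the passage to $\pi_0$ is completely formal.
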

\begin{proof}
 This follows from our explicit description of the localizations and the derived functors in Section 3.
\end{proof}

The rest of this Section is devoted to the proof of
 Theorem \ref{4_3}. By \ref{3_9} it suffices to construct a homotopy unit $\mu: \Id_{\scH\Monw}\to \OwH\BwH $ and a homotopy counit
$\eta: \BwH\OwH \to \Id_{\scH\topw}$. Then $\lambda(WM,X)$ is the composite
$$
\Mon(WM, W\Omega'X)\xrightarrow{B}\top(BWM,BW\Omega'X)\xrightarrow{\eta(X)_\ast} \top(BWM,X).$$

\begin{leer}\label{4_4} This means, we have to construct continuous homomorphisms
$$\mu(WM):WM\longrightarrow W\Omega'BWM $$
which constitute a natural transformation up to homotopy with respect to 
homomorphisms $WM\to WN$, and a natural transformation
$$\eta(X): BW\Omega'X\longrightarrow X,$$
such that\\
(1) $W\Omega'\eta(X)\circ\mu (W\Omega'X)\simeq\id_{W\Omega'X}$ in $\Monw$ and \\
(2) $\eta(BWM)\circ B\mu (WM)\simeq\id_{BWM}$ in $\topw$.\\
(For $\lambda$ to be a natural transformation we need $\eta$ to be a natural transformation.)
\end{leer}

\begin{leer}\label{4_5}
\textbf{The homotopy counit:} Let $X$ be a based space and let
$$
\Delta^n=\{(t_0,\ldots,t_n)\in\mathbb{R}^{n+1}; \quad \sum\limits^n_{i=0}t_i=1, \; t_i\ge 0\textrm{ for all } i\}
$$
denote the standard $n$-simplex. The \textit{evaluation map} 
$$
\ev(X): B\Omega' X=\left(\coprod\limits_{n\ge 0}(\Omega'X)^n\times\Delta^n\right)/\sim \ \longrightarrow X
$$
is defined by
$$
\ev(X)((w_1,\ldots,w_n)(t_0,\ldots,t_n)) =(w_1+\ldots+w_n)\left(\sum\limits^n_{i=1}t_i\cdot\sum\limits^i_{j=1}l(w_j)\right)
$$
where $l(w_j)$ is the length of $w_j$.
\end{leer}

The homotopy counit $\eta$ is the natural map
$$
\xymatrix{
\eta(X): BW\Omega'X \ar[rr]^(0.6){B\varepsilon(\Omega'X)}
&& B\Omega'X\ar[rr]^(0.5){\ev(X)} && X.
}
$$

\begin{leer}\label{4_6}
\textbf{The homotopy unit:} For a monoid $M$ let
$EM$ denote the 2-sided bar construction $B(M,M,\ast)$. Then
$$
z\cdot(x_0,x_1,\ldots,x_n)=(z\cdot x_0,x_1,\ldots,x_n)
$$
defines a left $M$-action on the simplicial space  $B_\bullet (M,M,\ast)$ and hence on $EM$.
\end{leer}

Let $P(EM)$ denote the space of Moore paths in $EM$ starting at the base-point $(e)$ in the 0-skeleton $M$ of $EM$. The endpoint projection
$$
P(EM)\longrightarrow EM
$$
is known to be a fibration. Moreover, it is a homotopy equivalence because $P(EM)$ and $EM$ are contractible. Let $P(EM,M)$ be the pullback
$$
\xymatrix{
P(EM,M) \ar[rr] \ar[d]^{\pi(M)}
&& P(EM) \ar[d]
\\
M \ar[rr]^i && EM
}
$$
where $i$ is the inclusion of the 0-skeleton, i.e. $P(EM,M)$ is the space of Moore paths in $EM$ starting at $(e)$ and ending in $M$. 
Then $\pi(M)$ is a fibration and a homotopy equivalence.
We define a monoid structure $\oplus$ in $P(EM,M)$ by
$$
w_1\oplus w_2=w_1+x\cdot w_2
$$
where $+$ is the usual path addition, $x\in M$ is the endpoint of $w_1$, and $x\cdot w_2$ is the path $t\mapsto x\cdot w_2(t)$. 
Then $\pi(M):P(EM,M)\to M$ is a homomorphism and hence a weak equivalence of monoids.

Factoring out the operation of $M$ on $EM$ we obtain a projection
$$
EM\to BM
$$
inducing a homomorphism
$$
\rho'(M):(P(EM,M),\oplus)\longrightarrow (\Omega'BM,+).
$$
Since we do not know whether or not $(P(EM,M))$ is well-pointed we apply the whiskering process to it and obtain a homomorphism
$$
\rho(M): V(P(EM,M),\oplus)\xrightarrow{q((P(EM,M))} (P(EM,M),\oplus)\xrightarrow{ \rho'(M)} (\Omega'BM,+).
$$
The homomorphism $\sigma (M): WV(P(EM,M)\to M$ defined by
$$\xymatrix{
WVP(EM,M)\ar[rrr]^{\varepsilon(VP(EM,M))}\ar[rrrd]_{\sigma (M)} &&& VP(EM,M)\ar[rr]^{q(P(EM,M))} &&
 P(EM,M)\ar[lld]^{\pi (M)} \\
&&& M &&
}
$$
is a weak equivalence.
All these constructions are functorial in $M$ and the maps between them are natural in $M$. 
We apply them to $WM$ rather than to $M$; in particular  $ \sigma(WM)$ is a homotopy equivalence in $\Monw$.

We choose a homotopy inverse of $ \sigma(WM)$ in $\Monw$
$$
\nu(WM):WM\longrightarrow WVP(EWM,WM),
$$
which is a natural transformation up to homotopy with respect to homomorphisms $WM\to WN$ by Lemma \ref{3_7}.

We define our homotopy unit by
$$
\xymatrix@C=4ex{
\mu(WM): WM \ar[rr]^(.45){\nu(WM)} 
&& WVP(EWM,WM) \ar[rr]^(.56){W\rho(WM)} 
&&
W\Omega'BWM,
}
$$
which is a natural transformation up to homotopy by Lemma \ref{3_7}.

Our verification of the conditions \ref{4_4} depends on an explicit description of an $h$-morphism $M\to\Omega'BM$ defined by a 
natural homomorphism
$$
\zeta '(M) :\overline{W}(M) \longrightarrow\Omega'BM
$$
and the interplay of $\overline{W}(M)$ and $WM$. 

We define $\zeta '(M)$ as a composite of homomorphisms
$$
\xymatrix{
\overline{W}(M)\ar[rr]^{\zeta(M)} && P(EM,M)\ar[rr]^(.6){\rho' (M)} && \Omega'BM
}
$$
The homomorphism $\zeta(M)$ maps the element represented by $(x_0,t_1,\ldots,x_n)$ to the path 
$$
v_0+v_1+\ldots+v_n
$$
of length $t_1+\ldots+t_n+1$ in the simplex $(e,x_0,x_1,\ldots,x_n)\times\Delta^{n+1}\subset EM$, where
$$
v_k(s)=(e,x_0,\ldots,x_n)\times(u_0,\ldots,u_{n+1})\quad\textrm{and}\quad l(v_k)=t_{k+1}
$$
with
$$
u_r=\left\{
\begin{array}{ll}
(1-s)\cdot t_r\cdot\prod^k_{j=r+1}(1-t_j) & r\le k\\
s & r=k+1\\
0 & r\ge k+2
\end{array}
\right.
$$
and the conventions that $t_0=1$ and $t_{n+1}=1$.

Observe that $+$ is the usual path addition of Moore paths in $EM$ and not the monoid structure of $P(EM,M)$.

\pagebreak
\textbf{Example:}  $(x_0,t_1,x_1,t_2,x_2)$ is mapped to the path $v_0+v_1+v_2$ of length $t_1+t_2+1$ given by

\begin{figure}[h]\centering
  \begin{tikzpicture}[scale=1.2]
   \draw[] (0,0) -- (6,0) -- (3,5) -- (0,0);
   \draw[] (0,0) -- (3,2);      
   \draw[] (3,2) -- (6,0);   
   \draw[] (3,2) -- (3,5);
   \draw[>=stealth',->,line width=1pt] (1,0.67) -- node [below] {\scriptsize $v_1$} (2,0.533); 
   \draw[>=stealth',->,line width=1pt] (0,0) -- node [below] {\scriptsize $v_0$}  (1,0.67);   
   \draw[,line width=1pt] (2,0.533) -- node [left] {\scriptsize $v_2$} (3,5); 
   \draw[>=stealth',->,line width=1pt] (2,0.53)  (2.5,2.73);    
   \draw[dashed] (2,0.533) -- (6,0);        
   \draw[fill] (0,0) circle(2pt) node[below=1mm] {$e$};
   \draw[fill] (6,0) circle(2pt) node[below=1mm] {$x_0x_1$};
   \draw[fill] (3,5) circle(2pt) node[above=1mm] {$x_0x_1x_2$};
   \draw[fill] (3,2) circle(2pt) node[right=1mm] {$x_0$}; 
   \draw[fill] (1,0.67) circle(2pt) node [above] {\scriptsize $t_1$} ;
   \draw[fill] (2,0.533) circle(2pt) node [below] {\scriptsize $t_2$};         
  \end{tikzpicture}
\caption{}
\end{figure}

\begin{leer}\label{4_7}
By construction, $\pi(M)\circ\zeta(M)=\overline{\varepsilon}(M)$. In particular, $\zeta(M):\overline{W}M\to P(EM,M)$ is a weak equivalence of semigroups.
\end{leer}

\begin{rema}\label{4_7b}
 We will show below that $\rho'(M):P(EM,M)\to \Omega'BM$ is a weak equivalence if $M$ is grouplike, so that
$\rho'(M)\circ \zeta(M)$ is an $h$-morphism which is a weak equivalence if $M$ is grouplike. It is well-known
that such an $h$-morphism exists, but to our knowledge there is no explicit description in the literature. 
\end{rema}

\begin{leer}\label{4_7a}
Consider the following diagram
$$
\xymatrix {
 && (\overline{W}WM)_+ \ar[lldd]_{\overline{\varepsilon}(WM)^+}\ar[rrdd]^{\zeta(WM)^+} &&\\
&& {\textrm{I}}&& \\
WM\ar[rr]^(0.35){\nu(WM)}\ar[rrdd]_{\id} && WVP(EWM,WM)\ar[rr]^{\sigma_1(WM)}\ar[dd]^{\sigma(WM)} && 
P(EWM,WM)\ar[lldd]^{\pi(WM)}\\
%& {\textrm{II}} && {\textrm{III}}&\\
\\
&& WM &&
}
$$
where $\sigma_1(WM)=q(P(EM,M))\circ \varepsilon(VP(EM,M))$ and $f^+:G_+\to M$ is the adjoint of the homomorphism $f:G\to M$ from
a semigroup into a monoid.
By definition of $\nu(WM)$ and $\sigma(WM)$  the left lower triangle commutes up to homotopy in $\Monw$ and 
the right lower triangle is commutative.
Since
$$
\pi(WM)\circ \zeta(WM)=\overline{\varepsilon}(WM)\simeq \pi(WM)\circ\sigma_1(WM)\circ \nu(WM)\circ \overline{\varepsilon}(WM)
$$
Proposition \ref{2_4} implies that
$$ \sigma_1(WM)\circ \nu(WM)\circ \overline{\varepsilon}(WM)\simeq \zeta(WM) \qquad \textrm{in }\ \Sgp$$
which in turn is equivalent to the saying that
 square I commutes up to homotopy in $\Mon$. 
\end{leer}
We are now in the position to prove

\begin{prop}\label{4_8}
$\eta(BWM)\circ B\mu(WM)\simeq \id_{BWM}$ in  $\topw$.
\end{prop}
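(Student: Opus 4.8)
\textit{Strategy.} The plan is to unwind $\eta(BWM)\circ B\mu(WM)$ into a composite of maps between bar constructions, use the triangle~$\mathrm{I}$ of \ref{4_7a} to trade the homotopy‑inverse factor $\nu(WM)$ for the explicit map $\zeta(WM)$, and then reduce everything to a single homotopy comparing the evaluation map with the endpoint projection of the path space over $EWM$.

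\textit{Unwinding and reduction.} Using $\mu(WM)=W\rho(WM)\circ\nu(WM)$ together with the factorisation $\rho(WM)=\rho'(WM)\circ q(P(EWM,WM))$ of \ref{4_6}, the definition $\eta(X)=\ev(X)\circ B\varepsilon(\Omega'X)$, and the naturality of $\varepsilon\colon W\to\Id$ applied to the homomorphism $\rho(WM)$, one obtains
\begin{align*}
\eta(BWM)\circ B\mu(WM)
&=\ev(BWM)\circ B\varepsilon(\Omega'BWM)\circ BW\rho(WM)\circ B\nu(WM)\\
&=\ev(BWM)\circ B\rho'(WM)\circ B\bigl(\sigma_1(WM)\circ\nu(WM)\bigr),
\end{align*}
with $\sigma_1(WM)$ as in \ref{4_7a}. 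Now precompose with $B\bigl(\overline{\varepsilon}(WM)^+\bigr)\colon B\bigl((\overline{W}WM)_+\bigr)\to BWM$. Under the natural homeomorphism $B(G_+)\cong\widetilde{B}(G)$ of \ref{4_1a} this map corresponds to $p(WM)\circ\widetilde{B}(\overline{\varepsilon}(WM))$, and both factors are homotopy equivalences ($\overline{\varepsilon}(WM)$ is a weak equivalence of semigroups by \ref{2_2}, $\widetilde{B}$ preserves these, and $p(WM)$ is a homotopy equivalence since $WM$ is well pointed), so $B\bigl(\overline{\varepsilon}(WM)^+\bigr)$ is a homotopy equivalence. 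Triangle~$\mathrm{I}$ of \ref{4_7a} gives $\sigma_1(WM)\circ\nu(WM)\circ\overline{\varepsilon}(WM)^+\simeq\zeta(WM)^+$ in $\Mon$; applying $B$ and using $\rho'(WM)\circ\zeta(WM)^+=\zeta'(WM)^+$ we get
\[
\eta(BWM)\circ B\mu(WM)\circ B\bigl(\overline{\varepsilon}(WM)^+\bigr)\ \simeq\ \ev(BWM)\circ B\bigl(\zeta'(WM)^+\bigr)\qquad\text{in }\topw .
\]
Since $B\bigl(\overline{\varepsilon}(WM)^+\bigr)$ is a homotopy equivalence, the proposition follows once we show $\ev(BWM)\circ B\bigl(\zeta'(WM)^+\bigr)\simeq B\bigl(\overline{\varepsilon}(WM)^+\bigr)$.

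\textit{The core homotopy.} Transporting once more along $B(G_+)\cong\widetilde{B}(G)$, the right‑hand side becomes $p(WM)\circ\widetilde{B}(\overline{\varepsilon}(WM))$ and the left‑hand side $\ev(BWM)\circ p(\Omega'BWM)\circ\widetilde{B}(\zeta'(WM))$; since $\overline{\varepsilon}(WM)=\pi(WM)\circ\zeta(WM)$ and $\zeta'(WM)=\rho'(WM)\circ\zeta(WM)$ by \ref{4_7}, it suffices to construct a homotopy
\[
\ev(BWM)\circ p(\Omega'BWM)\circ\widetilde{B}(\rho'(WM))\ \simeq\ p(WM)\circ\widetilde{B}(\pi(WM))
\]
of maps $\widetilde{B}\bigl(P(EWM,WM)\bigr)\to BWM$. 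On a cell $(w_1,\dots,w_k)\times\Delta^k$ the left side is $\ev$ applied to the image in $BWM$ of the concatenated Moore path $w_1\oplus\dots\oplus w_k$ in $EWM$ — a point of the loop obtained by pushing that path forward — while the right side is the singular simplex $\bigl(\pi(WM)w_1,\dots,\pi(WM)w_k\bigr)$ of $BWM$. I would produce the homotopy between them from the natural contraction of $EWM=B(WM,WM,\ast)$ furnished by the extra degeneracy $(z;a_1,\dots,a_n)\mapsto(e;z,a_1,\dots,a_n)$, which straightens every Moore path in $EWM$ onto the canonical edge joining the basepoint $(e)$ to its endpoint, together with the affine contractions of the simplices $\Delta^k$. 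The point to check is that this deformation can be chosen compatibly with the path addition $\oplus$ on $P(EWM,WM)$ and with the left $WM$‑action on $EWM$, so that it descends through $\widetilde{B}$ and is natural in the based space $BWM$.

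\textit{Conclusion and main obstacle.} Combining the two reductions gives $\eta(BWM)\circ B\mu(WM)\circ B\bigl(\overline{\varepsilon}(WM)^+\bigr)\simeq B\bigl(\overline{\varepsilon}(WM)^+\bigr)$, and cancelling the homotopy equivalence $B\bigl(\overline{\varepsilon}(WM)^+\bigr)$ yields $\eta(BWM)\circ B\mu(WM)\simeq\id_{BWM}$ in $\topw$. The hard part is the core step: the obvious straightening of a path onto the edge path to its endpoint is \emph{not} a semigroup map, so one must set up the comparison homotopy coherently over all simplices of the bar construction, tracking how concatenation of paths interacts with the evaluation formula for $\ev$; equivalently, one must show $\ev(BWM)\circ B\rho'(WM)\simeq B\pi(WM)$ by a homotopy coherent over the bar filtration. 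This simplicial bookkeeping is the technical heart of the argument.
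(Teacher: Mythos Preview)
Your reduction is correct and follows the paper's route almost verbatim: unwind $\eta\circ B\mu$, precompose with the homotopy equivalence $p(WM)\circ\widetilde{B}\overline{\varepsilon}(WM)$ (equivalently $B(\overline{\varepsilon}(WM)^+)$), and use square~I of \ref{4_7a} to replace $\sigma_1\circ\nu\circ\overline{\varepsilon}$ by $\zeta$. At that point you have arrived exactly at Lemma~\ref{4_9} (with $M$ replaced by $WM$), and this is indeed where the paper puts all the work.

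Where you diverge is in the execution of the core step. You factor both sides through $\widetilde{B}\zeta(WM)$ and propose to compare
\[
\ev(BWM)\circ p(\Omega'BWM)\circ\widetilde{B}\rho'(WM)\ \simeq\ p(WM)\circ\widetilde{B}\pi(WM)
\]
on $\widetilde{B}\bigl(P(EWM,WM)\bigr)$, using a straightening of arbitrary Moore paths in $EWM$ via the extra degeneracy. This is a strictly stronger statement than what is needed, and it is genuinely harder: a generic Moore path in $EWM$ does not stay inside a single simplex, so there is no obvious ``target simplex'' in $BWM$ in which to run an affine homotopy, and the compatibility with $\oplus$ and the $WM$--action that you flag as ``the point to check'' is not at all clear. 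Your own caveats make plain that this step is only a sketch.

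The paper avoids this by \emph{not} passing to $P(EWM,WM)$: it proves Lemma~\ref{4_9} directly on $\widetilde{B}\overline{W}M$, exploiting the explicit formula for $\zeta$. For $z=(z_1,\dots,z_n)\in(\overline{W}M)^n$ with $z_j=(x_{j0},t_{j1},\dots,x_{jr_j})$, the evaluation side sends $z\times\Delta^n$ into the single simplex
\[
\sigma(z)=(x_{10},\dots,x_{1r_1},\dots,x_{n0},\dots,x_{nr_n})\times\Delta^{r_1+\cdots+r_n+n}\subset BM,
\]
while the $\overline{\varepsilon}$ side lands in its face $\tau(z)=(x_{10}\cdots x_{1r_1},\dots,x_{n0}\cdots x_{nr_n})\times\Delta^n$. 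Hence the two maps are connected by a \emph{linear} homotopy inside $\sigma(z)$; the paper then assembles these affine homotopies coherently by a double induction on the bar filtration of $\widetilde{B}\overline{W}M$ and on $q(z)=r_1+\cdots+r_n$, using that the relevant subcomplexes are strong deformation retracts. So the ``simplicial bookkeeping'' you anticipate is carried out with $\overline{W}M$ as the source, where the combinatorics of $\zeta$ give the needed simplex-by-simplex control; drop your extra factorisation through $P(EWM,WM)$ and the argument goes through.
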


This result is a fairly easy consequence of

\begin{lem}\label{4_9} The diagram
$$
\xymatrix@C=4ex{
\widetilde{B}\overline{W}M \ar[rrrrrr]^{\widetilde{B}\overline{\varepsilon}(M)} \ar[d]_{\widetilde{B}\zeta(M)}
&&&&&&
 \widetilde{B}M \ar[d]^{p(M)}
\\
\widetilde{B}P(EM, M) \ar[rr]^(.55){\widetilde{B}\rho'(M)}
&& \widetilde{B}\Omega'BM \ar[rr]^{p(\Omega'BM)}
&& {B}\Omega'BM \ar[rr]^(.55){\ev(BM)}
&& BM
}
$$
commutes up to homotopy.
\end{lem}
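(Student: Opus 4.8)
The plan is to first strip away the $\overline{W}$--layer and reduce to a statement about the path--loop comparison for $EM$ alone, and then to produce the required homotopy from the contractibility of $EM$. For the reduction, note that by \ref{4_7} one has $\pi(M)\circ\zeta(M)=\overline{\varepsilon}(M)$, so the upper--right composite in the diagram is $p(M)\circ\widetilde{B}\pi(M)\circ\widetilde{B}\zeta(M)$, while the lower--left composite is $\ev(BM)\circ p(\Omega'BM)\circ\widetilde{B}\rho'(M)\circ\widetilde{B}\zeta(M)$. Both factor through $\widetilde{B}\zeta(M)$, so it is enough to prove the $\overline{W}$--free statement
\[
(\star)\qquad p(M)\circ\widetilde{B}\pi(M)\ \simeq\ \ev(BM)\circ p(\Omega'BM)\circ\widetilde{B}\rho'(M)\ \colon\ \widetilde{B}P(EM,M)\longrightarrow BM,
\]
which involves only the monoid $P(EM,M)$, the endpoint homomorphism $\pi(M)$, and the projection $\rho'(M)$.

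To prove $(\star)$ I would work with the explicit models. A point of $\widetilde{B}P(EM,M)$ is represented by $((w_1,\dots,w_n),(s_0,\dots,s_n))$ with $w_i$ Moore paths in $EM$ from $(e)$ into $M$ and $(s_i)\in\Delta^n$. Using the formula for $\ev(BM)$ from \ref{4_5}, together with the fact that $q\colon EM\to BM=EM/M$ collapses all $M$--translations --- so that $\rho'(M)$ really is a homomorphism and $q$ takes the $\oplus$--concatenation of paths in $EM$ to ordinary addition of loops in $\Omega'BM$ --- the right--hand side of $(\star)$ carries this point to $q$ of the value $\bigl(w_1\oplus\cdots\oplus w_n\bigr)\bigl(\sum_{i=1}^{n} s_i\sum_{j=1}^{i}l(w_j)\bigr)\in EM$, while the left--hand side carries it to $q$ of the simplex point $\bigl[(e,\pi(M)w_1,\dots,\pi(M)w_n),(s_0,\dots,s_n)\bigr]\in EM$. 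Since $EM$ is contractible, each Moore path $w_i$ is homotopic rel endpoints, naturally in $w_i$, to the straight edge $(e,\pi(M)w_i)$ of the bar construction; performing this deformation simplex by simplex in $EM$ (where it is a straight--line homotopy, the simplices being convex) and simultaneously reparametrising the $\Delta$--coordinates converts the ``evaluate the concatenation'' description into the ``simplex point'' description. After post--composition with $q$ this is the homotopy realising $(\star)$.

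The hard part is coherence over the fat realisation. The naive map ``evaluate $w_1\oplus\cdots\oplus w_n$'' from $\widetilde{B}P(EM,M)$ to $EM$ is not well defined, because $\oplus$ builds in the left translations by $\pi(M)w_1,\ \pi(M)w_1\pi(M)w_2,\dots$, which the face operators $d^{0}$ of $B_\bullet(\ast,P(EM,M),\ast)$ do not record; the ``edge'' map $(e,\pi(M)w_1,\dots,\pi(M)w_n)$ fails $d^{0}$--compatibility for the same reason. Both become well defined only after applying $q$, and the homotopy for $(\star)$ must respect the face identifications of the fat realisation. I would therefore build it by induction over the skeletal filtration of $\widetilde{B}P(EM,M)$, checking at each stage compatibility with the finitely many face operators --- all of which collapse the offending translations under $q$ --- or, more conceptually, run the whole argument inside the two--sided bar construction $B(\ast,P(EM,M),EM)$ (with $EM$ a left $P(EM,M)$--module through $\pi(M)$), where the translations are carried coherently and the projection to $EM/P(EM,M)=BM$ is manifestly natural. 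Once $(\star)$ is established the lemma follows by composing with $\widetilde{B}\zeta(M)$, and Proposition \ref{4_8} then drops out formally from Lemma \ref{4_9}, \ref{4_7a}, the naturality of $p$, and the fact that $p(M)\colon\widetilde{B}M\to BM$ is a homotopy equivalence for well--pointed $M$.
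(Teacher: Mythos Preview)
Your reduction to $(\star)$ via $\pi(M)\circ\zeta(M)=\overline{\varepsilon}(M)$ is valid, but the proof of $(\star)$ has a real gap, and the gap is exactly where the paper's argument differs from yours. You claim that ``each Moore path $w_i$ is homotopic rel endpoints, naturally in $w_i$, to the straight edge $(e,\pi(M)w_i)$ \dots\ performing this deformation simplex by simplex in $EM$ (where it is a straight--line homotopy, the simplices being convex)''. But a generic $w_i\in P(EM,M)$ does \emph{not} lie in a single simplex of $EM$; it can wander through arbitrarily many cells, so no convex/linear homotopy is available. Contractibility of $EM$ alone does not produce a deformation that is simultaneously rel endpoints, natural in $w_i$, and compatible with the face identifications of the fat realisation; your two proposed fixes (skeletal induction, or passing to $B(\ast,P(EM,M),EM)$) are both left as hopes rather than arguments, and I do not see how to make either one go through without essentially re-importing the information you discarded.

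This is precisely why the paper does \emph{not} strip off the $\overline{W}$--layer. It works directly on $\widetilde{B}\overline{W}M$ and exploits the explicit formula for $\zeta(M)$: for $z=(z_1,\dots,z_n)\in(\overline{W}M)^n$ with $z_j=(x_{j0},t_{j1},\dots,x_{jr_j})$, the lower composite sends $z\times\Delta^n$ into the single simplex
\[
\sigma(z)=(x_{10},\dots,x_{1r_1},\dots,x_{n0},\dots,x_{nr_n})\times\Delta^{r_1+\cdots+r_n+n}\subset BM,
\]
while the upper composite sends it onto the face $\tau(z)=(x_{10}\cdots x_{1r_1},\dots,x_{n0}\cdots x_{nr_n})\times\Delta^n$ of $\sigma(z)$. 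Now linear homotopies in the convex $\sigma(z)$ are genuinely available, and the paper builds an ``admissible'' homotopy (one that keeps $z\times\Delta^n$ inside $\sigma(z)$) by a double induction: first on the skeletal filtration of $\widetilde{B}\overline{W}M$, and then, for fixed $n$, on $q(z)=r_1+\cdots+r_n$. The extension at each stage reduces to filling a map already defined on a subspace of $I^q\times\Delta^n\times I$ that is a strong deformation retract. In short, the $\overline{W}$--data you threw away is exactly what supplies the simplex $\sigma(z)$ in which the straight--line homotopy lives; your statement $(\star)$ is (via the equivalence $\widetilde{B}\zeta(M)$) no easier than the original lemma, and your direct attack on it loses the geometric control that makes the argument work.
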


\begin{proof}
 Let $f=\ev(BM)\circ p(\Omega'BM) \circ \widetilde{B}\rho'(M)\circ \widetilde{B}\zeta$ and let
$g= p(M)\circ \widetilde{B}\overline{\varepsilon}(M)$. Let
$z=(z_1,\ldots ,z_n)$ be an element in $(\overline{W}M )^n$, so that $z\times \Delta^n$ is an $n$-simplex
in $\widetilde{B}\overline{W}M$. If $z_j=(x_{j0},t_{j1},\ldots, x_{jr_j})$, then $f$ maps $z\times \Delta^n$
to the image of the path $\rho'(M)\circ \zeta(z_1)+\ldots +\rho'(M)\circ \zeta(z_n)$ which lies 
in the simplex
$$
\sigma = \sigma(z)=(x_{10},\ldots, x_{1r_1},\ldots,x_{n0},\ldots,x_{nr_n})\times \Delta^{r_1+\cdots +r_n+n}
$$
in $BM$, while $g$ maps $z\times \Delta^n$ identically (modulo possible degenerations) onto the simplex
$$
\tau =\tau(z)=(x_{10}\cdot \ldots \cdot x_{1r_1},\ldots , x_{n0}\cdot \ldots\cdot x_{nr_n})\times \Delta^n
$$
in $BM$, which is a face of $\sigma$. So $f|z\times \Delta^n$ is homotopic to $g|z\times \Delta^n$ by a 
linear homotopy. We call a homotopy from $f$ to $g$ \textit{admissible} if it maps $z\times \Delta^n$ to $\sigma(z)$
throughout the homotopy.

We are going to construct an admissible homotopy $H: \widetilde{B}\overline{W}M \times I \to BM$ from $f$ to $g$ by
induction on the canonical filtration $(\widetilde{B}\overline{W}M)^{(n)}$ of $\widetilde{B}\overline{W}M$.

$(\widetilde{B}\overline{W}M)^{(0)}$ is a point, which is mapped by $f$ and $g$ to the base-point. Now suppose that we 
have constructed an admissible homotopy
$$
H:(\widetilde{B}\overline{W}M)^{(n-1)}\times I \to BM.
$$
Let $z\times \Delta^n$ be an $n$-simplex in $\widetilde{B}\overline{W}M$ as above. We define 
$$q(z)=q(z_1,\ldots ,z_n)=r_1+\cdots +r_n \in \mathbb{N}$$
 and we extend $H$ over $(\widetilde{B}\overline{W}M)^{(n)}\times I$
by induction on $q$. 

If $q=0$, then $z=(z_1,\ldots ,z_n)$ with $z_j=(x_{j0})$ for $j=1,\ldots ,n$ and $\sigma(z)=\tau(z)=(x_{10},\ldots ,x_{n0})\times \Delta^n$.
Hence the space of all $n$-simplices $z\in (\overline{W}M)^n$ with $q(z)=0$ is $M^n$.
By induction, we have to find a homotopy
$$
h:M^n\times \Delta^n\times I\to M^n\times \Delta^n
$$
over $M^n$ which is already determined on $M^n\times \partial (\Delta^n\times I)$. If $b_n$ denotes the barycenter of $\Delta^n$ we map
$((x_1,\ldots,x_n),b_n,\frac{1}{2})$ to $((x_1,\ldots,x_n),b_n)$ and cone off.

If $q>0$ we have $q$ coordinates $t_{jk}\in I$ in $z$. So the space of all elements $z$ with $q(z)=q$ is the union of spaces of
the form $M^{n+q}\times I^q$ which may intersect on their lower faces $M^{n+q}\times LI^q$ due to the relations,
where $LI^q=\{(t_1,\ldots ,t_q)\in I^q; \textrm{ some } t_i=0\}$. So possible intersections are of
lower filtration. We have to find a map 
$$h:M^{n+q}\times I^q\times \Delta^n\times I\to M^{n+q}\times\Delta^{n+q}$$
over $M^{n+q}$ which is already defined on
$$M^{n+q}\times (LI^q\times \Delta^n\times I\cup I^q\times \partial(\Delta^n\times I)).$$
Since $LI^q$ is a strong deformation retract of $I^q$, the inclusion
$$LI^q\times \Delta^n\times I\cup I^q\times \partial(\Delta^n\times I)\subset I^q\times \Delta^n\times I$$
is an inclusion of a strong deformation retract. Hence $h$ exists.
\end{proof}

\textbf{Proof of Proposition \ref{4_8}}:
Since $M$ is well-pointed, the projection $p(M):\widetilde{B}M\to BM$ is a homotopy equivalence. If $h:X\to Y$ is a 
weak equivalence of semigroups, then $\widetilde{B}h:\widetilde{B}X\to 
\widetilde{B}Y$ is a based homotopy equivalence. Hence it suffices to show that
$$
\eta(BWM)\circ B\mu(WM)\circ p(WM)\circ \widetilde{B}\overline{\varepsilon}(WM)
\simeq  p(WM)\circ \widetilde{B}\overline{\varepsilon}(WM).$$
Now
 \begin{align*}
\eta(BWM) & \circ B\mu(WM)  \circ p(WM) \circ \widetilde{B}\bar{\epsilon}(WM) 
\\
=& \ev(BWM)  \circ B \varepsilon(\Omega'BWM) \circ BW \rho(WM) \circ B \nu(WM) \circ p(WM) 
                                                          \circ \widetilde{B}\bar{\varepsilon}(WM),
\intertext{since $\eta(BWM) = \ev(BWM)  \circ B \varepsilon(\Omega'BWM)$ and $\mu(WM) =W \rho(WM) \circ  \nu(WM) $,}
 \begin{split} 
=& \ev(BWM) \circ B \varepsilon(\Omega'BWM) \circ BW \rho(WM) \circ  p(WVP(EWM,WM))
 \\ & \circ \widetilde{B}\nu(WM)  \circ \widetilde{B}\bar{\varepsilon}(WM),
\end{split}
\intertext{by naturality of $p$,}
 \begin{split} =&  \ev(BWM) \circ B\rho(WM) \circ p(VP(EWM,WM))  \circ \widetilde{B}\varepsilon(VP(EWM,WM)) 
  \\    & \circ \widetilde{B}\nu(WM) \circ   \widetilde{B}\bar{\varepsilon}(WM),
\end{split}
\intertext{by naturality of $\varepsilon$,}
\begin{split}
=& \ev(BWM) \circ p(\Omega'BWM)\circ \widetilde{B}\rho(WM) \circ \widetilde{B}\varepsilon(VP(EWM,WM)) \\
    &                     \circ \widetilde{B}\nu(WM) \circ \widetilde{B}\bar{\varepsilon}(WM),
\end{split}
\intertext{by naturality of $p$ again,}
\begin{split} =& \ev(BWM) \circ p(\Omega'BWM)\circ \widetilde{B}\rho'(WM) \circ \widetilde{B}q(P(EWM,WM)) \\
  & \circ \widetilde{B}\varepsilon(VP(EWM,WM)) 
           \circ \widetilde{B}\nu(WM) \circ \widetilde{B}\bar{\varepsilon}(WM),
\end{split}
\intertext{by the definition of $\rho(WM)$,}
\begin{split}
=& \ev(BWM) \circ p(\Omega'BWM)\circ \widetilde{B}\rho'(WM) \circ \widetilde{B}\sigma_1(WM) \circ \widetilde{B}\nu(WM)\\                                                            &                \circ \widetilde{B}\bar{\varepsilon}(WM),
\end{split}
\intertext{by the definition of $\sigma_1(WM)$ from 4.12,}
\simeq & \ev(BWM) \circ p(\Omega'BWM)\circ\widetilde{B}\rho'(WM) \circ \widetilde{B}\zeta(WM),
\intertext{by Diagram 4.12,}
\simeq & p(WM) \circ \widetilde{B}\varepsilon(WM),
\end{align*}
by Lemma 4.14. \hfill\ensuremath{\Box}

\begin{rema}
If we use the Quillen model structure on $\Top$ rather than the Str{\o}m structure we can construct a homotopy
 unit $\mu(WM)$ and deduce
Proposition \ref{4_8} fairly easily from \cite[Thm. 7.3]{Fied} and its proof.
\end{rema}

\vspace{2ex}
The proof of the first part of \ref{4_4} needs some preparation. Let $\scJ$ denote the category of ordered sets
$[n]=\{0<1<\dots <n\}$ and order preserving injections, and let $\scJ\Top_0$ denote the category of all diagrams
$$X_\bullet: \scJ^{\op}\to \Top,\qquad [n]\mapsto X_n$$
such that $X_0$ is a single point, i.e. an object in $\scJ\Top_0$ is a reduced simplicial space without degeneracies.
Of lately, such an object is called a \textit{reduced semisimplicial space}. The usual fat topological realization functor
$$
\scJ\Top_0 \to \top,\qquad X_\bullet \mapsto \parallel X_\bullet\parallel
$$
has a right adjoint, the \textit{reduced singular functor}
$$
\Sing_\bullet^0:\top \to \scJ\Top_0, \qquad \Sing_n^0(Y)=\Top((\Delta^n,\Delta^n_0),(Y,\ast))
$$
where $\Delta^n_0$ is the $0$-skeleton of $\Delta^n$. The unit of this adjunction
$$
\tau_\bullet (X_\bullet): X_\bullet \to \Sing_\bullet^0\parallel X_\bullet \parallel
$$
sends $x\in X_n$ to the singular simplex 
$$
\Delta^n\xrightarrow{i_x}\coprod_k X_k\times \Delta^k\rightarrow \parallel X_\bullet \parallel
$$
where $i_x$ is the inclusion of the simplex $\{ x\}\times \Delta^n$. The counit
$$
\widehat{\ev}(Y): \parallel \Sing_\bullet^0(Y) \parallel\rightarrow Y
$$
is induced by the evaluation maps $\Sing_n^0(Y)\times \Delta^n\to Y$. The formula defining our evaluation map of
\ref{4_5} defines a natural semisimplicial map
$$
\alpha_\bullet(Y): N_\bullet \Omega'Y\rightarrow \Sing_\bullet^0 Y
$$
where $ N_\bullet \Omega'Y$ is the semisimplicial nerve of $\Omega'Y$. Let $v_0,\ldots,v_n$ denote the vertices
of $\Delta^n$ and let $L_n\subset \Delta^n$ denote the union of the $1$-simplexes $[v_{i-1},v_i]$, $i=1,\ldots,n$.
Then $L_n$ is a strong deformation retract of $\Delta^n$. The composite
$$
(\Omega'Y)^n=N_n(\Omega'Y)\xrightarrow{\alpha_n(Y)}\Sing_n^0(Y)\xrightarrow{r}\Top((L_n,\Delta^n_0),(Y,\ast))=(\Omega Y)^n,
$$
where $r$ is the restriction to $L_n$, is the map normalizing the loop lengths to $1$. In particular, $\alpha_n(Y)$ is
a homotopy equivalence inducing a homotopy equivalence $\parallel \alpha_\bullet(Y)\parallel$. Moreover, the diagram
\begin{leer}\label{4_10}
 $$\xymatrix{
\widetilde{B}\Omega'Y\ar[rr]^{p(\Omega'Y)}\ar[d]_{\parallel \alpha_\bullet(Y)\parallel} && B\Omega'Y\ar[d]^{\ev (Y)}\\
\parallel \Sing_\bullet^0(Y)\parallel\ar[rr]^(0.6){\widehat{\ev}(Y)}&& Y
}
$$
commutes.
\end{leer}

\begin{prop}\label{4_11}
\begin{enumerate}
\item[(1)] If $M$ is a grouplike well-pointed monoid, then $\rho'(M):P(EM,M)\to \Omega'BM$ and hence $\mu(WM):
WM\to W\Omega'BWM$ are weak equivalences.
\item[(2)] If $Y$ is a well-pointed path-connected Dold space (see Definition \ref{1_6a}), then
$\ev(Y): B\Omega'Y \to Y$ is a based homotopy equivalence, and, hence so is
$$\eta(Y) :BW\Omega'Y\xrightarrow{B\varepsilon(\Omega'Y)} B\Omega'Y\xrightarrow{\ev(Y)} Y.$$
\item[(3)] If $Y$ is a well-pointed space, then $\Omega'\ev(Y): \Omega'B\Omega'Y \to\Omega' Y$ is a weak equivalence. Hence so is 
$\Omega'\eta(Y):\Omega'BW\Omega'Y\to \Omega'Y$.
\item[(4)] If $M$ is a well-pointed monoid, then $B\mu(WM): BWM\to BW\Omega'BWM$ is a homotopy equivalence.
\end{enumerate}
\end{prop}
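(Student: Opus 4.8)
The four parts are stratified: (1) and (2) carry the substance; I would deduce (3) from (1) and (4) from (2), in each case after bookkeeping with $W$, $V$ and the projection $p$.

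\emph{Part (1).} Since $EM=B(M,M,\ast)$ is contractible it admits a contraction, and I would use the natural one coming from the extra degeneracy $s_{-1}$ of $B_\bullet(M,M,\ast)$: it carries each $x\in M$ along the edge $s_{-1}(x)$ to a Moore path in $EM$ from the base point to $x$, i.e.\ it supplies a \emph{section} $s:M\to P(EM,M)$ of the endpoint projection $\pi(M)$. Since $\pi(M)$ is a homotopy equivalence (fibre $\Omega'EM\simeq\ast$) and $\pi(M)\circ s=\id$, the map $s$ is a homotopy inverse of $\pi(M)$, so $s\circ\pi(M)\simeq\id$ and $\rho'(M)\simeq(\rho'(M)\circ s)\circ\pi(M)$. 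Tracing definitions, $\rho'(M)\circ s:M\to\Omega'BM$ is the usual natural map sending $x$ to the loop traversing the edge labelled $x$ in $BM$; for grouplike $M$ this is a weak equivalence --- the classical consequence of $EM\to BM$ being a quasifibration with fibre $M$ over a contractible total space, together with $\Omega'\simeq\Omega$; cf.\ \cite{DKP}. Hence $\rho'(M)$ is a weak equivalence. For the last clause, $\mu(WM)=W\rho(WM)\circ\nu(WM)$ with $\nu(WM)$ a homotopy inverse of the weak equivalence $\sigma(WM)$ and $\rho(WM)=\rho'(WM)\circ q(P(EWM,WM))$, $q$ a homotopy equivalence; moreover $WM$ is again grouplike when $M$ is (transport a homotopy inversion along the weak equivalence $\varepsilon(M):WM\to M$ of well-pointed monoids), so the first part applies to $WM$, and $W$ preserves weak equivalences of well-pointed monoids.

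\emph{Part (2).} By the commutative square \ref{4_10}, $\ev(Y)\circ p(\Omega'Y)=\widehat{\ev}(Y)\circ\parallel\alpha_\bullet(Y)\parallel$. Here $p(\Omega'Y)$ is a homotopy equivalence because $\Omega'Y$ is well-pointed and $\parallel\alpha_\bullet(Y)\parallel$ is one as already observed, so $\ev(Y)$ is a homotopy equivalence iff the counit $\widehat{\ev}(Y):\parallel\Sing_\bullet^0(Y)\parallel\to Y$ is. I would show $\widehat{\ev}(Y)$ is a weak homotopy equivalence by the standard argument that singular simplices detect homotopy groups --- path-connectedness of $Y$ being exactly what makes the \emph{reduced} semisimplicial singular complex adequate --- and then upgrade it using Dold space theory: $Y$ is a Dold space by hypothesis, $\parallel\Sing_\bullet^0(Y)\parallel$ is again a Dold space (Dold spaces being stable under the realisations and mapping spaces involved, see \cite{Schwam}), and a weak homotopy equivalence between Dold spaces is a genuine homotopy equivalence; between well-pointed spaces a based such map is a based homotopy equivalence. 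The ``hence'' is then formal, since $\eta(Y)=\ev(Y)\circ B\varepsilon(\Omega'Y)$, $\varepsilon(\Omega'Y)$ is a weak equivalence of well-pointed monoids, and $B$ sends such to based homotopy equivalences.

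\emph{Parts (3) and (4).} For (3), write $\eta(Y)=\ev(Y)\circ B\varepsilon(\Omega'Y)$; since $B\varepsilon(\Omega'Y)$ is a based homotopy equivalence and $\Omega'$, being continuous, preserves homotopy equivalences, it suffices to see $\Omega'\ev(Y)$ is a weak equivalence. With $M=\Omega'Y$ --- well-pointed and grouplike --- the proof of (1) gives that the natural map $\theta: M\to\Omega'BM$ is a weak equivalence, and from the explicit formula for $\ev$ one checks $\Omega'\ev(Y)\circ\theta\simeq\id_M$ (the loop traversing the edge labelled $x$ evaluates back to $x$ up to reparametrisation). So $\theta$ is a homotopy equivalence and a right homotopy inverse of $\Omega'\ev(Y)$, whence $\Omega'\ev(Y)$, and hence $\Omega'\eta(Y)$, is a weak equivalence. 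For (4), Proposition \ref{4_8} gives $\eta(BWM)\circ B\mu(WM)\simeq\id_{BWM}$, so $B\mu(WM)$ is a homotopy equivalence once $\eta(BWM)$ is; and $BWM$ is well-pointed (since $WM$ is), path-connected, and a Dold space --- $WM$ is the free topological monoid on its well-pointed space of indecomposables (Observation \ref{2_3a}), so $BWM$ is homotopy equivalent to a suspension \cite{Milgram}, and suspensions of well-pointed spaces are Dold --- so part (2) applies to $Y=BWM$ and $\eta(BWM)$ is a homotopy equivalence.

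\emph{Expected main obstacle.} The delicate point is the upgrade in (2) from a plain weak homotopy equivalence to an honest one, which forces one to keep everything in sight --- in particular $\parallel\Sing_\bullet^0(Y)\parallel$ in (2) and $BWM$ in (4) --- inside the class of Dold spaces; establishing those Dold-ness claims is the least routine part of the argument. The identities ``$\rho'(M)\circ s\simeq$ the natural map $M\to\Omega'BM$'' in (1) and ``$\Omega'\ev(Y)\circ\theta\simeq\id$'' in (3) are explicit but require care with Moore-path lengths.
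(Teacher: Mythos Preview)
Your outline is largely sound and, for parts (1) and (2), tracks the paper closely: the paper also factors $\rho'(M)$ through the classical map $M\to\Omega BM$ via a commuting square (using $\zeta\circ\bar\iota(M)$ and $\tau_1(N_\bullet M)$ rather than your section $s$, but to the same effect), and for (2) it likewise reduces to the square~\ref{4_10} and then simply cites \cite[Prop.~5.6]{Schwam} for $\widehat{\ev}(Y)$.

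\textbf{Part (3) is where you genuinely diverge.} The paper restricts the commuting triangle of reduced semisimplicial spaces
\[
\xymatrix{
N_\bullet\Omega'Y\ar[r]^-{\tau_\bullet}\ar[d]_{\alpha_\bullet} & \Sing_\bullet^0\widetilde{B}\Omega'Y\ar[d]\\
\Sing_\bullet^0Y\ar[r]^-{\tau_\bullet}\ar[dr]_{\id} & \Sing_\bullet^0\parallel\Sing_\bullet^0Y\parallel\ar[d]\\
& \Sing_\bullet^0Y
}
\]
to degree $1$ and chases, again invoking that $\tau_1(N_\bullet\Omega'Y)$ is an equivalence because $\Omega'Y$ is grouplike. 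Your route --- apply (1) to $M=\Omega'Y$ and verify $\Omega'\ev(Y)\circ\theta\simeq\id$ directly --- bypasses the $\Sing^0_\bullet$ machinery entirely; the verification is the one-line computation that $\ev(Y)$ on the $1$-simplex labelled $w$ returns $w$ reparametrised. Both are correct; yours is shorter, while the paper's keeps the adjunction unit $\tau_\bullet$ visible.

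\textbf{Part (4) has a real weakness.} Your route to ``$BWM$ is a Dold space'' via the free-monoid structure of $WM$ is shaky: you need (i) that the algebraic isomorphism $WM\cong JX$ with $X$ the indecomposables is a \emph{topological} isomorphism and that $X$ is well-pointed, neither of which you verify; and (ii) a citation for $BJX\simeq\Sigma X$ --- \cite{Milgram} treats abelian $H$-spaces, and invoking the paper's own Proposition~\ref{5_3} would be circular since that result rests on the present proposition. The paper sidesteps all of this by citing \cite[Cor.~5.2]{Schwam}, which gives directly that $BM$ is a Dold space for any well-pointed monoid $M$; you should do the same.

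\textbf{A smaller caveat on (2).} Your upgrade step asserts that a weak homotopy equivalence between Dold spaces is automatically a genuine one. That is not a general fact without further hypotheses, and you do not argue it. The paper avoids the issue altogether by taking the homotopy equivalence $\widehat{\ev}(Y)$ from \cite[Prop.~5.6]{Schwam} as input; if you want to re-derive it along your lines, the upgrade needs an actual argument (or a precise citation) rather than a gesture at Dold-space stability.
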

\begin{proof} (1)
 The diagram
$$\xymatrix{
M\ar[rr]^{\tau_1(N_\bullet M)}\ar[d]_{\zeta\circ \bar{\iota}(M)} &&\Omega\widetilde{B}M\ar[rr]^{\Omega p(M)} && 
\Omega BM\ar[d]^{i(BM)}\\
P(EM,M)\ar[rrrr]^{\rho'(M)} &&&&\Omega'BM
}
$$
commutes. Here $i(X):\Omega X\to \Omega'X$ is the inclusion and $\bar{\iota} (M):M\to \overline{W}M$ the section (see \ref{2_2}).
It is well known that $\tau_1(N_\bullet M)$ is a homotopy equivalence if $M$ is grouplike (e.g. see \cite{Segal}). Since
$p(M),\ i(BM)$, and $\zeta\circ \bar{\iota}(M)$ are homotopy equivalences in $\Top$, so is $\rho'(M)$.

(2) In the commutative diagram \ref{4_10} the map $p(\Omega'Y)$ is a homotopy equivalence because $\Omega'Y$ is well-pointed 
and $\widehat{\ev}(Y)$ is a homotopy equivalence by \cite[Prop. 5.6]{Schwam}.

(3) Consider the following commutative diagram in $\scJ\Top_0$
$$\xymatrix{
N_\bullet\Omega'Y\ar[rr]^(0.4){\tau_\bullet(N_\bullet \Omega'Y)}\ar[d]_{\alpha_\bullet(Y)}
&& \Sing_\bullet^0\widetilde{B}\Omega'Y\ar[d]^{\Sing_\bullet^0\parallel \alpha_\bullet(Y)\parallel}\\
\Sing_\bullet^0Y\ar[rr]^(0.4){\tau_\bullet(\Sing_\bullet^0Y)}\ar[rrd]_{\id} &&
\Sing_\bullet^0\parallel\Sing_\bullet^0Y \parallel \ar[d]^{\Sing_\bullet^0\widehat{\ev}(Y)}\\
&& \Sing_\bullet^0Y
}
$$
Restricting this diagram to degree $0$ we obtain a commutative diagram of spaces
$$\xymatrix{
\Omega'Y\ar[rr]^(0.4){\tau_1(N_\bullet \Omega'Y)}\ar[d]_{\alpha_1} && \Omega\widetilde{B}\Omega'Y
\ar[d]^{\Omega\parallel \alpha_\bullet(Y)\parallel}\ar[rr]^{\Omega p(\Omega'Y)}
&& \Omega B\Omega' Y \ar[d]^{\Omega \ev (Y)}\\
\Omega Y\ar[rr]^(0.35){\tau_1(\Sing_\bullet^0Y)} && \Omega\parallel\Sing_\bullet^0Y \parallel \ar[rr]^(0.6){\Omega\widehat{\ev}(Y)}
&& \Omega Y.
}
$$
Since $\Omega'Y $ is grouplike, $\tau_1(N_\bullet \Omega'Y)$ is a homotopy equivalence.
Since $Y$ and hence $\Omega'Y$ is well-pointed, $\Omega p(\Omega'Y)$ is a homotopy equivalence.
 Since $\alpha_1$ and $\parallel \alpha_\bullet(Y)\parallel$
are homotopy equivalences, $\tau_1(\Sing_\bullet^0Y)$ is one. Hence so is $\Omega\widehat{\ev}(Y)$ and hence also $\Omega \ev (Y)$, which
implies the result.

(4) Since $BWM$ is a well-pointed path-connected Dold space by \cite[Cor. 5.2]{Schwam} the statement follows from Part (2) and
Proposition \ref{4_8}.
\end{proof}

\begin{prop}\label{4_12}
 $W\Omega'\eta(X)\circ \mu(W\Omega'X)\simeq \id_{W\Omega'X}$ in  $\Mon$.
\end{prop}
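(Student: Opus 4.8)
The plan is to verify condition \ref{4_4}(1), that is, to show $W\Omega'\eta(X)\circ\mu(W\Omega'X)\simeq\id_{W\Omega'X}$ in $\Monw$, and the first move is to pass from homomorphisms into $W\Omega'X$ to homomorphisms into $\Omega'X$. Since $X$, hence $\Omega'X$, is well-pointed and $\varepsilon(\Omega'X)\colon W\Omega'X\to\Omega'X$ is a weak equivalence, Proposition \ref{2_4}(1) makes
$$\varepsilon(\Omega'X)_\ast\colon\Monw(W\Omega'X,W\Omega'X)\longrightarrow\Monw(W\Omega'X,\Omega'X)$$
a homotopy equivalence, so in particular injective on path components. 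It therefore suffices to prove
$$\varepsilon(\Omega'X)\circ W\Omega'\eta(X)\circ\mu(W\Omega'X)\;\simeq\;\varepsilon(\Omega'X)\qquad\text{in }\Monw(W\Omega'X,\Omega'X).$$

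Next I would unwind the left-hand side purely formally. Using naturality of $\varepsilon\colon W\to\Id$ with respect to $\Omega'\eta(X)$ and with respect to $\rho(W\Omega'X)$, the factorisations $\mu(WM)=W\rho(WM)\circ\nu(WM)$ and $\rho(WM)=\rho'(WM)\circ q(P(EWM,WM))$, and the identity $\sigma_1(WM)=q(P(EWM,WM))\circ\varepsilon(VP(EWM,WM))$ from \ref{4_6} and \ref{4_7a}, the left-hand side (with $M=\Omega'X$) becomes
$$\Omega'\eta(X)\circ\rho'(W\Omega'X)\circ\sigma_1(W\Omega'X)\circ\nu(W\Omega'X).$$
Since $\pi(W\Omega'X)\circ\sigma_1(W\Omega'X)=\sigma(W\Omega'X)$ and $\nu(W\Omega'X)$ was chosen as a homotopy inverse of $\sigma(W\Omega'X)$ in $\Monw$, we have $\pi(W\Omega'X)\circ\sigma_1(W\Omega'X)\circ\nu(W\Omega'X)\simeq\id_{W\Omega'X}$ in $\Monw$; hence, precomposing with the homomorphism $\sigma_1(W\Omega'X)\circ\nu(W\Omega'X)$, everything will follow from the \emph{key homotopy}
$$\Omega'\eta(X)\circ\rho'(W\Omega'X)\;\simeq\;\varepsilon(\Omega'X)\circ\pi(W\Omega'X)\qquad\text{in }\Monw\bigl(P(EW\Omega'X,W\Omega'X),\Omega'X\bigr).$$

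To get the key homotopy I would use that $\rho'$ and $\pi$ are natural in the monoid and that $\eta(X)=\ev(X)\circ B\varepsilon(\Omega'X)$. Naturality of $\rho'$ and of $\pi$ with respect to $\varepsilon(\Omega'X)\colon W\Omega'X\to\Omega'X$ allows me to precompose with the induced homomorphism $P(EW\Omega'X,W\Omega'X)\to P(E\Omega'X,\Omega'X)$, and so it reduces the key homotopy to
$$\Omega'\ev(X)\circ\rho'(\Omega'X)\;\simeq\;\pi(\Omega'X)\qquad\text{in }\Monw\bigl(P(E\Omega'X,\Omega'X),\Omega'X\bigr).$$
This I would establish by an explicit homotopy built by induction over the canonical filtration, in the spirit of the proof of Lemma \ref{4_9} (so that the pieces not already determined on lower filtration lie in cells over which an admissible homotopy can be coned off), or equivalently through the semisimplicial comparison maps $\alpha_\bullet$, $\tau_\bullet$ and the counit $\widehat{\ev}$ of the proof of Proposition \ref{4_11}(3); the geometric content is that a point of $P(E\Omega'X,\Omega'X)$ is a Moore path in the contractible space $E\Omega'X$ ending in its $0$-skeleton $\Omega'X$, and $\Omega'\ev(X)$ carries the image loop of such a path in $B\Omega'X$ back to its endpoint up to a canonical path in $\Omega'X$. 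That $\Omega'\eta(X)$ is a weak equivalence by Proposition \ref{4_11}(3) makes both sides of the key homotopy weak equivalences, which is a useful consistency check while organising the induction.

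The hard part will be precisely this last homotopy $\Omega'\ev(X)\circ\rho'(\Omega'X)\simeq\pi(\Omega'X)$: it has to be produced \emph{through monoid homomorphisms}, i.e.\ as a genuine path in $\Monw\bigl(P(E\Omega'X,\Omega'X),\Omega'X\bigr)$. One cannot shortcut it by restricting along $\zeta(\Omega'X)$ to $\overline{W}\Omega'X$ — where $\pi(\Omega'X)\circ\zeta(\Omega'X)=\overline\varepsilon(\Omega'X)$ and $\rho'(\Omega'X)\circ\zeta(\Omega'X)=\zeta'(\Omega'X)$ are already understood — because $\zeta(\Omega'X)$ is only a weak equivalence and precomposition with it does not preserve homotopies through homomorphisms. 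So the homotopy must be constructed cellwise and kept compatible with both multiplications at every stage; combined with Proposition \ref{4_8} this then completes the verification of \ref{4_4} and, via Proposition \ref{3_9}, the proof of Theorem \ref{4_3}.
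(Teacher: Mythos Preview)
Your reduction to the ``key homotopy'' $\Omega'\ev(X)\circ\rho'(\Omega'X)\simeq\pi(\Omega'X)$ in $\Monw$ is correct as far as it goes, but you then leave this homotopy unconstructed, and your suggested methods do not obviously apply. The domain $P(E\Omega'X,\Omega'X)$ is not a free monoid, so there is no cellular filtration of the type used in Lemma~\ref{4_9} or Proposition~\ref{2_4} to induct over; the filtration arguments there work precisely because $\overline{W}M$ and $WM$ are algebraically free, which is what lets one extend maps freely over indecomposables. Invoking $\alpha_\bullet$, $\tau_\bullet$ and $\widehat{\ev}$ does not help either: those live in $\scJ\Top_0$ and produce homotopies of \emph{spaces}, not of monoid homomorphisms out of $P(E\Omega'X,\Omega'X)$. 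So as written the proof has a genuine gap at exactly the step you flag as hard.

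The paper avoids this entirely by a short formal trick. Write $f=W\Omega'\eta(X)\circ\mu(W\Omega'X)$. Using Proposition~\ref{4_8} (the other triangle identity, already proven) together with the homotopy naturality of $\mu$ and the naturality of $\eta$, one chases a $3\times 3$ diagram to obtain $f\circ f\simeq f$ in $\Monw$. Now $\Omega'X$ is grouplike, so by Proposition~\ref{4_11}(1) and~(3) both $\mu(W\Omega'X)$ and $W\Omega'\eta(X)$ are weak equivalences; by Proposition~\ref{2_7} they are therefore homotopy equivalences in $\Mon$, hence so is $f$. Cancelling one copy of $f$ from $f\circ f\simeq f$ gives $f\simeq\id$. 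No explicit homotopy of homomorphisms out of $P(E\Omega'X,\Omega'X)$ is ever needed.
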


\begin{proof}
It follows from Proposition \ref{4_8} and the homotopy naturality of $\mu$ and $\eta$ that the following diagram commutes up to homotopy.
$$\xymatrix{
W\Omega'X\ar[rr]^{\mu(W\Omega'X)}\ar[d]_{\mu(W\Omega'X)} && W\Omega'BW\Omega'X\ar[rr]^{W\Omega'\eta(X)}\ar[d]^{\mu(W\Omega'BW\Omega'X)}
&& W\Omega'X\ar[d]^{\mu(W\Omega'X)}\\
W\Omega'BW\Omega'X\ar[rr]^(0.45){W\Omega'B\mu(W\Omega'X)}\ar[rrd]_{\id} && W\Omega'BW\Omega'BW\Omega'X \ar[rr]^(0.55){W\Omega'BW\Omega'\eta(X)}
\ar[d]^{W\Omega'\eta(BW\Omega'X)} && W\Omega'BW\Omega'X\ar[d]^{W\Omega'\eta(X)}\\
&& W\Omega'BW\Omega'X\ar[rr]^{W\Omega'\eta(X)} && W\Omega'X
}
$$
We obtain
$$
W\Omega'\eta(X)\circ \mu(W\Omega'X)\circ W\Omega'\eta(X) \circ \mu(W\Omega'X)\simeq W\Omega'\eta(X)\circ \mu(W\Omega'X).
$$
Since $\Omega'X$ is grouplike $\mu(W\Omega'X)$ and $W\Omega'\eta(X)$ are weak equivalences by Proposition \ref{4_11}. By 
Proposition \ref{2_7} both homomorphisms have homotopy inverses in $\Mon$ so that
$$
W\Omega'\eta(X)\circ \mu(W\Omega'X)\simeq \id_{W\Omega'X}
$$
in $\Mon$.
\end{proof}

\section{Immediate consequences}

\textbf{The James Construction:}

The underlying space functor $U:(\Mon,\Monw)\to(\top,\topw)$ has a left adjoint
$$
J:(\top,\topw)\to(\Mon,\Monw)
$$
commonly called the \textit{James construction}, which associates with each based space $X$ the free based topological monoid on $X$.

\begin{prop}\label{5_1}
(James \cite{James}) For each path-connected based space there is a weak homotopy equivalence of spaces
$$
JX\simeq \Omega\Sigma X.
$$
\end{prop}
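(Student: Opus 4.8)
The plan is to bootstrap from the well-pointed Dold case, Proposition~\ref{1_6}(2), by cellular approximation. A pointed CW-complex is well-pointed and is a Dold space (a space of the homotopy type of a CW-complex is a Dold space; see the remark after Definition~\ref{1_6a}), so Proposition~\ref{1_6}(2) already provides a homotopy equivalence $JX'\simeq\Omega\Sigma X'$ for every path-connected pointed CW-complex $X'$. Given an arbitrary path-connected pointed space $X$, I would choose a CW-approximation, i.e. a path-connected pointed CW-complex $X'$ together with a weak homotopy equivalence $w\colon X'\to X$, and consider the zig-zag
\[ JX \xleftarrow{\ Jw\ } JX' \xrightarrow{\ \simeq\ } \Omega\Sigma X' \xrightarrow{\ \Omega\Sigma w\ } \Omega\Sigma X . \]
The proposition then follows once $Jw$ and $\Omega\Sigma w$ are shown to be weak homotopy equivalences.

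The loop--suspension end is routine. Naturality of the suspension isomorphism shows that $\Sigma w\colon\Sigma X'\to\Sigma X$ is an isomorphism on reduced singular homology; since $X'$ and $X$ are path-connected, their suspensions are simply connected, so $\Sigma w$ is a weak homotopy equivalence by the homological Whitehead theorem. Since $\Omega$ preserves weak homotopy equivalences (as $\pi_k\Omega(-)\cong\pi_{k+1}(-)$), $\Omega\Sigma w$ is a weak homotopy equivalence.

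The real work is to show that $Jw\colon JX'\to JX$ is a weak homotopy equivalence, i.e. that the James construction preserves weak homotopy equivalences. I would filter $JA$ (for $A=X,X'$) by the James filtration $J_0A\subset J_1A\subset\cdots$, with $J_1A=A$ and $J_nA=J_{n-1}A\cup_{F_nA}A^{\times n}$ where $F_nA\subset A^{\times n}$ is the fat wedge, so that $JA=\colim_n J_nA$ along the closed inclusions $J_{n-1}A\subset J_nA$ and $Jw$ is filtration-preserving, inducing on the $n$-th subquotient the smash power $w^{\wedge n}\colon (X')^{\wedge n}\to X^{\wedge n}$. Finite products preserve weak homotopy equivalences, so $w^{\times n}$, and hence (by a smaller induction over the finite diagram defining it) $F_nw$, are weak homotopy equivalences; feeding this into the defining pushouts and inducting on $n$ gives that each $J_nw$ is a weak homotopy equivalence, and passing to the colimit along the closed inclusions then yields that $Jw$ is one. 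I expect the one genuine obstacle to be the well-pointedness bookkeeping here: the square $J_nA=J_{n-1}A\cup_{F_nA}A^{\times n}$ is a homotopy pushout only when $F_nA\hookrightarrow A^{\times n}$ is a cofibration, which holds for the CW-complex $X'$ but may fail for the possibly badly pointed space $X$. The cleanest remedy is to replace the spaces $A^{\times n}$ and their fat wedges by compatible CW-approximations before running the induction (or simply to invoke the classical fact, implicit already in \cite{James}, that $J$ preserves weak homotopy equivalences). Once this is settled, the displayed zig-zag is a chain of weak homotopy equivalences and $JX\simeq\Omega\Sigma X$ follows.

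As a side remark, in the well-pointed path-connected case one can avoid Proposition~\ref{1_6}(2) and argue internally: $JX$ is then grouplike (a path-connected topological monoid is grouplike), so Proposition~\ref{4_11}(1) makes the unit $\mu(WJX)$ a weak equivalence, and combining this with $BJX\simeq\Sigma X$ from Proposition~\ref{1_6}(1), with $\Omega'\simeq\Omega$, and with Proposition~\ref{2_7} recovers $JX\simeq\Omega\Sigma X$; but the CW-approximation step above is still needed for the stated generality.
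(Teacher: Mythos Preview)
The paper does not prove Proposition~\ref{5_1}: it is simply quoted as James's classical result and used only as historical motivation for Proposition~\ref{5_2}. So there is no ``paper's proof'' to compare against; your proposal is an attempt to supply an argument where the paper gives none.

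Your strategy---deduce the general weak statement from the genuine equivalence in the well-pointed Dold case (Proposition~\ref{1_6}(2), proved in the paper as Proposition~\ref{5_2}) via CW-approximation---is logically sound in the sense that the paper's proof of~\ref{5_2} does not use~\ref{5_1}, so there is no circularity, only a forward reference. Your side remark is in fact exactly the paper's own proof of~\ref{5_2}.

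The gap you yourself flag is real and is not closed by what you wrote. The induction over the James filtration needs $F_nX\hookrightarrow X^{\times n}$ (equivalently $J_{n-1}X\hookrightarrow J_nX$) to be a cofibration so that the defining pushout is a homotopy pushout; for a badly pointed $X$ this can fail, and then neither the five-lemma step nor the identification $J_nX/J_{n-1}X\simeq X^{\wedge n}$ is available (smash powers are not homotopy invariant without well-pointedness). Your proposed remedies---``replace $A^{\times n}$ and their fat wedges by compatible CW-approximations'' or ``invoke the classical fact that $J$ preserves weak homotopy equivalences''---are precisely the content that needs to be supplied, not assumed. If you want a self-contained argument, one clean route is to compare $JX$ with $J(V^tX)$ via the whisker map $q(X)$: since $V^tX$ is well-pointed and $q(X)$ is a homotopy equivalence, the filtration argument runs for $V^tX$, and you are reduced to showing that $Jq(X)$ is a weak equivalence---which again needs an argument, but at least now only one of the two spaces is possibly bad. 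Alternatively, simply restrict the statement to well-pointed $X$ (this is what James and the subsequent literature effectively do), in which case your bootstrap is unnecessary and your side remark already finishes the job.
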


D. Puppe investigated the conditions which would imply for this weak homotopy equivalence to be a genuine homotopy equivalence.

\begin{prop}\label{5_2}
(Puppe \cite{DKP}): If $X$ is a well-pointed path-connected Dold space then there is a homotopy equivalence
$$
JX\simeq\Omega\Sigma X.
$$
\end{prop}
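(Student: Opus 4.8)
The plan is to realize the James equivalence as a short chain of homotopy equivalences built from Proposition~\ref{4_11}(1), once we have checked that the James monoid $JX$ lies in the scope of that result. So the first task is to verify that $JX$ is well-pointed and grouplike. Well-pointedness is routine: the free topological monoid on a well-pointed space is well-pointed. For grouplikeness, observe that $JX$ is path-connected because $X$ is (it is built by gluing the path-connected spaces $X^n$ along closed cofibrations), and that $JX$ is a Dold space whenever $X$ is a well-pointed Dold space, the James filtration being assembled from finite products and numerably glued colimits of Dold spaces (see \cite{Schwam}); this is the step at which all three hypotheses on $X$ are genuinely used. Since a path-connected Dold $H$-space admits a continuous homotopy inversion, $JX$ is grouplike in the sense of Definition~\ref{1_7}.

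Next I would record two further inputs. By Proposition~\ref{1_6}(1), which applies because $X$ is well-pointed, there is a homotopy equivalence $BJX \simeq \Sigma X$ of well-pointed spaces; since $\Omega'$ preserves homotopy equivalences this gives $\Omega'BJX \simeq \Omega'\Sigma X$. Moreover the inclusion $\Omega\Sigma X \hookrightarrow \Omega'\Sigma X$ is a deformation retract (see \ref{4_2}). Hence it suffices to produce a homotopy equivalence $JX \simeq \Omega'BJX$.

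That equivalence comes out of Proposition~\ref{4_11}(1) after unwinding: applied to the grouplike well-pointed monoid $JX$, it says the homotopy unit $\mu(WJX): WJX \to W\Omega'BWJX$ is a weak equivalence of monoids, hence a homotopy equivalence of underlying spaces. Now $\varepsilon(JX): WJX \to JX$ is a weak equivalence because $JX$ is well-pointed (by \ref{2_2}, \ref{2_4}); $WJX$ is well-pointed, hence so are $BWJX$ and $\Omega'BWJX$, so $\varepsilon(\Omega'BWJX): W\Omega'BWJX \to \Omega'BWJX$ is a weak equivalence; and $B\varepsilon(JX): BWJX \to BJX$ is a homotopy equivalence of well-pointed spaces (\ref{4_1a}), so $\Omega'B\varepsilon(JX)$ is one as well. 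Concatenating all of these yields
$$JX \;\simeq\; WJX \;\simeq\; W\Omega'BWJX \;\simeq\; \Omega'BWJX \;\simeq\; \Omega'BJX \;\simeq\; \Omega'\Sigma X \;\simeq\; \Omega\Sigma X,$$
which is the assertion. (Alternatively, once $JX$ is known to be grouplike, the universal property of Theorem~\ref{1_8} applied to $\id_{JX}$ forces the group completion $\mu_{JX}: JX \to \Omega'BJX$ to be a homotopy equivalence directly.)

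The step I expect to be the main obstacle is the verification that $JX$ is grouplike: it rests on the two classical but non-formal facts that the James construction preserves Dold spaces and that a connected Dold $H$-space has a continuous homotopy inverse, and it is precisely here that the Dold hypothesis on $X$ cannot be removed (Proposition~\ref{5_1} shows that without it one obtains only a weak homotopy equivalence). A secondary point is that the argument leans on Proposition~\ref{1_6}(1); if one prefers a self-contained route, $BJX \simeq \Sigma X$ can itself be deduced from the homotopy adjunction $B \dashv \Omega'$ of Theorem~\ref{4_3}, the strict adjunction $J \dashv U$, and the classical loop--suspension adjunction, by uniqueness of adjoints.
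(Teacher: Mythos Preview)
Your argument is correct and follows essentially the same route as the paper's own proof: both establish that $JX$ is a path-connected Dold space and hence grouplike via Proposition~\ref{5_4}, invoke Proposition~\ref{4_11}(1) to obtain $JX\simeq\Omega'BJX$, and finish with $BJX\simeq\Sigma X$ (Proposition~\ref{5_3}/\ref{1_6}(1)) and $\Omega'\simeq\Omega$. One small misattribution: the fact that $B\varepsilon(JX):BWJX\to BJX$ is a homotopy equivalence is not \ref{4_1a} (which concerns $p(M):\widetilde{B}M\to BM$) but rather the standard fact that $B$ sends weak equivalences between well-pointed monoids to homotopy equivalences.
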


Consider the diagram of functors
$$
\xymatrix{
\Monw \ar@<0.4ex>[rr]^B \ar@<0.4ex>[dr]^U && \topw\ar@<0.4ex>[ll]^{\Omega'}\ar@<0.4ex>[dl]^\Omega
\\
& \topw \ar@<0.4ex>[ur]^\Sigma \ar@<0.4ex>[ul]^J
}
$$
All functors preserve weak equivalences. Hence they induce a diagram 
$$
\xymatrix{
\Ho\Monw \ar@<0.4ex>[rr]^{\Ho B} \ar@<0.4ex>[dr]^{\Ho U} && \Ho\topw\ar@<0.4ex>[ll]^{\Ho \Omega'}\ar@<0.4ex>[dl]^{\Ho \Omega}
\\
& \Ho\topw \ar@<0.4ex>[ur]^{\Ho \Sigma} \ar@<0.4ex>[ul]^{\Ho J}
}
$$
consisting of adjoint pairs. Since the Moore loop space functor is naturally homotopy equivalent to the usual
loop space functor there is a natural transformation
$$
\tau(X):U\circ\Omega'(X)\to\Omega(X)
$$
which is a homotopy equivalence. Hence $\Ho\Omega$ and $\Ho U\circ\Ho\Omega'$ are naturally isomorphic. 
Since their left adjoints are unique up to natural
isomorphisms this implies that $\Ho B\circ \Ho J$ and $\Ho\Sigma$ are naturally isomorphic. We obtain

\begin{prop}\label{5_3}
For each $X\in\topw$ there is a homotopy equivalence 
$$
BJ(X)\simeq\Sigma(X)
$$
natural up to homotopy.\hfill$\square$
\end{prop}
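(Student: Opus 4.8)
The plan is to deduce the statement formally, by uniqueness of adjoints, from the derived adjunction $\Ho B\dashv\Ho\Omega'$ of Theorem~\ref{4_3b} together with the two ordinary adjunctions $J\dashv U$ on $(\topw,\Monw)$ and $\Sigma\dashv\Omega$ on $\topw$; the only points that need real care are that these adjunctions descend to and compose in the localized categories, and that the resulting natural isomorphism in $\Ho\topw$ unwinds to a homotopy equivalence which is natural up to homotopy.

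First I would check that each of $B,\Omega',U,J,\Sigma,\Omega$ carries weak equivalences to weak equivalences: this is immediate for the continuous functors $U,\Sigma,\Omega$ between well-pointed spaces, it holds for $J$ because $J$ preserves homotopies, and for $B,\Omega'$ it is contained in Section~4. Hence all six descend to the homotopy categories. Since a composite of right adjoints is again a right adjoint, $\Ho U\circ\Ho\Omega'$ is right adjoint to $\Ho B\circ\Ho J$, while $\Ho\Omega$ is right adjoint to $\Ho\Sigma$. Now the comparison map $\tau(X)\colon U\Omega'X\to\Omega X$ between the Moore loop space and the ordinary loop space is, for every $X$, a homotopy equivalence, so its image under the localization functor $\gamma_{\topw}$ is a natural isomorphism $\Ho U\circ\Ho\Omega'\cong\Ho\Omega$ of endofunctors of $\Ho\topw$. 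Uniqueness of left adjoints up to natural isomorphism therefore yields a natural isomorphism $\Ho B\circ\Ho J\cong\Ho\Sigma$.

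It then remains to read this isomorphism back in $\topw$. Because $\Id$ is a (strong) cofibrant replacement functor on $\topw$ (Example~\ref{3_4}.5), Proposition~\ref{3_3} identifies $\Ho\topw$ with the homotopy category $\pi\topw$, whose morphisms are homotopy classes of maps of well-pointed spaces; moreover $\Ho\Sigma$ sends $X$ to $\Sigma X$, and $\Ho B(\Ho J(X))$ is canonically isomorphic in $\Ho\topw$ to $BJX$ (since $\varepsilon\colon WJX\to JX$ is a weak equivalence of the well-pointed monoids $WJX$ and $JX$, and $B$ preserves weak equivalences). Thus the natural isomorphism $\Ho B\circ\Ho J\cong\Ho\Sigma$ provides, for each well-pointed $X$, an isomorphism in $\pi\topw$ between $BJX$ and $\Sigma X$ --- that is, the homotopy class of a homotopy equivalence $BJX\xrightarrow{\simeq}\Sigma X$ --- and its naturality in $\pi\topw$ is exactly naturality up to homotopy in $\topw$. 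Choosing a representative for each $X$ finishes the argument.

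I do not expect a genuine obstacle here: there is no hard computation, only the bookkeeping above, and the single substantive input is the homotopy equivalence $\tau\colon U\Omega'\simeq\Omega$. (A more hands-on alternative would be to invoke the homotopy equivalence $BJX\simeq\Sigma X$ of Proposition~\ref{1_6}(1) and then promote it to a natural-up-to-homotopy family; but since that equivalence is itself extracted from the machinery of Section~4, the formal route above is both shorter and delivers the naturality automatically.)
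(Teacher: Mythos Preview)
Your proposal is correct and follows essentially the same route as the paper: pass to the localized categories, use the natural equivalence $\tau\colon U\Omega'\to\Omega$ to identify the two right adjoints $\Ho U\circ\Ho\Omega'$ and $\Ho\Omega$, and invoke uniqueness of left adjoints to obtain $\Ho B\circ\Ho J\cong\Ho\Sigma$. You are in fact a bit more careful than the paper in spelling out why each functor descends and in translating the resulting natural isomorphism in $\Ho\topw=\pi\topw$ back into a homotopy equivalence $BJX\simeq\Sigma X$ that is natural up to homotopy.
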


We obtain Puppe's result by combining \ref{5_3} with another well-known result:

\begin{prop}\label{5_4}
 If $M$ is a well-pointed monoid whose underlying space is a Dold space and $\pi_0(M)$ is a group, then $M$ is grouplike \cite[(12.7)]{DKP}.
\end{prop}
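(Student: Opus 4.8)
I would prove that $M$ is grouplike by showing that the left shearing map
$$s\colon M\times M\longrightarrow M\times M,\qquad (x,y)\longmapsto (x,xy),$$
is a homotopy equivalence, and then extracting a homotopy inversion from a homotopy inverse of $s$. This last step is routine: if $t=(a,b)\colon M\times M\to M\times M$ is a homotopy inverse of $s$, then $s\circ t\simeq\id$ gives homotopies $a(x,z)\simeq x$ and $a(x,z)\cdot b(x,z)\simeq z$ continuous in $(x,z)$, so $\nu(x):=b(x,e)$ satisfies $x\cdot\nu(x)\simeq e$ through a homotopy continuous in $x$; since $M$ is a genuine (strictly associative and unital) monoid, a one–sided homotopy inverse is automatically two–sided (from $x\nu(x)\simeq e$ one deduces $\nu\circ\nu\simeq\id$, hence $\nu(x)\cdot x\simeq\nu(x)\cdot\nu(\nu(x))\simeq e$). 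So everything reduces to the shearing map.

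The key input is that \emph{every left translation $L_x\colon M\to M$, $m\mapsto xm$, is a homotopy equivalence}. The identity component $M_e$ is a submonoid (its square is path–connected and contains $e$), and for $x\in M_e$ a path in $M_e$ from $e$ to $x$, fed into the first slot of the multiplication, is a homotopy $\id_M=L_e\simeq L_x$. For arbitrary $x$, write $x\in M_\alpha$ with $\alpha\in\pi_0(M)$; since $\pi_0(M)$ is a group, picking $x'\in M_{\alpha^{-1}}$ yields $xx',\,x'x\in M_e$, whence $L_xL_{x'}=L_{xx'}\simeq\id$ and $L_{x'}L_x=L_{x'x}\simeq\id$, so $L_x$ is a homotopy equivalence with homotopy inverse $L_{x'}$.

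Next I regard $s$ as a map over $M$ with respect to the common first–coordinate projection $\mathrm{pr}_1\colon M\times M\to M$, which is a Hurewicz fibration; its restriction to the fibre over $x$ is exactly $L_x$. To upgrade these fibrewise homotopy equivalences to a global one I use the Dold–space hypothesis: choose a numerable cover $\{U_\gamma\}$ of $M$ with each inclusion $U_\gamma\subset M$ nullhomotopic (Definition \ref{1_6a}), say by a homotopy $h^\gamma\colon U_\gamma\times I\to M$ from the inclusion to the constant map at some $c_\gamma\in U_\gamma$, and let $\phi_\gamma$ be a homotopy inverse of $L_{c_\gamma}$, which exists by the previous paragraph. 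Then $(x,z)\mapsto(x,\phi_\gamma(z))$ is a fibrewise homotopy inverse of $s$ over $U_\gamma$: the required homotopies $(x,y)\mapsto(x,\phi_\gamma(xy))\simeq\id$ and $(x,z)\mapsto(x,x\phi_\gamma(z))\simeq\id$ over $U_\gamma$ are built by feeding $h^\gamma$ into the first slot of the multiplication and then concatenating with the homotopies $\phi_\gamma L_{c_\gamma}\simeq\id\simeq L_{c_\gamma}\phi_\gamma$. Thus $s$ restricts to a fibre homotopy equivalence over each member of a numerable cover of the base, so by Dold's theorem on fibre homotopy equivalences over a numerably covered base (see \cite{Schwam}) $s$ is a fibre homotopy equivalence over $M$, in particular a homotopy equivalence; together with the first paragraph this produces the homotopy inversion and finishes the proof.

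\textbf{The main difficulty} is precisely the passage from fibrewise to global in the last step. Without a covering hypothesis one only gets that $s$ is a weak homotopy equivalence (equivalently, that $M\to\Omega BM$ is a weak equivalence); the numerable nullhomotopic cover supplied by the Dold condition is exactly what lets Dold's gluing theorem convert the translations $L_x$ — which are homotopy equivalences only thanks to $\pi_0(M)$ being a group — into an honest homotopy inverse of $s$. (Well–pointedness of $M$ plays no role in this argument; it is part of the standing hypotheses of the section.)
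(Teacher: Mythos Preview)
The paper does not actually prove Proposition~\ref{5_4}; it is quoted as a known result from \cite[(12.7)]{DKP}, so there is no in-paper argument to compare against. Your sketch is essentially the standard proof (and presumably close to what is in \cite{DKP}): show that every left translation $L_x$ is a homotopy equivalence using that $\pi_0(M)$ is a group, regard the shear map $s(x,y)=(x,xy)$ as a fibrewise map between trivial fibrations over $M$, use the Dold hypothesis and Dold's local--global theorem for fibre homotopy equivalences to conclude that $s$ is a fibre homotopy equivalence, and finally read off a continuous homotopy inversion from a fibrewise homotopy inverse of $s$.

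Two small remarks. First, in your opening paragraph you extract $\nu(x)=b(x,e)$ from an arbitrary homotopy inverse $t=(a,b)$ of $s$ and immediately claim $x\cdot\nu(x)\simeq e$; strictly speaking you only get $a(x,e)\cdot b(x,e)\simeq e$ with $a(x,e)\simeq x$, so one should either concatenate with the homotopy $a(x,e)\simeq x$ or, more cleanly, use that $s$ is in fact a \emph{fibre} homotopy equivalence (as you establish later), so that $t$ may be taken fibrewise and $a(x,z)=x$ on the nose. Second, your observation that well-pointedness is not used in the argument is correct; it appears in the statement only because it is part of the ambient hypotheses in this section.
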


\textbf{Proof of \ref{5_2}:}
If $X$ is a path-connected Dold space, so is $JX$. Hence $JX$ is grouplike and 
$\mu(WJX): WJX\to W\Omega'BWJX$ is a weak equivalence by \ref{4_11}, so that $\Omega'B\varepsilon(JX)\circ
 \varepsilon(\Omega'BJX)\circ \mu(WJX)\circ \iota(JX)
:JX\to\Omega'BJX $ is 
a homotopy equivalence. We have a sequence of homotopy
equivalences 
$$
JX\simeq \Omega'BJX\simeq\Omega BJX\simeq \Omega \Sigma X.$$

\vspace{2ex}
\textbf{Homotopical group completion:}
Homotopical group completion is the replacement of a monoid by a grouplike one having a universal property. We state our result for the
full subcategory $\Ho\Monw$ of $\Ho\Mon$ of well-pointed monoids. Since $q(M):VM\to M$ is a weak equivalence, $\Ho\Monw$ is equivalent
to $\Ho\Mon$ so that the corresponding statement for $\Ho\Mon$ follows.

\begin{prop}\label{5_5}
Let $M$ be a well-pointed monoid. The homotopy class of the homomorphism $\mu(WM):WM\to W\Omega'BWM$, considered as a morphism in 
\linebreak
$\Ho\Monw(M, \Omega'BWM)$,
is a group completion in the following sense: Given a diagram
$$
\xymatrix{
M \ar[rr]^(0.43){[\mu(WM)]} \ar[dr]_{[g]} && \Omega'BWM \ar@{-->}[dl]^{[\overline{g}]}
\\ &N
}
$$
in $\Ho\Monw$ with $N$ grouplike, there exists a unique morphism $[\overline{g}]:\Omega'BWM\to N$ making the diagram commute.
\end{prop}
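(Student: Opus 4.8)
The plan is to prove the sharper statement that, for every grouplike well-pointed monoid $N$, precomposition with $[\mu(WM)]$ is a \emph{bijection}
$$[\mu(WM)]^\ast\colon \Ho\Monw(\Omega'BWM,N)\longrightarrow\Ho\Monw(M,N);$$
this yields simultaneously the existence and the uniqueness of $[\overline{g}]$. Throughout I use the explicit description of the localizations from Proposition \ref{3_3}, so that $\Ho\Monw(A,B)=\pi_0\Monw(WA,WB)$ and $\Ho\topw(S,T)=\pi_0\topw(S,T)$, together with the fact that $WA$, $BWA$ and $\Omega'BWA$ are well-pointed whenever $A$ is.

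I would first reduce to the case in which the target is $\Omega'BWN$. Since $N$ is grouplike and well-pointed, Proposition \ref{4_11}(1) says that $\mu(WN)\colon WN\to W\Omega'BWN$ is a weak equivalence. Its underlying map, as a $uh$-morphism, is $\varepsilon(\Omega'BWN)\circ\mu(WN)\circ\iota(N)$, a composite of homotopy equivalences (the outer two are homotopy equivalences by Construction~\ref{2_2}), hence itself a homotopy equivalence; so Proposition \ref{2_7} shows $\mu(WN)$ is a homotopy equivalence in $\Mon$, and therefore $[\mu(WN)]$ is an isomorphism in $\Ho\Monw$. Postcomposition with it is a bijection $\Ho\Monw(A,N)\xrightarrow{\ \cong\ }\Ho\Monw(A,\Omega'BWN)$ natural in $A$, which commutes with $[\mu(WM)]^\ast$; hence it suffices to show that
$$[\mu(WM)]^\ast\colon\Ho\Monw(\Omega'BWM,\Omega'BWN)\longrightarrow\Ho\Monw(M,\Omega'BWN)$$
is a bijection.

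Here I would invoke the conatural homotopy adjunction of Theorem \ref{4_3}. By its very construction the homotopy equivalence $\lambda(WA,X)\colon\Monw(WA,W\Omega'X)\to\topw(BWA,X)$ equals the composite $\eta(X)_\ast\circ B$, and so it is \emph{strictly} natural in $A$: for a homomorphism $\phi\colon WA'\to WA$ one has $\lambda(WA',X)(f\circ\phi)=\eta(X)\circ B(f)\circ B(\phi)=(B\phi)^\ast\lambda(WA,X)(f)$. Applying this with $A=\Omega'BWM$, $A'=M$, $\phi=\mu(WM)$ and $X=BWN$, so that $W\Omega'X=W\Omega'BWN$, produces a commuting square whose two vertical maps are the homotopy equivalences $\lambda$ and whose lower horizontal map is $(B\mu(WM))^\ast$. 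Passing to path components identifies $[\mu(WM)]^\ast$ (into $\Ho\Monw(M,\Omega'BWN)$) with
$$(B\mu(WM))^\ast\colon\Ho\topw(BW\Omega'BWM,BWN)\longrightarrow\Ho\topw(BWM,BWN).$$
Now Proposition \ref{4_11}(4) says that $B\mu(WM)\colon BWM\to BW\Omega'BWM$ is a homotopy equivalence of well-pointed based spaces, hence an isomorphism in $\Ho\topw$, so $(B\mu(WM))^\ast$ is a bijection. Unwinding the reduction, $[\mu(WM)]^\ast\colon\Ho\Monw(\Omega'BWM,N)\to\Ho\Monw(M,N)$ is a bijection, which is precisely the stated universal property.

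The one genuinely delicate point is the first reduction: one must know that $[\mu(WN)]$ is an honest isomorphism in $\Ho\Monw$, not merely that its underlying map is a homotopy equivalence — and this is exactly where the hypothesis that $N$ is grouplike, via Proposition \ref{4_11}(1) and Proposition \ref{2_7}, is indispensable. Everything else is bookkeeping with the strict naturality of $\lambda$ in its monoid variable and with Proposition \ref{4_11}(4).
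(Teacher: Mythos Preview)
Your proof is correct and cleanly organized. The approach, however, differs from the paper's. The paper treats existence and uniqueness separately: for existence it uses the homotopy-commutative naturality square of $\mu$ applied to $g:WM\to WN$ together with a homotopy inverse $h$ of $\mu(WN)$ to set $\overline{g}=h\circ W\Omega'Bg$; for uniqueness it applies $B$ to two candidates, cancels $B\mu(WM)$ (a homotopy equivalence by \ref{4_11}(4)), and then transports the resulting homotopy back through the naturality of $\mu$ using that $\mu(W\Omega'BWM)$ and $\mu(W\Omega'BWN)$ are homotopy equivalences. Your argument instead packages both statements into the single assertion that $[\mu(WM)]^\ast$ is a bijection, reduces the target $N$ to $\Omega'BWN$ via the isomorphism $[\mu(WN)]$, and then invokes the strict naturality in the monoid variable of the adjunction map $\lambda=\eta_\ast\circ B$ from Theorem~\ref{4_3} to identify $[\mu(WM)]^\ast$ with $(B\mu(WM))^\ast$ on homotopy classes. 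Both routes rest on the same two inputs from Proposition~\ref{4_11}, but yours avoids unpacking the homotopy naturality of $\mu$ and instead exploits the already-established homotopy adjunction directly, which makes the uniqueness half essentially automatic.
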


\begin{proof}
Consider the homotopy commutative diagram in $\Monw$
$$
\xymatrix{
WM \ar[rr]^(0.4){\mu(WM)} \ar[d]_g
&& W\Omega'BWM \ar[d]^{W\Omega'Bg}
\\
WN \ar[rr]^(0.4){\mu(WN)}
&& W\Omega'BWN 
}
$$
Since $N$ is well-pointed and grouplike $\mu(WN)$ is homotopy invertible in $\Monw$ by \ref{4_11}. We choose a homotopy inverse
$h:W\Omega'BWN\to WN$ and define $\overline{g}=h\circ W\Omega'Bg$. Then $\overline{g}\circ\mu(WM)\simeq g$ in $\Monw$.

 For the uniqueness of $[\overline{g}]$ suppose there
is a homomorphism $g': W\Omega'BM \longrightarrow  WN$ such that $h \circ\mu(WM)\simeq g$. 
Put $j_1=\mu(WN)\circ g'$ and $j_2=W\Omega'Bg$.
It suffices to show
that $j_1\simeq j_2$ in $\Monw$. Since 
$Bj_1\circ B\mu(WM)\simeq \mu(WN)\circ Bg\simeq Bj_2\circ B\mu(WM)$
and $B\mu(WM)$ is a homotopy equivalence by \ref{4_11}, we obtain $Bj_1\simeq Bj_2$. Since $\mu(W\Omega'BM)$ and
$\mu(W\Omega'BN)$ are homotopy equivalences in $\Monw$ by \ref{4_11} and $\mu$ is natural up to homotopy the following diagram is homotopy 
commutative and establishes the result:
$$\xymatrix{
W\Omega'BM\ar[rr]^(0.43){\mu(W\Omega'BM)}\ar[d]_{j_k} &&  W\Omega'B W\Omega'BM\ar[d]^{W\Omega'Bj_k}\\
W\Omega'BN\ar[rr]^(0.43){\mu(W\Omega'BN)} && W\Omega'B W\Omega'BN
}
$$
\end{proof}

\textbf{Dold spaces and grouplike monoids}

For details on Dold spaces see \cite{Schwam}. We restrict our attention to the well-pointed case. Using the whiskering process it is easy to 
extend our results to the general case.

Let $\topw_{Dold}\subset \topw$ denote the full subcategory of well-pointed path-connected Dold spaces. Since $BM$ is in $\topw_{Dold}$ for
any well-pointed monoid by \cite[Cor. 5.2]{Schwam}, the classifying space functor restricts to a functor
$$B:\Monw\to \topw_{Dold}.$$
Let $\Monw_{group}\subset \Monw$ denote the full subcategory of grouplike well-pointed monoids. Then Proposition \ref{4_11} implies

\begin{theo}\label{5_6}
The functors
$$\BwH:\scH\Monw_{group}\leftrightarrows \scH\topw_{Dold}:\OwH$$
define an equivalence up to homotopy of categories, i.e. the natural transformations up to homotopy $\mu: \Id \to \OwH\circ \BwH$ and
$\eta: \BwH\circ \OwH\to \Id$ take values in
homotopy equivalences. In particular,
$$\Ho B:\Ho \Monw_{group}\leftrightarrows \Ho \topw_{Dold}:\Ho \Omega'$$
define an equivalence of categories.\hfill\ensuremath{\Box}
\end{theo}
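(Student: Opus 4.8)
The plan is to read the theorem off Proposition \ref{4_11}, using Proposition \ref{2_7} to promote the weak equivalences produced there to homotopy equivalences inside the relevant categories, and the localization machinery of Section 3 for the final clause. First I would check that the two functors restrict to the indicated full subcategories. For a well-pointed monoid $M$ the space $BWM$ is a well-pointed path-connected Dold space by \cite[Cor. 5.2]{Schwam}, so $\BwH$ carries $\scH\Monw_{group}$ (in fact all of $\scH\Monw$) into $\scH\topw_{Dold}$; and $\OwH(Y)=\Omega'Y$ is a Moore loop space, hence grouplike, and is well-pointed whenever $Y$ is, so $\OwH$ carries $\scH\topw_{Dold}$ into $\scH\Monw_{group}$. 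By Theorem \ref{4_3} (via Proposition \ref{3_9}) the homotopy unit $\mu\colon\Id\to\OwH\BwH$ and homotopy counit $\eta\colon\BwH\OwH\to\Id$ are natural transformations up to homotopy, and their restrictions to the subcategories above remain such.

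Next I would verify that, on the subcategories, $\mu$ and $\eta$ take values in homotopy equivalences. For an object $M$ of $\scH\Monw_{group}$, $\mu(M)$ is represented by the homomorphism $\mu(WM)\colon WM\to W\Omega'BWM$, whose underlying map is a weak equivalence by Proposition \ref{4_11}(1) since $M$ is well-pointed and grouplike; as $WM$ and $W\Omega'BWM$ are both well-pointed, Proposition \ref{2_7} makes $\mu(WM)$ a homotopy equivalence in $\Monw$, and since the morphism objects of $\scH\Monw_{group}$ are mapping spaces of the form $\Monw(WA,WB)$ this is exactly the statement that $\mu(M)$ is a homotopy equivalence in $\scH\Monw_{group}$ (an inverse being supplied by a homotopy-inverse homomorphism). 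For an object $Y$ of $\scH\topw_{Dold}$, $\eta(Y)$ is represented by $\eta(Y)\colon BW\Omega'Y\to Y$, which is a based homotopy equivalence by Proposition \ref{4_11}(2); since the morphism objects of $\scH\topw_{Dold}$ are function spaces in $\topw$, this says $\eta(Y)$ is a homotopy equivalence in $\scH\topw_{Dold}$. Thus $\BwH$ and $\OwH$ form an equivalence up to homotopy of categories.

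For the final statement I would pass to path components. The assignment $\scC\mapsto\pi\scC$ turns continuous functors into functors, natural transformations up to homotopy into honest natural transformations, and homotopy equivalences into isomorphisms, so $\pi\BwH$ and $\pi\OwH$ become mutually inverse equivalences of categories. It remains to identify these with $\Ho B$ and $\Ho\Omega'$: the full subcategories $\Monw_{group}\subset\Monw$ and $\topw_{Dold}\subset\topw$ are tensored over the subcategory $\{\ast,I\}$ of $\Top$ and carry the restricted strong cofibrant replacement functors $W$ and $\Id$ — here one checks that $WM$ is again grouplike for well-pointed grouplike $M$ by applying the shear-map criterion to the weak equivalence $\varepsilon(M)\colon WM\to M$, and that the Dold condition is homotopy invariant — so by Proposition \ref{3_3} their localizations exist and are $\pi\scH\Monw_{group}$ and $\pi\scH\topw_{Dold}$, which under the identifications of \ref{3_6b} carry $\pi\BwH$ and $\pi\OwH$ to $\Ho B$ and $\Ho\Omega'$. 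This yields the equivalence $\Ho B\colon\Ho\Monw_{group}\leftrightarrows\Ho\topw_{Dold}\colon\Ho\Omega'$.

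The substantive input is entirely Proposition \ref{4_11}; the only steps needing real care are the appeal to Proposition \ref{2_7} — a mere weak equivalence $\mu(WM)$ would not become invertible in $\Ho\Monw_{group}$, so one must genuinely know it is a homotopy equivalence of monoids — and the verification that $\Monw_{group}$ and $\topw_{Dold}$ satisfy the hypotheses of Proposition \ref{3_3}; I expect the latter, specifically the fact that $W$ preserves grouplikeness of well-pointed monoids, to be the main obstacle.
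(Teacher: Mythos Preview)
Your proposal is correct and matches the paper's approach: the paper gives no proof beyond the sentence ``Then Proposition \ref{4_11} implies'' and a $\Box$, so citing \ref{4_11}(1),(2) for $\mu$ and $\eta$ and invoking \ref{2_7} to upgrade to homotopy equivalences in $\Monw$ is exactly what is intended. Your extra care in verifying that $\Monw_{group}$ and $\topw_{Dold}$ satisfy the hypotheses of Proposition \ref{3_3} is more than the paper supplies; one can sidestep this by viewing $\Ho\Monw_{group}$ and $\Ho\topw_{Dold}$ as the full subcategories of $\Ho\Monw$ and $\Ho\topw$ on the indicated objects, but your direct verification (via the shear map and the homotopy invariance of the Dold condition) is a legitimate and arguably cleaner route.
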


The second part is a slight extension of a well-known result (e.g. see \cite[Section 4]{Brink}).

The following two propositions extend and strengthen results of Fuchs \cite[Satz 7.7]{Fuchs}. 

The diagram
$$
\xymatrix{
\Monw(WM,WN) \ar[rr]^{\mu(WN)_\ast} \ar[rrd]_B && \Monw(WM,W\Omega'BWN)\ar[d]^{\lambda(WM,BWN)}\\
&& \topw(BWM,BWN)
}
$$
commutes up to homotopy because by \ref{4_8}
$$\begin{array}{rcl}
(\lambda(WM,BWN)\circ \mu(WN)_\ast)(f) &= & \eta(BWN)\circ B' \circ \mu(WN)_\ast)(f)\\
& = & \eta(BWN)\circ B\mu(WN)\circ Bf\\
& \simeq & Bf \quad\textrm{continuously in } f
\end{array}
$$
with $B': \Monw(WM,W\Omega'BWN)\to \topw(BWM,BW\Omega'BWN)$. If $N$ is 
grouplike $\mu(WN)$ is a homotopy equivalence in $\Monw$, and we obtain 

\begin{prop}\label{5_7}
 If $N$ is a well-pointed grouplike monoid then 
$$B: \Monw(WM,WN)\to \topw(BWM,BWN)$$
is a homotopy equivalence.\hfill\ensuremath{\Box}
\end{prop}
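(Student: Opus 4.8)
The plan is to read the result off the homotopy-commutative triangle displayed immediately above the statement, namely
$$\lambda(WM,BWN)\circ\mu(WN)_\ast\;\simeq\;B\colon\Monw(WM,WN)\to\topw(BWM,BWN),$$
the homotopy being continuous in the argument (this is exactly what \ref{4_8} provides). It therefore suffices to show that the two maps $\mu(WN)_\ast$ and $\lambda(WM,BWN)$ are homotopy equivalences; then $B$, being homotopic to their composite, is one as well.

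First I would show that $\mu(WN)_\ast$ is a homotopy equivalence. Since $N$ is well-pointed and grouplike, Proposition \ref{4_11}(1) gives that $\mu(WN)\colon WN\to W\Omega'BWN$ is a weak equivalence. Its source is $W$ applied to the well-pointed monoid $N$, and its target is $W$ applied to the monoid $\Omega'BWN$, which is well-pointed because $WN$, hence $BWN$, hence $\Omega'BWN$, is well-pointed. Thus Proposition \ref{2_7} applies and $\mu(WN)$ is a homotopy equivalence in $\Monw$, i.e. it admits a homomorphism homotopy inverse with homotopies through homomorphisms. Since composition in a $\top$-enriched category is continuous, post-composition with a homotopy equivalence induces a homotopy equivalence on hom-spaces, so $\mu(WN)_\ast\colon\Monw(WM,WN)\to\Monw(WM,W\Omega'BWN)$ is a homotopy equivalence.

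Next, $\lambda(WM,BWN)$ is a homotopy equivalence by Theorem \ref{4_3} applied with $X=BWN$. The four objects $WM$, $W\Omega'BWN$, $BWM$, $BWN$ are all well-pointed, so the $\Monw$- and $\topw$-hom-spaces occurring here coincide with the corresponding $\Mon$- and $\top$-hom-spaces, and $\lambda(WM,BWN)\colon\Mon(WM,W\Omega'BWN)\to\top(BWM,BWN)$ is precisely the conatural homotopy adjunction supplied by Theorem \ref{4_3}. Combining this with the previous step and the homotopy-commutative triangle gives $B\simeq\lambda(WM,BWN)\circ\mu(WN)_\ast$, a composite of homotopy equivalences, which completes the proof.

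I do not anticipate a real obstacle. The only points requiring a moment's care are the identification of $\mu(WN)$ as a map of the form ``$W$ of a well-pointed monoid $\to$ $W$ of a well-pointed monoid'' so that Proposition \ref{2_7} is applicable, and the (standard) observation that a homotopy equivalence in a $\top$-enriched category induces homotopy equivalences of hom-spaces by post-composition; both are routine.
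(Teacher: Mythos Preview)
Your proposal is correct and follows exactly the paper's own argument: the homotopy-commutative triangle (established via \ref{4_8}), the fact that $\mu(WN)$ is a homotopy equivalence in $\Monw$ when $N$ is grouplike, and the homotopy equivalence $\lambda(WM,BWN)$ from Theorem \ref{4_3} combine to give the result. You have simply spelled out in more detail why $\mu(WN)$ is a homotopy equivalence in $\Monw$ (via \ref{4_11}(1) and \ref{2_7}), which the paper leaves implicit.
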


Since $\eta(X):BW\Omega'X\to X$ is a natural transformation the following diagram commutes
$$
\xymatrix{
\Monw(W\Omega'X,W\Omega'Y)\ar[d]^{B} & \topw(X,Y)\ar[l]_(.4){W\Omega'} \ar[d]_{\eta(X)^\ast}\\
\topw(BW\Omega'X,BW\Omega'Y) \ar[r]^(0.55){\eta(Y)_\ast} & \topw(BW\Omega'X,Y).
}
$$
%Consider the commutative diagram
%$$
%\xymatrix{
%\topw(X,Y)\ar[d]_{W\Omega'} & \topw(X,BW\Omega'Y)\ar[d]_{\eta(X)^\ast}\ar[r]^{\eta(Y)_\ast}\ar[l]_{\eta(Y)_\ast}&\topw(X,Y)\ar[d]_{\eta(X)^\ast}\\
%\Monw(W\Omega'X,W\Omega'Y)\ar[r]^{B} & \topw(BW\Omega'X,BW\Omega'Y) \ar[r]^(0.55){\eta(Y)_\ast} & \topw(BW\Omega'X,Y)
%}
%$$
Since $W\Omega'Y$ is grouplike the map $B$ is a homotopy equivalence by \ref{5_7}. Since $\eta(Y)_\ast\circ B=\lambda(W\Omega'X,Y)$ the
map $\eta(Y)_\ast: \topw(BW\Omega'X,BW\Omega'Y)\to \topw(BW\Omega'X,Y)$ is a homotopy equivalence.
If $X$ is a well-pointed path-connected Dold space $\eta(X)$  is a based homotopy equivalence by \ref{4_11}. We obtain

\begin{prop}\label{5_8} 
 If $X$ is a well-pointed path-connected Dold space then $W\Omega': \topw(X,Y)\to\Monw(W\Omega'X,W\Omega'Y)$ is a homotopy equivalence.
\hfill\ensuremath{\Box}
\end{prop}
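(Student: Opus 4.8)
The plan is to deduce the statement from the strictly commutative square displayed just before the proposition, namely
$$\eta(Y)_\ast\circ B\circ W\Omega' \;=\; \eta(X)^\ast: \topw(X,Y)\longrightarrow\topw(BW\Omega'X,Y),$$
by checking that every edge of that square other than $W\Omega'$ is a homotopy equivalence; then $W\Omega'$ is forced to be one as well.

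First I would observe that $\Omega'X$ is well-pointed because $X$ is (see \ref{4_2}), hence $W\Omega'X$ and $BW\Omega'X$ are well-pointed. Thus Theorem \ref{4_3}, applied with $M=\Omega'X$, says that $\lambda(W\Omega'X,Y)=\eta(Y)_\ast\circ B$ is a homotopy equivalence. Moreover the Moore loop space $\Omega'Y$ is grouplike, so Proposition \ref{5_7} gives that $B:\Monw(W\Omega'X,W\Omega'Y)\to\topw(BW\Omega'X,BW\Omega'Y)$ is a homotopy equivalence; combining these two facts shows that $\eta(Y)_\ast$ is a homotopy equivalence too.

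Next I would treat the right-hand edge $\eta(X)^\ast$. The hypothesis that $X$ is a well-pointed path-connected Dold space is exactly what Proposition \ref{4_11}(2) requires in order to conclude that $\eta(X):BW\Omega'X\to X$ is a based homotopy equivalence. Since both $X$ and $BW\Omega'X$ are well-pointed, $\eta(X)$ admits a based homotopy inverse, and the based homotopies inverting it induce, by continuity of composition in the mapping spaces, homotopies inverting $\eta(X)^\ast$; hence $\eta(X)^\ast:\topw(X,Y)\to\topw(BW\Omega'X,Y)$ is a homotopy equivalence.

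Finally, the commuting square gives that $W\Omega'$ is homotopic to $B^{-1}\circ\eta(Y)_\ast^{-1}\circ\eta(X)^\ast$ (equivalently to $\lambda(W\Omega'X,Y)^{-1}\circ\eta(X)^\ast$), a composite of homotopy equivalences, so $W\Omega'$ is a homotopy equivalence. I do not expect a genuine obstacle: all the real content is carried by Theorem \ref{4_3}, Proposition \ref{5_7} and Proposition \ref{4_11}(2), and the only step needing a word of care is the passage from $\eta(X)$ being a based homotopy equivalence to $\eta(X)^\ast$ being a homotopy equivalence of based mapping spaces, which is where the well-pointedness of $X$ and of $BW\Omega'X$ enters.
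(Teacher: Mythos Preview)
Your proposal is correct and follows essentially the same argument as the paper: use the commuting square, invoke Proposition~\ref{5_7} for $B$, use $\eta(Y)_\ast\circ B=\lambda(W\Omega'X,Y)$ (Theorem~\ref{4_3}) to get $\eta(Y)_\ast$, and apply Proposition~\ref{4_11}(2) to handle $\eta(X)^\ast$. The only difference is cosmetic ordering; your extra remark about why $\eta(X)^\ast$ is a homotopy equivalence of based mapping spaces is a welcome clarification the paper leaves implicit.
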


\textbf{Homotopy homomorphisms and unitary homotopy homomorphisms}

\begin{prop}\label{5_9}
 Let $M$ and $N$ be well-pointed monoids and $N$ be grouplike. Then $\varepsilon'(M):\overline{W}M\to WM$ induces a
homotopy equivalence
$$\Mon(WM,N)\to \Sgp(\overline{W}M,N).$$
\end{prop}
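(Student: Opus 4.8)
The plan is to translate the statement, by applying the classifying space functor and using the adjunction of \ref{2_3b} (with $(-)_+$ left adjoint, $(-)_+\dashv i$, to the forgetful functor $i\colon\Mon\to\Sgp$), into Proposition~\ref{5_7}; the grouplikeness of $N$ enters only at that last step.

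Two preliminary facts are needed. First, $\overline{\varepsilon}(M)=\varepsilon(M)\circ\varepsilon'(M)$ by Construction~\ref{2_2}, and both $\overline{\varepsilon}(M)$ and $\varepsilon(M)$ are shrinkable, hence homotopy equivalences of underlying spaces; so by two-out-of-three $\varepsilon'(M)\colon\overline{W}M\to WM$ is a weak equivalence in $\Sgp$, and therefore $\widetilde{B}\varepsilon'(M)$ is a based homotopy equivalence. Second, the adjunction $(-)_+\dashv i$ gives a natural homeomorphism $\Sgp(\overline{W}M,N)\cong\Mon((\overline{W}M)_+,N)$, and the isomorphism $(\overline{W}M)_+\cong W(M_+)$ of \ref{2_3b} rewrites this as $\Mon(W(M_+),N)$. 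Chasing these identifications, the map $(\varepsilon'(M))^\ast\colon\Mon(WM,N)\to\Sgp(\overline{W}M,N)$ is carried to precomposition $(W\kappa(M))^\ast\colon\Mon(WM,N)\to\Mon(W(M_+),N)$ with $W\kappa(M)\colon W(M_+)\to WM$, where $\kappa(M)\colon M_+\to M$ is the counit of the adjunction. Moreover, under the natural homeomorphism $BW(M_+)\cong B((\overline{W}M)_+)\cong\widetilde{B}\overline{W}M$ of \ref{4_1a}, the map $BW\kappa(M)$ corresponds to $p(WM)\circ\widetilde{B}\varepsilon'(M)$, a composite of based homotopy equivalences (the first because $WM$ is well-pointed), so $BW\kappa(M)\colon BW(M_+)\to BWM$ is a based homotopy equivalence, whence $(BW\kappa(M))^\ast$ is a homotopy equivalence of mapping spaces.

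With these in hand I would consider the square
$$\xymatrix{
\Mon(WM,N)\ar[r]^{B}\ar[d]_{(W\kappa(M))^\ast} & \top(BWM,BN)\ar[d]^{(BW\kappa(M))^\ast}\\
\Mon(W(M_+),N)\ar[r]^{B} & \top(BW(M_+),BN)
}$$
which commutes because $B$ is a functor. Its right-hand vertical arrow is a homotopy equivalence by the previous paragraph. Both horizontal arrows are homotopy equivalences: for any well-pointed monoid $M'$ and our well-pointed grouplike $N$, the map $B\colon\Mon(WM',N)\to\top(BWM',BN)$ fits into a commuting square with the homotopy equivalence $\varepsilon(N)_\ast\colon\Mon(WM',WN)\to\Mon(WM',N)$ (Proposition~\ref{2_4}, using $M'$ well-pointed), the homotopy equivalence $(B\varepsilon(N))_\ast\colon\top(BWM',BWN)\to\top(BWM',BN)$ (as $B\varepsilon(N)$ is a based homotopy equivalence of well-pointed spaces), and the homotopy equivalence $B\colon\Monw(WM',WN)\to\topw(BWM',BWN)$ of Proposition~\ref{5_7}; one applies this with $M'=M$ (well-pointed by hypothesis) and with $M'=M_+$ (always well-pointed). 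Since three sides of the first square are homotopy equivalences, so is the fourth, $(W\kappa(M))^\ast$, and hence so is $(\varepsilon'(M))^\ast$.

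The main difficulty is not conceptual but lies in the bookkeeping of the second paragraph: one must verify carefully that $(\varepsilon'(M))^\ast$ is taken to $(W\kappa(M))^\ast$ by the natural identifications of \ref{2_3b} and \ref{4_1a}, and that $BW\kappa(M)$ is a based homotopy equivalence. The grouplikeness of $N$ is used exactly once, to invoke Proposition~\ref{5_7}, and it is genuinely needed: $\kappa(M)$ is not a weak equivalence, so the conclusion cannot follow from Proposition~\ref{2_4} alone.
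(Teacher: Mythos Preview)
Your proof is correct and follows essentially the same route as the paper's: both translate the problem via the adjunction \ref{2_3b} into showing that $(W\kappa(M))^\ast:\Mon(WM,WN)\to\Mon(W(M_+),WN)$ is a homotopy equivalence, then compare with the classifying-space side through Proposition~\ref{5_7} and the fact that $BW\kappa(M)$ is a homotopy equivalence. The only organizational differences are that the paper replaces $N$ by $WN$ at the outset (via Proposition~\ref{2_4}) rather than reducing each horizontal $B$-map separately as you do, and that the paper simply cites \ref{4_1a} for the right vertical map whereas you unpack the identification $BW\kappa(M)\simeq p(WM)\circ\widetilde{B}\varepsilon'(M)$ explicitly; your version is in fact more transparent on that point.
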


\begin{proof}
 By \ref{2_4} we may replace $N$ by $WN$. Since $\Sgp(\overline{W}M,WN)$ is naturally homeomorphic to $\Mon(W(M_+),WN)$ by \ref{2_3b}
it suffices to show that the counit $\kappa(M):M_+\to M$ induces a homotopy equivalence
$$\kappa(M)^\ast: \Mon(WM,WN)\to \Mon(W(M_+),WN)$$
The diagram
$$\xymatrix{
\Mon(WM,WN)\ar[rr]^B\ar[d]_{\kappa(M)^\ast} && \top(BWM,BWN)\ar[d]^{B\kappa(M)^\ast}\\
\Mon(W(M_+),WN)\ar[rr]^B && \top(BW(M_+),BWN)
}
$$
commutes. By \ref{5_7} the maps $B$ are homotopy equivalences, and by \ref{4_1a} the map $B\kappa(M)^\ast$ is
a homotopy equivalence. Hence so is $\kappa(M)^\ast$.
\end{proof}

\begin{rema}\label{5_10}
 In general we cannot expect that $\varepsilon'(M):\overline{W}M\to WM$ induces a
homotopy equivalence. E.g. it can happen that a homomorphism $\overline{W}M\to N$ does not map $(e_M)$ into the path-component of $e_N$
so that there is no chance to homotop it into a homomorphism $WM\to N$. 
\end{rema}

\begin{prop}\label{5_11}
 If $M$ is a well-pointed monoid then $Wq(M):WVM\to WM$ is a homotopy equivalence in $\Monw$ by \ref{2_7} inducing a homotopy
equivalence
$$\Mon(WM,N)\to \Mon(WVM,N).$$
\end{prop}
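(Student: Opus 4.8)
The plan is to deduce both assertions from Proposition \ref{2_7} together with the elementary fact that a continuous hom-functor carries homotopy equivalences to homotopy equivalences.

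First I would record that $Wq(M)=W(q(M))$ is obtained by applying the functor $W:\Mon\to\Mon$ to the map $q(M):VM\to M$ of the whiskering construction \ref{2_6b}, and that both the source and the target of $q(M)$ are well-pointed: $M$ by hypothesis, and $VM$ because $0\in I$ is its unit (see \ref{2_6b}). Hence $Wq(M):WVM\to WM$ may be regarded as a $uh$-morphism from the well-pointed monoid $VM$ to the well-pointed monoid $M$, and by Definition \ref{2_6} its underlying map is $\varepsilon(M)\circ W(q(M))\circ\iota(VM)$. By naturality of $\varepsilon:W\to\Id$ this equals $q(M)\circ\varepsilon(VM)\circ\iota(VM)$, and since $\iota(VM)$ is a section of $\varepsilon(VM)$ (see \ref{2_2}) it is just $q(M)$, which is a homotopy equivalence of underlying spaces by \ref{2_6b}. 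Proposition \ref{2_7} then gives that $Wq(M)$ is a homotopy equivalence in $\Mon$; since $WVM$ and $WM$ are again well-pointed (recall $W$ restricts to $\Monw\to\Monw$, cf.\ \ref{3_4}.2), it is a homotopy equivalence in $\Monw$. This justifies the first sentence of the statement.

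It remains to pass to hom-spaces. Let $g:WM\to WVM$ be a homotopy inverse of $Wq(M)$ in $\Monw$, with homotopies $g\circ Wq(M)\simeq\id$ and $Wq(M)\circ g\simeq\id$ through homomorphisms, i.e.\ paths in the relevant morphism spaces of $\Mon$. Applying the continuous contravariant functor $\Mon(-,N)$ turns $Wq(M)$ and $g$ into maps $(Wq(M))^\ast$ and $g^\ast$ between $\Mon(WM,N)$ and $\Mon(WVM,N)$ with $g^\ast\circ(Wq(M))^\ast=(Wq(M)\circ g)^\ast$ and $(Wq(M))^\ast\circ g^\ast=(g\circ Wq(M))^\ast$; continuity of composition in the enriched category $\Mon$ carries the two homotopies above to homotopies $g^\ast\circ(Wq(M))^\ast\simeq\id$ and $(Wq(M))^\ast\circ g^\ast\simeq\id$, so the induced map $\Mon(WM,N)\to\Mon(WVM,N)$ is a homotopy equivalence. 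There is no genuine obstacle here; the only point that needs a little care is the identification of the underlying map of the $uh$-morphism $Wq(M)$ with $q(M)$, so that Proposition \ref{2_7} really applies, together with the routine bookkeeping that all monoids in sight are well-pointed.
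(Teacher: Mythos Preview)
Your argument is correct and follows exactly the route the paper intends: the paper gives no separate proof here, merely citing Proposition \ref{2_7} inside the statement, and you have spelled out precisely why that citation suffices (identifying the underlying map of $Wq(M)$ with $q(M)$ via naturality of $\varepsilon$ and then invoking continuity of $\Mon(-,N)$). There is nothing to add.
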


\section{Diagrams of monoids}
We want to show that the homotopy adjunction of Theorem \ref{4_3a} lifts to diagram categories. This is not
evident: since the unit of our homotopy adjunction is only natural up to homotopy it does not lift to
diagrams.

Let $\scM$ be a cocomplete $\Top$-enriched tensored category with a class $\mathscr{W}$ of weak equivalences containing the
homotopy equivalences. We assume that $\scM$ has a strong cofibrant replacement functor $(Q_M,\varepsilon_M)$. We use
 $\otimes$ for the tensor in $\scM$ and $Q$ for $Q_M$ as long as there is no ambiguity.

\begin{defi}\label{6_1} Let $\mathcal{C}$ be a small indexing category.
A morphism $f:D_1\to D_2$ of $\mathcal{C}$-diagrams in $\scM$ is called a \textit{weak equivalence} if it is 
objectwise a weak equivalence in $\scM$. 
We denote the class of weak equivalences in $\scM^\mathcal{C}$ by $\mathscr{W}^\mathcal{C}$.
\end{defi}

Our first aim is to show that $\Mc$ admits a strong cofibrant replacement functor in order to make additional
applications of Proposition \ref{3_3}. Therefore we proceed as in \ref{2_4} and \ref{2_2}.

We define a $\mathcal{C}\times\mathcal{C}^{\op}$-diagram $B(\mathcal{C},\mathcal{C},\mathcal{C})$ in $\Top$ as follows:
$$
B(\mathcal{C},\mathcal{C},\mathcal{C})(b,a)=B(\mathcal{C}(-,b),\mathcal{C},\mathcal{C}(a,-))
$$
where the right side is the 2-sided bar construction of \ref{4_1}. 

The $\mathcal{C}\times\mathcal{C}^{\op}$ structure on $B_n(\mathcal{C},\mathcal{C},\mathcal{C})$ is given by
$$
(g, h)\cdot(f_0,\ldots,f_{n+1})=(g\circ f_0, f_1,\ldots, f_n, f_{n+1}\circ h)
$$

Analogously we define a $\mathcal{C}^{\op}$-diagram $B(\ast,\mathcal{C},\mathcal{C})$ in $\Top$, where $\ast$ 
denotes the constant $\mathcal{C}^{\op}$-diagram on a single point.

\begin{lem}\label{6_2}
Let $X$ and $Y$ be $\mathcal{C}\times\mathcal{C}^{\op}$-diagrams in $\Top$, let $p:X\to Y$ be 
a map of diagrams which is objectwise a 
homotopy equivalence. 
 Then $p$ induces a homotopy equivalence
$$
p_\ast:\Top^{\mathcal{C}\times\mathcal{C}^{\op}}(B(\mathcal{C},\mathcal{C},\mathcal{C}),X)\to 
\Top^{\mathcal{C}\times\mathcal{C}^{\op}}(B(\mathcal{C},\mathcal{C},\mathcal{C}),Y)
$$
in $\Top$.
\end{lem}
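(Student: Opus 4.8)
The plan is to follow the proof of Proposition \ref{2_4} in spirit, with the canonical filtration of $WM$ replaced by the skeletal filtration of the fat realization. Abbreviate $D=B(\mathcal{C},\mathcal{C},\mathcal{C})$. By the HELP lemma \cite{Vogt} in $\Top$ with the Str{\o}m structure \cite{Strom}, it suffices to prove the following extension property: given a closed cofibration $i:A\hookrightarrow B$ in $\Top$, maps $\bar f_A:A\to\Top^{\mathcal{C}\times\mathcal{C}^{\op}}(D,X)$ and $\bar g:B\to\Top^{\mathcal{C}\times\mathcal{C}^{\op}}(D,Y)$, and a homotopy $\bar h_{A,t}:\bar g\circ i\simeq p_\ast\circ\bar f_A$, there exist extensions $\bar f$ of $\bar f_A$ and $\bar h_t$ of $\bar h_{A,t}$ with $\bar h_t:\bar g\simeq p_\ast\circ\bar f$. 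Passing to adjoints via $\Top\bigl(B,\Top^{\mathcal{C}\times\mathcal{C}^{\op}}(D,Z)\bigr)\cong\Top^{\mathcal{C}\times\mathcal{C}^{\op}}(D\times B,Z)$, where $D\times B$ is the objectwise product with the constant diagram on $B$, this becomes an extension problem for maps of $\mathcal{C}\times\mathcal{C}^{\op}$-diagrams: construct $f:D\times B\to X$ extending the given $f_A:D\times A\to X$ together with a homotopy $h_t:D\times B\to Y$ from $g$ to $p\circ f$ extending $h_{A,t}$.

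I would construct $f$ and $h_t$ by induction over the skeletal filtration $\emptyset=\mathrm{sk}_{-1}D\subseteq\mathrm{sk}_0D\subseteq\mathrm{sk}_1D\subseteq\cdots$ of the fat realization. Since the fat realization disregards degeneracies, each $\mathrm{sk}_nD$ is obtained from $\mathrm{sk}_{n-1}D$ by gluing $B_n(\mathcal{C},\mathcal{C},\mathcal{C})\times\Delta^n$ along $B_n(\mathcal{C},\mathcal{C},\mathcal{C})\times\partial\Delta^n$ in a pushout of $\mathcal{C}\times\mathcal{C}^{\op}$-diagrams (formed objectwise), and correspondingly $\mathrm{sk}_nD\times B$ is obtained from $\mathrm{sk}_{n-1}D\times B$ by gluing $B_n(\mathcal{C},\mathcal{C},\mathcal{C})\times\Delta^n\times B$ along $B_n(\mathcal{C},\mathcal{C},\mathcal{C})\times\partial\Delta^n\times B$. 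From the explicit description of the bar construction one reads off
\[
B_n(\mathcal{C},\mathcal{C},\mathcal{C})\ \cong\ \coprod_{A',B'\in\ob\mathcal{C}}\mathcal{C}_n(A',B')\times\bigl(\mathcal{C}(B',-)\boxtimes\mathcal{C}(-,A')\bigr),
\]
an objectwise coproduct of external-product representable $\mathcal{C}\times\mathcal{C}^{\op}$-diagrams, each multiplied by the mapping space $\mathcal{C}_n(A',B')$. By the topological Yoneda lemma, the $\mathcal{C}\times\mathcal{C}^{\op}$-maps out of such a coproduct into a diagram $Z$ correspond naturally to families $\{\mathcal{C}_n(A',B')\to Z(B',A')\}_{A',B'}$ of maps of spaces, and this correspondence is compatible with the filtration and with $p$.

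Hence in the inductive step, extending $f$ and $h_t$ over $\mathrm{sk}_nD\times B$ (given their values on $\mathrm{sk}_{n-1}D\times B$ and on $\mathrm{sk}_nD\times A$) reduces, separately for each pair $(A',B')$, to an ordinary extension problem in $\Top$: extend a given map defined on $\mathcal{C}_n(A',B')\times\bigl((\partial\Delta^n\times B)\cup(\Delta^n\times A)\bigr)$ over $\mathcal{C}_n(A',B')\times\Delta^n\times B$ into $X(B',A')$, together with the corresponding extension of the homotopy into $Y(B',A')$. By the pushout-product theorem for cofibrations, $(\partial\Delta^n\times B)\cup(\Delta^n\times A)\hookrightarrow\Delta^n\times B$ is a closed cofibration, hence so is its product with $\mathcal{C}_n(A',B')$; since $p:X(B',A')\to Y(B',A')$ is a homotopy equivalence, the HELP lemma in $\Top$ supplies the extension. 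Reassembling over all $(A',B')$ by Yoneda yields the extension over $\mathrm{sk}_nD\times B$, and passing to the colimit over $n$ produces $f$ and $h_t$; this shows that $p_\ast$ is a homotopy equivalence.

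The one step that needs care is this inductive step: identifying the cells of $\mathrm{sk}_nD$ attached at stage $n$ as freely adjoined copies of representable $\mathcal{C}\times\mathcal{C}^{\op}$-diagrams multiplied by $\mathcal{C}_n(A',B')$, so that mapping out of them is controlled by Yoneda, and then checking that the relevant inclusion is a closed cofibration. Everything else — the adjunction reduction, the passage to the colimit over $n$, the naturality of the whole construction — is formal bookkeeping exactly as in the proof of Proposition \ref{2_4} (note that here, as in the $\overline{W}$-case of that proof, no well-pointedness hypothesis is needed, since the fat realization makes no use of degeneracies and the induction starts from $\mathrm{sk}_{-1}D=\emptyset$).
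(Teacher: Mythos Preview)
Your approach is essentially the paper's --- HELP-Lemma reduction, passage to adjoints, induction over the skeletal filtration, and the observation that each level is free as a $\mathcal{C}\times\mathcal{C}^{\op}$-diagram so that the extension problem reduces via Yoneda to a problem in $\Top$ --- and your Yoneda packaging is a clean way of saying what the paper does by hand (solve on the generators $(\id_{c_0},j_1,\ldots,j_n,\id_{c_n})$ and extend by the action).

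There is one slip: $D=B(\mathcal{C},\mathcal{C},\mathcal{C})$ is defined in the paper as the \emph{ordinary} realization $|B_\bullet(\mathcal{C},\mathcal{C},\mathcal{C})|$, not the fat one (the paper reserves $\widetilde{B}$ for fat realizations). So your description of the cell attachment at stage $n$ is not quite right: you do not glue $B_n(\mathcal{C},\mathcal{C},\mathcal{C})\times\Delta^n$ but only its subdiagram of non-degenerate simplices, i.e.\ those $(j_0,j_1,\ldots,j_n,j_{n+1})$ with $j_1,\ldots,j_n$ non-identities. This is harmless for your argument: the degeneracy condition constrains only the middle coordinates, while the $\mathcal{C}\times\mathcal{C}^{\op}$-action lives on the outer coordinates $j_0,j_{n+1}$, so the non-degenerate part is still a coproduct of representables $\coprod_{c_0,c_n}\mathcal{C}_n^{\mathrm{nd}}(c_n,c_0)\times\bigl(\mathcal{C}(c_0,-)\boxtimes\mathcal{C}(-,c_n)\bigr)$ and your Yoneda step goes through unchanged. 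With that correction your proof matches the paper's.
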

\begin{proof}
We apply the HELP-Lemma. So given a diagram
$$\xymatrix{
K\ar[rr]^(0.3){\bar{f}_K} \ar[d]_i && \Top^{\mathcal{C}\times\mathcal{C}^{\op}}(B(\mathcal{C},\mathcal{C},\mathcal{C}),X) \ar[d]^{p_\ast}\\
L\ar[rr]^(0.3){\bar{g}} && \Top^{\mathcal{C}\times\mathcal{C}^{\op}}(B(\mathcal{C},\mathcal{C},\mathcal{C}),Y)
}
$$
which commutes up to a homotopy $\bar{h}_{K,t}:\bar{g}\circ i\simeq p_\ast\circ \bar{f}_K$, where $i$ is
 a closed cofibration, we have to construct
extensions
$$
\begin{array}{rcl}
\bar{f}: L & \rightarrow & \Top^{\mathcal{C}\times\mathcal{C}^{\op}}(B(\mathcal{C},\mathcal{C},\mathcal{C}),X)\\
\bar{h}_t: L& \rightarrow & \Top^{\mathcal{C}\times\mathcal{C}^{\op}}(B(\mathcal{C},\mathcal{C},\mathcal{C}),Y)
\end{array}
$$
of $\bar{f}_K$ respectively $\bar{h}_{K,t}$ such that
$\bar{h}_t: \bar{g}\simeq p_\ast\circ \bar{f}$.

Taking adjoints the above diagram translates to the following diagram of $\mathcal{C}\times \mathcal{C}^{\op}$-spaces
$$\xymatrix{
K\times B(\mathcal{C},\mathcal{C},\mathcal{C})\ar[rr]^(0.65){f'}\ar[d]_{i\times \id} && X\ar[d]^{p} \\
L\times B(\mathcal{C},\mathcal{C},\mathcal{C})\ar[rr]^(0.65){g} && Y
}
$$
which commutes up to a homotopy $h'_t:g\circ (i\times \id)\simeq p\circ f'$ in $\Top^{\scC\times\scC^{\op}}$, and it suffices to construct
 extensions $f: L\times B(\mathcal{C},\mathcal{C},\mathcal{C})\to X$ of $f'$ and $h_t:L\times B(\mathcal{C},\mathcal{C},\mathcal{C})\to Y$
of $h'_t$ such that $h_t: g\simeq p \circ f$ in $\Top^{\scC\times\scC^{\op}}$.

 We construct
these extensions  by induction on the natural filtration $F_n$ of $L\times B(\mathcal{C},\mathcal{C},\mathcal{C})$ induced by 
the realization of the simplicial set
$B_\bullet(\mathcal{C},\mathcal{C},\mathcal{C})$. We start with $F_0=\coprod_{a,b,c} L\times\scC(c,b)\times \scC(a,c)$. The diagram
$$\xymatrix{
K\times \{(\id_c,\id_c)\} \ar[d] \ar[rr]^(0.6){f'} && X(c,c)\ar[d]^{p}\\
L\times \{(\id_c,\id_c)\ar[rr]^(0.6){g} && Y(c,c)
}
$$
commutes up to a homotopy given by $h_t'$. Since $p: X(c,c)\to Y(c,c)$ is a homotopy equivalence and $K\to L$ is a closed cofibration
the required extensions exist by the HELP-Lemma.
We extend $f$ over all of $F_0$ by $f(l,j_0,j_1)=X(j_0,j_1)\circ f(l,\id,\id)$ and analogously for $h_t$. \\
Now suppose that $f$ and $h_t$ have been defined on $F_{n-1}$.
We obtain $F_n$ from $F_{n-1}$ by attaching spaces $L\times (j_0,\ldots ,j_{n+1})\times \Delta^n$
 along $L\times (j_0,\ldots ,j_{n+1})\times \partial\Delta^n$.
Here the $j_k$ are morphisms in $\scC$ such that the composition 
$$j_0\circ \ldots j_{n+1}:a\to c_n\to \ldots \ c_0\to b$$ is defined and $j_1,\ldots, j_n$ are not identities. 
Hence the extension $f$ and the homotopy $h_t$ are already defined on

$D(L\times (j_0,j_1,\ldots,j_n,j_{n+1})\times \Delta^n)\\
{} \qquad\qquad = K\times (j_0,j_1,\ldots,j_n,j_{n+1})\times \Delta^n \cup L\times 
(j_0,j_1,\ldots,j_n,j_{n+1})\times \partial\Delta^n.$ 

We apply the HELP-Lemma
to the homotopy commutative diagram
$$\xymatrix{
 D(L\times (\id_{c_0},j_1,\ldots,j_n,\id_{c_n})\times \Delta^n) \ar[rr]^(0.7){f''}\ar[d] &&
X(c_0,c_n)\ar[d]^{p}\\
L\times (\id_{c_0},j_1,\ldots,j_n,\id_{c_n})\times \Delta^n \ar[rr]^(0.7)g &&
Y(c_0,c_n)
}
$$
where $f''$ and the commuting homotopy are given by the already defined extensions. Since ${p}$ is objectwise a homotopy equivalence 
and the inclusion
$$
D(L\times (\id_{c_0},j_1,\ldots,j_n,\id_{c_n})\times \Delta^n)\subset L\times (\id_{c_0},j_1,\ldots,j_n,\id_{c_n})\times \Delta^n
$$
is a closed cofibration the required
extensions exist. We extend our maps to maps of diagrams as in the $F_0$-case.
\end{proof}

Let $D$ be a $\mathcal{C}$-diagram in $\scM$ and $X$ a $\mathcal{C}^{\op}$-diagram in $\Top$. 
We define $X\otimes_\mathcal{C} D$ to be the coequalizer in $\scM$ of
$$
\xymatrix{
\coprod\limits_{f\in \textrm{ mor } \mathcal{C}} X(\textrm{target} (f))\otimes D(\textrm{source}(f))
\ar@<1.5ex>[r]^(0.65)\alpha\ar@<0.5ex>[r]_(0.65)\beta &
\coprod\limits_{a\in\rm{ob}\mathcal{C}} X(a)\otimes D(a)
}
$$
where for $f:a\to b$ in $\scC$ the $f$-summand $X(b)\otimes D(a)$ is mapped as follows
$$
\begin{array}{rcrrcl}
\alpha &=& X(f)\otimes\id: & X(b)\otimes D(a) &\longrightarrow & X(a)\otimes D(a)\\
\beta &=& \id\otimes D(f): & X(b)\otimes D(a) &\longrightarrow & X(b)\otimes D(b)
\end{array}
$$
We define a functor
$$
 R:\Mc \rightarrow \Mc ,\qquad D\mapsto B(\mathcal{C},\mathcal{C},\mathcal{C})\otimes_\mathcal{C} QD
$$
where $B(\mathcal{C},\mathcal{C},\mathcal{C})\otimes_\mathcal{C}D$
is the $\mathcal{C}$-diagram 
$$
a\mapsto B(\mathcal{C}(-,a),\mathcal{C},\mathcal{C})\otimes_\mathcal{C} D
$$
in $\scM$. 

\begin{prop}\label{6_3}
Let $D_0,\  D_1$, and $D_2$  be $\mathcal{C}$ diagrams in $\scM$, let $p:D_1\to D_2$ be a weak equivalence
in $\Mc$ and $q:A_1\to A_2$ a weak equivalence in $\scM$.
 Then $p$ and $q$ induce homotopy equivalences
$$\begin{array}{rcl} 
p_\ast:\Mc (R D_0,D_1) &\rightarrow & \Mc(R D_0,D_2)\\
q_\ast: \scM(B(\ast,\mathcal{C},\mathcal{C})\otimes_\mathcal{C}QD_0, A_1) & \rightarrow &\scM(B(\ast,\mathcal{C},\mathcal{C})\otimes_\mathcal{C}QD_0, A_2)
\end{array}
$$
in $\Top$.
\end{prop}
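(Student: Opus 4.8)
I would prove (1) and (2) together, following the pattern of Proposition~\ref{2_4} and Lemma~\ref{6_2}: apply the HELP-Lemma, pass to adjoints, and run an induction over the bar-construction filtration of the source. At each stage the extension-and-lifting problem collapses, after a further passage to adjoints, to a problem in $\Top$ posed against a closed cofibration and against a map of the form $\scM(QA,B)\to\scM(QA,C)$ induced by a weak equivalence $B\to C$; by the defining property of the strong cofibrant replacement functor $(Q,\varepsilon)$ of $\scM$ (Definition~\ref{3_3d}) this last map is a homotopy equivalence, so the HELP-Lemma in $\Top$ with the Str{\o}m structure \cite{Strom} disposes of the stage.

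\textbf{The induction.}
For (1): by the HELP-Lemma \cite{Vogt} it suffices, given a closed cofibration $i\colon K\to L$ in $\Top$ and a square from $i$ to $p_\ast\colon\Mc(RD_0,D_1)\to\Mc(RD_0,D_2)$ that commutes up to homotopy, to construct a lift up to homotopy. Taking adjoints over the (objectwise) tensor of $\Mc$, the task becomes: extend a map of $\scC$-diagrams $K\otimes RD_0\to D_1$ over $L\otimes RD_0$, compatibly with a given homotopy along $p$ into $D_2$. By the co-Yoneda isomorphism $\int^{a}\scC(a,c)\otimes QD_0(a)\cong QD_0(c)$ absorbing the right-hand variable, $RD_0=B(\scC,\scC,\scC)\otimes_\scC QD_0$ is the geometric realization of a simplicial $\scC$-diagram whose $n$-th term is the coproduct, over strings $c_0\xleftarrow{f_1}c_1\leftarrow\cdots\xleftarrow{f_n}c_n$ in $\scC$, of the free $\scC$-diagram generated at $c_0$ by a copy of $QD_0(c_n)$. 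Hence $RD_0=\colim_n F_n$ with $F_n$ obtained from $F_{n-1}$ by attaching, for each nondegenerate string (all $f_i$ non-identities), the free diagram at $c_0$ on $\Delta^n\otimes QD_0(c_n)$ along the free diagram at $c_0$ on $\partial\Delta^n\otimes QD_0(c_n)$; this is the filtration used in the proof of Lemma~\ref{6_2}. Assume the lift and homotopy are built over $K\otimes RD_0\cup L\otimes F_{n-1}$. By freeness and the enriched Yoneda lemma, a map of $\scC$-diagrams out of the cell for a string $c_0\leftarrow\cdots\leftarrow c_n$ into $D_j$ is the same as a map $\Delta^n\otimes QD_0(c_n)\to D_j(c_0)$ in $\scM$, i.e. a map $\Delta^n\to\scM(QD_0(c_n),D_j(c_0))$ in $\Top$. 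So for each such cell the stage-$n$ problem becomes: extend a prescribed map
$$K\times\Delta^n\ \cup\ L\times\partial\Delta^n\ \longrightarrow\ \scM(QD_0(c_n),D_1(c_0))$$
over $L\times\Delta^n$, lifting up to homotopy along $p(c_0)_\ast\colon\scM(QD_0(c_n),D_1(c_0))\to\scM(QD_0(c_n),D_2(c_0))$. The left-hand inclusion is a closed cofibration — the pushout-product of $i$ with $\partial\Delta^n\subset\Delta^n$ — and $p(c_0)\colon D_1(c_0)\to D_2(c_0)$ is a weak equivalence in $\scM$ since $p$ is objectwise one, so $p(c_0)_\ast$ is a homotopy equivalence by Definition~\ref{3_3d}. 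The HELP-Lemma in $\Top$ now yields the extension together with a homotopy; doing this for all cells of degree $n$ extends the lift and homotopy over $L\otimes F_n$, and passing to the colimit over $n$ gives the desired lift $L\otimes RD_0\to D_1$ and homotopy. Hence $p_\ast$ is a homotopy equivalence. Statement (2) is proved the same way, with the object $B(\ast,\scC,\scC)\otimes_\scC QD_0$ of $\scM$ in place of $RD_0$: now the source is an ordinary object of $\scM$, its skeletal cells are plain coproducts of copies of $\Delta^n\otimes QD_0(c_n)$, and the stage-$n$ problem reduces in the same way to a lift against $q_\ast\colon\scM(QD_0(c_n),A_1)\to\scM(QD_0(c_n),A_2)$, again a homotopy equivalence by Definition~\ref{3_3d}.

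\textbf{Main obstacle.}
All the homotopy-theoretic content is carried by the strong cofibrant replacement property of $Q$ and by the HELP-Lemma over $\Top$; the real work is the combinatorial bookkeeping of the bar filtration, exactly as in the proof of Lemma~\ref{6_2}. One must identify $F_n/F_{n-1}$ precisely — in particular handle the degeneracies so that only nondegenerate strings contribute cells attached along $\partial\Delta^n$ — check that the attaching maps and their pushout-products with $i$ are closed cofibrations (so that HELP applies at every stage and the stagewise homotopies assemble over the colimit topology), and verify that the reduction ``map out of a free cell $=$ map into $\scM(QD_0(c_n),D_j(c_0))$ in $\Top$'' respects the residual $\scC$-diagram structure on $D_1$ and $D_2$. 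Once these routine verifications are done the argument goes through, and it simultaneously supplies the clause ``$p_\ast$ (resp.\ $q_\ast$) is a homotopy equivalence'' needed later to exhibit $R$, resp.\ $B(\ast,\scC,\scC)\otimes_\scC Q(-)$, as a strong cofibrant replacement functor.
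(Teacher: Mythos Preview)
Your argument is correct, but the paper takes a shorter route that you almost have in hand. Rather than running the HELP-plus-filtration induction directly on $RD_0$ inside $\Mc$, the paper first observes the adjunction isomorphism
\[
\Mc(RD_0,D_i)=\Mc\bigl(B(\scC,\scC,\scC)\otimes_{\scC}QD_0,\,D_i\bigr)\;\cong\;\Top^{\scC\times\scC^{\op}}\bigl(B(\scC,\scC,\scC),\,\scM(QD_0,D_i)\bigr),
\]
which pushes all the $\scM$-content to the target. One then applies Lemma~\ref{6_2} verbatim with $X(b,a)=\scM(QD_0(a),D_1(b))$ and $Y(b,a)=\scM(QD_0(a),D_2(b))$; the strong cofibrant replacement hypothesis on $Q$ guarantees that $p$ induces an objectwise homotopy equivalence $X\to Y$, so Lemma~\ref{6_2} gives the result in one line. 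For part~(2) the paper uses $B(\ast,\scC,\scC)\cong\colim_{\scC}B(\scC,\scC,\scC)$ to rewrite the left-hand side as $\Top^{\scC\times\scC^{\op}}\bigl(B(\scC,\scC,\scC),\scM(QD_0,\cons A_i)\bigr)$ and again invokes Lemma~\ref{6_2}.

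What you do instead is unspool Lemma~\ref{6_2} inside the proof: your cell-by-cell reduction to a lifting problem against $\scM(QD_0(c_n),D_j(c_0))$ is exactly the content of Lemma~\ref{6_2} after the adjunction above has been applied. So the two proofs are the same argument differently packaged. The paper's organisation buys a clean separation of concerns---the bar-construction combinatorics are handled once in $\Top$, where the tensor is just the product, and both parts of Proposition~\ref{6_3} drop out as corollaries---while your direct approach is self-contained and makes the role of Definition~\ref{3_3d} equally visible, at the cost of redoing bookkeeping already encapsulated in Lemma~\ref{6_2}.
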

\begin{proof}
 Since $\Mc (R D_0,D_i)\cong \Top^{\scC\times \scC^{\op}}(B(\mathcal{C},\mathcal{C},\mathcal{C}), \scM(QD_0,D_i))$
it follows from Lemma \ref{6_2} with $X(b,a)=\scM (QD_0(a),D_1(b))$ and 
 $Y(b,a)=\scM (QD_0(a),D_2(b))$ that $p_\ast$ is a homotopy equivalence. 

There is a sequence of natural homeomorphisms
$$
\begin{array}{rcl}
 \scM(B(\ast,\mathcal{C},\mathcal{C})\otimes_\mathcal{C}QD_0, A_i) & \cong & \Top^{\scC^{\op}}(B(\ast,\mathcal{C},\mathcal{C}),\scM(QD_0,A_i)\\
& \cong & \Top^{\scC^{\op}}(\colim_{\scC} B(\scC ,\mathcal{C},\mathcal{C}),\scM(QD_0,A_i)\\
& \cong & \Top^{\scC\times\scC^{\op}}(B(\scC ,\mathcal{C},\mathcal{C}),\scM (QD_0,\cons A_i)
\end{array}
$$
where $\cons A_i$ are the constant $\scC$-diagrams on $A_i$. As in the first part, it follows that $q_\ast$ is a homotopy equivalence.
\end{proof}

 Let $\mathcal{C}_\bullet$ denote the $\mathcal{C}\times\mathcal{C}^{\op}$-diagram 
of simplical sets sending $(b,a)$ to the constant simplicial set $\mathcal{C}(a,b)$. The maps
$$
\begin{array}{rrcl}
\delta_n: & B_n(\mathcal{C},\mathcal{C},\mathcal{C}) (b,a)=B_n(\scC(-,b),\scC,\scC(a,-)) &\longrightarrow & \mathcal{C}(a,b)\\
& (f_0,\ldots,f_{n+1}) &\longmapsto & f_0\circ\ldots\circ f_{n+1}
\end{array}
$$
define a simplicial map $B_\bullet(\mathcal{C},\mathcal{C},\mathcal{C})\to \mathcal{C}_\bullet$. Let $\delta:B(\mathcal{C},\mathcal{C},
\mathcal{C})\to\mathcal{C}$ be its realization.

\begin{prop}\label{6_4}
$\delta(D)=\delta\otimes_\mathcal{C}\id_D:B(\mathcal{C},\mathcal{C},\mathcal{C})\otimes_\mathcal{C}D\to\mathcal{C} \otimes_\mathcal{C}D\cong D$ is 
objectwise a homotopy equivalence in $\scM$ and hence a weak equivalence in $\Mc$.
\end{prop}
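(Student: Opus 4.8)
The plan is to reduce to a pointwise statement and then run the classical ``extra degeneracy'' argument, carried out inside the enriched category $\scM$. By Definition \ref{6_1} and the standing assumption of Section 6 that $\mathscr{W}$ contains all homotopy equivalences, it suffices to show that for every object $b$ of $\mathcal{C}$ the component $\delta(D)_b$ is a homotopy equivalence in $\scM$. By the definition of $B(\mathcal{C},\mathcal{C},\mathcal{C})\otimes_{\mathcal{C}}D$, its value at $b$ is $B(\mathcal{C}(-,b),\mathcal{C},\mathcal{C})\otimes_{\mathcal{C}}D$, where $-\otimes_{\mathcal{C}}-$ pairs the contravariant (``source'') variable of $B(\mathcal{C},\mathcal{C},\mathcal{C})$ with $D$. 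The functor $-\otimes_{\mathcal{C}}D\colon\Top^{\mathcal{C}^{\op}}\to\scM$ is a left adjoint (a right adjoint being $M\mapsto\scM(D(-),M)$), hence preserves all colimits, in particular geometric realizations; so $B(\mathcal{C}(-,b),\mathcal{C},\mathcal{C})\otimes_{\mathcal{C}}D$ is the realization in $\scM$ of the simplicial object $[n]\mapsto B_n(\mathcal{C}(-,b),\mathcal{C},\mathcal{C})\otimes_{\mathcal{C}}D$, and, using the co-Yoneda isomorphism $\mathcal{C}(-,b)\otimes_{\mathcal{C}}D\cong D(b)$, the map $\delta(D)_b$ is the realization of the augmentation of this simplicial object over $D(b)$.

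Next I would exhibit an extra degeneracy on the augmented simplicial object of $\mathcal{C}^{\op}$-diagrams in $\Top$ given by $B_\bullet(\mathcal{C}(-,b),\mathcal{C},\mathcal{C})\to\mathcal{C}(-,b)$: on $B_n$ set $s_{-1}(f_0,\ldots,f_{n+1})=(\id_b,f_0,\ldots,f_{n+1})$, and on the augmentation term set $f\mapsto(\id_b,f)$. A direct check against the face and degeneracy maps of \ref{4_1} gives the simplicial identities. The essential point is that $\id_b$ enters in the \emph{covariant} (``target'') slot, so that each $s_{-1}$ commutes with the $\mathcal{C}^{\op}$-action in the source variable, i.e. is a morphism of $\mathcal{C}^{\op}$-diagrams; hence applying $-\otimes_{\mathcal{C}}D$ yields an augmented simplicial object in $\scM$ that still carries an extra degeneracy. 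An augmented simplicial object in a $\Top$-enriched tensored category which admits an extra degeneracy is, as an augmented object, simplicially homotopy equivalent to the constant simplicial object on its augmentation term, and geometric realization — built from the cylinder $I\otimes(-)$ just as over $\Top$ — sends simplicial homotopies to homotopies; thus the realized augmentation is a homotopy equivalence in $\scM$. Applying this to the simplicial object of the first paragraph shows $\delta(D)_b$ is a homotopy equivalence; since $b$ is arbitrary, $\delta(D)$ is objectwise a homotopy equivalence, hence a weak equivalence in $\Mc$ by Definition \ref{6_1}.

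The formal bookkeeping with the two-sided bar construction is routine; the two places that need care, and on which the argument really depends, are: (i) that the extra degeneracy survives the coend $-\otimes_{\mathcal{C}}D$, which is exactly why $s_{-1}$ must be built by inserting $\id_b$ on the covariant side, so as to be a map of $\mathcal{C}^{\op}$-diagrams; and (ii) that the classical implication ``extra degeneracy $\Rightarrow$ realized augmentation is a homotopy equivalence'' transfers verbatim from $\Top$ to the enriched, tensored setting $\scM$. Point (ii) is where I expect to spend the most effort, writing out the explicit contracting homotopy produced by $s_{-1}$ and checking that it realizes to an honest homotopy in $\scM$; everything else follows by chasing the definitions.
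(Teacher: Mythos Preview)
Your argument is correct and rests on the same idea as the paper's: the extra degeneracy $s_{-1}(f_0,\ldots,f_{n+1})=(\id_b,f_0,\ldots,f_{n+1})$ inserted on the covariant side. The paper organizes it slightly differently. It first isolates Lemma~\ref{6_5}, which asserts that $\delta\colon B(\scC,\scC,\scC)(-,b)\to\scC(-,b)$ is already a homotopy equivalence in $\Top^{\scC^{\op}}$; the proof interprets $B(\scC,\scC,\scC)(a,b)$ as the classifying space of an auxiliary category admitting a section together with a natural transformation back to the identity---which is exactly your $s_{-1}$ in categorical dress. Proposition~\ref{6_4} is then declared an immediate consequence: the functor $-\otimes_{\scC}D\colon\Top^{\scC^{\op}}\to\scM$ preserves homotopies (since $(I\times X)\otimes_{\scC}D\cong I\otimes(X\otimes_{\scC}D)$), hence carries the homotopy equivalence of Lemma~\ref{6_5} to one in $\scM$. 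This bypasses your point~(ii) entirely: rather than transporting the extra degeneracy through the coend and rerunning the contraction argument inside $\scM$, the paper finishes the contraction in $\Top^{\scC^{\op}}$ and only then applies the coend. Both routes are sound; the paper's simply shifts the enriched-category bookkeeping from ``realize a simplicial contraction in $\scM$'' to the easier check that $-\otimes_{\scC}D$ commutes with cylinders.
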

The proposition is an immediate consequence of the following Lemma:

\begin{lem}\label{6_5}
For each object $b\in \scC$ the map $\varepsilon: B(\scC,\scC,\scC)(-,b)\to \scC(-,b)$ is a homotopy equivalence in the category $\Top^{\scC^{\op}}$. 
\end{lem}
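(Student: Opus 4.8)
The plan is to fix $b\in\scC$ and to exhibit the map of the statement as a simplicial deformation retraction that is natural in the remaining variable, so that after realization it becomes a deformation retraction in $\Top^{\scC^{\op}}$. Write $Z^{a}_\bullet=B_\bullet(\scC(-,b),\scC,\scC(a,-))$, so the source diagram is $a\mapsto|Z^{a}_\bullet|$, the target is the discrete diagram $a\mapsto\scC(a,b)$, and the map in question is in each degree the composition of the whole string $a\xrightarrow{f_{n+1}}\cdots\xrightarrow{f_0}b$ to $f_0\circ\cdots\circ f_{n+1}$ (the map $\delta$ constructed just before the lemma).

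First I would write down an extra degeneracy on $Z^{a}_\bullet$, the crucial point being \emph{which} one. The obvious candidate, appending $\id_a$ to the right-hand end of the string, is not compatible with the $\scC^{\op}$-action in $a$, since that action precomposes precisely into the last coordinate. Instead I would use the extra degeneracy that prepends $\id_b$ on the left,
\[
s_{-1}\colon(f_0,f_1,\dots,f_{n+1})\longmapsto(\id_b,f_0,f_1,\dots,f_{n+1}),\qquad s_{-1}(g)=(\id_b,g)\ \ (g\in\scC(a,b)),
\]
paired with the augmentation $\delta$. The extra-degeneracy identities ($d_0s_{-1}=\id$, $d_{i+1}s_{-1}=s_{-1}d_i$ and $s_{i+1}s_{-1}=s_{-1}s_i$ for $i\ge 0$, and $\delta\circ s_{-1}=\id$ on $\scC(a,b)$) follow at once from the formulas in \ref{4_1}. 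Because $s_{-1}$ only inserts the identity of the \emph{fixed} object $b$ and never touches the $\scC(a,-)$-coordinate, it — like all the faces and degeneracies and like $\delta$ — is natural in $a$; hence $s_{-1}$, $d_\bullet$, $s_\bullet$ and $\delta$ together form natural transformations of functors $\scC^{\op}\to(\text{simplicial spaces})$.

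Finally I would invoke the standard extra-degeneracy lemma: an augmented simplicial space $Z_\bullet\to Z_{-1}$ carrying an extra degeneracy is a strong simplicial deformation retract of the constant simplicial space $Z_{-1}$, with section, retraction and contracting homotopy given by universal formulas in the faces, degeneracies, extra degeneracy and augmentation; since geometric realization of $k$-spaces preserves finite products, hence simplicial homotopies, this realizes to a deformation retraction $|Z_\bullet|\to Z_{-1}$. Applied to $Z^{a}_\bullet$ this shows $\delta\colon|Z^{a}_\bullet|\to\scC(a,b)$ is a homotopy equivalence, and by the naturality in $a$ noted above the section and the homotopy are maps of $\scC^{\op}$-diagrams and a homotopy in $\Top^{\scC^{\op}}$, which is exactly the claim. (For an invariant reformulation when $\scC$ has discrete hom-sets: $|Z^{a}_\bullet|$ is the classifying space of the category of factorizations $a\to A\to b$, which splits as a disjoint union over $g\in\scC(a,b)$ of the factorization categories of the single morphism $g$; each has $(\id_a,g)$ as initial object, so contractible nerve, and the $\id_b$-prepending extra degeneracy is precisely what makes this contraction natural in $a$.) The only genuine obstacle is this naturality in $a$: everything hinges on choosing the left-hand rather than the right-hand extra degeneracy, and the remaining verifications are routine.
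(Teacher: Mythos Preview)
Your proof is correct and is the simplicial-space translation of the paper's argument. The paper identifies $B(\scC,\scC,\scC)(a,b)$ with the classifying space of the factorization category $\scX_a$ (exactly your parenthetical remark), exhibits the section $j\mapsto(a\xrightarrow{j}b\xrightarrow{\id_b}b)$ together with a natural transformation $\tau_a:\Id_{\scX_a}\Rightarrow s_a\circ\varepsilon_a$ given by $j_0:c\to b$, and observes that all data are natural in $a\in\scC^{\op}$; on nerves this is precisely your extra degeneracy $s_{-1}$ prepending $\id_b$, and the induced simplicial contraction is the one produced by the natural transformation. Both arguments hinge on the same point you emphasize: one must use $\id_b$ on the left (equivalently, the \emph{terminal} object $(g,\id_b)$ of each factorization category, not the initial object $(\id_a,g)$ you mention in your aside) so that the contracting homotopy does not touch the $\scC(a,-)$-slot and is therefore $\scC^{\op}$-natural. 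The only difference is packaging: you invoke the extra-degeneracy lemma for simplicial spaces directly, while the paper invokes the ``natural transformation yields homotopy on classifying spaces'' principle; each is a standard route to the same deformation retraction.
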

\begin{proof}
 For $a\in \scC$ let $\scX_a$ denote the category whose objects are diagrams $a\xrightarrow{j_1}
 c\xrightarrow{j_0} b$ and whose morphisms from
this object to $a\xrightarrow{j'_1} c'\xrightarrow{j'_0} b$ are morphisms $h:c\to c'$ in $\scC$ making the diagram
$$\xymatrix{
&& c\ar[dd]^h\ar[rrd]^{j_0} &&\\
a\ar[rru]^{j_1}\ar[rrd]_{j'_1} &&&& b\\
&& c'\ar[rru]_{j'_0}&&
}
$$
commute. Let $\scC(a,b)$ stand for the discrete category whose object set is $\scC(a,b)$. Then 
$$\varepsilon_a: \scX_a\to \scC(a,b),\qquad (a\xrightarrow{j_1} c\xrightarrow{j_0} b)\mapsto (j_0\circ j_1:a\to b)
$$
defines a functor which has the section
$$s_a:\scC(a,b)\to \scX_a,\qquad j\mapsto (a\xrightarrow{j} b\xrightarrow{\id} b).$$
There is a natural transformation $\tau_a:\Id_{\scX_a}\to s_a\circ \varepsilon_a$ defined by the diagram
$$\xymatrix{
&& c\ar[dd]^{j_0}\ar[rrd]^{j_0} &&\\
a\ar[rru]^{j_1}\ar[rrd]_{j_0\circ j_1} &&&& b\\
&& b\ar[rru]_{\id}&&
}
$$
So $\varepsilon_a$ induces a homotopy equivalence of the classifying spaces. Now $B(\scX_a)=B(\scC,\scC,\scC(a,b))$ and $B(\scC(a,b))=\scC(a,b)$.
Moreover all data are natural with respect to $a\in \scC^{\op}$. Hence we obtain the required result.
\end{proof}

When we combine \ref{6_3} and \ref{6_4} we obtain the following corollary.

\begin{coro}\label{6_6}
$R:\Mc  \rightarrow \Mc$ together with $\epsilon=\delta\otimes_{\scC}\varepsilon_M: R\to \Id$ is a strong
cofibrant replacement functor.
\end{coro}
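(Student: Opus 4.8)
The plan is just to unwind Definition~\ref{3_3d} for the pair $(R,\epsilon)$ and feed in Propositions~\ref{6_3} and~\ref{6_4}; no new construction is required. First I would note that $\epsilon=\delta\otimes_{\scC}\varepsilon_M$ really is a natural transformation $R\to\Id$: the realized map $\delta:B(\mathcal{C},\mathcal{C},\mathcal{C})\to\mathcal{C}$ is a map of $\mathcal{C}\times\mathcal{C}^{\op}$-diagrams in $\Top$, $\varepsilon_M:Q\to\Id$ is natural, and $-\otimes_{\scC}-$ is functorial in both slots, so, using the canonical identification $\mathcal{C}\otimes_{\scC}D\cong D$, one obtains a natural map $\epsilon(D):RD=B(\mathcal{C},\mathcal{C},\mathcal{C})\otimes_{\scC}QD\to D$. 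Functoriality of the coend in its two variables moreover exhibits $\epsilon(D)$ as the composite
$$
B(\mathcal{C},\mathcal{C},\mathcal{C})\otimes_{\scC}QD\ \xrightarrow{\ \delta(QD)\ }\ \mathcal{C}\otimes_{\scC}QD\cong QD\ \xrightarrow{\ \varepsilon_M(D)\ }\ D ,
$$
and this is the identity I would use to see that $\epsilon$ takes values in weak equivalences.

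Next I would check the two conditions of Definition~\ref{3_3d}. For the first, that each $\epsilon(D)$ is a weak equivalence in $\Mc$, i.e.\ objectwise a weak equivalence in $\scM$, I would evaluate the displayed factorization at an object $a$ of $\mathcal{C}$: the $a$-component of $\delta(QD)$ is a homotopy equivalence in $\scM$ by Proposition~\ref{6_4} applied to the $\mathcal{C}$-diagram $QD$, while the $a$-component of $\varepsilon_M(D)$ is a weak equivalence in $\scM$ because $(Q,\varepsilon_M)$ is by hypothesis a strong cofibrant replacement functor on $\scM$. Since $\mathscr{W}$ contains the homotopy equivalences and is closed under composition, $\epsilon(D)$ is objectwise a weak equivalence, hence lies in $\mathscr{W}^{\mathcal{C}}$. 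The second condition — that $p_\ast:\Mc(RD_0,D_1)\to\Mc(RD_0,D_2)$ be a homotopy equivalence whenever $p:D_1\to D_2$ lies in $\mathscr{W}^{\mathcal{C}}$ — is precisely the first assertion of Proposition~\ref{6_3}. This shows $(R,\epsilon)$ is a strong cofibrant replacement functor, hence also a cofibrant replacement functor, and since $\Mc$ is cocomplete, $\Top$-enriched and tensored (because $\scM$ is), Proposition~\ref{3_3} then applies and produces the localization $\Ho\Mc=\Mc[(\mathscr{W}^{\mathcal{C}})^{-1}]$ together with the associated category of $R$-morphisms.

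I do not expect any real obstacle at this stage: the substance has already been handled in Lemmas~\ref{6_2} and~\ref{6_5} and Propositions~\ref{6_3} and~\ref{6_4}, and the corollary only repackages them. The two points that deserve a line of care are the correctness of the coend factorization of $\epsilon(D)$ displayed above, so that Proposition~\ref{6_4} can legitimately be applied to $QD$ in place of $D$, and the (standard, and true in every category we consider) fact that weak equivalences are closed under composition. The second, $\scM$-level statement in Proposition~\ref{6_3}, concerning $B(\ast,\mathcal{C},\mathcal{C})\otimes_{\scC}QD_0$, plays no role in this corollary; it is recorded for the subsequent discussion of the colimit functor.
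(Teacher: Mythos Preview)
Your proposal is correct and follows essentially the same approach as the paper: the paper's entire proof is the single sentence ``When we combine \ref{6_3} and \ref{6_4} we obtain the following corollary,'' and you have faithfully unpacked that combination, using \ref{6_4} (applied to $QD$) together with $\varepsilon_M$ for the first condition of Definition~\ref{3_3d} and \ref{6_3} for the second. Your extra care about the factorization of $\epsilon(D)$ and closure of $\mathscr{W}$ under composition is appropriate and does not deviate from the paper's intent.
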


Let $\scN$ be another cocomplete $\Top$-enriched tensored category with a class of weak equivalences containing the homotopy equivalences
and a strong cofibrant replacement functor $(Q_N,\varepsilon_N)$. 

\begin{theo}\label{6_7} Let
 $$ F:\scM \leftrightarrows \scN:G$$
be continuous functors inducing a natural homotopy equivalence
$$
\lambda(Q_MA,Q_NX): \scM(Q_MA,Q_MGQ_NX)\to \scN(Q_NFQ_MA,Q_NX)
$$ 
so that
$$
F^{\scH}:\scH\scM \leftrightarrows \scH\scN:G^{\scH}
$$
is a conatural adjunction up to homotopy. Then
$$
(F^{\scC})^{\scH}: \scH(\Mc)\leftrightarrows \scH(\Nc):(G^{\scC})^{\scH}
$$
is an  adjunction up to homotopy, and hence
$$ \Ho (F^{\scC}):\Ho (\scM^{\scC}) \leftrightarrows \Ho (\scN^{\scC}):\Ho (G^{\scC})$$
a genuine adjunction.
\end{theo}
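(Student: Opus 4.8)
The plan is to realize $(F^{\scC})^{\scH}\dashv(G^{\scC})^{\scH}$ as an adjunction up to homotopy by building, naturally in the $\scC$-diagrams $D$ in $\scM$ and $E$ in $\scN$, a zig-zag of homotopy equivalences connecting
$$
\scN^{\scC}\bigl(R'F^{\scC}RD,\ R'E\bigr)\qquad\text{and}\qquad \scM^{\scC}\bigl(RD,\ RG^{\scC}R'E\bigr),
$$
where $R$ is the strong cofibrant replacement functor of Corollary \ref{6_6} on $\Mc$ and $R'$ is its evident analogue on $\Nc$ (built from $Q_N$). By \ref{3_6b} and Corollary \ref{6_6} these two spaces are precisely $\scH(\Nc)\bigl((F^{\scC})^{\scH}D,E\bigr)$ and $\scH(\Mc)\bigl(D,(G^{\scC})^{\scH}E\bigr)$, so such a zig-zag — turned into an honest natural transformation up to homotopy through homotopy equivalences by Lemma \ref{3_7} — gives, via Proposition \ref{3_9} (reconstruct a homotopy unit and counit), the desired adjunction up to homotopy. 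For the last clause: $\Mc$ and $\Nc$ are $\Top$-enriched, tensored objectwise over $\Top$, and carry the strong cofibrant replacement functors of Corollary \ref{6_6}, so Proposition \ref{3_3} applies and gives $\Ho(\Mc)=\pi\,\scH(\Mc)$ and $\Ho(\Nc)=\pi\,\scH(\Nc)$; passing to path components turns the homotopy adjunction into the genuine adjunction $\Ho(F^{\scC})\dashv\Ho(G^{\scC})$. Since $F$ and $G$ are continuous, $F^{\scC}$ and $G^{\scC}$ preserve objectwise homotopy equivalences, so as in the discussion after \ref{3_6b} these are the total left derived functors of $\gamma_{\Nc}\circ F^{\scC}$ and $\gamma_{\Mc}\circ G^{\scC}$.

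To produce the zig-zag I would proceed in three moves. First, peel off the outer replacements: since $(R',\,\text{counit})$ is a \emph{strong} cofibrant replacement functor (Definition \ref{3_3d}) and $R'F^{\scC}RD$ has the form $R'(-)$, the counit of $R'$ induces a homotopy equivalence $\scN^{\scC}(R'F^{\scC}RD,R'E)\xrightarrow{\ \simeq\ }\scN^{\scC}(R'F^{\scC}RD,E)$, and symmetrically $\scM^{\scC}(RD,RG^{\scC}R'E)\xrightarrow{\ \simeq\ }\scM^{\scC}(RD,G^{\scC}R'E)$. Second, apply the adjunction $-\otimes_{\scC}-\dashv\scM(-,-)$ exactly as in the proof of Proposition \ref{6_3}: this identifies these right-hand terms, naturally, with $\Top^{\scC\times\scC^{\op}}\bigl(B(\scC,\scC,\scC),\,\scN(Q_NF^{\scC}RD,E)\bigr)$ and $\Top^{\scC\times\scC^{\op}}\bigl(B(\scC,\scC,\scC),\,\scM(Q_MD,RG^{\scC}R'E)\bigr)$, the mapping-space diagrams having values $\scN\bigl(Q_NF(RD(a)),E(b)\bigr)$ and $\scM\bigl(Q_M(D(a)),(RG^{\scC}R'E)(b)\bigr)$ at $(b,a)$.

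The third and decisive move is to trade every ``fattened'' object for its honest objectwise replacement, so that the hypothesis $\lambda$ can be used. By Proposition \ref{6_4} (through Lemma \ref{6_5}) the structure map $\delta$ is an objectwise homotopy equivalence, which yields homotopy equivalences $RD(a)\xrightarrow{\simeq}Q_M(D(a))$, $R'E(b)\xrightarrow{\simeq}Q_N(E(b))$, and $(RG^{\scC}R'E)(b)\xrightarrow{\simeq}Q_M\bigl(G(R'E(b))\bigr)$; continuity of $F,G,Q_M,Q_N$ transports these, and the strong cofibrant replacement property of $(Q_N,\varepsilon_N)$ and $(Q_M,\varepsilon_M)$ (applied in the second variable along the respective $\varepsilon$'s) inserts the missing $Q_N,Q_M$. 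By Lemma \ref{6_2} each such objectwise homotopy equivalence of $\scC\times\scC^{\op}$-diagrams becomes a homotopy equivalence after applying $\Top^{\scC\times\scC^{\op}}(B(\scC,\scC,\scC),-)$, so both sides get connected, through natural homotopy equivalences, to $\Top^{\scC\times\scC^{\op}}\bigl(B(\scC,\scC,\scC),\,\scN(Q_NFQ_MD,Q_NE)\bigr)$ respectively $\Top^{\scC\times\scC^{\op}}\bigl(B(\scC,\scC,\scC),\,\scM(Q_MD,Q_MGQ_NE)\bigr)$, whose pointwise values are $\scN\bigl(Q_NFQ_M(D(a)),Q_N(E(b))\bigr)$ and $\scM\bigl(Q_M(D(a)),Q_MGQ_N(E(b))\bigr)$. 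These last two $\scC\times\scC^{\op}$-diagrams are linked objectwise by $\lambda(D(a),E(b))$; because $\lambda$ is a genuine natural transformation this is a morphism of $\scC\times\scC^{\op}$-diagrams which is objectwise a homotopy equivalence, hence, by Lemma \ref{6_2} once more, a homotopy equivalence after $\Top^{\scC\times\scC^{\op}}(B(\scC,\scC,\scC),-)$. Splicing all of this and inverting the arrows that point backwards (Lemma \ref{3_7}) yields the required natural transformation up to homotopy.

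I expect the real difficulty to lie in this bookkeeping: the diagram cofibrant replacement $R$ is not ``$Q$ applied objectwise'' but ``$B(\scC,\scC,\scC)\otimes_{\scC}(-)$ applied to $Q$ objectwise'', so $F^{\scC}RD$ is \emph{not} of the form $F\circ Q_M$ of anything and $\lambda$ is not directly applicable; one must shuttle between $R,R'$ and $Q_M,Q_N$ using Proposition \ref{6_4} and then re-glue over $B(\scC,\scC,\scC)$ using Lemma \ref{6_2}, checking at each stage that the intermediate map is a genuine natural transformation (even when it is only an objectwise homotopy equivalence), so that Lemma \ref{3_7} really does hand back an honest natural transformation up to homotopy. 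The remaining verifications — that $\Mc,\Nc$ meet the hypotheses of Proposition \ref{3_3}, and that a homotopy adjunction of continuous functors between $\Top$-enriched categories induces a genuine adjunction on homotopy categories — are routine and are what promote the homotopy adjunction to the stated adjunction $\Ho(F^{\scC})\dashv\Ho(G^{\scC})$.
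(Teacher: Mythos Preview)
Your approach is essentially the same as the paper's: both construct a zig-zag of natural homotopy equivalences between $\Mc(R_MD,R_MGR_NZ)$ and $\Nc(R_NFR_MD,R_NZ)$ by passing through the identification with $\Top^{\scC\times\scC^{\op}}(B(\scC,\scC,\scC),-)$ (Proposition \ref{6_3}), using $\delta$ (Proposition \ref{6_4}) to trade $R$ for objectwise $Q$, applying the hypothesis $\lambda$ objectwise, and invoking Lemma \ref{6_2} to promote objectwise homotopy equivalences of $\scC\times\scC^{\op}$-diagrams to homotopy equivalences of mapping spaces; the paper's proof is simply a more compressed presentation of the same chain, and it likewise has a backward arrow (cf.\ Addendum \ref{6_8}) inverted via Lemma \ref{3_7}. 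One small remark: you do not need Proposition \ref{3_9} at the end, since the composite natural transformation up to homotopy you obtain \emph{is} already the homotopy adjunction in the sense of Definition \ref{3_6}; Proposition \ref{3_9} goes the other way, from unit/counit data to the adjunction map.
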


\begin{proof} For diagrams $D:\scC\to \scM$ and $Z:\scC\to \scN$ we have a sequence of natural maps
 $$
\xymatrix{
\Top^{\mathcal{C}\times\mathcal{C}^{\op}}(B(\mathcal{C},\mathcal{C},\mathcal{C}),\scN(Q_NFQ_MD,Q_NZ))\ar[r]^(0.63)\cong &
\Nc(R_NFQ_MD,Q_NZ)\ar[d]^{R_NF\delta(Q_MD)^\ast}\\
\Top^{\mathcal{C}\times\mathcal{C}^{\op}}(B(\mathcal{C},\mathcal{C},\mathcal{C}),\scM(Q_MD,Q_MGQ_NZ))
\ar[u]^{\lambda(Q_MD,Q_NZ)_\ast} & \Nc(R_NFR_MD,Q_NZ)\\
\Mc(R_MD,Q_MGQ_NZ)\ar[u]^\cong &   \Nc(R_NFR_MD,R_NZ)\ar[u]_{\delta(Q_NZ)_\ast}  \\
\Mc(R_MD,R_MGQ_NZ)\ar[u]^{\delta(Q_MGQ_NZ)_\ast} & \\
\Mc(R_MD,R_MGR_NZ)\ar[u]^{R_MG\delta(Q_NZ)_\ast} & \\
}
$$
By assumption $\lambda(Q_MD,Q_NZ)$ is a homotopy equivalence. Since $\delta(D)$ is objectwise a homotopy equivalence
 and since continuous functors preserve homotopy equivalences, $R_MG\delta(Q_NZ)$ and
$R_NF\delta(Q_MD)$ are homotopy equivalences in $\Mc$ by \ref{6_3} so that $R_MG\delta(Q_NZ)_\ast$ and $R_NF\delta(Q_MD)^\ast$ are
 homotopy equivalences in $\Top$,
 and $\delta(Q_NZ)_\ast$  and $\delta(Q_MGQ_NZ)_\ast$ are
homotopy equivalences in $\Top$ by \ref{6_3}.
\end{proof}

\begin{leer}\label{6_8}
\textbf{Addendum:}
The last natural map in the proof of the theorem points in the wrong direction. So we cannot conclude that 
$(F^{\scC})^{\scH}$ and $(G^{\scC})^{\scH}$ are a conatural homotopy adjoint pair. 
$$\eta(Q_NY)=\lambda(Q_MGQ_NY,Q_NY)(\id_{Q_MGQ_NY}):Q_NFQ_MGQ_NY\to Q_NY$$
is natural with respect to morphisms $f:Q_NY_1\to Q_NY_2$ in $\scN$. If $\eta$ extends to a natural map
$\eta(Y):Q_NFQ_MGY\to Y$ for all $Y\in\scN$ or at least for all $Y$ of the form $Y=R_NY'$
  we obtain a natural map $\lambda^{\scC}(R_MD,R_NZ)$ defined by
$$\xymatrix{
\Mc(R_MD,R_MGR_NZ) \ar[rr]^(0.45){R_NF}\ar[rrd]_{\lambda^{\scC}(R_MD,R_NZ)}   && \Mc(R_NFR_MD,R_NFR_MGR_NZ)\ar[d]^{\eta(R_NZ)} \\
&& \Nc(R_NFR_MD,R_NZ)
}$$
which makes the diagram of the proof of the theorem commute so that $(F^{\scC})^{\scH}$ and $(G^{\scC})^{\scH}$ are a
conatural homotopy adjoint pair.  
\end{leer}

For use in the next proposition we note

\begin{lem}\label{6_9} Let $D: \scC\to \Monw$ be a diagram of well-pointed monoids. Then
$B(\ast,\mathcal{C},\mathcal{C})\otimes_\mathcal{C} D$ is a well-pointed space, and
$B(\mathcal{C},\mathcal{C},\mathcal{C})\otimes_\mathcal{C} D$ and 
\linebreak
$B(\mathcal{C},\mathcal{C},\mathcal{C})\otimes_\mathcal{C} WD$ 
are diagrams of well-pointed monoids.
%\begin{tabular}[t]{lp{10cm}}
%(1) & $B(\ast,\mathcal{C},\mathcal{C})\otimes_\mathcal{C} D$ is a well-pointed space.\\
%(2) & $B(\mathcal{C},\mathcal{C},\mathcal{C})\otimes_\mathcal{C} D$
%and $B(\mathcal{C},\mathcal{C},\mathcal{C})\otimes_\mathcal{C} WD$ are diagrams of well-pointed monoids.
%\end{tabular}
\end{lem}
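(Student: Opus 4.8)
The plan is to exhibit each of the three objects as a geometric realization, formed \emph{inside} $\Monw$, of an explicitly describable simplicial monoid built from the monoids $D(a)$, and then to check that these simplicial monoids are proper with well-pointed terms, so that their realizations are well-pointed.

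\emph{Reduction.} Since $\mathcal{C}$ is an ordinary (discrete) small category, each set $\mathcal{C}_n(A,B)$ of composable tuples is discrete, so the two-sided bar constructions of \ref{4_1} are realizations of simplicial objects whose levels are disjoint unions of copies of a point indexed by such tuples. The tensor $\otimes$ of $\Monw$ over $\Top$ exists (\cite[Prop. 2.10]{PRV}), is a left adjoint in the space variable, is associative ($(K\times L)\otimes M\cong K\otimes(L\otimes M)$), and satisfies $\mathcal{C}(-,A)\otimes_{\mathcal{C}}E\cong E(A)$ by the co-Yoneda lemma. As geometric realization, the coend $\otimes_{\mathcal{C}}$, and $\otimes$ all commute with one another, a routine computation gives, for any $\mathcal{C}$-diagram $E$ in $\Monw$,
$$B(\ast,\mathcal{C},\mathcal{C})\otimes_{\mathcal{C}}E\;\cong\;\Big|\,[n]\longmapsto\coprod E(\mathrm{src})\,\Big|,$$
the coproduct running over all chains of $n$ composable morphisms of $\mathcal{C}$ (over $\ob\mathcal{C}$ when $n=0$), with $E(\mathrm{src})$ the value of $E$ at the source of the chain, and
$$\big(B(\mathcal{C},\mathcal{C},\mathcal{C})\otimes_{\mathcal{C}}E\big)(b)\;\cong\;\Big|\,[n]\longmapsto\coprod E(\mathrm{src})\,\Big|,$$
the coproduct now over chains of $n+1$ composable morphisms of $\mathcal{C}$ with target $b$; here all coproducts and realizations are taken in $\Monw$. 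Finally $WD$ is again objectwise well-pointed, because $W$ restricts to a functor $\Monw\to\Monw$ (\ref{3_4}.2), so the third assertion is the second with $E=WD$. It therefore suffices to show that the two displayed realizations are well-pointed whenever $E$ is objectwise well-pointed.

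\emph{Levelwise properties and properness.} Each level of both simplicial monoids is a coproduct in $\Monw$ of the well-pointed monoids $E(A)$. From the word-length filtration of a free product, together with the pushout--product behaviour of $\otimes$ over $\Top$ recorded in \cite[Prop. 7.8]{PRV} and its surroundings, one sees that such a coproduct is again well-pointed and that every sub-coproduct inclusion $\coprod_{i\in S}E(A_i)\hookrightarrow\coprod_{i\in T}E(A_i)$ with $S\subseteq T$ is a closed cofibration. The degeneracy operators merely adjoin an identity morphism to a chain and so preserve its source, whence the latching map $L_nX_\bullet\hookrightarrow X_n$ is exactly the inclusion of the sub-coproduct indexed by the degenerate chains. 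So both simplicial monoids are proper with well-pointed terms; in particular their $0$-th terms $\coprod_{A\in\ob\mathcal{C}}E(A)$, respectively $\coprod_{A\to b}E(A)$, are well-pointed.

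\emph{Realization.} Let $X_\bullet$ be a proper simplicial monoid with well-pointed terms and consider the skeletal filtration of $|X_\bullet|$ in $\Monw$. Each step $\mathrm{sk}_{n-1}|X_\bullet|\hookrightarrow\mathrm{sk}_n|X_\bullet|$ is a pushout of the map obtained from the closed cofibration $L_nX_\bullet\hookrightarrow X_n$ and the inclusion $\partial\Delta^n\hookrightarrow\Delta^n$ by the pushout--product construction; this map is a closed cofibration because $\otimes$ over $\Top$ has the pushout--product property (\cite[Prop. 7.8]{PRV}), and $\mathrm{sk}_0|X_\bullet|=X_0$ is well-pointed, so $|X_\bullet|=\colim_n\mathrm{sk}_n|X_\bullet|$ is well-pointed. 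Applying this to the two simplicial monoids above gives all three assertions. I expect the only genuine obstacle to be one of care rather than substance: all of the colimits in play --- the coends $\otimes_{\mathcal{C}}$, the coproducts, and the realizations --- are formed in $\Monw$, where they are built from free products and are \emph{not} computed on underlying spaces, so the argument rests squarely on the facts from \cite{PRV} that coproducts of well-pointed monoids are well-pointed and that the tensor of $\Monw$ over $\Top$ satisfies the pushout--product property with respect to closed cofibrations.
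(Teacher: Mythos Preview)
Your proof is correct and follows essentially the same approach as the paper's, just in far more detail. The paper's proof is two sentences: it cites \cite[Prop.~7.8]{PRV} directly for the first assertion and says the second and third follow ``by the argument used in \cite[Prop.~7.8]{PRV}.'' What you have done is precisely to unpack that argument---rewriting the bar-tensor as the realization in $\Mon$ of a simplicial monoid whose levels are coproducts of the $E(A)$, checking properness via the sub-coproduct description of the latching maps, and then running the skeletal filtration with the pushout--product property of $\otimes$. Your closing caveat is exactly right: everything hinges on the PRV facts that coproducts of well-pointed monoids are well-pointed and that the tensor over $\Top$ behaves well with respect to closed cofibrations, and one should say the colimits are formed in $\Mon$ and then shown to be well-pointed, rather than formed in $\Monw$.
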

\begin{proof}
The first part holds by \cite[Prop. 7.8]{PRV}. The second and third statement follow by the argument used in \cite[Prop. 7.8]{PRV}.
%For (1) see \cite[Prop. 7.8]{PRV}; and by the same argument (2) holds.
\end{proof}

>From \ref{6_6} and \ref{6_9} we obtain

\begin{prop}\label{6_10} With the choices of weak equivalences $\mathscr{W}$ as in \ref{3_4} the functors
$$
 \begin{array}{rcll}
\Monc & \rightarrow & \Monc & \qquad  D\mapsto B(\mathcal{C},\mathcal{C},\mathcal{C})\otimes_\mathcal{C} WVD\\
 ( \Monw)^{\scC}&\rightarrow &( \Monw)^{\scC}&\qquad  D\mapsto B(\mathcal{C},\mathcal{C},\mathcal{C})\otimes_\mathcal{C} WD\\
\Sgp^{\scC} & \rightarrow & \Sgp^{\scC} & \qquad  D\mapsto B(\mathcal{C},\mathcal{C},\mathcal{C})\otimes_\mathcal{C} \overline{W}D\\
(\top)^{\scC} & \rightarrow & (\top)^{\scC} & \qquad  D\mapsto B(\mathcal{C},\mathcal{C},\mathcal{C})_+\wedge_\mathcal{C} V^tD\\
(\topw)^{\scC} & \rightarrow & (\topw)^{\scC} & \qquad  D\mapsto B(\mathcal{C},\mathcal{C},\mathcal{C})_+\wedge_\mathcal{C} D
\end{array}
$$
together with the corresponding natural transformations $\epsilon$
are strong cofibrant replacement functors with respect to the weak equivalences in $\mathscr{W}^{\scC}$. In particular, the localizations
of these categories with respect to $\mathscr{W}^{\scC}$ exist. (Recall that $K_+\wedge X$ is the tensor over $\Top$ in $\top$.)
\end{prop}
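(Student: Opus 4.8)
The plan is to identify each of the five functors with an instance of the replacement functor $R$ of Section~6, built from a strong cofibrant replacement functor on the relevant base category, and then to quote Corollary~\ref{6_6}, using Lemma~\ref{6_9} to handle the two base categories which fail to be cocomplete. For the three bases that are genuinely cocomplete $\Top$-enriched tensored categories — namely $\Mon$, $\Sgp$ and $\top$, the tensor over $\Top$ on $\top$ being $K\otimes X=K_+\wedge X$ — no extra work is needed: by Example~\ref{3_4} the pairs $WV$, $\overline{W}$ and $V^t$ are strong cofibrant replacement functors, so Corollary~\ref{6_6} says at once that $D\mapsto B(\scC,\scC,\scC)\otimes_\scC WVD$, $D\mapsto B(\scC,\scC,\scC)\otimes_\scC\overline{W}D$ and $D\mapsto B(\scC,\scC,\scC)_+\wedge_\scC V^tD$, each with the natural transformation $\epsilon=\delta\otimes_\scC\varepsilon_M$, is a strong cofibrant replacement functor for $\Monc$, $\Sgp^\scC$ and $(\top)^\scC$. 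So the first, third and fourth rows are immediate.

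The second and fifth rows, with base categories $\Monw$ and $\topw$, are the substantive point, since these categories are not cocomplete and Corollary~\ref{6_6} is not literally applicable. Here I would argue as follows. Cocompleteness of the base enters Section~6 only to guarantee that the colimits $B(\scC,\scC,\scC)\otimes_\scC QD$, and the coequalizers and realizations making them up, exist; Lemma~\ref{6_2}, the hom-space identifications in the proof of Proposition~\ref{6_3}, Proposition~\ref{6_4} and Lemma~\ref{6_5} use only the topological enrichment and are insensitive to which full subcategory one works in. Now $\Monw\subset\Mon$ and $\topw\subset\top$ are full $\Top$-enriched subcategories carrying the same weak equivalences (Definition~\ref{2_3c}), and $(W,\varepsilon)$ and $(\Id,\id)$ are strong cofibrant replacement functors on them by Example~\ref{3_4}. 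I would therefore form $B(\scC,\scC,\scC)\otimes_\scC WD$ and $B(\scC,\scC,\scC)_+\wedge_\scC D$ inside the ambient cocomplete categories $\Mon$ and $\top$, and then invoke Lemma~\ref{6_9} (together with its evident space-level analogue, proved by the argument used there from \cite{PRV}) to see that these diagrams again consist of well-pointed objects, hence lie in $(\Monw)^\scC$ and $(\topw)^\scC$. Fullness of the inclusions leaves the hom-space computations of Section~6 unchanged, so the arguments of Lemma~\ref{6_2} through Corollary~\ref{6_6} go through verbatim and show that these two functors, with their $\epsilon$, are strong cofibrant replacement functors for $(\Monw)^\scC$ and $(\topw)^\scC$. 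I expect this transplantation to be the main obstacle: one must check with some care that Lemma~\ref{6_9} plus fullness really suffice to rerun the cocomplete-case arguments in the non-cocomplete subcategories, and keep the three tensors $\otimes$, $\otimes_\scC$ and $K_+\wedge(-)$ straight along the way.

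Finally I would deduce the existence of the localizations from Proposition~\ref{3_3} in the form of Remark~\ref{3_3c}. Each of the five diagram categories is $\Top$-enriched, the class $\mathscr{W}^\scC$ contains every homotopy equivalence of $\scM^\scC$ (such a map being objectwise a homotopy equivalence and hence objectwise a weak equivalence), and we have just produced strong — in particular, ordinary — cofibrant replacement functors for them; so the only hypothesis of Proposition~\ref{3_3} left to verify is that each category is tensored over the full subcategory of $\Top$ on $\ast$ and $I$. For $\Monc$, $\Sgp^\scC$ and $(\top)^\scC$ the objectwise tensor serves, and for $(\Monw)^\scC$ and $(\topw)^\scC$ the objectwise tensor $I\otimes D$ stays in the subcategory because each $I\otimes D(a)$ is well-pointed, by \cite[Prop.~7.8]{PRV} and trivially, respectively. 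Proposition~\ref{3_3} would then supply the localizations with respect to $\mathscr{W}^\scC$.
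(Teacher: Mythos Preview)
Your proposal is correct and follows the same approach as the paper, which derives the proposition in a single sentence (``From \ref{6_6} and \ref{6_9} we obtain''); you have simply spelled out what the paper leaves implicit, in particular the cocompleteness issue for $\Monw$ and $\topw$, the need for a space-level analogue of Lemma~\ref{6_9}, and the verification of the tensoring hypothesis for Proposition~\ref{3_3}.
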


Since Addendum \ref{6_8} applies to our situation in Section 4 we obtain

\begin{theo}\label{6_11}
 The homotopy adjunctions of Theorems \ref{4_3} and \ref{4_3a} lift to conatural homotopy adjunctions
$$
\xymatrix{
(B^{\scC})^{\scH}:\mathcal{H}(\Monw)^{\scC} \ar@<0.5ex>[r] & (\topw)^{\scC}:({\Omega'}^{\scC})^{\scH} \ar@<0.5ex>[l]
}
$$
and
$$
\xymatrix{
(B^{\scC})^{\scH}:\mathcal{H}\Mon^{\scC} \ar@<0.5ex>[r] & {\top}^{\scC}: ({\Omega'}^{\scC})^{\scH}. \ar@<0.5ex>[l]
}
$$
There are natural adjunction homotopy equivalences 
$$
\begin{array}{rcl}
\lambda(RD,QZ):(\Monw)^{\scC}(RD,R\Omega'QZ)&\rightarrow& (\topw)^{\scC}(QBRD,QZ)\\
\lambda(RVD,QV^tZ):\Mon^{\scC}(RVD,RV\Omega'QV^tZ)&\rightarrow& {\top}^{\scC}(QV^tBRVD,QV^tZ)
\end{array}
$$
in $\Top$, where $(R,\epsilon)$ and $(Q,\epsilon^t)$ are the cofibrant replacement functors in 
$(\Monw)^{\scC}$ respectively $(\topw)^{\scC}$ of \ref{6_10}. Hence
$$\Ho B^{\scC}: \Ho (\Monw)^{\scC}\leftrightarrows \Ho (\topw)^{\scC}:\Ho \Omega'^{\scC}$$
and
$$\Ho B^{\scC}: \Ho (\Mon)^{\scC}\leftrightarrows \Ho (\top)^{\scC}:\Ho \Omega'^{\scC}$$
are genuine adjunctions.
\end{theo}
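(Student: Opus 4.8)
The plan is to obtain Theorem~\ref{6_11} by feeding the homotopy adjunctions of Theorems~\ref{4_3} and~\ref{4_3a} into the machinery of Theorem~\ref{6_7} and Addendum~\ref{6_8}, using the strong cofibrant replacement functors on diagram categories supplied by Proposition~\ref{6_10}. In the well-pointed case one sets $\scM=\Monw$, $\scN=\topw$, $F=B$, $G=\Omega'$, with $Q_{\scM}=W$ and $Q_{\scN}=\Id$ (the identity is a strong cofibrant replacement functor on $\topw$ by Example~\ref{3_4}.5); the natural homotopy equivalence demanded by Theorem~\ref{6_7}, namely $\lambda(Q_{\scM}M,Q_{\scN}X)\colon \Monw(WM,W\Omega'X)\to\topw(BWM,X)$, is exactly the map $\lambda(WM,X)$ of Theorem~\ref{4_3}. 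In the general case one sets $\scM=\Mon$, $\scN=\top$, $Q_{\scM}=WV$, $Q_{\scN}=V^t$, and the input homotopy equivalence is $\lambda(WVM,V^tX)$ of Theorem~\ref{4_3a}. Since $B$, $\Omega'$, $W$ and $V^t$ are continuous, Theorem~\ref{6_7} applies verbatim and gives that $(B^{\scC})^{\scH}$ and $({\Omega'}^{\scC})^{\scH}$ form an adjunction up to homotopy between the diagram categories, hence $\Ho B^{\scC}$ and $\Ho{\Omega'}^{\scC}$ a genuine adjunction on the localizations, the latter existing by Proposition~\ref{6_10}.

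Next I would upgrade this to a \emph{conatural} homotopy adjoint pair by checking the hypothesis of Addendum~\ref{6_8}: the homotopy counit of the original adjunction must extend to a natural transformation defined on \emph{all} objects of the target category, not only on the cofibrant ones. Here the explicit construction of Section~4 is what makes the argument go through. In the well-pointed case the homotopy counit is $\eta(X)=\ev(X)\circ B\varepsilon(\Omega'X)\colon BW\Omega'X\to X$, which is a \emph{genuine} natural transformation defined for every based space $X$, since the evaluation map of~\ref{4_5} and the counit $\varepsilon$ of the $W$-construction are honestly natural. Thus $\eta$ restricts on the objects $R_{\scN}Z$ to the maps required by Addendum~\ref{6_8}, which then produces the conatural homotopy adjoint pair $(B^{\scC})^{\scH}\leftrightarrows({\Omega'}^{\scC})^{\scH}$ together with the explicit adjunction homotopy equivalence $\lambda(RD,QZ)$, namely the composite appearing in the proof of Theorem~\ref{6_7} with its final, wrong-direction arrow replaced by the map induced by $\eta(R_{\scN}Z)$. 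Running the same argument in $\top$, with $\eta$ replaced by its $V^t$-whiskered analogue coming from the proof of Theorem~\ref{4_3a} (still genuinely natural, as $V^t$, $q$, $\varepsilon$ and $\ev$ all are), yields the second conatural homotopy adjunction and the map $\lambda(RVD,QV^tZ)$.

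Finally, passing to path components via Proposition~\ref{3_3} turns these conatural homotopy adjoint pairs into the genuine adjunctions $\Ho B^{\scC}\leftrightarrows\Ho{\Omega'}^{\scC}$ asserted in the theorem, exactly as in the passage from Theorem~\ref{4_3} to Theorem~\ref{4_3b}.

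I expect the main obstacle to be the verification that Addendum~\ref{6_8} applies, i.e.\ that one genuinely has a \emph{natural} (not merely homotopy-natural) homotopy counit on the whole target category. This cannot be arranged abstractly: a homotopy counit produced by the unit/counit formalism of Proposition~\ref{3_9} is natural only up to homotopy, and it is precisely because our $\eta$ in Section~4 is assembled from the honestly natural maps $\ev$, $\varepsilon$ (and $q$, $V^t$ in the general case) that the hypothesis of~\ref{6_8} holds; confirming this in the $V^t$-whiskered setting of Theorem~\ref{4_3a} means tracing through the construction of that counit. By contrast the homotopy \emph{unit} $\mu$ of~\ref{4_6} is only homotopy-natural and genuinely fails to lift to diagrams --- exactly the difficulty that the bar-resolution replacement $R=B(\scC,\scC,\scC)\otimes_{\scC}Q(-)$ of~\ref{6_10}, together with the objectwise equivalence $\delta$ of~\ref{6_4}, is built to circumvent. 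Everything else is routine bookkeeping, matching the replacement functors of Proposition~\ref{6_10} against the hypotheses of Theorem~\ref{6_7}.
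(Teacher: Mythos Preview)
Your proposal is correct and follows exactly the paper's approach: the paper's entire proof of Theorem~\ref{6_11} is the single sentence ``Since Addendum~\ref{6_8} applies to our situation in Section~4 we obtain'', and you have spelled out precisely why Addendum~\ref{6_8} applies, namely that the homotopy counit $\eta(X)=\ev(X)\circ B\varepsilon(\Omega'X)$ of~\ref{4_5} is an honest natural transformation defined on all of $\top$. Your identification of the relevant cofibrant replacement functors from~\ref{3_4} and~\ref{6_10}, and your observation that it is the genuine naturality of $\eta$ (as opposed to the merely homotopy-natural $\mu$) that makes the lift possible, are exactly on target.
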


\begin{theo}\label{6_12}
 Let $\scM$ be as above. Then the adjoint pair of functors
$$
\colim : \Mc \leftrightarrows \scM:\cons
$$
induces a conatural adjunction up to homotopy
$$
\colimH :\scH \Mc \leftrightarrows \scH\scM :\constH.
$$
Hence we obtain a genuine adjunction
$$
\Ho \colim : \Ho\Mc \leftrightarrows \Ho\scM:\Ho\cons
$$
\end{theo}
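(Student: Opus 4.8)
The plan is to build the conatural homotopy adjunction between $\scH\Mc$ and $\scH\scM$ directly from Proposition \ref{6_3}, the strict enriched adjunction $\colim\dashv\cons$, and the Whitehead-type properties of strong cofibrant replacement functors, and then to pass to homotopy classes as in Proposition \ref{3_3}. Write $Q$ for the strong cofibrant replacement functor of $\scM$ and $R=B(\scC,\scC,\scC)\otimes_{\scC}Q(-)$ for the one on $\Mc$ supplied by Corollary \ref{6_6}. Two preliminary observations: first, $\cons$ preserves weak equivalences, since a weak equivalence $A_1\to A_2$ in $\scM$ gives an objectwise weak equivalence of constant diagrams; second, the adjunction $\colim\dashv\cons$ is $\Top$-enriched, both functors being continuous ($\colim$ commutes with tensors, $\cons$ with cotensors), so that $\scM(\colim D,A)\cong\Mc(D,\cons A)$ naturally. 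Hence $\colimH$ and $\constH$ of \ref{3_6b} are defined, and since colimits commute with colimits and with $\otimes_{\scC}$ one has $\colimH D=\colim RD\cong B(\ast,\scC,\scC)\otimes_{\scC}QD$, the homotopy colimit of $QD$.

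Next I would construct, for each $\scC$-diagram $D$ in $\scM$ and each object $A$ of $\scM$, a homotopy equivalence
$$
\alpha(D,A)\colon\ \scH\scM(\colimH D,A)=\scM(Q\colim RD,\,QA)\ \longrightarrow\ \Mc(RD,\,R\cons QA)=\scH\Mc(D,\constH A)
$$
as a short zig-zag of natural homotopy equivalences. The map $(\varepsilon_M A)_\ast$ changes the target from $QA$ to $A$; it is a homotopy equivalence because $Q\colim RD$ is cofibrant in the sense of Definition \ref{3_3d}. Precomposition with $\varepsilon_M(\colim RD)\colon Q\colim RD\to\colim RD$ identifies $\scM(Q\colim RD,A)$ with $\scM(\colim RD,A)$: this structure map is a weak equivalence between two objects out of which mapping preserves weak equivalences — the source by Definition \ref{3_3d}, the target $\colim RD\cong B(\ast,\scC,\scC)\otimes_{\scC}QD$ by Proposition \ref{6_3}(2) — hence a homotopy equivalence in $\scM$ by the usual two-step Whitehead argument (using cofibrancy of target and source in turn), and therefore induces a homotopy equivalence on $\scM(-,A)$. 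The enriched adjunction identifies $\scM(\colim RD,A)$ with $\Mc(RD,\cons A)$ on the nose. Finally, the map $R\cons QA\to\cons A$ obtained by composing $\epsilon(\cons QA)$ with $\cons\varepsilon_M A$ is a weak equivalence in $\Mc$ (both factors are, using that $\cons$ preserves weak equivalences), so by Proposition \ref{6_3}(1) it induces a homotopy equivalence $\Mc(RD,R\cons QA)\to\Mc(RD,\cons A)$. Composing the two forward maps, the adjunction homeomorphism, and chosen homotopy inverses of the two backward maps yields $\alpha(D,A)$; each is a homotopy equivalence, and by Lemma \ref{3_7}, applied to the homotopy inverses, the family $\alpha$ is a natural transformation up to homotopy with respect to morphisms in $\scH\Mc$ and $\scH\scM$. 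Thus $\colimH$ and $\constH$ form a conatural homotopy adjoint pair in the sense of Definitions \ref{3_6} and \ref{3_18}.

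Passing to homotopy classes as in Proposition \ref{3_3} — which applies since $\Mc$ and $\scM$ are tensored over the subcategory $\{\ast,I\}$ of $\Top$, so that their localizations are the $\pi$-quotients of $\scH\Mc$ and $\scH\scM$ — each $\alpha(D,A)$ descends to a bijection $\Ho\scM(\Ho\colim\,D,A)\cong\Ho\Mc(D,\Ho\cons\,A)$, natural because homotopic maps become equal; this is the asserted adjunction $\Ho\colim\dashv\Ho\cons$. That $\Ho\colim$ is the left derived functor of $\gamma_{\scM}\circ\colim$ and $\Ho\cons$ that of $\gamma_{\Mc}\circ\cons$ is already recorded in the discussion following \ref{3_6b}, exactly as for Theorem \ref{4_3b}.

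The one step with any content is the middle one: because $\colim$ is merely a left adjoint it does not preserve weak equivalences, so the strict adjunction cannot simply be localized, and the argument must know that the concrete model $\colimH D\cong B(\ast,\scC,\scC)\otimes_{\scC}QD$ behaves cofibrantly — which is precisely what Proposition \ref{6_3}(2) provides. The rest is bookkeeping of the zig-zag, threading Propositions \ref{6_3} and \ref{3_3}, Corollary \ref{6_6}, and Lemma \ref{3_7} together, and I anticipate no further difficulty.
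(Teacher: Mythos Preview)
Your argument is correct and uses the same ingredients as the paper---Proposition~\ref{6_3}, the strict enriched adjunction $\colim\dashv\cons$, and Corollary~\ref{6_6}. The paper's execution is a little tighter: instead of starting at $\scM(Q\colim RD,QA)$ and routing through maps into the bare object $A$ (which obliges you to invert two of your four steps and invoke Lemma~\ref{3_7}), it runs the chain in the opposite direction and keeps $QA$ as the target throughout, obtaining the three-step composite
\[
\Mc(RD,R\cons QA)\xrightarrow{\epsilon(\cons QA)_\ast}\Mc(RD,\cons QA)\cong\scM(\colim RD,QA)\xrightarrow{\varepsilon_M(\colim RD)^\ast}\scM(Q\colim RD,QA)
\]
of genuinely natural maps, with no homotopy inverses needed; your step involving $(\varepsilon_M A)_\ast$ and the final backward step become redundant once you notice both endpoints already carry $QA$. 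What you gain is that your direction literally matches the ``conatural'' label in the statement, at the cost of strict naturality of the adjunction map; what the paper gains is a strictly natural comparison map and a shorter chain. Your Whitehead-type justification that $\varepsilon_M(\colim RD)$ is itself a homotopy equivalence in $\scM$ is more explicit than the paper's bare citation of~\ref{6_3}, and is exactly what that citation has to mean.
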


\begin{proof}
We have the following sequence of natural homotopy equivalences and homeomorphisms from
$\scH\Mc(D,\constH A)=\Mc(RD,R(\cons QA))$ to 
$\scH\scM (\colimH D,A)$
$=\scM (Q(\colim RD),QA)$:
$$
\begin{array}{lrcl}
 (1) & \Mc(RD,R(\cons QA)) &\xrightarrow{\epsilon(\cons QA)_\ast} & \Mc(RD, \cons QA)\\
(2) & &\stackrel{\cong}{\longrightarrow} & \scM(\colim RD, QA)\\
(3) && \xrightarrow{\varepsilon_M(\colim RD)^\ast} & \scM(Q(\colim QD),  QA).
\end{array}
$$
The first map is a homotopy equivalence by \ref{6_3}, the second one is the adjunction homeomorphism, and the third one is a homotopy equivalence,
because $\varepsilon_M(\colim RD): Q\colim RD \to \colim RD$ is a homotopy equivalence in $\scM$ by \ref{6_3}.
\end{proof}

\begin{defi}\label{6_13} Let $\scM$ be a cocomplete $\Top$-enriched tensored category with a class
$\mathscr{W}$ of weak equivalences containing the homotopy equivalences and equipped with a strong cofibrant replacement
functor $(Q,\varepsilon)$. Then the
\textit{homotopy colimit functor} $\hocolim: \Mc\to\scM$ is defined by 
$$\hocolim D=  \colim RD= B(\ast,\scC,\scC)\otimes_{\scC} QD.$$
\end{defi}

\begin{rema}\label{6_14} In the literature one often finds the homotopy colimit defined by
 $\hocolim D =B(\ast,\scC,\scC)\otimes_{\scC} D$ (e.g. see \cite[18.1.1]{Hirsch}). This has
historical reasons because homotopy colimits were first defined in categories where all 
object were cofibrant. 
\end{rema}

We apply these results to $\Mon$ and prove

\begin{theo}\label{6_15}
 The classifying space functor $$B:(\Mon,\Monw)\to (\top,\topw)$$ preserves homotopy colimits up to genuine homotopy equivalences.
More precisely, for any diagram $D:\scC\to \Mon$ the natural map
$$\hocolim^{\top}BD\to B(\hocolim^{\Mon}D)$$
is a homotopy equivalence.
\end{theo}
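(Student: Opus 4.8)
The plan is to deduce the statement from the two conatural homotopy adjunctions already established, rather than recomputing with bar constructions. Applying Theorem \ref{6_12} once to $\scM=\Mon$ and once to $\scM=\top$ (both are cocomplete $\Top$-enriched tensored categories with strong cofibrant replacement functors, by Examples \ref{3_4}.1 and \ref{3_4}.4), and combining with Theorems \ref{4_3b} and \ref{6_11}, we obtain after passing to homotopy classes four genuine adjunctions
$$
\Ho\colim^{\top}\dashv\Ho\cons_{\top},\qquad \Ho\colim^{\Mon}\dashv\Ho\cons_{\Mon},\qquad \Ho B\dashv\Ho\Omega',\qquad \Ho B^{\scC}\dashv\Ho\Omega'^{\scC}.
$$
The input that couples them is the pair of strict identities of ordinary functors
$$
\cons_{\top}\circ B=B^{\scC}\circ\cons_{\Mon},\qquad \cons_{\Mon}\circ\Omega'=\Omega'^{\scC}\circ\cons_{\top},
$$
which merely say that the classifying space, resp.\ the Moore loop space, of a constant diagram is the constant diagram on the classifying space, resp.\ the Moore loop space. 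All functors involved are continuous (hence preserve homotopy equivalences), $\cons$ preserves all weak equivalences, $\Ho\cons$ is the induced functor, and $\Ho B,\Ho B^{\scC},\Ho\Omega',\Ho\Omega'^{\scC}$ are the total left derived functors computed on the cofibrant replacements of Proposition \ref{6_10} and Examples \ref{3_4}; using Lemma \ref{6_9} to stay inside well-pointed monoids (so that $B$ really does invert the relevant weak equivalences) one checks that these two identities descend to natural isomorphisms $\Ho\cons_{\top}\circ\Ho B\cong\Ho B^{\scC}\circ\Ho\cons_{\Mon}$ and $\Ho\cons_{\Mon}\circ\Ho\Omega'\cong\Ho\Omega'^{\scC}\circ\Ho\cons_{\top}$.

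Composing adjunctions, $\Ho\colim^{\top}\circ\Ho B^{\scC}$ is left adjoint to $\Ho\Omega'^{\scC}\circ\Ho\cons_{\top}$ and $\Ho B\circ\Ho\colim^{\Mon}$ is left adjoint to $\Ho\cons_{\Mon}\circ\Ho\Omega'$; by the previous paragraph the two right adjoints are naturally isomorphic, so by uniqueness of left adjoints there is a natural isomorphism
$$
\Ho\colim^{\top}\circ\Ho B^{\scC}\ \cong\ \Ho B\circ\Ho\colim^{\Mon}\colon\ \Ho\Mon^{\scC}\longrightarrow\Ho\top .
$$
By the explicit description of the localizations and derived functors in Section 3 and by Definition \ref{6_13}, $\Ho\colim$ is computed by $\colim\circ R=\hocolim$ and $\Ho B^{\scC}$ by $B^{\scC}\circ R$, so evaluated at a diagram $D$ this isomorphism becomes an isomorphism $[\hocolim^{\top}BD]\cong[B(\hocolim^{\Mon}D)]$ in $\Ho\top$; since $\top$ carries the Str{\o}m model structure, a morphism inverted in $\Ho\top$ is a genuine homotopy equivalence, so the two spaces are homotopy equivalent, naturally in $D$.

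The remaining, and main, point is to see that this isomorphism is represented by the \emph{canonical} comparison map $\hocolim^{\top}BD\to B(\hocolim^{\Mon}D)$, equivalently that that map is itself inverted in $\Ho\top$. Here $\hocolim^{\top}BD=B(\ast,\scC,\scC)_+\wedge_{\scC}V^tBD$ and $B(\hocolim^{\Mon}D)=B\bigl(B(\ast,\scC,\scC)\otimes_{\scC}WVD\bigr)$, and the comparison map comes from the fact that $B=|N_\bullet(-)|$ commutes up to natural homeomorphism with geometric realization and with the coend $B(\ast,\scC,\scC)\otimes_{\scC}(-)$ (a Fubini argument for bi-semisimplicial spaces), identifying $B\bigl(B(\ast,\scC,\scC)\otimes_{\scC}WVD\bigr)\cong B(\ast,\scC,\scC)_+\wedge_{\scC}B(WVD)$; the only genuine bookkeeping is reconciling the two cofibrant replacements $WV$ on the monoid side and $V^t$ on the space side, which one does by interposing the natural weak equivalences $WVD\to VD\to D$ and $V^tBD\to BD$ and invoking, exactly as throughout Section 4 (e.g.\ \ref{4_1a}), that $B$ carries weak equivalences of well-pointed monoids to homotopy equivalences, together with the fact that $B(\ast,\scC,\scC)_+\wedge_{\scC}(-)$ preserves objectwise homotopy equivalences between diagrams of well-pointed spaces (as in Lemma \ref{6_2}). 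Once the comparison map is pinned down this way the theorem follows from the isomorphism above; alternatively one can run the whole argument at the level of the conatural homotopy adjunctions of Theorems \ref{6_11} and \ref{6_12}, i.e.\ before passing to homotopy classes, and thereby produce the natural homotopy equivalence of spaces directly.
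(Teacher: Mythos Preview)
Your approach is essentially the same as the paper's: both arguments pass to the homotopy categories, use the four adjunctions $\Ho\colim\dashv\Ho\cons$ (for $\Mon$ and for $\top$) together with $\Ho B\dashv\Ho\Omega'$ and $\Ho B^{\scC}\dashv\Ho\Omega'^{\scC}$, observe that the right adjoints commute because $\cons\circ\Omega'=\Omega'^{\scC}\circ\cons$, and conclude by uniqueness of left adjoints that the comparison map becomes an isomorphism in $\Ho\topw=\pi\topw$, hence is a genuine homotopy equivalence. The paper's proof is somewhat terser: it first reduces to the well-pointed case (so that $B$ preserves weak equivalences and $\Ho B^{\scC}$ is simply the induced functor), and it does not spell out your third paragraph---it simply takes for granted that the natural isomorphism furnished by uniqueness of adjoints \emph{is} the canonical comparison map, which is the usual mate formalism. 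Your Fubini argument identifying $B(\hocolim^{\Mon}D)$ with $B(\ast,\scC,\scC)_+\wedge_{\scC}B(WVD)$ and then reconciling the two cofibrant replacements is correct extra bookkeeping that the paper leaves implicit.
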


\begin{proof}
By definition of the homotopy colimit functor it suffices to prove the well-pointed case.

 Consider the diagram
$$\xymatrix{
(\Monw)^\mathcal{C} \ar[rr]^{\gamma_{(\Monw)^{\scC}}}\ar[d]_{ B^{\scC}} && \Ho(\Monw)^\mathcal{C} \ar[rr]^{\Ho\colim} \ar[d]_{\Ho B^{\scC}} 
&& \Ho\Mon \ar[d]^{\Ho B}
\\
(\topw)^\mathcal{C} \ar[rr]^{\gamma_{\topw}} && \Ho(\top)^\mathcal{C} \ar[rr]^{\Ho\colim}
&&
\Ho\topw
}
$$
and recall that $\Ho\colim$ is induced by the homotopy colimit functor. Since $B$ preserves weak equivalences in the
well-pointed case, $B^{\scC}$ induces $\Ho B^{\scC}$ so that the left square commutes up to natural equivalence.
The right square commutes up to natural equivalence, because
the corresponding square of right adjoints commutes. Hence, for any diagram $D$ in $\Mon$, the natural map
$$\hocolim^{\topw}BD\to B(\hocolim^{\Monw}D)$$
becomes an isomorphism in $\Ho\topw =\pi\topw$.
\end{proof}

%%%%%%%%%%%%%%%%%%%%   End of main body of article
%
%                             References
%
%   BiBTeX users uncomment the following line:
%
%\bibliographystyle{gtart}
%

\end{document}